\documentclass[]{amsart}%amsart
\usepackage{graphicx, amsmath, amsthm, amsfonts, todonotes} % Required for inserting images
\usepackage[]{geometry}
\usepackage{hyperref}
\usepackage{float}
\usepackage{colortbl}
\newtheorem{theorem}{Theorem}[section]
\newtheorem{lemma}[theorem]{Lemma}

\theoremstyle{remark}
\begingroup
\newtheorem{remark}[theorem]{Remark}

\endgroup 

\usepackage[ruled,noend]{algorithm2e}
\RestyleAlgo{ruled}
\SetKwComment{Comment}{// }{ }
\SetKwInput{Input}{Input}
\SetKwInput{Return}{Return}

\DeclareMathOperator{\argmin}{argmin}
\DeclareMathOperator{\Per}{Per}

\title[Momentum-based minimization of the Ginzburg-Landau functional]{Momentum-based minimization of the Ginzburg-Landau functional on Euclidean spaces and graphs}

\keywords{Phase-field model, Allen-Cahn equation, semi-supervised learning, Graph-PDE, momentum-based optimization, accelerated optimization, geometric evolution equation}

% 	49Q05  	Minimal surfaces and optimization
%   53E10  	Flows related to mean curvature
%   35R02  	PDEs on graphs and networks (ramified or polygonal spaces)
%   53Z50  	Applications of differential geometry to data and computer science
\subjclass[2020]{49Q05, 53E10, 35R02, 53Z50}

\author{Oluwatosin Akande}
\address{Oluwatosin Akande\\
Industrial and Systems Engineering,
Lehigh University,
200 West Packer Avenue,
Bethlehem, PA 18015, USA
}
\email{oaa323@lehigh.edu}

\author{Patrick Dondl}
\address{
Patrick Dondl\\
Abteilung für Angewandte Mathematik,
Albert-Ludwigs-Universit\"at Freiburg,
Hermann-Herder-Straße 10,
79104 Freiburg i.\ Br., 
Germany 
}
\email{patrick.dondl@mathematik.uni-freiburg.de}

\author{Kanan Gupta}
\address{Kanan Gupta\\
University of Pittsburgh,
Department of Mathematics,
Thackeray Hall,
Pittsburgh, PA 15213, USA
}
\email{kanan.g@pitt.edu}

\author{Akwum Onwunta}
\address{
Akwum Onwunta\\
Industrial and Systems Engineering,
Lehigh University,
200 West Packer Avenue,
Bethlehem, PA 18015, USA
}
\email{ako221@lehigh.edu}

\author{Stephan Wojtowytsch}
\address{Stephan Wojtowytsch\\
University of Pittsburgh,
Department of Mathematics,
Thackeray Hall,
Pittsburgh, PA 15213, USA
}
\email{s.woj@pitt.edu}

\let\eps = \varepsilon  
\let \wto = \rightharpoonup
\let \Ra = \Rightarrow

\newcommand{\dist}{\mathrm{dist}}
\newcommand{\sdist}{\mathrm{sdist}}

\newenvironment{pde}{\left\{\begin{array}{rll} } {\end{array}\right.}

\renewcommand{\d}{\,\mathrm{d}}
\newcommand{\dr} {\d r}

\newcommand{\dt} {\d t}
\newcommand{\dx} {\d x}

\newcommand{\dz} {\d z}

\newcommand{\R}{\mathbb{R}}

\begin{document}
\begin{abstract}
We study the momentum-based minimization of a diffuse perimeter functional on Euclidean spaces and on graphs with applications to semi-supervised classification tasks in machine learning.
While the gradient flow in the task at hand is a parabolic partial differential equation, the momentum method corresponds to a damped hyperbolic PDE, leading to qualitatively and quantitatively different trajectories.
Using a convex-concave splitting-based FISTA-type time discretization, we demonstrate empirically that momentum can lead to faster convergence if the time step size is large but not too large.
With large time steps, the PDE analysis offers only limited insight into the geometric behavior of solutions and typical hyperbolic phenomena like loss of regularity are not be observed in sample simulations.
We obtain the singular limit of the evolution equations as the length parameter of the phase fields tends to zero by formal expansions and numerically confirm its validity for circles in two dimensions.
Our analysis is complemented by numerical experiments for planar curves, surfaces in three-dimensional space, and semi-supervised learning tasks on graphs.
\end{abstract}

\maketitle
\markboth{\shorttitle}{\shorttitle}

% \tableofcontents

\section{Introduction}

From its inception as Dido's problem \cite[Book I]{aeneid}, the task of separating two regions with as short a boundary as possible is one of the oldest problems in mathematics. Many problems across the sciences are driven by the energetic imperative to minimize a perimeter or weighted perimeter functional, from crystal grain growth to rocks being ground to smooth pebbles in the sea and from liquids with surface tension to soap films and bubbles. The heuristic of minimizing a transition region in a suitable sense has been applied to graphs in the classical setting (min-cut problem) and more recently on weighted graphs in the setting of semi-supervised learning in data science.

In semi-supervised learning tasks, we have a large number of data points but only a small subset of them are labeled. Our goal is to label the remaining points. Two heuristics are common:

\begin{enumerate}
    \item {\em Proximity-based clustering.} We base new labels on the closest labeled point or points. The simplest instance of this heuristic is the $k$-nearest neighbors algorithm, which does not exploit the knowledge of where other unlabeled points are. A more advanced PDE-based version which integrates this information can be based on the eikonal equation \cite{dunbar2022models}.

    \item {\em Perimeter minimization clustering.} We try to assign consistent labels to clusters of data and change labels only in between clusters in regions of low data density (small weighted perimeter).
\end{enumerate}

The second approach leads to similar mathematical structures in data science as in the sciences. PDE-based algorithms are popular in machine learning due to their interpretability: While intuition may not transfer one-to-one to the new setting, such analogy can offer insight into the relative strengths and weaknesses of algorithms and even inspire methods to remedy such pitfalls \cite{calder2020poisson}.

Finding minimizers of perimeter functionals is not easy, neither analytically nor numerically. It has stimulated active research across disciplines from minimal surfaces in differential geometry to mean curvature flow (the $L^2$-gradient flow of the perimeter functional) in partial differential equations and to rigorous computational approximations in numerical analysis.

Direct numerical discretizations of an evolving interface between two regions are possible, but challenging even for surfaces in dimension three \cite{deckelnick2005computation, dziuk2007finite}. Mimicking extrinsic perspectives in geometric measure theory \cite{brakke2015motion}, extrinsic approaches to mean curvature flows which operate on (potentially smooth approximations of) characteristic functions of sets separated by an evolving boundary rather than the hypersurface itself have been proposed. These include the `thresholding' or Merriman-Bence-Osher (MBO) scheme \cite{merriman1992diffusion, evans1993convergence} and the Allen-Cahn equation. Both have a gradient-flow/minimizing movements structure with respect to a smooth approximation of the perimeter functional \cite{esedoglu2015threshold, laux2020thresholding}. Heuristically, the sharp jump between the two domains is `smeared out' across a narrow region in a principled way, leading to computationally stable methods which can easily accommodate topological transitions. Due to their stability, diffuse interface models have been popular both in mathematical modeling and computational works. Both the MBO scheme and the Allen-Cahn equation have been studied extensively on graphs for applications in image segmentation and semi-supervised learning \cite{luo2017convergence, merkurjev2018semi,bosch2018generalizing, bertozzi2019graph, cristoferi2019clustering, mercado2020node, bosch2020graph, budd2021classification, budd2023joint}.

Classical energy-driven approaches (MCF, MBO, Allen-Cahn) have emphasized using a gradient flow to minimize a diffuse perimeter functional, both on Euclidean spaces and graphs. In this work, we consider a momentum-based method or `accelerated gradient flow' instead. 

While gradient flows choose a locally optimal `descent' direction based on first order information about an objective landscape, momentum methods (also referred to as `accelerated gradient flows') retain information on past descent directions along their trajectory. Both can be seen as applications of Newton's second law, but while the (inertial) mass vanishes in gradient flow models, it is normalized as 1 in momentum methods. Due to inertia, the velocity does not change instantaneously to adapt to a new gradient. Integrating such global information allows them to converge faster in `favorable' landscapes where past information is indicative of future geometry. 

Rigorous guarantees for momentum methods outperforming gradient descent schemes are available mostly in convex optimization or in landscapes with convex-like properties \cite{gupta2024nesterov}. Qualitative differences can arise between finite-dimensional and infinite-dimensional tasks \cite{attouch2018fast, siegel2023qualitative}. Still, for non-convex optimization tasks in machine learning such as the training of neural networks, there is a large corpus of empirical evidence that momentum-based methods converge significantly faster than pure gradient descent methods -- see e.g.\ \cite{gupta2024nesterov} and the sources cited therein.

We note that (diffuse) perimeter minimization geometrically differs from most benchmarks in convex (and non-convex) optimization. For instance, compact initial surfaces vanish in finite time under mean curvature flow (the gradient flow of the perimeter functional), while gradient flows typically require infinite time to find minimizers in convex and strongly convex optimization tasks. This article is a curiosity-driven exploration into momentum methods for perimeter minimization: Are they (provably and/or empirically) outperforming gradient flow discretizations?

Our main contributions are as follows.

\begin{enumerate}
\item We introduce the `accelerated Allen-Cahn' equation and establish its elementary properties (total energy dissipation, conditional convergence to a minimizer).

\item We show that the accelerated Allen-Cahn equation is hyperbolic and inherits its geometric properties from the wave equation, in particular: finite speed of propagation and a lack of implicit regularization for evolving interfaces, compared to the parabolic mean curvature flow equation.

\item We demonstrate that it also inherits features from momentum-methods compared to gradient flows (such as `overshooting' a global minimizer due to inertia).

\item We formally deduce a geometric evolution equation for the hypersurfaces in the singular limit as the width of the diffuse interfaces is taken to zero.

\item We introduce two time discretizations of the `accelerated Allen-Cahn equation', both based on a stablizing convex-concave splitting, i.e.\ a splitting where the gradient of the `convex part' of the objective function is evaluated implicitly and the gradient of the `concave part' of the objective function is evaluated explicitly.

The {\em Convex-Implicit Nonconvex-Explicit Momentum Algorithm} (CINEMA) is a discretization which provably decreases the sum of kinetic and potential energy in every time step even for large time step sizes, but empirically it does not achieve acceleration over a standard gradient descent scheme with convex-concave splitting.

The second discretization is an instance of the {\em Fast iterative shrinkage and thresholding algorithm} (FISTA), also with convex-concave splitting. We do not guarantee it to be energy decreasing, but for moderately large time step sizes it empirically achieves substantial acceleration over a gradient descent scheme with the same splitting and {\em any} step size.

\item We demonstrate that both algorithms can be implemented with negligible excess computational cost over a mere gradient descent scheme. A key ingredient is the choice of a double-well potential whose convex part is quadratic, leading to a linear problem for the implicit part of the time step.

\item We compare momentum methods and local-in-time gradient descent methods numerically in Eudlidean spaces and on semi-supervised learning tasks on graphs.
\end{enumerate}

Oversimplifying, we can summarize: The `accelerated gradient flow' of the diffuse perimeter functional has potentially undesirable geometric properties. However, in a large time step discretization, the algorithm may achieve significantly faster convergence than a convex-concave splitting gradient descent scheme. The choice of a time discretization is crucial.

We are not considering the `accelerated Allen-Cahn equation' as a physical model, but only as a computational tool to find a (local) minimizer of the diffuse perimeter functional. Momentum-based methods have been previously considered in numerical analysis for solving obstacle problems \cite{schaeffer2018penalty, calder2019pde} with an $L^1$-penalty (which enforces the obstacle constraint exactly). 
Unlike the Allen-Cahn equation, these tasks fall into the realm of convex optimization where FISTA is provably faster (at least in the sense of minmax rates).
Non-convex problems have been considered under the name of `second order flows' in \cite{chen2024second} for Allen-Cahn like evolutions, in the context of image processing \cite{dong2021class} and the bending of thin elastic structures \cite{bonito2023numerical, bonito2024finite, dong2024accelerated, dong2024bdf}.

Momentum methods have also recently gained traction in a different PDE setting. Namely, particles with non-vanishing inertia subject to a potential force, friction, and a stochastic diffusion force follow a phase-space SDE
\[
\begin{cases} \d X_t &= V_t \dt\\ \d V_t &= - \big(\nabla U(X_t) + \alpha V_t\big)\dt + \sqrt{2}\sigma\,\d B_t\end{cases}.
\]
Their law obeys associated hypo-coercive PDE
\[
\partial_t\pi = div \left(\pi \begin{pmatrix} v\\ - \nabla U(x) - \alpha v\end{pmatrix}\right) + \sigma^2\,\Delta_v \pi,
\]
which can be interpreted as a `Hamiltonian flow' in Wasserstein space \cite{ambrosio2008hamiltonian, park2024variational}. As time approaches infinity, $\pi$ converges to the invariant measure with density proportional to $\rho^*(x,v) = \exp\left(- U(x) + \frac{\|v\|^2}{2\,\sigma^2}\right)$. These dynamics be used in place of the more classical overdamped Langevin-dynamics
\[
\d X_t = -\nabla U(X_t)\dt + \sqrt 2 \d B_T, \qquad \partial_t\pi = div(\pi\,\nabla U) + \Delta \pi
\]
for sampling, and in fact $\pi(t)$ converges to $\rho^*$ faster (in $\chi^2$-divergence) if the invariant measure $\rho \sim \exp(- U)$ satisfies a Poincar\'e inequality \cite{cao2023explicit}. Functional inequalities and log-concavity are the analogue of convexity-type assumptions in the setting of optimization.

The article is organized as follows. In Section \ref{section background}, we give more context for the mathematical models underlying this work: The Ginzburg-Landau (diffuse perimeter) functional, momentum-based first order optimization, partial differential equations on graphs, and semi-supervised learning. In Section \ref{section allen-cahn and acceleration}, we analyze the accelerated gradient flow and gradient flow of the Ginzburg-Landau energy on Euclidean spaces and compare their geometric properties. In Section \ref{section numerical approach}, we derive discrete time algorithms for the numerical solution of the evolution equations, which we use for numerical experiments both in Euclidean spaces and on graphs in Section \ref{section numerical experiments}. We conclude with a brief reflection in Section \ref{section conclusion}.

\section{Background Material}\label{section background}

\subsection{Perimeter minimization and the Ginzburg-Landau functional}\label{section ginzburg-landau}

Many physical phenomena are driven by `perimeter minimization', i.e.\ by the imperative to minimize the length (or area) separating two different `phases'/open sets in $\R^2$ (or $\R^3$). Mathematically, this is generally described as minimizing
\[
\Per(E) = \int_\Omega \|\nabla 1_E\|\dx = \sup\left\{\int_E \mathrm{div}(\phi)\dx : \phi \in C^\infty(\Omega; \R^d), \:\:\|\phi\|_{L^\infty}\leq 1\right\}
\]
where $1_E$ is the indicator function of the set $E$ and $\Omega$ is a larger containing set. Since $1_E$ is non-smooth, the integral has to be understood in the sense of functions of bounded variation on the right \cite{giusti1984minimal}, which leads to analytic and numerical challenges. 

A computationally stable approximation is the Ginzburg-Landau functional (also sometimes referred to as Modica-Mortola energy)
\[
\Per_\eps(u) = \int_\Omega\,\frac\eps2\,\|\nabla u\|^2 + \frac{W(u)}\eps\dx
\]
where $W$ is a `double-well potential': A non-negative function which takes the value zero only if $u\in\{0,1\}$, for instance $W(u) = u^2(1-u)^2$. If $\eps$ is small, then $u$ takes values very close to $0$ or $1$ on most of the domain. However, the transition between the potential wells $0,1$ cannot happen arbitrary quickly due to the presence of the squared gradient. Both contributions to the energy balance when $u$ transitions between being close to $0$ and close to $1$ on a length scale $\sim \eps$. Indeed, $\Per_\eps$ converges to (a $W$-dependent multiple of) $\Per$ as $\eps\to 0^+$ (in the sense of $\Gamma$-convergence). Depending on the boundary conditions for $u$, the limit may also be a perimeter relative to the set $\Omega$ -- we refer to \cite{modica1977esempio, modica1987gradient, ambrosio2000variational} for details. 

Rather than the characteristic function of the set $E$, which jumps along the boundary $\partial E$, we encounter functions of the form
\[
u_\eps(x) = \phi\left(\frac{\sdist_E(x)}\eps\right)
\]
when studying $\Per_\eps$. Here $\sdist(x) = \dist(x, E^c) - \dist(x, E)$ is the {\em signed distance function} from $\partial E$, taken to be positive inside of $E$, and $\phi$ is the optimal transition between the potential wells at $0$ and $1$ in one dimension, i.e.\ the monotone increasing function which balances the contributions to the energy: $(\phi')^2 = W(\phi)$. Under mild assumptions, there is a unique solution to this ODE such that $\phi(0) = 1/2$ and $\lim_{x\to\infty}\phi(x) = 1, \:\lim_{x\to-\infty}\phi(x) = 0$.

By differentiation, we see that the `optimal profile' $\phi$ satisfies $\phi'' = W'(\phi)$. 
The transition between $0$ and $1$ is `smeared out' across an area of width $\sim \eps$ with the characteristic shape $\phi$. For further details, see also Appendix \ref{appendix corrector}. 

We recall that $\sdist$, like the regular distance function, has a unit gradient: $\|\nabla \sdist\|\equiv 1$ wherever the distance function is smooth. By standard results, $\sdist$ is always $C^2$-smooth close to a $C^2$-boundary \cite[Chapter 14.6]{gilbarg1977elliptic} and even if $\partial E$ is non-smooth, it is differentiable except on a set of measure zero by Rademacher's theorem.

\subsection{The Allen-Cahn Equation}
The Allen-Cahn equation
\[
\eps\,\partial_tu = \eps\,\Delta u - \frac{W'(u)}\eps
\]
is the (time-normalized) $L^2$-gradient flow of the Ginzburg-Landau functional. Different boundary conditions imposed on the Ginzburg-Landau energy -- which correspond to different limits as $\eps\to 0^+$ -- correspond to different boundary conditions for the PDE.

\subsubsection{Singular limit}\label{section singular limit allen-cahn}
Like $\Per_\eps$ converges to a perimeter functional, also solutions to the Allen-Cahn equation (i.e.\ the $L^2$-gradient flow of $\Per_\eps$) converge to solutions of Mean Curvature Flow (i.e.\ the $L^2$-gradient flow of $\Per$) in a suitable sense. Heuristically, this can be reasoned out as follows:

If $u_{\eps,0} : \Omega\to [0,1]$ is the initial condition for the Allen-Cahn equation, then $0 < u_\eps(t,x) < 1$ for all $t>0$ and $x\in \Omega$ by the maximum principle (unless the boundary conditions on $\partial\Omega$ force us to leave the interval). In particular, we can write
\[
u_\eps(t,x) = \phi\left(\frac{r_\eps(t,x)}\eps\right), \qquad r_\eps = \eps \,\phi^{-1}(u_\eps).
\]
In $r_\eps$, the Allen-Cahn operator can be expressed as
\[
(\partial_t-\Delta)u_\eps + \frac{W'(u_\eps)}{\eps^2} 
\:=\:
\frac{\phi''(r_\eps/\eps)}{\eps^2}\big(1- \|\nabla r_\eps\|^2\big) + \phi'\left(\frac{r_\eps}\eps\right) \,\big(\partial_t-\Delta\big)r_\eps.
\]
It is in general not possible to simultaneously make both terms zero, i.e.\ to simultaneously solve $\|\nabla r_\eps\|^2 =1$ and $(\partial_t-\Delta)r_\eps=0$. If $r_\eps\to r$ for some limiting $r$, then we expect that the coefficient of $1/\eps^2$ has to be zero, while there is a bit of leeway for the $O(1)$ term. Thus, we expect that $\|\nabla r\| \equiv 1$ everywhere, suggesting that $r$ should be a signed distance function in the spatial coordinates. This does not tell us anything about the time evolution of $r$ and the interface. To find it, we posit that the second PDE $(\partial_t -\Delta)r$ is solved on the interface $r=0$, i.e.\ in the place where $\phi'$ is largest. 

It is well known that $\mathrm{div}(\nabla v/ \|\nabla v\|)(x)$ is the mean curvature of the level set $\{z : v(z) = v(x)\}$ at $x$ for any smooth function $v$ -- see e.g.\ \cite[Section 2.1]{evans1991motion}. If we are correct and $\|\nabla r\| \equiv 1$, then $\Delta r$ is the mean curvature of the interface 
\[
I(t) = \{x : r(t,x) = 0\} = \left\{ x : \phi\left(\frac{r(t,x)}\eps\right) = 1/2 \right\}.
\]
A proof in the context of distance functions is also given in \cite[Chapter 14.6]{gilbarg1977elliptic}.

The meaning of $\partial_tr$ is easiest to glean in the setting of moving hyperplanes. Namely, if $H(t) = \{ x : x_n= x_n^0 + vt\}$, then the signed distance function is $\sdist = (x_n^0+vt)- x_n$ and the normal velocity is $v = \partial_tr$ (where $v>0$ corresponds to the area where $\sdist>0$ expanding in time). The same is true more generally, as can be derived easily in locally adapted coordinates.

We thus conjecture that $r_\eps$ is (close to) the signed distance function from an interface $I(t)$ which moves by mean curvature. Far away from the interface, we conjecture that `nothing happens', i.e.\ $u_\eps$ remains almost constant in space and time close to the potential wells. Overall, $u_\eps$ would then be close (e.g.\ in $L^2$) to the characteristic function of a set $E(t)$ whose boundary moves by mean curvature flow.

The heuristic explanation above in fact describes the limiting behavior of the Allen-Cahn equation. Rigorous proofs of this result in various forms are given in \cite{ilmanen1993convergence, mugnai2011convergence, fischer2020convergence}.

\subsubsection{Vector-valued extension}\label{section vector-valued}
In many applications, there may be more than two phases. If the phases are unordered (i.e.\ phase 1 can border phase 3 and does not have to pass through phase 2), we have to design Ginzburg-Landau type functionals like for example
\[
F_\eps^{GL}(u) = \int_\Omega \frac\eps2\,\|Du\|^2_F + \frac{W(u)}\eps \dx
\]
for functions $u:\Omega\to\R^k$ where $k\geq 3$ is the number of classes, $\|Du\|_F$ is the Frobenius norm of the derivative matrix and $W$ is a potential with $k$ wells, usually selected at the unit vectors $e_1, \dots, e_k$, unless prior information suggests that boundaries between different phases should have different `surface tension'. 

An easy way to create such a multi-well potential is to select a double-well potential $W_{1D} :\R\to\R$ which vanishes only at $0,1$ and set
\[
W:\R^k\to [0, \infty], \qquad W(u) = \sum_{i=1}^k W_{1D}(u_i) + \lambda \cdot \left(1- \sum_{i=1}^k u_i\right)^2
\]
for some $\lambda\in (0, \infty]$. The first contribution to the potential $W$ vanishes if and only if $W_{1D}(u_i) = 0$ for all $i$, i.e.\ if and only if $u_i \in \{0,1\}$ for all $i$. The second term ensures that exactly one of the $u_i$ is $1$ and the others are $0$.

Also solutions to the vector-valued Allen-Cahn equation approach (multi-phase) mean curvature flow as proved recently in \cite{laux2018convergence, fischer2024quantitative}.

\subsubsection{The hyperbolic Allen-Cahn equation}

Another modification of the classical Allen-Cahn equation is its hyperbolic version 
\[
\tau\,u_{tt} + \alpha \,u_t = \Delta u + \frac{W'(u)}{\eps^2}.
\]
which has been considered more recently by several authors with parameters scaling as $\tau/\alpha = O(\eps)$. Like in the parabolic case, the limiting dynamics are described by mean curvature flow. For details, see e.g.\ \cite{nizovtseva2016hyperbolic, folino2016metastable, folino2017slow}.

The `accelerated Allen-Cahn equation' considered in this work is given by the same equation, but in a parameter regime where $\tau$ and $\alpha$ are both of order $1$. Its behavior is therefore quite different and borrows more from the hyperbolic world of PDEs. It has previously been considered in \cite{chen2024second} from the perspective of numerical analysis. The authors construct two stable numerical schemes, one of which is second order accurate in time, and use the discrete time approximation to prove existence in continuous time. By contrast, our focus lies on the geometric properties of the equation and potential applications in semi-supervised learning.

\subsection{Momentum-based first order methods in optimization}

Gradient flows reduce an objective function by adjusting the function inputs towards a locally optimal `steepest descent' direction. If the objective function has favorable geometric properties -- for instance, in convex optimization -- it is possible to integrate more global geometric information gained throughout the optimization process to achieve faster convergence. This is the rationale behind both conjugate gradient methods in numerical linear algebra and momentum-based optimizers such as Nesterov's algorithm \cite{nesterov1983method} and FISTA \cite{beck2009fast} in convex optimization.

For instance, if $f$ is a convex objective function which has a minimizer and a Lipschitz-continuous gradient, then gradient descent achieves a decay of $f(x_t^{GD}) - \inf f = O(1/t)$ in $t$ steps while Nesterov's method achieves the much faster decay $f(x_t^{Nest}) - \inf f = O(1/t^2)$ with constants of comparable size hidden in the Landau notation. Non-asymptotic lower bounds demonstrate that Nesterov's method achieves optimal decay, at least up to a constant factor -- see e.g.\ \cite{nesterov2018lectures} for precise statements.

For information from previous time-steps to remain useful, the objective function $f$ must have favorable geometric properties such as convexity or strong convexity. Recently, multiple works have relaxed the geometric conditions under which faster convergence can be established in several directions -- see e.g.\ \cite{gupta2024nesterov, hermant2024study} and the references cited therein. While many realistic optimization tasks do not fall into classes where faster convergence for momentum methods can be guaranteed, the use of momentum often leads to a notable improved in practice (for instance in the training of neural networks).

Both momentum methods and gradient flows are derived from Newton's second law of mechanics 
\[
m\ddot x = - \alpha \,\dot x - \nabla f(x)
\]
for a particle of mass $m$ under the influence of a linear friction and a potential force $-\nabla f$. Gradient flows formally correspond to $m=0, \alpha =1$ while momentum methods correspond to positive mass $m=1$. The optimal coefficient of friction $\alpha$ depends on the geometry of $f$: For $\mu$-strongly convex functions, we select (an estimate of) $2\sqrt\mu$ while for general convex functions, $\alpha = \alpha(t) = 3/t$ is in fact a function of time.

\subsection{Momentum-based time stepping algorithms in convex optimization}\label{section convex algorithms}

Notably, the choice of time discretization for the heavy-ball dynamics is crucial. Polyak's \cite{polyak1964some} heavy-ball method
\[
x_{n+1} = x_n -\alpha\,\nabla f(x_n) + \beta(x_n-x_{n-1})
\]
corresponds to the symplectic Euler discretization
\[
\begin{cases}
x_{n+1} &= x_n + h\,v_n\\ v_{n+1} &= \rho v_n - h \,\nabla f(x_{n+1}),
\end{cases}
\qquad
h = \sqrt{\alpha}, \quad \rho = \beta, \quad v_n = \frac{x_n-x_{n-1}}{\sqrt\alpha}
\]
of the heavy ball ODE and generally does not achieve acceleration (or even fails to converge) in situations where Nesterov's scheme \cite{nesterov1983method} remains stable \cite{lessard2016analysis, goujaud2023provable}. In situations where a convex function can be decomposed into a smooth part and a `simple' part, the {\em fast iterative shrinkage and thresholding algorithm} (FISTA), which treats the smooth part explicitly and the simple part implicitly, is a stable algorithm with provable acceleration \cite{beck2009fast}.

Let $H$ be a Hilbert space. Consider the task of minimizing the sum of two functions $F+G: H\to \R$. If $F, G$ are both convex, then Nesterov's method and FISTA are two well-studied methods which are both based on numerical discretizations of the `heavy ball ODE' $\ddot x = -\alpha(t)\,\dot x - \nabla (F+G)(x)$. Both can be written in the form
\begin{equation}\label{eq our algorithm}
x_{n+1} = x_n + \tau v_n - \eta g_n, \qquad v_{n+1} = \rho_n\big(v_n - \tau g_n\big)
\end{equation}
where $g_n$ is an approximation of the gradient. The general scheme \eqref{eq our algorithm} encompasses
\begin{enumerate}
    \item Nesterov's method with
    \[
    g_n = \big(\nabla F + \nabla G\big)\big(x_n + \tau v_n\big)
    \]
    and parameters $\eta = \tau^2$ and $\rho_n = \frac n{n+3}$ (convex case, corresponding to $\alpha(t) = 3/t$) or $\rho_n = \frac{1-\sqrt\mu\,\tau}{1+\sqrt\mu\,\tau}$ ($\mu$-strongly convex case, corresponding to $\alpha = 2\sqrt\mu$). 

    The scheme converges if $F+G$ is convex and $\tau\leq 1/\sqrt L$ where $L$ is the Lipschitz-constant of $\nabla(F+G)$. The rate of decay $(F+G)(x_n)\to 0$ is faster than the GD rate in the sense of minimax optimal rates.
    
    \item FISTA with
    \[
    g_n = \nabla F(x_{n+1}) + \nabla G\big(x_n + \tau v_n\big)
    \]
    and parameters $\eta = \tau^2$ and $\rho_n = \frac n{n+3}$ (convex) or  $\rho_n = \frac{1-\sqrt\mu\,\tau}{1+\sqrt\mu\,\tau}$ ($\mu$-strongly convex). 
    
    The scheme converges (at a rate faster than GD) if $F,G$ are convex and $\tau\leq 1/\sqrt L$ where $L$ is the Lipschitz-constant of $\nabla G$ (which may be finite even if $\nabla F$ is discontinuous).
\end{enumerate}

Neither scheme is guaranteed to decrease the `total energy'
\[
e_n = (F+G)(x_n) + \frac{\lambda}2\,\|v_n\|^2
\]
monotonically in non-convex optimization for a suitable $\lambda$ depending on $\tau, \eta, \rho$, and even for very small time steps, we found that FISTA increased $e_n$ in numerical experiments in certain time steps when we split $F_\eps^{GL}$ into its convex part $F$ and concave part $G$ (details below).

% % \begin{remark}[Restart]
% Momentum may cause us to overshoot minimizers both in continuous and in discrete time: A boulder rolling down a mountainside generally will not come to rest the moment it reaches the bottom of a valley. This can be mitigated by `restarting' the algorithm as
% \[
% (x_{n+1}, v_{n+1}) = \begin{cases}
%         (x_{n+1}, v_{n+1}) &\text{as in Theorem \ref{theorem descent scheme} if $(F+G)(x_{n+1}) < (F+G)(x_n)$}\\
%         (x_n, 0) &\text{else}.
% \end{cases}
% \]
% The first step in the momentum scheme coincides with a convex-concave gradient descent step since $v_0=0$, i.e.\ $(F+G)(x_{1}) \leq e_{1} \leq e_0 = (F+G)(x_0)$. Restarting therefore guarantees energy-monotonicity -- see also \cite{wang2022scheduled} for a scheduled, non-adaptive version of restarting.
% %     , unless we restart at $(x_n, 0)$ where $- \nabla G(x_n) \in \partial F(x_{n})$, i.e.\ a stationary point of $F+G$.
% % \end{remark}

\subsection{Convex-concave splitting}\label{section convex concave}

If $F$ is convex and $G$ is concave, there is an effective version of the (momentum-less) gradient descent scheme for the function $F+G$.
Namely, for given $x_n$ the next iterate $x_{n+1}$ is the {\em unique} minimizer of the strongly convex function 
\[
f_{x_n, h}^{aux}(z) := \frac12\,\|z-x_n\|^2 + h \big(F(z) + G(x_n) + \langle \nabla G(x_n), z-x_n\rangle\big).
\]
With this scheme, the sequence $(F+G)(x_n)$ is monotone decreasing independently of the step size -- see e.g.\ \cite{bartels2015numerical, related_splitting_paper}. The scheme coincides with the ISTA scheme (the momentum-less version of FISTA, see e.g.\ the sources in the introduction of \cite{beck2009fast}), but crucially, $G$ is assumed to be concave here, while both $F$ and $G$ are convex in FISTA. Convergence can be proved in both settings: In the convex-concave splitting, geometric conditions suffice. In the purely convex setting, we must assume that $\nabla G$ is Lipschitz-continuous and that $h\leq 2/ [\nabla G]_{Lip}$.

The assumptions on $F, G$ can be relaxed somewhat in terms of regularity, and the scheme may be defined on a Hilbert space rather than $\R^d$. These extensions are indeed necessary for the Allen-Cahn equation. We will pursue them below, where we study a momentum scheme of the type \eqref{eq our algorithm} with convex-concave splitting in the gradients.

The scheme is particularly convenient to implement if $F(x) = \frac12\,x^TAx$ is quadratic and amounts to solving the linear system
\[
0 = \nabla_z f^{aux}_{x_n, h}(z) = (z-x_n) + h\,\big(Az + \nabla G(x_n)\big)\qquad \Ra \quad
\big(I + hA\big)x_{n+1} = x_n - h\,\nabla G(x_n).
\]
We will pursue this simplification in \ref{section ws}.

While arbitrarily large gradient descent steps are formally possible with the convex-concave splitting scheme, we demonstrate in \cite{related_splitting_paper} that they generally do not result in large steps in practice. The scheme achieves its stability at the cost of `freezing' the transition regions of $u$ in space. Pairing the stability of convex-concave splitting with momentum-based acceleration is therefore a natural goal.

\subsection{Partial differential equations on graphs}

A graph $\Gamma = (V,E)$ is a tuple of two sets: The set $V$ of vertices and the set $E$ of edges, where an edge $e=\{v, v'\}$ is a set containing two distinct vertices $v, v'\in V$. We say that $e=\{v,v'\}$ {\em connects} $v$ and $v'$.

In the following, we will assume that each edge has a weight $w_e \in[0, \infty)$ which is large if its vertices are `close' (the edge is short) and small if the vertices are `far apart' (the edge is long). We can think of $w_e$ as a reciprocal length or a reciprocal length squared.

The graphs we consider have a finite set $V= \{v_1, \dots, v_n\}$ of vertices and two vertices can only be connected by at most one edge. No vertex is connected to itself by an edge. Note that our graphs are not directed, i.e.\ an edge is a set, not a tuple, and $v$ is connected to $v'$ by an edge if and only if $v'$ is connected to $v$ by the same edge. Also the weight $w_e$ depends only on the edge, not on its orientation.

If $e$ connects the vertices $v_i, v_j$ in $V = \{v_1, \dots, v_n\}$, we also denote $w_e = w_{ij} = w_{ji}$. A function $u:V\to\R^k$ can also be interpreted as a vector $u_i = u(x_i)$.

In analogy to the Euclidean Dirichlet energy $E_{DR}(u) = \frac12\int \|\nabla u\|^2\dx$, one can define the graph Dirichlet energy
\[
E_{DR}(u) = \frac12 \sum_{e\in E} w_e \,\big|u_i -u_j\big|^2 = \frac14 \sum_{i, j=1}^n w_{ij}\,\big|u_i-u_j\big|^2
\]
where $w_{ij}=0$ if $v_i, v_j$ are not connected by an edge. The $L^2$-gradient of the Dirichlet energy is the (negative) Laplacian, so we call the gradient
\[
(Lu)_k = \partial_{u_k} E_{DR}(u) = \frac12\sum_{i,j} w_{ij}(u_i-u_j) (\delta_{ik}-\delta_{jk}) =  \sum_j w_{kj} \,(u_k - u_j)
\]
of the Dirichlet energy the `graph Laplacian.' Like the classical (negative) Laplacian, also the graph-Laplacian is symmetric and positive semi-definite and, since
\[
\langle Lu, u\rangle = \sum_i u_i\sum_j w_{ij} \,(u_i - u_j) = \frac12\left(\sum_{i, j} w_{ij}(u_i-u_j)u_i - \sum_{i, j} w_{ij}(u_i-u_j)u_j\right) = 2\,E_{DR}(u),
\]
it is positive definite on the $\ell^2$-orthogonal complement of the functions which are constant on the connected components of the graph.\footnote{\ Two vertices $v, v'$ are in the same connected component if there exists a `path' $v_{i_1}, \dots, v_{i_m}$ such that $v_{i_1} = v$, $v_{i_m} = v'$ and $e_{i_li_{l+1}}>0$.}

We note that $L$ is represented by the matrix with entries $L$ with entries $L_{ij} = -w_{ij}$ for $i\neq j$ and $L_{ii} = \sum_{j\neq i}w_{ij}$ or $L = D-W$ where $W$ is the matrix with entries $w_{ij}$ if $i$ and $j$ are connected by an edge and zero if they are not, and $D$ is the diagonal matrix with entries $d_{ii} = \sum_{j=1}^n d_{ij}$.

The graph-Laplacian introduced here is unnormalized, but symmetric positive definite. We note that there are two other common notions of a `graph-Laplacian': 

\begin{enumerate}
\item The symmetric normalized graph Laplacian compensates for a high variation in the number of edges containing a node (the `degree' of nodes).  In matrix form, it is given by $L_{sn} = D^{-1/2} L D^{-1/2} = I - D^{-1/2}WD^{-1/2}$. 

\item The `random walk graph Laplacian' $L_{rw} := D^{-1}L = I - D^{-1}W$ is the generator of a random walk on $V$. It is usually not symmetric since the probability of jumping from $v_i$ to $v_j$ may be different from the probability of jumping from $v_j$ to $v_i$: The probabilities to jump from $v_i$ to $v_j$ in the next step have to sum up to $1$ over $j$ (since there are no other options), but there is no reason that they should sum up to $1$ over $i$. This Laplacian generalizes the link of the classical Laplacian to Brownian motion rather than its variational properties.
\end{enumerate}

For variational problems -- such as minization of the Ginzburg-Landau energy -- the unnormalized or symmetric normalized graph Laplacians are suitable. We select the unnormalized graph Laplacian as we desire constant functions to have zero energy $u^TLu$ (in particular, the constant functions that take values in the potential wells). We note however that the normalized graph Laplacian would be an admissible choice for a non-negative `energy' and may have other advantages.

\subsection{Semi-supervised learning}

Consider a data space $\mathcal X$ and label space $\mathcal Y$.
In semi-supervised learning applications, we receive a set of data points $S = \{x_i \in \mathcal X: i = 1, \dots, N\}$ and a collection of labels $\{y_i \in\mathcal Y: i\in I\}$ for a subset of the data. In general, the cardinality of the labeled set is much smaller than that of the dataset. Our task is to find a `good' function $u : S \to \mathcal Y$ such that $u(x) = y$ for a large proportion of $x\in S$, although we have no knowledge of the labels for the majority of points.

To decide what a `good' function $u$ is, we require further information. In general, we assume that a notion of similarity or distance between data points is available, which is meaningful in the sense that similar data points can generally be expected to have similar labels. We can use this notion to construct a graph with vertex set $V = S$ in which two nodes are connected by an edge if they are `similar enough'. 

Our applications fall into a common framework: We assume that $\mathcal X = \R^d$ for some $d$ and $\mathcal Y = \{1, \dots, k\}$ for some $k\in\mathbb N$. We either connect two vertices $x_i, x_j \in V = S$ by an edge if their Euclidean distance $\|x_i - x_j\|$ is below a cut-off length (which may be infinite), or we connect every point to its $K$ nearest neighbors for some $K\in \mathbb N$ (where the symmetry of the graph means that some vertices have degree $> K$ if they are the nearest neighbor to more than $K$ vertices). In this work, we either assign the weight $1$ to all edges, or we choose Gaussian edge weights
\[
w_{ij} = \exp\left(-\frac{\|x_i-x_j\|^2}{2\sigma^2}\right)
\]
for the edge connecting $x_i, x_j$ for a suitable $\sigma$. Of course, more advanced constructions are possible and, at times, required. 

The well-developed Allen-Cahn framework can be leveraged in semi-supervised learning applications by minimizing the `energy'
\[
F_\eps^{GL} : (\R^k)^N \to [0, \infty),\qquad
F_\eps^{GL}(u) = \frac1N\left( \frac\eps2\,\,\mathrm{tr}\big(u^TLu\big) + \frac{1}{\eps}\sum_{i=1}^N W(u_i)\right)
\]
subject to the (Dirichlet) `boundary condition' $u_i = \vec y_i$ if $i\in I$. The trace in the gradient term applies for labels in $\R^k$ with $k\geq 1$ since $u^TLu\in \R^{k\times k}$.

The vector $\vec y_i$ is the one-hot encoding of the label $y_i \in \{1, \dots, k\}$ (i.e.\ the vector which has a one in the $y_i$-th coordinate and zeros in all others). Naturally, we identify $u:S\to\R^k$ and $u\in \R^{N\times k}$. As in the Euclidean setting, minimizing $F_\eps^{GL}$ (approximately) corresponds to minimizing the size of the transition set as measured by the sum over edges between differently labeled points
\[
\frac1{2N}\,\sum_{i,j} w_{ij} 1_{\{u(x_i) \neq u (x_j)\}}.
\]
The energy $E$ is non-convex and in general, there exist many (local) minimizers. Using a gradient flow to minimize $F_\eps^{GL}$ corresponds to solving the graph Allen-Cahn equation \cite{bosch2018generalizing, bertozzi2019graph, bosch2020graph, mercado2020node}. In this article, we investigate the use of momentum methods for the same purpose.

\subsection{Ginzburg-Landau minimization and linear Laplacian methods}

\begin{figure}
\begin{tikzpicture}[scale=1.5]
\filldraw[fill = black!30, domain = 0 : 360, samples = 200] plot ( {1.3*cos(\x)}, {.5*sin(\x)} );

\draw[ultra thick, red, domain = -30 : 30, samples = 200] plot ( {1.3*cos(\x)}, {.5*sin(\x)} );
\draw[ultra thick, blue, domain = -30 : 30, samples = 200] plot ( {-1.3*cos(\x)}, {.5*sin(\x)} );

\node[left, blue] at (-1.3, 0){$u=-1$};
\node[right, red] at (1.3, 0){$u=1$};

\draw[very thick, dashed] (0, -.6) -- (0, .6);
\draw[very thick, dotted, purple] (1.1, -.4) -- (1.1, .4);
\draw[very thick, dotted, purple] (-1.1, -.4) -- (-1.1, .4);

\node at(0, -1.1){};
\node at(0, 1.1){};

\end{tikzpicture}\hfill
\vspace{-2.75cm}

\hfill
\begin{tikzpicture}[scale=1.5]
\draw[black!30](-2,0)--(2,0);
\filldraw[fill = black!30, domain = -2:2, samples = 100] (-2,0) -- plot ({\x}, {1/(\x*\x+1) + \x*\x/8 -.5}) -- (2,0);
\filldraw[fill = black!30, domain = -2:2, samples = 100] (-2,0) -- plot ({\x}, {-1/(\x*\x+1) - \x*\x/8 +.5}) -- (2,0);

\draw[ultra thick, color = red](2, .2)-- (2, -.2);
\draw[ultra thick, blue](-2, .2)-- (-2, -.2);
\node[left, blue] at (-2, 0){$u=-1$};
\node[right, red] at (2, 0){$u=1$};

\draw[very thick, dashed] (0, -.6) -- (0, .6);
\draw[very thick, dotted, purple] (1.35, -. 2) -- (1.35, .2);
\draw[very thick, dotted, purple] (-1.35, -. 2) -- (-1.35, .2);
\end{tikzpicture}
\caption{
\label{figure linear vs non-linear}
The function $u$ has a boundary condition $u=-1$ on the left boundary segment, $u=1$ on the right boundary segment, and no condition on the remainder of the boundary (i.e.\ only part of the boundary is `labeled'). The dashed black line illustrates the cut location between classes for any $p$-Laplacian energy with $p\in (1, \infty)$. The dotted purple lines indicate where a minimizer of the Ginzburg-Landau energy could separate classes (for small enough $\eps>0$). 
}
\end{figure}

The Ginzburg-Landau energy formalizes a trade-off: We seek functions $u$ with low Dirichlet energy $\frac12\int_\Omega\|\nabla u\|^2\dx$ which are close to $\pm 1$ on the majority of the domain $\Omega$. The precise balance is governed by the length-scale parameter $\eps>0$. 

In binary classification applications, we designate the set where $u>0$ as `class $1$' and the set where $u<0$ as `class $-1$' (with some tie-break mechanism for $u=0$). 
Since we are thresholding $u$ for classification, one may ask whether the condition that $|u|\approx 1$ before thresholding makes any difference, or whether we could just minimize a $p$-Laplacian energy $\int_\Omega |\nabla u|^p\dx$ without any double-well term. The $p$-Laplacian energy is convex for any $p\geq 1$, strictly convex for $p\in(1,\infty)$ and quadratic for $p=2$. Finding minimizers is therefore much easier computationally. This is the paradigm of Laplace learning and its refined version, Poisson learning \cite{calder2020poisson}.

There are situations where the presence of the double-well term leads to vastly different geometric effects. Namely, if we are minimizing a strictly convex energy of the gradient, there exists a unique energy minimizer. If the domain and boundary conditions are symmetric -- consider e.g.\ the examples in Figure \ref{figure linear vs non-linear} -- then the unique minimizer exhibits the same symmetry. In both examples, we separate classes vertically -- which is sensible -- but the decision boundary is in fact the {\em longest} vertical cut. 

By comparison, the non-convex Ginzburg-Landau energy has a symmetric set of minimizers, but the individual minimizers may not have the same symmetries as the data. Instead, we can think of minimizing the perimeter of the classes as a heuristic for small $\eps>0$. Rather than switching the classifier in the middle, we switch in either of the two narrow regions. By symmetry, there are two energy minimizers in this situation with vastly different class assignments. The Poisson-MBO scheme is a similarly non-convex extension of Poisson learning \cite{calder2020poisson}.

A similar counterexample can be constructed with a finite data graph, and the observation remains stable under small perturbations, including those that break symmetry. Which heuristic seems more appropriate and yields higher performance may depend on the situation.

\section{\texorpdfstring{The Allen-Cahn Equation and Accelerated Allen-Cahn Equation on $\R^d$}{The Allen-Cahn Equation and Accelerated Allen-Cahn Equation on Euclidean Spaces}}\label{section allen-cahn and acceleration}

\subsection{Basic properties}\label{section pde basics}

For the sake of convenience, we assume that the doublewell potential $W:\R\to[0,\infty)$ is $C^1$-smooth, vanishes only at $0$ and $1$, and only grows quadratically at $\infty$. This excludes the popular prototype $W(u) = u^2(1-u)^2$, but it includes for instance the potentials used in our simulations. It could be generalized with little additional effort, but theorem statements would become more involved. We define
\[
F_\eps^{GL}: L^2(\Omega)\to [0, \infty], \qquad F_\eps^{GL}(u) = \begin{cases}
    \int_\Omega \frac\eps2\,\|\nabla u\|^2 + \frac{W(u)}\eps\dx &\text{if }u\in H^1(\Omega)\\ +\infty &\text{else}.
\end{cases}
\]
The choice of defining $F_\eps^{GL}$ on the larger space $L^2$ provides a notion of dissipation both for the gradient flow and the momentum method. For an initial condition $u_0\in H^1(\Omega)$, the accelerated and time-normalized $L^2$-gradient flow of $F_\eps^{GL}$ is given by the evolution equation
\[
\begin{pde}
    (\partial_{tt} + \alpha \partial_t)u &= \Delta u - \frac1{\eps^2}\,W'(u) & t>0, x\in \Omega\\
    u &= u_0 &t=0\\
    \partial_tu &= 0 &t=0\\
    \partial_\nu u &=0 & t>0, \:x\in\partial\Omega.
\end{pde}
\]
Naturally, we can require Dirichlet boundary conditions by making $F_\eps^{GL}$ finite if and only if $u \in g + H_0^1(\Omega)$ for some $g\in H^1(\Omega)$ rather than for all $u\in H^1(\Omega)$. In this case, we would recover a Dirichlet boundary condition also for the evolution equation in place of the homogeneous Neumann boundary condition. The following statement applies to both settings.

\begin{theorem}[Total energy decrease]\label{theorem energy decrease}
    Let $\alpha,\eps>0$ and $W\in C^2(\R)$ a function such that $W(u) = 0$ if and only if $u\in \{0,1\}$. 
    Assume that $u\in C^2([0, \infty) \times\Omega)$ solves the PDE 
    \[
    (\partial_{tt} + \alpha \partial_t)u = \Delta u - \frac1{\eps^2}\,W'(u)
    \]
    in a domain $\Omega$ and that either
    \begin{enumerate}
        \item $\partial_t u(t,x)\equiv 0$ for all $t>0$ and $x\in \partial\Omega$ or
        \item $\partial_\nu u(t,x)\equiv 0$ for all $t>0$ and $x\in \partial\Omega$, or
        \item $\partial\Omega = \emptyset$.
    \end{enumerate}
    Then the total energy
    \[
    E_\eps(u) = F_\eps^{GL}(u) + \frac{\eps}2\,\|\partial_tu\|_{L^2(\Omega)}^2
    \]
    is monotone decreasing in time. 
\end{theorem}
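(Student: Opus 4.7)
The natural strategy is a standard energy estimate: differentiate $E_\eps(u(t,\cdot))$ in time, manifestly produce $\partial_t u$ as a factor on the result of the linear operator, and then use the PDE to collapse everything into a non-positive friction term. Equivalently, I would multiply the PDE through by $\eps\,\partial_tu$ (the right weight so that the kinetic, Dirichlet, and potential contributions assemble with the correct $\eps$-prefactors) and integrate over $\Omega$.

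The plan in order is as follows. First, assume enough regularity on $u$ to justify differentiation under the integral and an $H^1$-in-space chain rule for $W(u)$; the hypothesis $u\in C^2([0,\infty)\times\Omega)$ together with $W\in C^2$ makes this automatic, so it is not the interesting point. Second, compute
\[
\frac{\d}{\dt}E_\eps(u) \;=\; \int_\Omega \eps\,\nabla u\cdot\nabla\partial_t u \;+\; \frac{W'(u)}{\eps}\,\partial_t u \;+\; \eps\,\partial_t u\,\partial_{tt}u\,\dx,
\]
and integrate the first term by parts in space. This produces $-\int_\Omega \eps\,\Delta u\,\partial_tu\,\dx$ plus a boundary term $\int_{\partial\Omega}\eps\,\partial_\nu u\,\partial_t u\,\d\mathcal H^{d-1}$. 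Third, observe that the boundary term vanishes in each of the three cases: under Dirichlet data $\partial_t u\equiv 0$ on $\partial\Omega$, under homogeneous Neumann data $\partial_\nu u\equiv 0$, and trivially if $\partial\Omega=\emptyset$. Fourth, collect terms to get
\[
\frac{\d}{\dt}E_\eps(u) \;=\; \int_\Omega \eps\,\partial_t u\,\Big(\partial_{tt}u - \Delta u + \tfrac{1}{\eps^2}W'(u)\Big)\dx,
\]
and substitute the PDE, which identifies the parenthesis as $-\alpha\,\partial_t u$. The conclusion is
\[
\frac{\d}{\dt}E_\eps(u) \;=\; -\,\eps\,\alpha\,\|\partial_tu\|_{L^2(\Omega)}^2 \;\leq\; 0,
\]
which gives monotone decrease (in fact, quantitatively with a friction dissipation rate).

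There is no serious obstacle in this argument; the only point requiring any care is the vanishing of the boundary flux $\eps\,\partial_\nu u\,\partial_t u$, which is exactly what the three alternative boundary hypotheses are designed to ensure. If one wished to relax the regularity assumption $u\in C^2$ to an energy-class solution, the argument would need to be reformulated as a distributional identity and the identity $\frac{\d}{\dt}\int W(u)\,\dx = \int W'(u)\,\partial_t u\,\dx$ would have to be justified via the chain rule for Sobolev functions together with the quadratic growth assumption on $W$, but for classical solutions as stated in the theorem nothing further is needed.
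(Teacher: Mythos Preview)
Your proposal is correct and follows essentially the same approach as the paper: differentiate $E_\eps$ in time, integrate the Dirichlet term by parts to obtain a boundary flux $\int_{\partial\Omega}\eps\,\partial_\nu u\,\partial_t u$, observe that each of the three boundary hypotheses kills it, and use the PDE to collapse the remaining integrand to $-\alpha\eps\,|\partial_t u|^2$. The paper's computation is identical in structure and outcome, arriving at $E_\eps'(t) = -\alpha\eps\,\|u_t\|_{L^2(\Omega)}^2$.
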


The three scenarios correspond to Dirichlet boundary conditions, which do not depend on time (first case), homogeneous Neumann boundary conditions (second case), and periodic boundary conditions (third case).

\begin{proof}
    We compute 
    \begin{align*}
    E_\eps'(t) &= \frac{2\eps}2\,\langle u_t, u_{tt}\rangle_{L^2(\Omega)} + \int_\Omega \eps\,\langle\nabla u, \,\nabla u_t\rangle + \frac{W'(u)}\eps\,u_t\dx\\
        &= \int_\Omega \eps\, u_{tt}\,u_t + \mathrm{div}(u_t\nabla u) - u_t\,\Delta u + \frac{W'(u)}\eps\,u_t\dx\\
        &= \int_\Omega \left(\eps\,u_{tt} - \eps \Delta u+ \frac{W'(u)}\eps\right)u_t \dx + \int_{\partial\Omega}u_t\,\partial_\nu u\,\mathrm dA = -\alpha\eps\,\|u_t\|_{L^2(\Omega)}^2
    \end{align*}
    since the boundary integral vanishes if $u_t\equiv 0$ or $\partial_\nu u\equiv 0$ on $\partial\Omega$ (in particular if $\partial\Omega = \emptyset$). The domain integral is evaluated using the PDE. 
\end{proof}

We conjecture that the regularity assumptions can be relaxed. The existence of weak solutions to the accelerated Allen-Cahn equation is deduced as the continuous time limit of a stable discrete time algorithm in \cite{chen2024second}. The existence, uniqueness, and regularity of solutions to the accelerated Allen-Cahn equation are left for future work. 

While we do not prove that solutions to the accelerated Allen-Cahn equation have a long term limit, we establish that any limit if it does exist must be a critical point of the energy $F_\eps^{GL}$. 

\begin{theorem}[Conditional convergence to a critical point]\label{theorem conditional convergence}
Let $u$ be as in Theorem \ref{theorem energy decrease}
Assume that there exist $c, C$ and $R\geq 1$ such that 
\[
c |u|\leq \operatorname{sign}(u)\,W'(u) \leq C\,|u| \quad\text{i.e. }c\,u^2 \leq u\,W'(u)\leq C\,u^2 \qquad\forall\ |u|\geq R.
\]
    \begin{enumerate}
        \item There exist a sequence of times $t_n\to \infty$ and $u^*\in H^1(\Omega)$ such that $u(t_n, \cdot)\wto u^*$ weakly in $H^1(\Omega)$.
        
        \item Assume that $u(t,\cdot)\to u^\infty$ weakly in $H^1(\Omega)$ as $t\to\infty$. Then $-\Delta u^\infty + \frac{W'(u^\infty)}{\eps^2}=0$.
    \end{enumerate}
\end{theorem}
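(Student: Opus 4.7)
The plan is to derive a uniform $H^1$-bound on $u(t,\cdot)$ and invoke Eberlein--Smulyan. Theorem~\ref{theorem energy decrease} gives the monotonicity $F_\eps^{GL}(u(t,\cdot)) \leq E_\eps(u_0)$ for all $t\geq 0$, which immediately yields $\|\nabla u(t,\cdot)\|_{L^2}^2 \leq (2/\eps)\,E_\eps(u_0)$. For an $L^2$-bound I would use the coercivity of $W$ implied by the lower bound $\operatorname{sgn}(u)W'(u)\geq c|u|$ for $|u|\geq R$: integrating this inequality from $\pm R$ yields $W(u)\geq \tfrac{c}{2}(u^2-R^2)$ outside $[-R,R]$, and after splitting $\Omega$ into $\{|u|\leq R\}$ and its complement, the bound $\int W(u(t,\cdot))\leq \eps\, E_\eps(u_0)$ gives $\|u(t,\cdot)\|_{L^2}^2 \leq \tfrac{2}{c}\,\eps\, E_\eps(u_0) + 2R^2|\Omega|$. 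Reflexivity of $H^1(\Omega)$ then furnishes a weakly convergent subsequence.

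\textbf{Part (2).} Given the full weak limit $u(t,\cdot)\wto u^\infty$, the plan is to test the PDE against a time-independent $\phi\in C^\infty(\bar\Omega)$ compatible with the boundary condition ($\phi=0$ on $\partial\Omega$ in the Dirichlet case, arbitrary otherwise; $C_c^\infty(\Omega)$ works in all cases) and extract the Euler--Lagrange equation from an ODE for the moment $F(t) := \int_\Omega u(t,\cdot)\,\phi\dx$. Multiplying the PDE by $\phi$ and integrating by parts in space --- boundary terms vanish in all three cases --- yields
\begin{equation*}
F''(t) + \alpha F'(t) = G(t),\qquad G(t) := -\int_\Omega \nabla u(t,\cdot)\cdot\nabla\phi\dx - \frac{1}{\eps^2}\int_\Omega W'(u(t,\cdot))\,\phi\dx.
\end{equation*}
Weak $H^1$-convergence immediately gives $F(t)\to F^\infty := \int u^\infty\phi\dx$ and $\int\nabla u(t,\cdot)\cdot\nabla\phi\dx \to \int\nabla u^\infty\cdot\nabla\phi\dx$. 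For the nonlinear term, Rellich--Kondrachov applied on $\operatorname{supp}\phi$ upgrades weak $H^1$-convergence to strong $L^2$-convergence on compact subsets; combined with the upper bound $|W'(u)|\leq C(1+|u|)$ implied by $\operatorname{sgn}(u)W'(u)\leq C|u|$ for $|u|\geq R$, the Nemitsky operator $u\mapsto W'(u)$ is continuous $L^2\to L^2$ on compact sets, so $\int W'(u(t,\cdot))\,\phi\dx \to \int W'(u^\infty)\,\phi\dx$. Hence $G(t)\to G^\infty$.

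\textbf{Closing the argument.} It remains to conclude $G^\infty=0$. Multiplying $F''+\alpha F' = G$ by $e^{\alpha t}$ and integrating gives $F'(T) = e^{-\alpha T}F'(0) + e^{-\alpha T}\int_0^T e^{\alpha s}G(s)\ds$; a Ces\`aro-type estimate with weight $e^{\alpha s}$ shows the right-hand side tends to $G^\infty/\alpha$ as $T\to\infty$. If $G^\infty\neq 0$, then $F'(T)$ would eventually have a sign bounded away from zero, forcing $|F(T)|\to\infty$ and contradicting $F(T)\to F^\infty$; hence $G^\infty=0$. Since $\phi$ was arbitrary, $u^\infty$ is a weak solution of $-\Delta u^\infty + W'(u^\infty)/\eps^2 = 0$, which is the claimed Euler--Lagrange equation. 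I expect the main obstacle to be precisely the passage to the limit through $W'$: the two-sided growth hypothesis is used in both directions --- the lower bound to obtain the uniform $L^2$-bound in Part~(1), and the upper bound to make the Nemitsky operator continuous on $L^2$ --- so the cleanness of the argument hinges on this hypothesis.
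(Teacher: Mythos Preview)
Your proof is correct and follows essentially the same approach as the paper: the $H^1$-bound from energy monotonicity plus coercivity of $W$ for Part~(1), and testing the PDE against a fixed $\phi$ to obtain a second-order ODE for the moment $\int u\phi$, then passing to the limit in the nonlinearity via the compact Sobolev embedding for Part~(2). Your closing argument via the integrating factor (showing $F'(T)\to G^\infty/\alpha$ and contradicting convergence of $F$) is a minor variant of the paper's, which instead observes directly that $(h'+\alpha h)' \geq \bar c/2$ for large $t$, forcing $h'\to+\infty$.
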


The growth on $W$ is technical and could be relaxed at the expense of a more complicated statement. For instance, for the classical example $W(u) = u^2(1-u)^2$, we would have to consider the more complicated space $H^1\cap L^4$.

\begin{proof}
    {\bf First claim.} The condition on $W'$ ensures that 
    \[
    W(u) \geq W(R) + c\int_R^u t \dt \geq c \left(\frac{u^2}2 - \frac{R^2}2\right) \qquad \Ra \quad u^2 \leq R^2 + \frac 2c\,W(u)
    \]
    if $u\geq R$ and similarly for $u\leq -R$. The same bound holds trivially for $|u|\leq R$, so
    \[
    \int_\Omega u^2\dx \leq R^2 |\Omega| + \frac2c\int_\Omega W(u)\dx.
    \]
    
    Since $E_\eps(u)$ remains bounded along the accelerated Allen-Cahn equation, we find that for any sequence $t_n\in(0,\infty)$, the associated sequence $u(t_n,\cdot)$ remains bounded in $H^1$. By \cite[Theorem 3.18 and Proposition 9.1]{brezis2011functional} there exists a subsequence $t_{n_k}$ of $t_n$ such that $u(t_{n_k}, \cdot)$ converges to a limit weakly in $H^1(\Omega)$.
    
    {\bf Second claim.} Assume for the sake of contradiction that $f := \Delta u^\infty + \frac{W'(u^\infty)}{\eps^2}\neq 0$ in $H^{-1}(\Omega)$. Let $\phi \in H^1_0(\Omega)$ such that $\bar c:= \langle f, \phi\rangle_{H^{-1}, H^1} >0$. 
    % Since $C^\infty(\R^d) \cap H^1(\Omega)$ is dense in $H^1(\Omega)$ by \cite{source}, we may assume that $g\in C^\infty(\R^d)$ and in particular $g\in L^4(\Omega)$.
    Denote 
    \[
    h(t):= \int_\Omega u(t,x)\,\phi(x)\dx.
    \]
    Since $u$ converges to a limit weakly in $L^2(\Omega)$ as $t\to\infty$, we find that $h$ converges to a limit. To obtain a contradiction, compute
    \begin{align*}
    \left(\frac{d^2}{dt^2} + \alpha\frac{d}{dt}\right)\int_\Omega u(t,x)\,\phi(x)\dx
        &= \int_\Omega \left(u_{tt}(t,x)+\alpha u_t\right)\,\phi(x)\dx\\
        &= \langle -\Delta u + W'(u)/\eps^2, \:\phi\rangle_{H^{-1},H^1}.
    \end{align*}
    We note that $-\Delta u(t,\cdot) \wto -\Delta u^\infty$ in $H^{-1}(\Omega)$ by definition. For the non-linear term $W'(u(t,\cdot))$, we note that $W'$ grows linearly, i.e.\ $W'(u(t_n, \cdot)) \to W'(u^\infty)$ strongly in $L^2$ by the compact Sobolev embedding \cite[Kapitel 6.7]{dobrowolski2010angewandte} and hence in $H^{-1}$.
        
    We conclude that for sufficiently large $t$ we have
    \[
    \frac{d}{dt} \left(h'(t) + \alpha h(t)\right) = h''(t) + \alpha h'(t)  
    \geq \bar c/2.
    \]
    In particular
    \[
    h'(t) + \alpha h(t) \geq h_* + \frac{\bar c}2\, (t-t^*)
    \]
    for $t>t^*$ with some $h_*, t_*\in \R$. Since $h$ converges by assumption, it also remains bounded. This means that $h'(t) \to+\infty$ as $t\to \infty$, also contradicting the fact that $h$ remains bounded.
    \end{proof}

Formally, the accelerated Allen-Cahn equation is a semi-linear version of the telegraph equation
\[
\big(\partial_{tt} + \lambda_1\partial_t + \lambda_2\big) u = c^2 \,\Delta u
\]
with a zeroth-order non-linearity. The telegraph equation is a hyperbolic partial differential equation of second order which arises when decoupling a system of PDEs modelling the electric flow in a transmission line. As an equation from electromagnetism, it is compatible with special relativity: Information cannot propagate faster than the speed of light. 

We establish a similar limit on the speed of propagation also for the accelerated Allen-Cahn equation, as it will be crucial for our geometric analysis below. While more general results are known and the method of proof is standard, we provide a brief proof for the reader's convenience.% The method is standard, and more general results are known, but harder to access.

\begin{theorem}[Finite speed of propagation]\label{theorem finite speed of propagation}
    Let $(\bar t, \bar x) \in (0,\infty)\times \Omega$ such that the initial condition satisfies $u_0\equiv 0$ on $B_{\bar t}(\bar x)\cap \Omega$ and the initial condition for $\partial_tu$ vanishes on $B_{\bar t}(\bar x)$. Assume that $u$ is as in Theorem \ref{theorem energy decrease}. Then $u(\bar t, \bar x) = 0$.
\end{theorem}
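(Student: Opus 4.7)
The plan is a classical cone-energy argument, adapted to include the friction and double-well terms of the accelerated Allen-Cahn equation. For the fixed point $(\bar t, \bar x)$, I would consider the backward light cone
\[
K = \{(t,x) : 0 \le t \le \bar t,\ \|x-\bar x\|\le \bar t - t\}
\]
and define the local energy
\[
e(t) = \int_{B_{\bar t-t}(\bar x)} \left(\frac12 |\partial_t u|^2 + \frac12\|\nabla u\|^2 + \frac{W(u)}{\eps^2}\right)\dx.
\]
Since $W \ge 0$, we have $e(t)\ge 0$, and by the hypotheses on the initial data together with $W(0)=0$ we have $e(0)=0$. If we can show $e'(t)\le 0$ for $t\in[0,\bar t)$, we conclude $e\equiv 0$ on $[0,\bar t]$; continuity of $u$ then forces $u$ to be constant on the connected cone $K$, and the initial data pin this constant at $0$, yielding $u(\bar t,\bar x)=0$.

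To compute $e'(t)$, I would use the Reynolds-type formula accounting for the ball shrinking at unit speed, which produces a bulk term and a surface term:
\[
e'(t) = \int_{B_{\bar t-t}(\bar x)} \left(\partial_t u \,\partial_{tt}u + \nabla u\cdot\nabla\partial_t u + \frac{W'(u)}{\eps^2}\partial_t u\right)\dx - \int_{\partial B_{\bar t-t}(\bar x)}\left(\frac12|\partial_t u|^2 + \frac12\|\nabla u\|^2 + \frac{W(u)}{\eps^2}\right)\mathrm{d}A.
\]
After integrating the middle term by parts, the bulk integrand becomes $\partial_t u\,(\partial_{tt}u - \Delta u + W'(u)/\eps^2) = -\alpha |\partial_t u|^2$ by the PDE, which is non-positive and can be discarded. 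Integration by parts also contributes a boundary term $\int_{\partial B}\partial_t u\,\partial_\nu u\,\mathrm{d}A$.

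The crux is that the surface contribution is non-positive. Here I would apply Cauchy--Schwarz on the unit outer normal $\nu$:
\[
|\partial_t u \,\partial_\nu u| \le \frac12|\partial_t u|^2 + \frac12\|\nabla u\|^2,
\]
which is exactly the pointwise bound needed so that the flux term $\partial_t u\,\partial_\nu u$ is absorbed by the kinetic and gradient densities on $\partial B_{\bar t - t}(\bar x)$, leaving only $-\int_{\partial B} W(u)/\eps^2\,\mathrm{d}A \le 0$. This is the geometric reason the speed of propagation equals $1$, matching the choice of slope for the cone $K$; picking any smaller radial speed would make the Cauchy--Schwarz bound fail. Combining with the non-positive bulk term yields $e'(t)\le 0$.

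The only obstacle I anticipate is purely technical: the formal computation above assumes the needed $C^2$ regularity up to the shrinking sphere (which is supplied by the assumption $u\in C^2([0,\infty)\times\Omega)$ of Theorem \ref{theorem energy decrease}) and that the integration by parts does not pick up contributions from $\partial\Omega$. The latter is handled by the observation that for $t \in [0,\bar t]$ we have $B_{\bar t - t}(\bar x) \subset B_{\bar t}(\bar x)$, so if $\overline{B_{\bar t}(\bar x)} \subset \Omega$ there is nothing to check; if the cone meets $\partial\Omega$, one either restricts to cones with $\overline{B_{\bar t}(\bar x)}\subset \Omega$ (which is the natural interior statement) or notes that under the Neumann, Dirichlet-with-$\partial_t u=0$, or periodic boundary conditions from Theorem \ref{theorem energy decrease} the sign of the boundary contribution is unchanged.
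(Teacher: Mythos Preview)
Your proposal is correct and follows essentially the same cone-energy argument as the paper: define the localized energy on the shrinking balls, differentiate using the Reynolds formula, integrate by parts to insert the PDE (yielding the non-positive friction term $-\alpha\|u_t\|^2$), and absorb the flux $u_t\,\partial_\nu u$ into the surface density via $|u_t\,\partial_\nu u|\le \tfrac12 u_t^2+\tfrac12\|\nabla u\|^2$. The only cosmetic differences are that the paper works directly with $C(t)=B_{\bar t-t}(\bar x)\cap\Omega$ from the outset and explicitly uses the boundary conditions of Theorem~\ref{theorem energy decrease} to kill $u_t\,\partial_\nu u$ on $\partial\Omega\cap\partial C(t)$, and it phrases the key pointwise bound as Young's inequality rather than Cauchy--Schwarz.
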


In optimization, it is standard to use the initial condition $u_t \equiv 0$ at time $t=0$.

\begin{proof}
Consider the `past light cone' of $(\bar t, \bar x)$, i.e.\ the family of shrinking domains $C(t) = B_{\bar t-t}(\bar x)\cap \Omega$ and the localized energy
\[
e(t) := \int_{C(t)} \frac12\,u_t^2 + \frac12\,\|\nabla u\|^2  + \frac{W(u)}{\eps^2}\dx.
\]
We can compute the derivative
\begin{align*}
e'(t) &= \int_{C(t)} u_tu_{tt} + \langle \nabla u, \nabla u_t\rangle + \frac{W'(u)}{\eps^2}\,u_t\dx
    - \int_{\Omega\cap \partial B_{\bar t-t}(\bar x)} \frac{u_t^2 + \|\nabla u\|^2}2 + \frac{W(u)}{\eps^2}\d A\\
    &= \int_{C(t)} \left(u_{tt}-\Delta u + \frac{W'(u)}{\eps^2}\right)u_t\dx - \int_\Omega \frac{u_t^2 + \|\nabla u\|^2}2 + \frac{W(u)}{\eps^2} \d A + \int_{\partial C(t)}u_t\,\partial_\nu u \d A
\end{align*}
since only the portion $\partial B_{\bar t-t}(x_0)$ of the boundary is moving. The second boundary integral is obtained by the divergence theorem and therefore goes over the whole boundary $\partial C(t)$, but the boundary conditions make $u_t\partial_\nu u \equiv 0$ on $\partial\Omega \cap \partial C(t)$, so
\begin{align*}
e'(t) &= - \alpha\eps\int_{C(t)}|u_t|^2 \dx + \int_{\Omega\cap \partial B_{\bar t-t}(x_0)} u_t\,\partial_\nu u - \frac{u_t^2+ \|\nabla u\|^2}2 - \frac{W(u)}{\eps^2} \d A\\
    &\leq \int_{\partial B_{\bar t-t}(x_0)} \frac12\big(u_t^2 + (\partial_\nu u)^2\big) - \frac{u_t^2+ \|\nabla u\|^2}2\d A \leq 0
\end{align*}
by Young's inequality, using that $|\partial_\nu u| \leq \|\nabla u\|$. In particular, since $\|\nabla u\| \equiv u_t \equiv W(u) \equiv 0$ in $C(0)$, we conclude that $e(t) = 0$ for all $t\leq \bar t$.
\end{proof}

Naturally, the same would be true if $u\equiv 1$ on $\Omega \cap B_{\bar t}(\bar x)$. 

Theorems \ref{theorem energy decrease} and \ref{theorem conditional convergence} serve as indicators that the accelerated Allen-Cahn equation can alternatively be used to as a tool to minimize the Ginzburg-Landau energy in place of the Allen-Cahn equation. To the best of our knowledge, the existence of a long time limit for solutions to the accelerated (and even the regular!) Allen-Cahn equation is open in the general case. Even for gradient flows in finite dimensions, a subsequential limit may depend on the sequence of times and a unique limit may not exist -- see for instance the summary of counterexamples in the introduction of \cite{dereich2021convergence}.

Theorem \ref{theorem finite speed of propagation} is a first step towards geometrically analyzing solutions to the accelerated Allen-Cahn equation. We will explore the geometry in greater detail in the following section by deriving the singular limit $\eps\to 0$ of the evolutions.

\subsection{Singular limit}\label{section singular}
Even if the initial condition takes values strictly between the potential wells, there is no guarantee that the same is true for the solution to the accelerated Allen-Cahn equation at a positive time. For instance, with homogeneous Neumann (or periodic) boundary conditions, if the inital condition $u^0 \equiv c$ is constant in space, so is the solution $u(t,\cdot) \equiv c(t)$ for all positive times, and the constant $c(t)$ satisfies the one-dimensional heavy ball ODE
\begin{equation}\label{eq harmonic oscillator}
c'' + \alpha c' = - \frac{W'(c)}{\eps^2}.
\end{equation}
If $W''(0), W''(1)>0$, $W$ behaves like a quadratic function at the minimizers and $c$ strongly resembles a harmonic oscillator. Unless $\alpha$ is sufficiently large -- roughly $2\sqrt{\min\{W''(0), W''(1)\}}/\eps$ for critical dampening -- it is well-known that $c$ `overshoots' the potential wells -- see e.g.\ \cite{siegel2023qualitative}.

\subsubsection{Singular limit: First attempt}
Despite this observation, we posit that a singular limit for the evolution of interfaces as $\eps\to 0$ exists for well-prepared initial data, and that it is given by the relation
\begin{equation}\label{eq singular limit}
\partial_t v = (1-v^2)\left( h - \alpha v\right)
\end{equation}
between the normal velocity $v$ and the mean curvature $h$ of the evolving hypersurface.
To derive \eqref{eq singular limit}, we make the same formal ansatz 
\begin{equation}\label{eq formal asymptotic ansatz}
u(t,x) = \phi\left(\frac{r(t,x)}\eps\right)
\end{equation}
as in Section \ref{section singular limit allen-cahn} for the regular Allen-Cahn equation and -- entirely heuristically -- attempt to derive an evolution equation for $r$. The procedure strongly resembles our sketch for deriving mean curvature flow as the singular limit of Allen-Cahn equations. In this case, the ansatz \eqref{eq formal asymptotic ansatz} cannot be justified rigorously since we know that the solution $u$ may not remain in $(0,1)$, even if the initial datum is. We will see below that a corrector term is required in this more complicated case, but even our simple ansatz provides useful information. We compute
\[
\partial_tu = \frac{\phi'}\eps\,\partial_tr, \qquad \partial_{tt}u = \frac{\phi''}{\eps^2}\,\big(\partial_tr\big)^2 + \frac{\phi'}{\eps}\,\partial_{tt}r 
\]
and analogously for the spatial derivatives
\[
\nabla u = \frac{\phi'}\eps\,\nabla r, \qquad \Delta u = \frac{\phi''}{\eps^2}\,\|\nabla r\|^2 + \frac{\phi'}{\eps}\,\Delta r.
\]
Since $W'(\phi) =\phi''$, the accelerated Allen-Cahn equation can be rewritten as
\begin{align}\label{eq accelerated allen-cahn rewritten}
0 &= \frac{\phi''}{\eps^2}\left((\partial_tr)^2 +1 - \|\nabla r\|^2\right) + \frac{\phi'}\eps\left(\partial_{tt}+\alpha\partial_t -\Delta\right)r.
\end{align}
Again, we assume that the coefficient of $\eps^{-2}$ is more important, i.e.\ that we should focus on establishing
\[
|\partial_tr|^2 + 1 - \|\nabla r\|^2 \equiv 0
\]
(at least as much so as we can achieve). To satisfy the equation, we make a separable ansatz: We assume that $r$ can be approximated sufficiently well by a function of the form
\[
r(t,x) = \omega\big(t, \,\pi_{\partial E(t)}(x)\big)\cdot \sdist\big(x, E(t)\big)
\]
where $E(t)$ is an evolving open set, $\pi_{\partial E(t)}$ is the closest point projection onto its boundary and $\sdist(\cdot, E(t))$ is the signed distance function to $\partial E(t)$, taken to be positive inside $E$ and negative outside. The function $\omega$ remains to be determined. We observe that
\begin{align*}
\partial_t r &= \big(\partial_t\omega + \nabla\omega \cdot \partial_t \pi_{\partial E(t)}(x)\big) \,\sdist_{E(t)} + \omega\,\partial_t \sdist_{E(t)}(x)\\
\nabla r &= \sdist\,D\pi^T\nabla \omega + \omega\,\nabla \sdist.
\end{align*}
As before, we identify $v = \partial_t\sdist$ as the normal velocity of the evolving interface $E(t)$ and deduce that
\begin{align*}
0 &= \omega^2v^2  + 2\omega v\,\big(\partial_t\omega +\nabla \omega\cdot\partial_t\pi\big)\sdist + 1 - \omega^2 -2 \big(\nabla \sdist^TD\pi^T\nabla \omega\big)\,\omega \,\sdist + O(\sdist^2)\\
	&= \big\{\omega^2v^2+ 1 - \omega^2\big\} + 2v\,\partial_t\omega\,\omega\,\sdist + O(\sdist^2)
\end{align*}
since $D\pi\nabla \sdist = 0$: the gradient of the signed distance is orthogonal to the interface, i.e.\ it points into the direction along which $\pi$ is constant. Making the leading order term zero yields
\[
\omega^2(1-v^2) = 1 \qquad \Ra\quad \omega = \big(1-v^2\big)^{-1/2},
\]
i.e.\ $\omega$ is the usual Lorentz factor (in coordinates where the speed of light is normalized as $c=1$).
Notably, we cannot make the coefficient vanish identically also at a distance from the interace, as we did for the Allen-Cahn equation. This will require a corrector term to the optimal interface which we introduce below.

To analyze the next order coefficient, we compute
\begin{align*}
\partial_{tt}r &= \frac{d}{dt}\big(\partial_t\omega + \nabla\omega \cdot \partial_t \pi\big) \,\sdist + 2 \big(\partial_t\omega + \nabla\omega \cdot \partial_t \pi\big) v + \omega\,\partial_tv\\
\Delta r &= \mathrm{div}\big(D\pi^T\nabla \omega\big) \,\sdist + 2 \,\nabla\sdist^T D\pi^T\nabla \omega + \omega\,\Delta \sdist.
\end{align*}
We only consider this equation on the interface $\{\sdist = 0\}$, i.e.\ we discard the terms in the equation which retain $\sdist$. As before, we note that $\nabla \sdist^TD\pi^T\nabla \omega =0$. Additionally, we observe that $\omega$ only varies {\em along} the interface and that $\partial_t\pi(x)$ is orthogonal to the moving interface $\partial E(t)$ for all $x\in \partial E(t)$. To see this, note that
\[
\pi_t(x) - \pi_{t+h}(x) = x- \pi_{t+h}(x) \bot \partial E(t+h)
\]
by the properties of the closest point projection. Thus for a sufficiently regular evolution we have
\[
\frac{d}{dt} \pi_t(x) = \lim_{h\to 0} \frac{\pi_t(x) - \pi_{t+h}(x)}h = \lim_{h\to 0} \beta(h)\,\nu_{\partial E(t+h)}(\pi_{t+h}(x)) = \beta_0 \nu_{\partial E(t)}(x)
\]
where $\beta$ is a real-valued function and $\nu_{\partial E(s)}(z)$ denotes the normal to the evolving interface at time $s$ and $z\in \partial E(s)$. This means that $\nabla\omega \cdot \partial_t \pi\equiv 0$.
Overall, we observe that
\begin{align*}
0 &= \frac{\phi'(r/\eps)}\eps \left(2v\,\partial_t\omega + \omega\,\partial_tv + \alpha\,\omega v-\omega \,h\right)+ \frac{\phi''(r/\eps)\cdot r/\eps}\eps\,2v\,\partial_t\omega + O(\sdist).
\end{align*}
Integrating by parts, we see that e.g.
\begin{align*}
-\int_\R \phi''\left(\frac r\eps\right)\,\frac r\eps\,\frac1\eps\dr &= \int_\R \phi'\left(\frac r\eps\right)\frac1\eps \dr = \lim_{r\to\infty}\big(\phi(r) - \phi(-r)\big) = 1\\
- 2\int_\R \phi''\left(\frac r\eps\right)\,\frac r\eps\cdot \phi'\left(\frac r\eps\right) \frac1\eps\dr &= - \int_\R \frac{d}{dr} \left|\phi'\left(\frac r\eps\right)\right|^2\,\frac r\eps\dr = \int_\R \left|\phi'\left(\frac r\eps\right)\right|^2\,\frac 1\eps\dr = \int_{-1}^1 \sqrt{2\,W(z)}\dz
\end{align*}
as noted in Appendix \ref{appendix corrector}. In particular, both terms impact the equation at the same order (when averaging over the distance to the interface), and neither can be ignored in favor of the other. However, since $\phi' (r) \neq r\phi''(r)$, we cannot simply compare or cancel their coefficients. A corrector is needed to compensate for the fact that the interface does not only change its width, but also its shape as it moves. The corrector defines the `exchange ratio' between the coefficients of $\phi'$ and $r\phi''$.

\subsubsection{Singular limit: Revised ansatz}
We claim that there exists a solution $\psi \in H^1(\R)$ of the `corrector equation'
\[
\psi'' - W''(\phi) \psi = \phi' +2 \,x\phi''(x),
\]
i.e.\ a solution for which both $\psi$ and $\psi'$ are square integrable. A proof of the claim is given in Appendix \ref{appendix corrector}. 

Rather than assuming that $u= \phi(r/\eps)$, we make the refined ansatz that 
\[
u= \phi(r/\eps) + \eps(v\partial_t\omega)\,\psi(r/\eps)
\]
for the same function $r = (1-v^2)^{-1/2}\sdist$ we found above. Using the approximation $W'(\phi+\eps\xi) = W'(\phi) + \eps\,W''(\phi)\xi$, we find that \eqref{eq accelerated allen-cahn rewritten} is heuristically estimated by
\begin{align*}
0     &= \frac{\phi''}{\eps^2}\left((\partial_tr)^2 +1 - \|\nabla r\|^2\right) + \frac{\phi'}\eps\left(\partial_{tt}+\alpha\partial_t -\Delta\right)r + (v\partial_t\omega) \frac{\psi''}\eps \big(\partial_tr^2 - \|\nabla r\|^2\big) \\
    &\qquad + \frac{W''(\phi)}\eps(v\partial_t\omega)\psi + O(1)\\
    &= \frac{\phi''\,\frac{\sdist}\eps}{\eps}\,2v\,\partial_t\omega + \frac{\phi'}\eps\left(2v\partial_t\omega + \omega\partial_tv + \alpha\omega v-\omega h\right) - (v\partial_t\omega) \frac{\psi'' - W''(\phi)\psi}\eps  + O(1)\\
    &= \frac{\phi''\,\frac{\sdist}\eps}{\eps}\,2v\,\partial_t\omega + \frac{\phi'}\eps\left(2v\partial_t\omega + \omega\partial_tv + \alpha\omega v-\omega h\right) - (v\partial_t\omega) \frac{\phi' + 2\,\phi''\,\frac{\sdist}\eps}\eps  + O(1)\\
    &= \frac{\phi'}\eps\left(v\partial_t\omega + \omega\partial_tv + \alpha\omega v-\omega h\right) + O(1)
\end{align*}
since we may replace $(\partial_tr)^2 - \|\nabla r\|^2$ by $-1$ up to leading order. Noting that
\[
v\partial_t\omega = v \partial_t(1-v^2)^{-1/2} = -2v^2\partial_tv\,\left(-\frac12\right) (1-v^2)^{-3/2} = \omega\,\frac{v^2\partial_tv}{1-v^2}, 
\]
we reformulate the obtained equation as
\begin{align*}
0 &= v\partial_t\omega + \omega\partial_tv + \alpha\omega v-\omega h = \omega\left( \frac{v^2}{1-v^2}\,\partial_tv+ \partial_tv +\alpha v - h\right)  = \omega\left( \frac{1}{1-v^2}\,\partial_tv + \alpha v - h\right)\\
	&= \omega^3 \left(\partial_t v+ (1-v^2)\big(\alpha v - h\big)\right).
\end{align*}
Since $\omega^3>0$, this is equivalent to the proposed law \eqref{eq singular limit}. The same derivation would go through for a time-dependent coefficient of friction $\alpha(t)$. 
No spatial derivatives of $v$ (or $\omega$) tangentially to the interface enter in the singular limit. This does not come as a surprise: Due to the finite speed of propagation, no parabolic term involving e.g.\ $\Delta v$ can be present.

\subsubsection{Numerical validation}

In Figure \ref{figure verifying singular limit}, we compare solutions to the accelerated Allen-Cahn equation to the predicted singular limit for the special case of a circle, where \eqref{eq singular limit} becomes an ordinary differential equation for the radius
\begin{equation}\label{eq singular limit circle}
\ddot r = (1-\dot r^2)\left(- \frac1r - \alpha\,\dot r\right)
\end{equation}
since the PDE \eqref{eq singular limit} is rotationally invariant.\footnote{\ So is the accelerated Allen-Cahn equation, up to boundary conditions. Our derivation is only valid inside the domain $\Omega$, i.e.\ for circles away from the boundary.}\ 
We consider constant vanishing friction and a non-zero coefficient of friction $\alpha \equiv 3$.

We can find the perimeter of (a phase field approximation to) an evolving disk in two ways: By considering the Ginzburg-Landau approximation to the perimeter, and by exploiting the relationship $P = 2 \sqrt{\pi A}$ between perimeter and area of the disk and estimating the area by $\int |u_\eps|\dx$. For the Allen-Cahn equation, both provide accurate approximations of the perimeter.

For the accelerated Allen-Cahn equation, an interface moving with speed $v$ is expected to resemble $u(x) = \phi\left( (1-v^2)^{-1/2}\, \frac{\sdist}\eps\right)$ to leading order, i.e.\ fast moving interfaces resemble an optimal transition, but ``at the wrong length scale'' and therefore count for more diffuse area than if they were at rest. The `velocity-adjusted area density' which arises as the limit of the Ginzburg-Landau energy of evolving interfaces should be 
\begin{align*}
\rho(v) &= \int_{-\infty}^\infty \frac12 \,\left| \frac{d}{dr} \phi \left((1-v^2)^{-1/2}r\right)\right|^2 + W\big(\phi(1-v^2)^{-1/2}r\big)\dr\\
	&= \frac12\left\{(1-v^2)^{-1} + 1\right) \int_{-\infty}^\infty \sqrt{2\,W\big(\phi(1-v^2)^{-1/2}r\big)} \,\phi'\big((1-v^2)^{-1/2}r\big)\dr \\
	&= \frac{(1-v^2)^{-1}+1}{2\,(1-v^2)^{-1/2}} \int_{-1}^1 \sqrt{2\,W(z)}\dz\\
	&= \frac{c_0}2 \,\left( (1-v^2)^{1/2} + (1-v^2)^{-1/2}\right).
\end{align*}
since $\phi' = \sqrt{2\,W(\phi)}$ for the optimal interface as noted in Section \ref{section ginzburg-landau} -- see also Appendix \ref{appendix corrector}.

Differential operators are implemented using a fast Fourier transform on a spatial spatial grid of $500\times 500$ points with a time step size of $\tau = 2.5\cdot 10^{-7}$. Further details of the numerical implementation can be found in Section \ref{section numerical approach}. We compare both estimates to (numerical approximations of) the predicted perimeter and `velocity-adjusted perimeter' $\pi r\, ((1-\dot r^r)^{1/2} + (1-\dot r^2)^{-1/2})$in Figure \ref{figure verifying singular limit} and find good agreement to leading order until the evolving sets vanish, at which point the asymptotic analysis ceases to be valid.

Solutions to the ordinary differential equations for the singular limit are found using a predictor/corrector scheme based on the Adams-Bashforth and Adams-Moulton formulas of order five. The initial values for the linear multi-step methods were found by the Runge-Kutta method of order four.

\begin{figure}
    \centering
    \includegraphics[clip = true, trim = 6mm 0 0 0, width=0.9\linewidth]{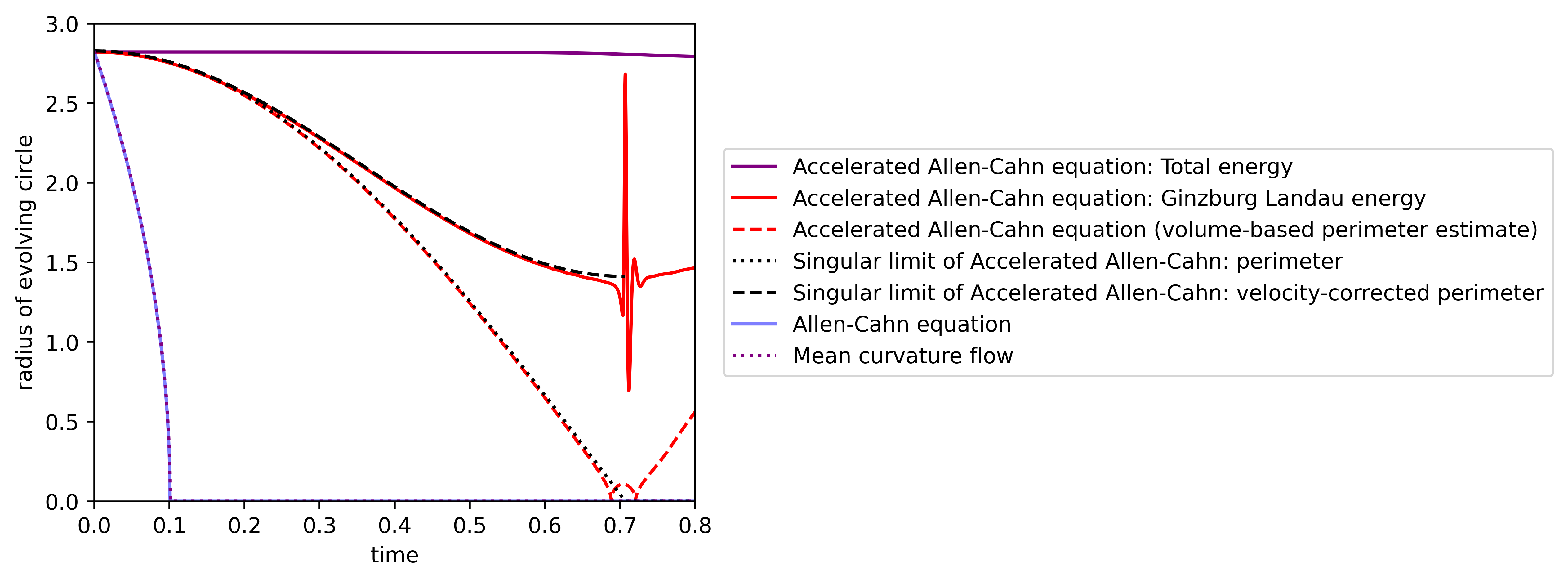}
    \includegraphics[clip = true, trim = 6mm 0 0 0, width=0.9\linewidth]{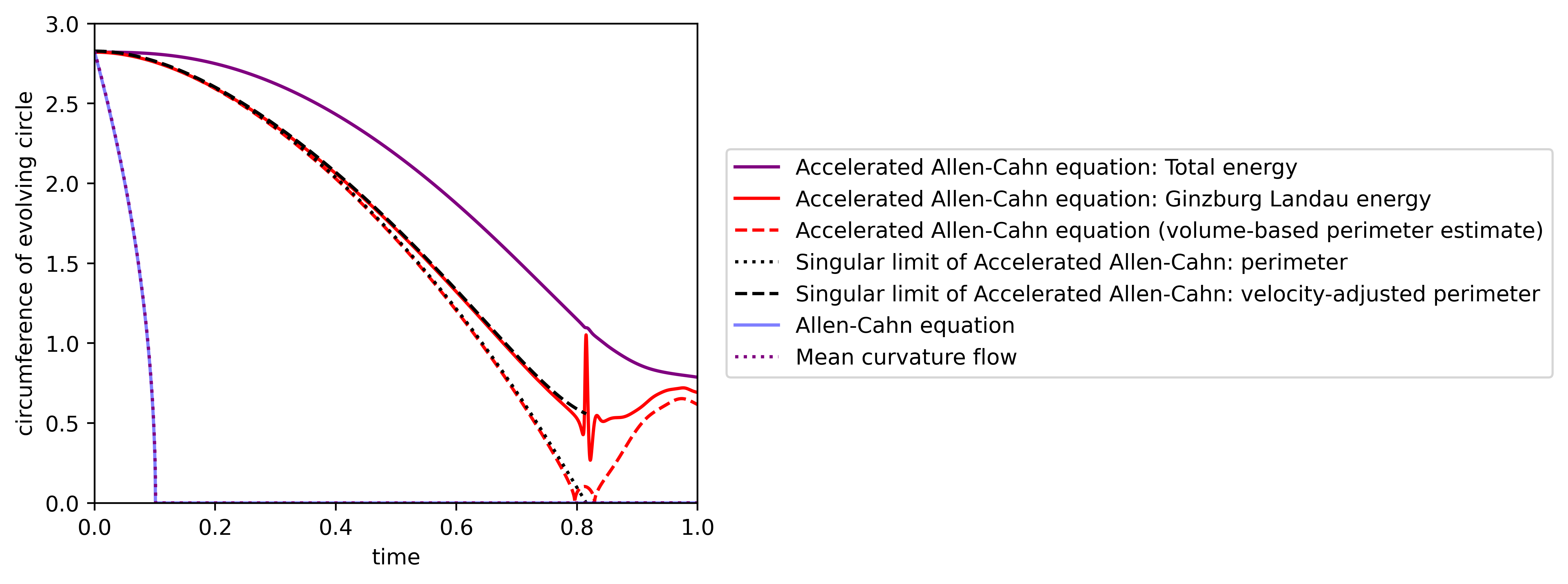}
     
    \caption{We numerically solve the Allen-Cahn equation and the accelerated Allen-Cahn equation for $\eps = 0.015$ on the unit square with periodic boundary conditions. In the top plot, the friction is essentially zero and energy is conserved (up to numerical viscosity). In the bottom plot, the parameter of friction is $\alpha=3$ and total energy (but not pure Ginzburg-Landau energy) is dissipated. In both experiments, the initial condition is a circle of radius $r= 0.45$ centered at $(0.5, 0.5)$. The time interval is chosen slightly differently for both plots to include the vanishing time of the disk.\\
    For the accelerated Allen-Cahn equation, we estimate the circumference of the circle both by the Ginzburg-Landau energy (solid red line) and by the area-perimeter relation of the disk (dashed red line).
    The estimates agree well with the predicted perimeter (dotted black line) and velocity-adjusted perimeter (dashed black line) of the singular limit \eqref{eq singular limit circle}.  Higher friction corresponds to slower dynamics and a slower vanishing disk. Unsurprisingly, the regular Allen-Cahn equation leads to a significantly faster perimeter decrease than the `accelerated' version.\\
    Notably, after vanishing in a point, the solution to the accelerated Allen-Cahn equation continues its evolution by expanding the circle again, increasing its perimeter. Briefly, before the disk expands again, the integral of $u$ becomes negative. Both solutions visually remain circular throughout their evolution, including after the singularity.\\
    % For comparison, we present the solution to mean curvature flow/curve shortening flow -- the singular limit of the Allen-Cahn equation.
    }
    \label{figure verifying singular limit}
\end{figure}

\subsection{A brief comparison of the Allen-Cahn and accelerated Allen-Cahn equations}

There are several notable differences between the Allen-Cahn equation and its `accelerated' version.

\subsubsection{Interface velocity}

The maximal speed that an interface can have in the accelerated Allen-Cahn equation or its singular limit is $1$. This is quite different from the singular limit of the Allen-Cahn equation, where the velocity is routinely unbounded at topological singularities. 

Thus interfaces can move faster -- and the perimeter can decrease faster -- in the singular limit of the Allen-Cahn equation than in the singular limit of what we optimistically dubbed the `accelerated Allen-Cahn equation'. However, the analysis in continuous time may not be representative of discrete time simulations: If $f$ is $\mu$-strongly convex, then generically the gradient flow of $f$ decays as $f(x_t) - \inf f \sim \exp(-\mu t)$ while the `accelerated gradient flow' with $\alpha = 2\sqrt\mu$ decays like $f(x_t) - \inf f \sim \exp(-\sqrt\mu\,t)$. Which rate of decay is faster depends on whether $\mu<1$ or $\mu>1$. 

However, if $\nabla f$ is $L$-Lipschitz continuous, then the largest stable step size for the explicit Euler discretization of gradient flow scales as $1/L$, while we can allow larger timesteps $\sim 1/\sqrt{L}$ in Nesterov's discretization of the accelerated gradient flow \cite{su2014differential}. In discrete time, we obtain the decay $f(x_k) - \inf f \sim (1-\sqrt{\mu/L})^k$ for Nesterov's scheme after $k$ steps while gradient descent leads to markedly slower decay $f(x_k) - \inf f \sim (1-\mu/L)^k$.

A more conclusive answer to which algorithm is `faster' in practical applications therefore requires a discrete time analysis. For the highly non-convex problem of perimeter minimization, a fully satisfying answer is beyond the scope of this note, but numerical experiments suggest that momentum may be helpful if a suitable time-discretization is chosen.

\subsubsection{Interface width and shape}

For both equations, we make the ansatz that $u(t,x) = \phi(r(t,x)/\eps)$. For the Allen-Cahn equation, we find based on this that $r$ is the signed distance function from a spatially evolving hypersurface, i.e.\ $\|\nabla r\|\equiv 1$. For the accelerated Allen-Cahn equation, on the other hand, the interface width depends on its velocity $v$: If $v\neq 0$, then $\|\nabla r\| = 1/\sqrt{1-v^2}>1$ and the interface is thinner than it would be without acceleration (see Figure \ref{figure corrector}).

\begin{figure}
    \centering
    %%% This is correct - the images are labeled incorrectly, but it was easier to change the code than the names.
    \includegraphics[width=0.44\linewidth]{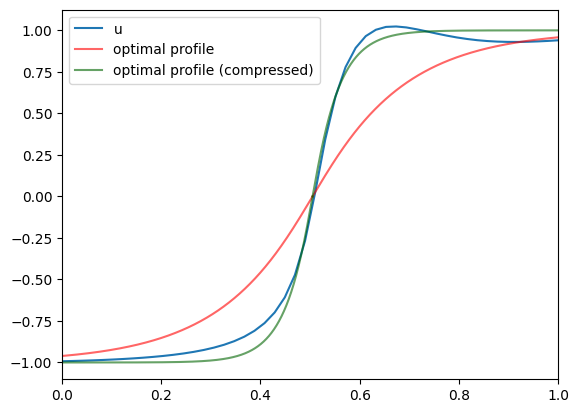}
    \includegraphics[width=0.44\linewidth]{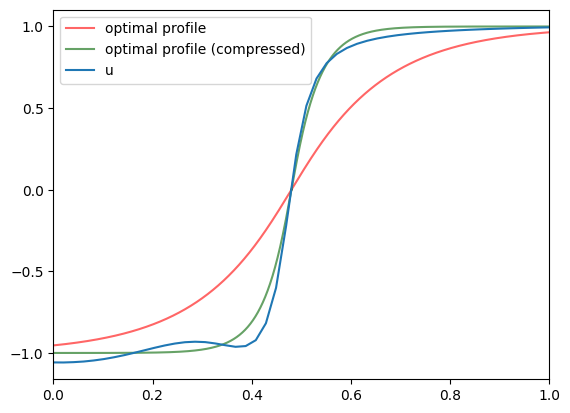}
    
    \includegraphics[width=0.44\linewidth]{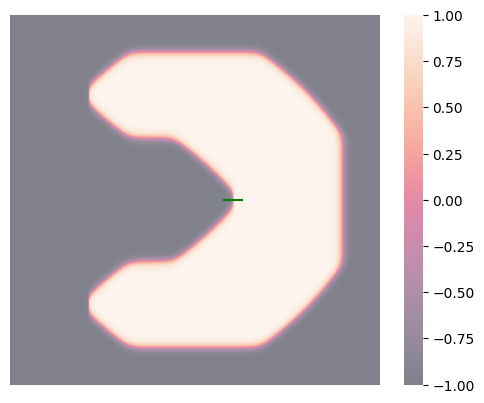}
    \includegraphics[width=0.44\linewidth]{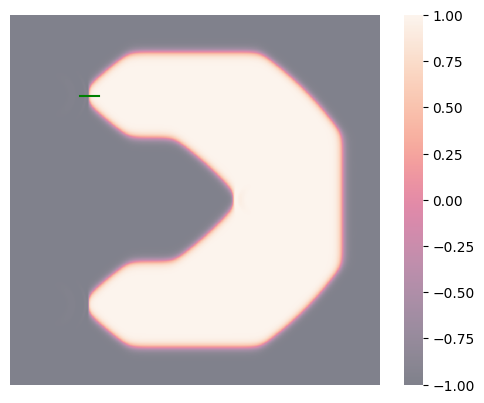}

    \caption{{\bf Top line:} An interface moving to the left (left image) and an interface moving right (right image). The optimal profile $\phi$ is included on the length scale of a stationary interface (red line) and scaled for a good visual fit (green line) to illustrate the compression of fast-moving interface in the Allen-Cahn equation with momentum. {\bf Bottom line:} The green line indicates where the one-dimensional slice which we are viewing in the image above is taken from in a simulation.
    \label{figure corrector}
    }
\end{figure}

In particular, an evolving `interface' may become much thinner than we would anticipate from the optimal transition shape, as we noted when deriving the `velocity adjusted perimeter.' Unless the spatial resolution of a numerical approximation is much finer than the thickness parameter $\eps$ -- and much finer than the Allen-Cahn equation would require in the same problem -- the accelerated Allen-Cahn equation may not be resolved accurately.

For the Allen-Cahn equation, the `diffuse area measures' $\mu_\eps = \frac\eps2\,\|\nabla u_\eps\|^2 + \frac1\eps\,W(u_\eps)$ approach varifold solutions to mean curvature flow \cite{mugnai2011convergence}. Here, the limit of the area measures appears to be a `velocity-adjusted' area density, illustrating the major geometric differences introduced by momentum.

\subsubsection{Ambient/intrinsic compatibility}

The limit of solutions to the Allen-Cahn equation -- a gradient flow of the Ginzburg-Landau energy -- as $\eps\to 0^+$ is (the indicator function of a set whose boundary is moving according to) a mean curvature flow, i.e.\ a gradient flow of the singular limit of the Ginzburg-Landau energies. More plainly: The limit of the gradient flows is the gradient flow of the limit. Whether such a result is true can in general be subtle \cite{sandier2004gamma, serfaty2011gamma, dondl2019effect}, and an analogous result is not true for the momentum method. Namely, the singular limit of the evolution equations is
\[
\dot v = (1-v^2) \big(h - \alpha v) \qquad \text{not}\quad
\dot v = h-\alpha v
\]
which we would get by imitating Newton's second law directly on the surface.

\subsubsection{Non-monotonicity of the energy}

As we observe in Figure \ref{figure verifying singular limit}, the Ginzburg-Landau energy $\Per_\eps(u(t))$ is monotone decreasing for the gradient flow (Allen Cahn), but not for the momentum dynamics (accelerated Allen-Cahn). This is unsurprising: Even for a quadratic function in one dimension, the momentum dynamics generally overshoot the minimizer if the coefficient of friction $\alpha$ is too low. What is perhaps more surprising is that a along the momentum dynamics, a circle can shrink, disappear -- and then reappear! From our preliminary analysis, it is unclear how the flow should continue past singularities.

\section{Solving the accelerated Allen-Cahn equation numerically}\label{section numerical approach}

In this Section, we introduce a computationally stable time discretization of the accelerated Allen-Cahn equation (CINEMA) and compare it to a version of FISTA with convex-concave splitting, using the ideas from Sections \ref{section convex algorithms} and \ref{section convex concave}. Our main algorithmic contribution is given in an abstract setting in Section \ref{section algorithm} and specialized to the accelerated Allen-Cahn equation in Section \ref{section allen-cahn algorithm}. An additional simplification is described in Section \ref{section ws}. A direct comparison between the two time-stepping methods is given in Section \ref{section cinema vs fista}.

\subsection{An unconditionally stable momentum algorithm}
\label{section algorithm}

We consider the version of \eqref{eq our algorithm} for which $g_n = \nabla F(x_{n+1}) + \nabla G(x_n)$, i.e.\ we treate $F$ implicitly and $G$ explicitly, but unlike FISTA, we evaluate the gradient of $G$ at $x_n$ rather than the `advanced' point $x_n + \tau v_n$. In terms of implementation, this merely corresponds to exchanging the order of the operations [{\tt compute gradient}] and [{\tt advance by velocity}] in each time step. We dub this version the Convex Implicit/Non-convex Explicit Momentum Algorithm (CINEMA).

We prove existence and energy-stability the CINEMA scheme in a slightly more general context which automatically covers the PDE setting.

\begin{theorem}[Existence in discrete time]\label{theorem descent existence}
    Let $H$ be a separable Hilbert space. Assume that
    \begin{enumerate}
        \item $F:H\to \R$ is a weakly lower semi-continuous and convex function with sub-differential $\partial F$ such that the domain of $\partial F$ is dense in $H$.

        \item $G:H\to \R$ is concave, continuous, Gateaux-differentiable and the Gateaux-derivative is continuous and linear, i.e.\ for every $x\in H$ there exists a vector $\nabla G(x)\in H$ such that
        \[
        \lim_{t\to 0}\frac{G(x+tv)-G(x)}t = \langle \nabla G(x), v\rangle\qquad \forall\ v\in H.
        \]
    \end{enumerate}
    Then given $x, v \in H$ and $\tau, \eta > 0$, there exists a unique $z^*\in H$ such that
    \begin{equation}\label{eq cinema time step}
    \frac{x + \tau v - z^*}\eta  - \nabla G(x) \in \partial F(z^*).
    \end{equation}
\end{theorem}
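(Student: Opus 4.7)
The plan is to recast the inclusion \eqref{eq cinema time step} as the Euler--Lagrange equation of a strongly convex minimization problem on $H$. Rearranging the inclusion, $z^*$ satisfies \eqref{eq cinema time step} if and only if
\[
0 \:\in\: \frac{z^* - (x+\tau v)}{\eta} + \nabla G(x) + \partial F(z^*),
\]
which is precisely the first-order optimality condition for
\[
\Phi(z) \::=\: \frac{1}{2\eta}\,\|z - (x+\tau v)\|^2 \:+\: \langle \nabla G(x), z\rangle \:+\: F(z).
\]
So I would first introduce $\Phi$ and aim to prove: (i) $\Phi$ has a unique minimizer $z^*$, and (ii) this minimizer satisfies the claimed subdifferential inclusion.

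For existence and uniqueness, I would apply the direct method. The quadratic term makes $\Phi$ strongly convex with parameter $1/\eta$; since $F$ is convex and $z\mapsto\langle\nabla G(x), z\rangle$ is linear (hence convex), $\Phi$ is $(1/\eta)$-strongly convex. Strong convexity together with $\Phi(0)$ being finite (since $F$ is real-valued) yields coercivity: $\Phi(z)\to +\infty$ as $\|z\|\to\infty$. Weak lower semicontinuity of $\Phi$ follows because the norm-squared is weakly lsc on a Hilbert space, the linear term is weakly continuous, and $F$ is weakly lsc by hypothesis. A bounded minimizing sequence has a weakly convergent subsequence, and weak lsc gives a minimizer; strong convexity gives uniqueness.

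The remaining step is to convert the minimum condition into the subdifferential inclusion. The map $z \mapsto \frac{1}{2\eta}\|z-(x+\tau v)\|^2 + \langle\nabla G(x), z\rangle$ is Fr\'echet differentiable with derivative $\frac{z-(x+\tau v)}{\eta} + \nabla G(x)$. By the sum rule for subdifferentials (valid when all but one summand is convex and continuously differentiable, see e.g.\ standard references on convex analysis in Hilbert space), a point $z^*$ minimizes $\Phi$ if and only if
\[
0 \:\in\: \frac{z^* - (x+\tau v)}{\eta} + \nabla G(x) + \partial F(z^*),
\]
which rearranges to \eqref{eq cinema time step}.

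The main point requiring care is the subdifferential calculus: one must verify that $\partial \Phi(z) = \frac{z-(x+\tau v)}{\eta} + \nabla G(x) + \partial F(z)$, for which it is enough that the smooth summand is everywhere finite and Fr\'echet differentiable. The density-of-domain assumption on $\partial F$ is not actually needed here; it is automatic since any real-valued convex weakly lsc function on a Hilbert space is continuous and hence subdifferentiable everywhere. The step is otherwise routine, so I would keep the proof short: introduce $\Phi$, invoke strong convexity plus weak lsc for existence and uniqueness, and cite the sum rule to recover the inclusion.
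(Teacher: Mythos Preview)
Your proposal is correct and follows essentially the same route as the paper: both introduce the auxiliary functional (your $\Phi$ equals the paper's $M$ up to scaling by $\eta$ and an additive constant) and apply the direct method with weak lower semicontinuity to obtain a minimizer satisfying the subdifferential inclusion. Your version is in fact slightly cleaner, since you explicitly invoke strong convexity for uniqueness (which the paper's proof does not address) and correctly observe that the density assumption on the domain of $\partial F$ is redundant for a real-valued convex lsc function.
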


The proof is given in Appendix \ref{appendix cinema}. Assuming for the moment that $\partial F(z) = \{\nabla F(z)\}$ is a singleton, we re-write
\[
\frac{x-z+\tau v}\eta -\nabla G(x) \in \partial F(z^*) \qquad \Ra \quad x+\tau v -z - \eta\big( \nabla G(x) + \nabla F(z) \big)= 0
\]
which is compatible with the scheme \eqref{eq our algorithm} with $g_n = \nabla F(x_{n+1}) + \nabla G(x_n)$. 

\begin{theorem}[Stability in discrete time]\label{theorem descent scheme}
    Let $F$ and $G$ be as in Theorem \ref{theorem descent existence}. Given $(x_0, v_0)$ and $\eta, \tau >0$, and $\rho \in [0,1]$, there exists a unique sequence $(x_n, v_n)$ which obeys the CINEMA time-stepping scheme
    \begin{align*}
    \frac{x_n + \tau v_n-x_{n+1}}\eta  - \nabla G(x_n) &\in \partial F(x_{n+1})\\
    v_{n+1} &= \rho_n\big(v_n - \tau g_n\big)
    \end{align*}
    where $g_n \in \nabla G(x_n) + \partial F(x_{n+1})$ is the same element of the sub-differential as in the line above.
    Additionally, the `total energy' $e_n = (F+G)(x_n) + \frac1{2\rho^2}\,\|v_n\|^2$ satisfies
    \[
    e_{n+1} \leq e_n - \frac12(\rho^{-2}-1)\|v_n\|^2 + \left(\frac{\tau^2}2 - \eta\right)\|g_n\|^2.
    \]
    In particular, if $\eta \geq \tau^2/2$, the energy $e_n$ is monotonically decreasing.
\end{theorem}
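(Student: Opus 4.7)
The existence and uniqueness of the sequence reduces immediately to Theorem \ref{theorem descent existence} applied inductively: given $(x_n,v_n)$, that theorem produces a unique $x_{n+1}$ with $\xi_{n+1} := \frac{x_n+\tau v_n - x_{n+1}}{\eta} - \nabla G(x_n) \in \partial F(x_{n+1})$, whence $g_n := \nabla G(x_n)+\xi_{n+1}$ is uniquely determined, and $v_{n+1} = \rho(v_n - \tau g_n)$ follows. Note the algebraic identity $x_{n+1} - x_n = \tau v_n - \eta g_n$, which will be the key link between the potential and kinetic parts of the estimate.

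For the potential energy change, I would invoke the one-sided tangent inequalities that express convexity/concavity. Convexity of $F$ gives $F(x_{n+1}) - F(x_n) \leq \langle \xi_{n+1}, x_{n+1}-x_n\rangle$ for any $\xi_{n+1}\in\partial F(x_{n+1})$, and concavity of $G$ gives $G(x_{n+1})-G(x_n) \leq \langle \nabla G(x_n), x_{n+1}-x_n\rangle$. Adding and substituting $x_{n+1}-x_n = \tau v_n - \eta g_n$ yields
\[
(F+G)(x_{n+1}) - (F+G)(x_n) \leq \langle g_n, \tau v_n - \eta g_n\rangle = \tau\langle g_n, v_n\rangle - \eta\,\|g_n\|^2.
\]

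For the kinetic part, I would simply expand $\|v_{n+1}\|^2 = \rho^2 \|v_n - \tau g_n\|^2$, obtaining
\[
\frac{1}{2\rho^2}\|v_{n+1}\|^2 - \frac{1}{2\rho^2}\|v_n\|^2 = -\tfrac{1}{2}(\rho^{-2}-1)\,\|v_n\|^2 - \tau\langle v_n, g_n\rangle + \tfrac{\tau^2}{2}\|g_n\|^2.
\]
Summing the two bounds, the cross terms $\tau\langle g_n,v_n\rangle$ cancel exactly, producing the claimed inequality for $e_{n+1}-e_n$. Monotonicity when $\eta\geq\tau^2/2$ is then immediate since both remaining coefficients are non-positive.

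The whole argument is essentially a one-line calculation once the ingredients are lined up; the only conceptual obstacle is the choice of inequalities so that the cross term $\tau\langle g_n,v_n\rangle$ coming from the displacement $\tau v_n$ in the update cancels the corresponding $-\tau\langle v_n,g_n\rangle$ from the expansion of $\|v_n - \tau g_n\|^2$. This is precisely the reason that $F$ is treated implicitly (so that the subgradient $\xi_{n+1}$ is evaluated at $x_{n+1}$ and yet still controls the increment of $F$) while $G$ is treated explicitly (so that concavity provides the matching tangent inequality at $x_n$). The splitting is thus exactly what makes the cross term vanish without any smallness assumption on $\tau$ or regularity assumption on $\nabla G$ beyond the hypotheses already invoked.
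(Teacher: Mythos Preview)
Your proof is correct and follows essentially the same approach as the paper: existence from Theorem \ref{theorem descent existence}, the convexity/concavity tangent inequalities applied to the increment $x_{n+1}-x_n = \tau v_n - \eta g_n$, and the expansion of $\frac{1}{2\rho^2}\|v_{n+1}\|^2$ so that the cross term $\tau\langle g_n,v_n\rangle$ cancels. The organization and the key identities are identical to the paper's argument.
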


Also this proof is given in Appendix \ref{appendix cinema}. We will see in the next section that the Ginzburg-Landau energy satisfies all of the conditions of Theorem \ref{theorem descent scheme}. A similar time discretization is derived in this setting in \cite{chen2024second} and used to prove existence in continuous time \cite[Section 3]{chen2024second}.

\subsection{Application to the accelerated Allen-Cahn equation}
\label{section allen-cahn algorithm}

Let $W$ be a double-well potential such that $W'$ grows at most linearly at infinity. We split the energy
\[
F_\eps^{GL} : L^2(\Omega)\to \R, \qquad F_\eps^{GL}(u) = \begin{cases} \int_\Omega \frac\eps2 \,\|\nabla u\|^2 + \frac{W(u)}\eps\dx & \text{if } u\in H^1(\Omega)\\ +\infty &\text{else}\end{cases}
\]
into a convex and a concave part
\[
F_\eps^{GL, convex}(u) = \int_\Omega \frac\eps2 \,\|\nabla u\|^2 + \frac{W_{convex}(u)}\eps\dx, \qquad F_\eps^{GL, concave}(u) = \int_\Omega \frac{W_{concave}(u)}\eps\dx
\]
where $W = W_{convex} + W_{concave}$ and naturally, $W_{convex}$ is convex and $W_{concave}$ is concave. 

\begin{lemma}
    Assume that $W_{convex}'$ grows at most linearly at infinity and that $W_{concave}$ is Lipschitz-continuous and $C^1$-smooth.
    Then the functions $F:= F_\eps^{GL, convex}$ and $G:= F_\eps^{GL, concave}$ satisfy the conditions of Theorem \ref{theorem descent scheme}.
\end{lemma}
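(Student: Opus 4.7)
I would work throughout with the Hilbert space $H = L^2(\Omega)$ and verify the three structural conditions on $F$ and the four on $G$ one by one, splitting the argument into a convex-analytic part for $F$ and a calculus part (dominated convergence) for $G$.

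\textbf{For $F = F_\eps^{GL,convex}$.} Convexity is immediate: $u\mapsto \int \frac\eps2\|\nabla u\|^2\,dx$ is a quadratic form on $H^1$, and pointwise convexity of $W_{convex}$ together with integration preserves convexity; the constant value $+\infty$ outside $H^1(\Omega)$ is also convex, so $F$ is convex on all of $L^2(\Omega)$. For weak lower semi-continuity on $L^2$ I would first prove strong lower semi-continuity and then invoke the standard fact that a convex, strongly lsc function on a Hilbert space is weakly lsc. Strong lower semi-continuity is obtained by a standard extraction: given $u_n\to u$ in $L^2$, assume $\liminf F(u_n)<\infty$ (else there is nothing to prove), pass to a subsequence along which $F(u_n)$ is bounded, so the $u_n$ are bounded in $H^1(\Omega)$; a further subsequence converges weakly in $H^1$ to a limit that must agree with $u$. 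Weak-$H^1$ lower semi-continuity of the Dirichlet term is classical, and the $W_{convex}$ term is handled either by convexity and strong $L^2$-continuity (since $W_{convex}'$ grows at most linearly, $W_{convex}$ grows at most quadratically and $u\mapsto\int W_{convex}(u)\,dx$ is strongly continuous on $L^2$) or by a.e.\ convergence along a sub-subsequence plus Fatou (using that convex functions are bounded below by an affine function).

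\textbf{Density of $D(\partial F)$.} I would exhibit an explicit dense subset. For $u\in H^2(\Omega)$ satisfying the relevant boundary condition (homogeneous Neumann, Dirichlet, or none for a torus), a direct computation shows that $-\eps\Delta u+\frac1\eps W_{convex}'(u)\in\partial F(u)$ (integration by parts plus pointwise convexity), and this element lies in $L^2$ because $W_{convex}'$ has at most linear growth. Since such functions are dense in $L^2(\Omega)$, density of the effective domain of $\partial F$ follows. Alternatively, one may quote the Brézis result that for a proper convex lsc function on a Hilbert space, $\overline{D(\partial F)} = \overline{D(F)}$, combined with $H^1(\Omega)\subset L^2(\Omega)$ being dense.

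\textbf{For $G = F_\eps^{GL,concave}$.} Concavity transfers from $W_{concave}$ by the same pointwise argument. Lipschitz continuity of $W_{concave}$ with constant $L$ gives, for $\Omega$ of finite measure,
\[
|G(u)-G(v)|\le \frac{L}{\eps}\int_\Omega|u-v|\,dx\le \frac{L\,|\Omega|^{1/2}}{\eps}\,\|u-v\|_{L^2},
\]
so $G$ is (Lipschitz) continuous on $L^2(\Omega)$. For Gateaux differentiability, I would fix $u,v\in L^2$ and apply dominated convergence to the difference quotient $\frac{1}{t\eps}\int_\Omega[W_{concave}(u+tv)-W_{concave}(u)]\,dx$: the integrand converges pointwise to $\frac1\eps W_{concave}'(u)\,v$, and Lipschitz continuity of $W_{concave}$ bounds the integrand by $\frac{L}{\eps}|v|\in L^1(\Omega)$. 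This identifies the candidate gradient $\nabla G(u)=\frac1\eps W_{concave}'(u)\in L^\infty(\Omega)\subset L^2(\Omega)$. The map $v\mapsto\langle\nabla G(u),v\rangle$ is then linear and continuous by construction, and continuity of $u\mapsto \nabla G(u)$ from $L^2$ into $L^2$ follows once more by dominated convergence: along any sequence $u_n\to u$ in $L^2$, extract an a.e.\ convergent subsequence, use continuity of $W_{concave}'$ for pointwise convergence and boundedness of $W_{concave}'$ for the dominating function, yielding $W_{concave}'(u_n)\to W_{concave}'(u)$ in $L^2$ (a Urysohn subsequence argument removes the need to pass to subsequences in the final statement).

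\textbf{Main obstacles.} The only step that is not purely mechanical is the density of $D(\partial F)$ in $L^2(\Omega)$. Everything else reduces either to the chain ``convex$+$strongly lsc$\Rightarrow$weakly lsc'' or to Lipschitz-controlled dominated convergence. I would therefore spend most of the write-up carefully stating the boundary conditions implicit in the definition of $F$ (so that the $H^2$-based argument integrates by parts without boundary contributions) and briefly remark that the at-most-linear growth of $W_{convex}'$ is exactly what makes $-\eps\Delta u+W_{convex}'(u)/\eps$ belong to $L^2$ when $u\in H^2$, which is the one place where the growth hypothesis on $W_{convex}'$ is actually used.
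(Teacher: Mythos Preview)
Your proposal is correct and follows essentially the same approach as the paper: convexity and weak lower semi-continuity of $F$ via $H^1$-boundedness and weak compactness, density of $D(\partial F)$ via the $H^2$ domain, and the properties of $G$ via dominated convergence and Lipschitz bounds. The only minor deviation is that you establish strong lower semi-continuity first and invoke the ``convex $+$ strongly lsc $\Rightarrow$ weakly lsc'' principle, whereas the paper argues weak lower semi-continuity directly by starting from a weakly $L^2$-convergent sequence and using the compact embedding $H^1\hookrightarrow L^2$ to identify the limit; your subdifferential computation is also more explicit (and more accurate) than the paper's, which omits the $W_{convex}'$ contribution.
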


The assumption on the derivatives is necessary since the convex-concave decomposition is never unique. Namely, if $f = g+ h$ is a decomposition into a convex and a concave function, then also $f = (g+\phi) + (h-\phi)$ where $\phi$ is any convex function.

\begin{proof}
 $F$ is convex by construction with domain $H^1(\Omega)$ and the singleton-valued sub-differential $-\Delta u$ on the smaller domain $H^2(\Omega)$ of the sub-differential (which is dense in $L^2$). To see that $F$ is lower semi-continuous, we note that if $u_n\wto u^*$ in $L^2(\Omega)$ and $\liminf_{n\to \infty}F(u_n) < \infty$, then the subsequence which realizes the liminf is indeed bounded in $H^1(\Omega)$ as well. A further subsequence then converges to a limit $u'$ weakly in $H^1(\Omega)$. By the compact embedding of $H^1$ into $L^2$, we find that $u_n\to u'$ strongly in $L^2$, yielding that $u' = u^*$. We conclude that
    \[
    \int_\Omega \|\nabla u^*\|^2\dx \leq \liminf_{n\to \infty}\int_\Omega \|\nabla u_n\|^2\dx 
    \]
    by the lower semi-continuity of the norm under weak convergence \cite[Proposition 3.5]{brezis2011functional} and that
    \[
    \int_\Omega W_{convex}(u^*)\dx \leq \liminf_{n\to \infty}\int_\Omega W_{convex}(u_n) \dx
    \]
    by convexity.
    $G$ is concave and continuous under $L^2$-strong convergence by construction. Furthermore
    \begin{align*}
        \lim_{t\to 0}\frac{G(u+t\phi) - G(u)}t = \lim_{t\to0}\int_\Omega\frac{W_{concave}(u+t\phi) - W_{concave}(u)}t\dx
        = \int_\Omega W_{concave}'(u)\phi\dx
    \end{align*}
    by the Dominated Convergence Theorem,
    so $G$ is Gateaux-differentiable and the derivative is a continuous linear functional at every point (and continuous as a map into $L^2$) since $W_{concave}'$ grows at most linearly by assumption.
\end{proof}

The momentum descent step in the scheme of Theorem \ref{theorem descent scheme} for the $u$-variable is 
\[
u_{n+1} = u_n + \tau v_n - \eta \left(-\Delta u_{n+1} + \frac{W_{convex}'(u_{n+1})}{\eps^2} + \frac{W_{concave}'(u_n)}{\eps^2}\right) 
\]
or equivalently
\[
\left(1- \eta \Delta\right) u_{n+1} + \eta \frac{W_{convex}'(u_{n+1})}{\eps^2} = u_n + \tau v_n - \eta \,\frac{W'_{concave}(u_n)}{\eps^2}.
\]
For a particularly simple implementation, we choose a double-well potential such that $W_{convex}(u) = u^2$ (scalar valued case) or $W_{convex}(u) = |u|^2 + \infty\cdot 1_{\{u\notin V\}}$ for the affine subspace $V = \{ u : \sum_{i=1}^k u_i = 1\}$ of $\R^k$ (vector-valued case). To compute the next CINEMA time-step, we only need to solve
\[
\left(1 + 2\,\frac{\eta}{\eps^2} - \eta \Delta\right) \tilde u_{n+1} = u_n + \tau v_n - \frac{\eta}{\eps^2}\,W_{concave}'(u_n)
\]
and $u_{n+1} = \tilde u_{n+1}$ (scalar-valued case) or $u_{n+1} = \Pi_V\tilde u_{n+1}$ where $\Pi_V$ denotes the orthogonal projection onto $V$. A similar expression holds true for FISTA.

The algorithm can be easily discretized by spectral methods or finite elements, leading to large but sparse linear systems. The same is true in the graph setting if the graph is sparse.

\subsection{Smooth double-well potentials with quadratic convex part}\label{section ws}

Computations simplify when we use a doublewell potential with quadratic convex part, i.e.\ $W$ such that $W(u) = u^2 + W_{conc}(u)$ where $W_{conc}$ is a differentiable concave function. Such smoothed versions
\[
W_R(u) =  u^2 - 2\,\sqrt{\frac{R+1}{R}}\,\sqrt{u^2 +\frac{1}{R}} + 1 + \frac2{R}, \qquad R>0
\]
of $\overline W(u) = (|u|-1)^2 = u^2 - 2|u|+1$ are constructed in \cite{related_splitting_paper} with $\lim_{R\to\infty} W_R = \overline W$. The potentials $\overline W, W_R$ have potential wells at $\pm 1$ rather than at $0, 1$. When we want the potential wells at $0,1$ in simulations, we work with $4\,W((1+u)/2)$, which also has the quadratic convex part $u^2$.

A multi-well potential for the vector-valued case can be derived from $W$ as in Section \ref{section vector-valued}.

\subsection{CINEMA vs FISTA}\label{section cinema vs fista}

\begin{figure}
    \centering
    \includegraphics[width=0.24\linewidth]{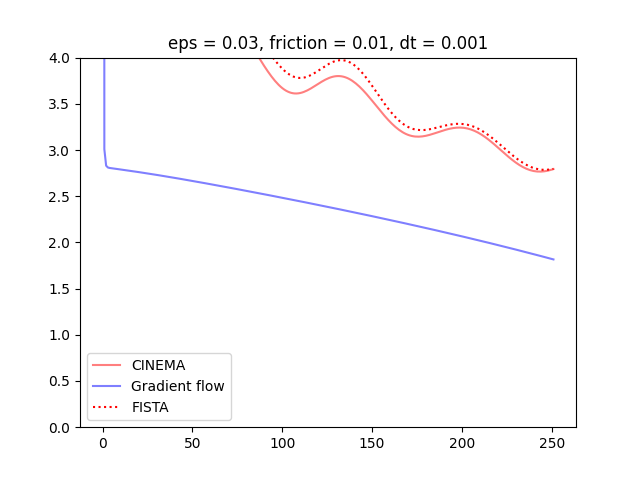}\hfill
    \includegraphics[width=0.24\linewidth]{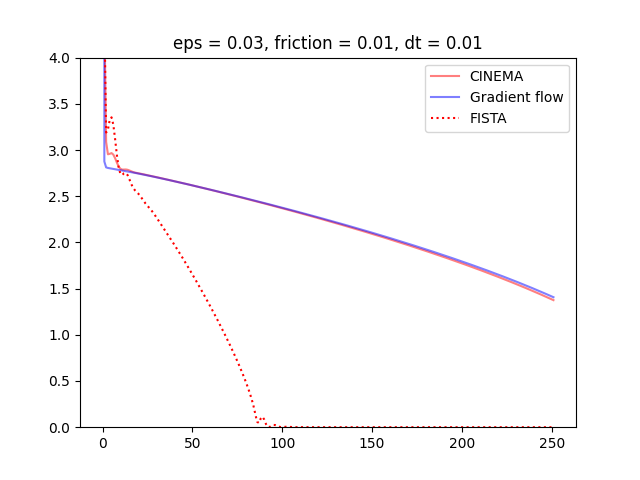}\hfill
    \includegraphics[width=0.24\linewidth]{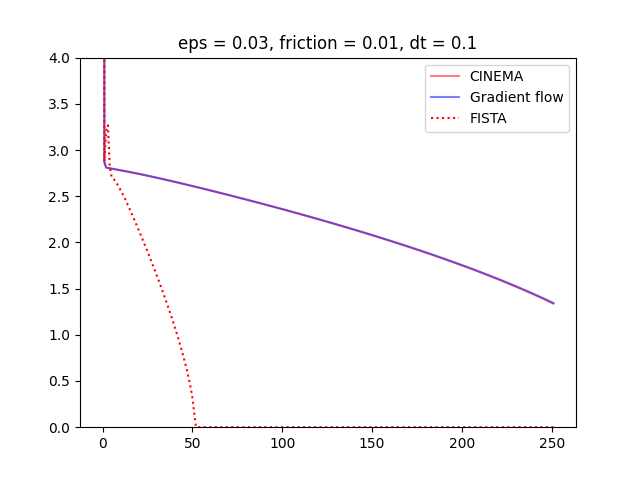}\hfill
    \includegraphics[width=0.24\linewidth]{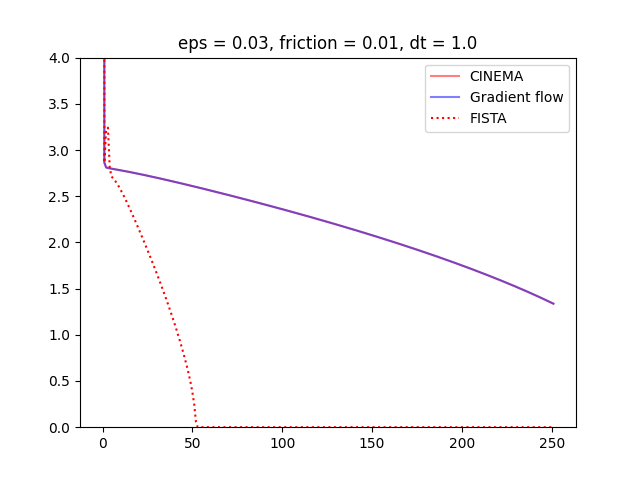}
    
    \includegraphics[width=.33\linewidth]{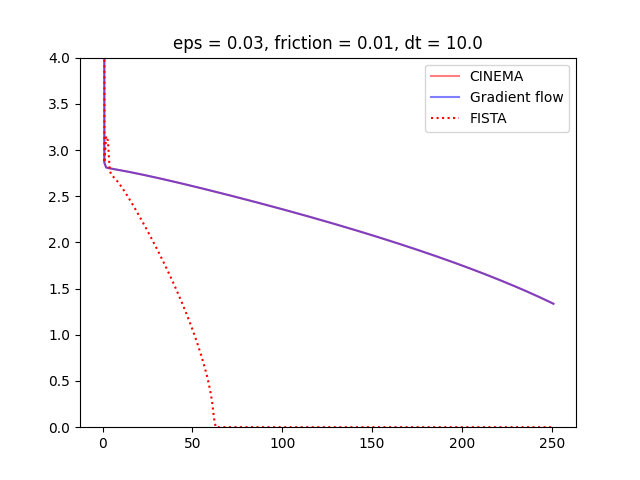}\hfill
    \includegraphics[width=.33\linewidth]{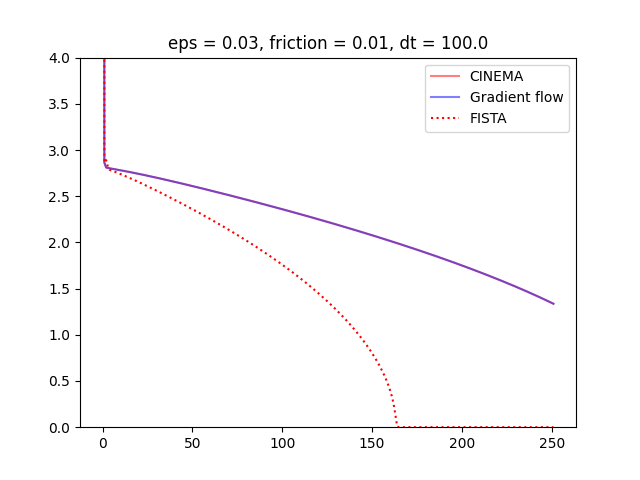}\hfill
    \includegraphics[width=.33\linewidth]{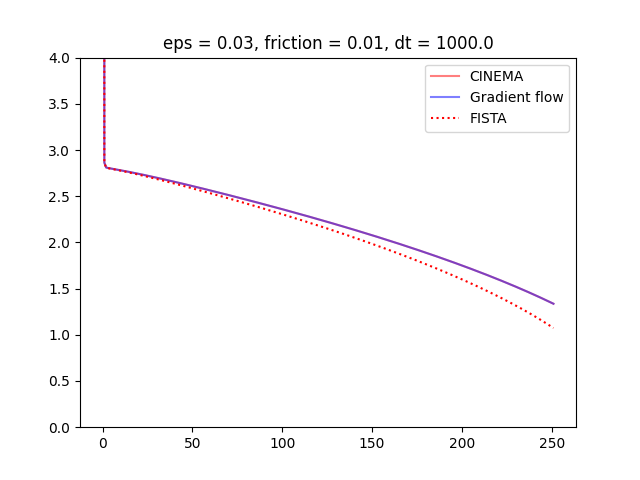}

    \caption{
    \label{figure cinema vs fista pde}
    We compare FISTA, CINEMA and gradient descent where for all schemes, the convex part of the Ginzburg-Landau energy is treated implicitly and the concave part is treated explicitly. The $x$-axis counts the number of iterations, not physical time, and the $y$-axis reflects the Ginzburg-Landau energy (not total energy).
    }
\end{figure}

Let us compare CINEMA to the well-established FISTA scheme. In Figure \ref{figure fista doesnt work}, we compare three different versions of the time-stepping scheme \eqref{eq our algorithm} for minimizing the double-well potential $W_R:\R\to\R$ described in Section \ref{section ws} with $R=2$ (which corresponds to minimizing the Ginzburg-Landau energy among constant functions).

In all three versions, $F$ is the convex part and $G$ is the concave part of $W$, but the algorithms differ in where the gradients are evaluated: For Nesterov's algorithm, we evaluate both gradients at $x_n+\tau v_n$. For FISTA, we evaluate $\nabla G$ at $x_n + \tau v_n$ and $\nabla F$ (implicitly) at $x_{n+1}$. For CINEMA, we evaluate $\nabla G$ at $x_n$ and $\nabla F$ at $x_{n+1}$. In all algorithms, $\eta = \tau^2$ and $\rho = 1/(1+\alpha\tau)$ with $\alpha = 0.01$. 

Unsurprisingly, the explicit Nesterov scheme quickly seizes to be stable. More surprisingly, FISTA fails to reduce the `total energy'  $e_n = W(u_n) + \frac1{2\rho^2}|v_n|^2$ over a range of time step sizes, even as the energy is low enough to ensure that we are indeed in a region where $W$ is convex. Other monotone quantities exist. CINEMA is decreasing the energy fastest among the algorithms and reliably over several orders of magnitude in the time-step size.

\begin{figure}
    \centering
    \includegraphics[width=0.19\linewidth]{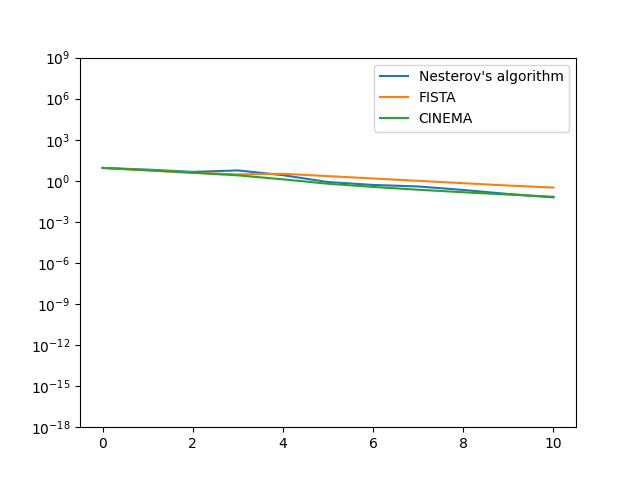}
    \includegraphics[width=0.19\linewidth]{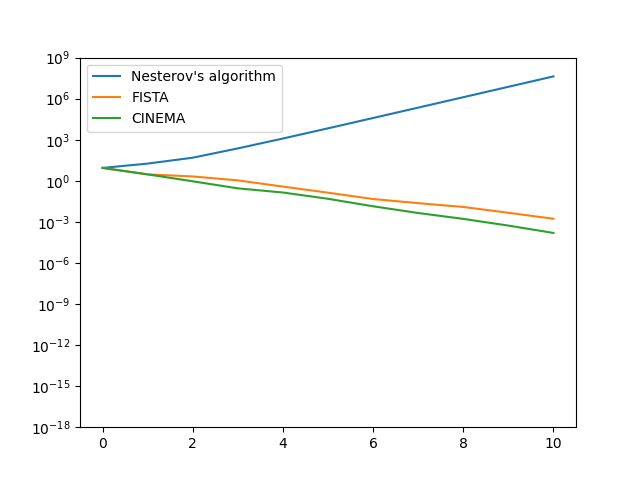}
    \includegraphics[width=0.19\linewidth]{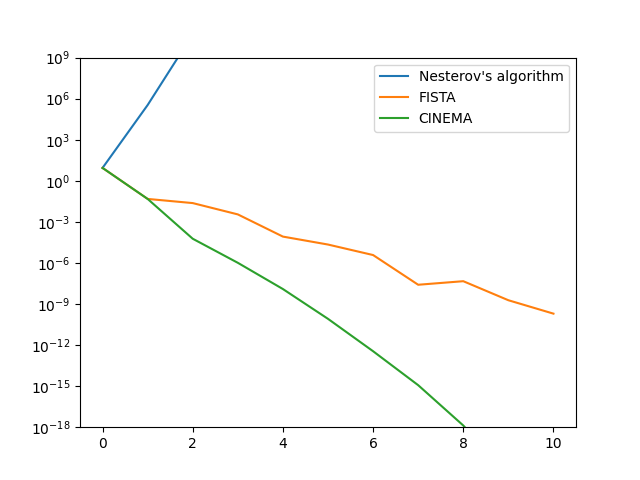}
    \includegraphics[width=0.19\linewidth]{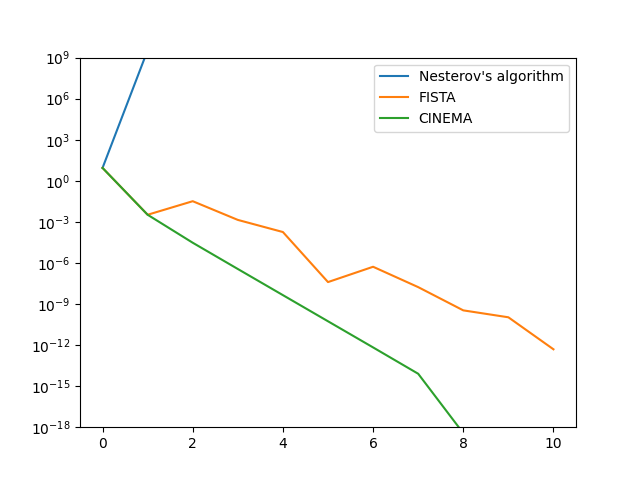}
    \includegraphics[width=0.19\linewidth]{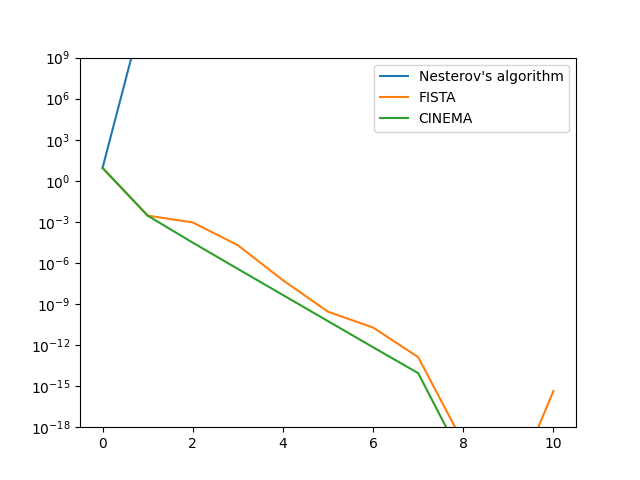}
    \caption{
    \label{figure fista doesnt work}
    We plot the total energy $e_n = W(u_n) + \frac1{2\rho^2}|v_n|^2$ for Nesterov's algorithm (blue line), FISTA (orange line) and the algorithm proposed above (green line) for time step sizes $\tau \in \{0.5, 1, 10, 100, 1000\}$ (from left to right). 
    }
\end{figure}

The situation is quite different in the PDE-experiments we present in Figure \ref{figure cinema vs fista pde}. Here we compare the FISTA and CINEMA discretizations of the accelerated Allen-Cahn equation starting from a circle in the same experimental setting as outlined in Section \ref{section numerics r2}. We see that for small time-steps ($\tau = 10^{-3}$), FISTA and CINEMA behave very similarly, and for very large time step sizes ($\tau = 10^3$), both FISTA and CINEMA essentially recover the behavior of a pure gradient descent scheme with convex-concave splitting.

Over the vast range of intermediate time step sizes (roughly $10^{-2}\leq \tau \leq 10^2$), we find that FISTA leads to much faster decrease of the Ginzburg-Landau energy while CINEMA never accelerates substantially over the gradient descent scheme with convex-concave splitting. The momentum-free scheme stagnates and increasing the time-step size beyond a certain threshold has no noticeable impact, as expected in light of \cite{related_splitting_paper}.

To interpret this observation, we compare the two time-stepping schemes. CINEMA corresponds to solving
\[
u_{n+1} = u_n + \tau v_n + \eta\left(\Delta u_{n+1} - \frac{2u_{n+1} + W_{R,conc}'(u_n)}{\eps^2}\right)
\]
or equivalently
\begin{equation}\label{eq cinema for ginzburg-landau}
\left(-\eta\,\Delta +1 + 2\eta\right) u_{n+1} = u_n +\tau v_n - \frac{\eta}{\eps^2}\,W_{R,concave}'(u_n).
\end{equation}
Formally, all that changes in FISTA is the place where the derivative of $W_{R,concave}$ is evaluated:
\[
\left(-\eta\,\Delta +1 + 2\eta\right) u_{n+1} = u_n +\tau v_n - \frac{\eta}{\eps^2}\,W_{R,concave}'(u_n+\tau v_n).
\]
Both schemes have comparable computational complexity. However, we conjecture that their different performance can be explained as follows: The FISTA scheme factors into a momentum step and a gradient descent step
\[
u_{n+1/2} = u_n+\tau v_n, \qquad (1+ \eta/\eps^2 - \eta\Delta)\,u_{n+1} = u_{n+1/2} - \frac{\eta}{\eps^2}\,W_{concave'}(u_{n+1/2}).
\]
The momentum step is not energy-driven and may lead to an intermediate `loss of regularity' which is regained by the gradient descent step. The velocity -- which concentrates on the narrow interface -- is not regularized. By comparison, the two steps happen simultaneously and cannot be separated for CINEMA. This leads to unconditional stability, but also comes at the cost of oversmoothing the velocity with the Laplacian by coupling the momentum step and the gradient descent step. See also Figure \ref{figure large time step} for a visualization of the FISTA scheme with large time step size.

\section{Numerical experiments}\label{section numerical experiments}

\subsection{The Accelerated Allen-Cahn Equation in Euclidean Spaces}

We present numerical solutions to the accelerated Allen-Cahn equation in two and three dimensions to develop geometric intuition for its behavior.

\subsubsection{Curves in Two Dimensions}\label{section numerics r2}

\begin{figure}
    \begin{flushleft}
    \includegraphics[height = 3.5cm, clip = true, trim = 0 0 2.3cm 0]{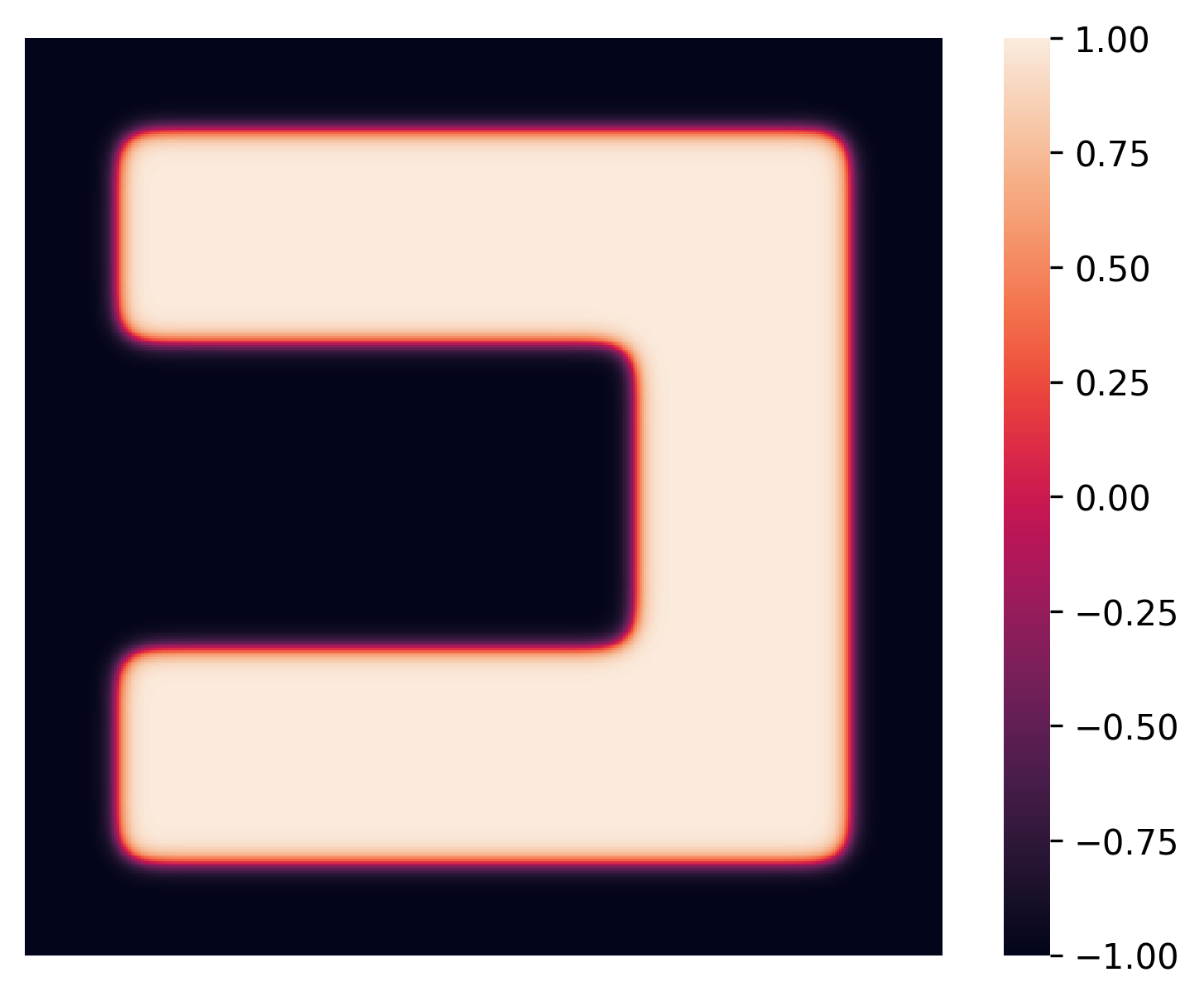}
    \includegraphics[height = 3.5cm, clip = true, trim = 0 0 2.3cm 0]{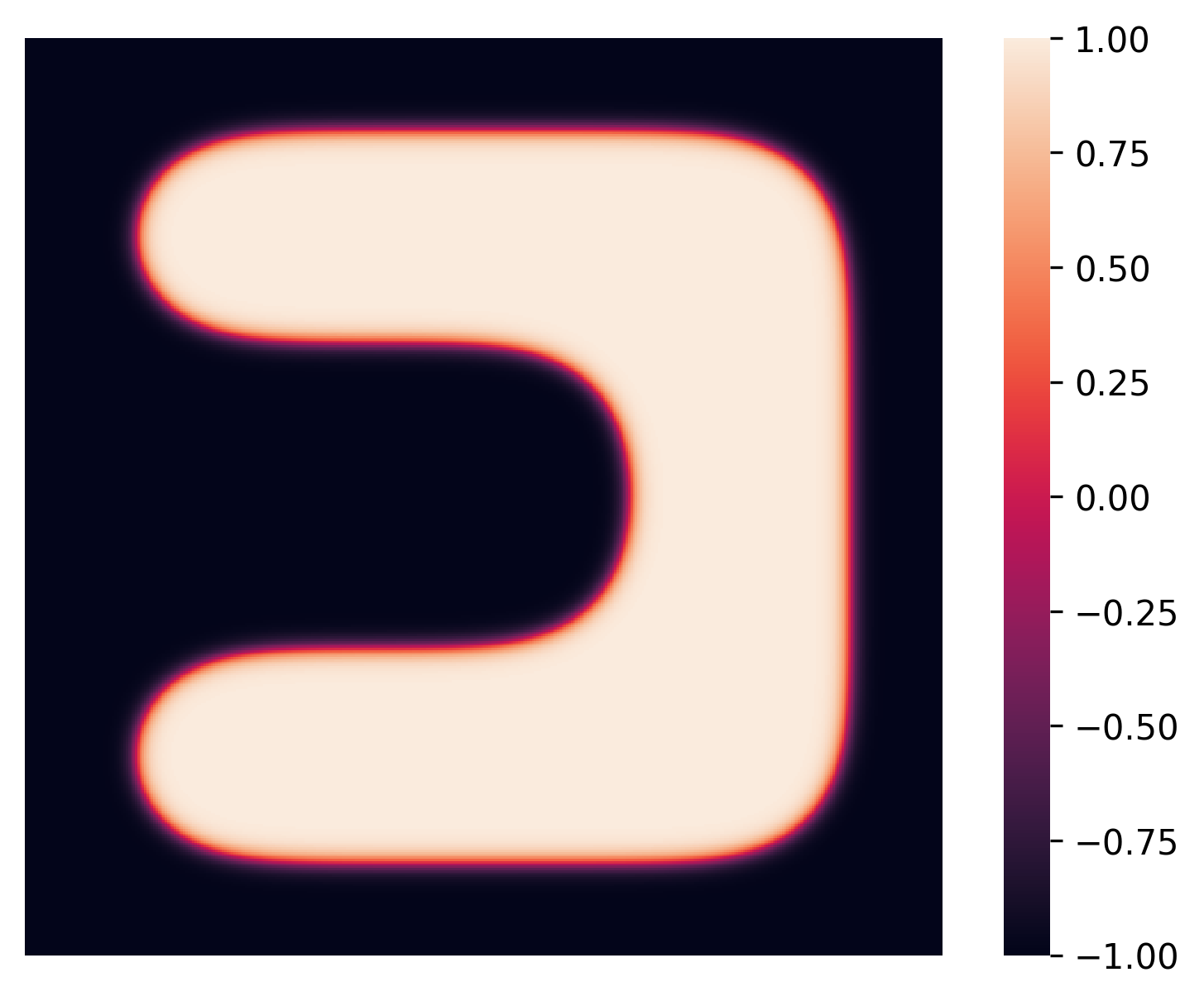}
    \includegraphics[height = 3.5cm, clip = true, trim = 0 0 2.3cm 0]{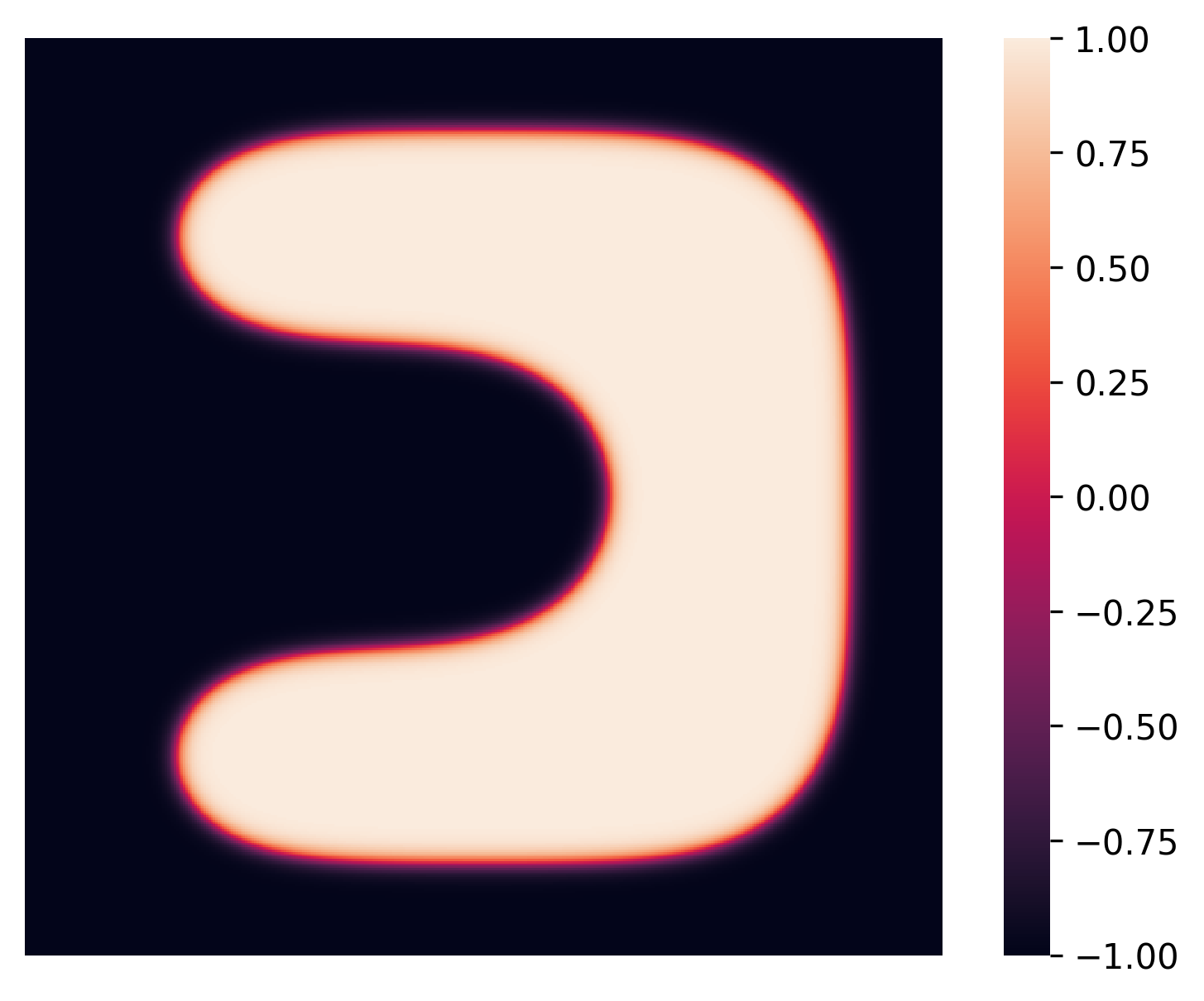}
    \includegraphics[height = 3.5cm, clip = true, trim = 0 0 2.3cm 0]{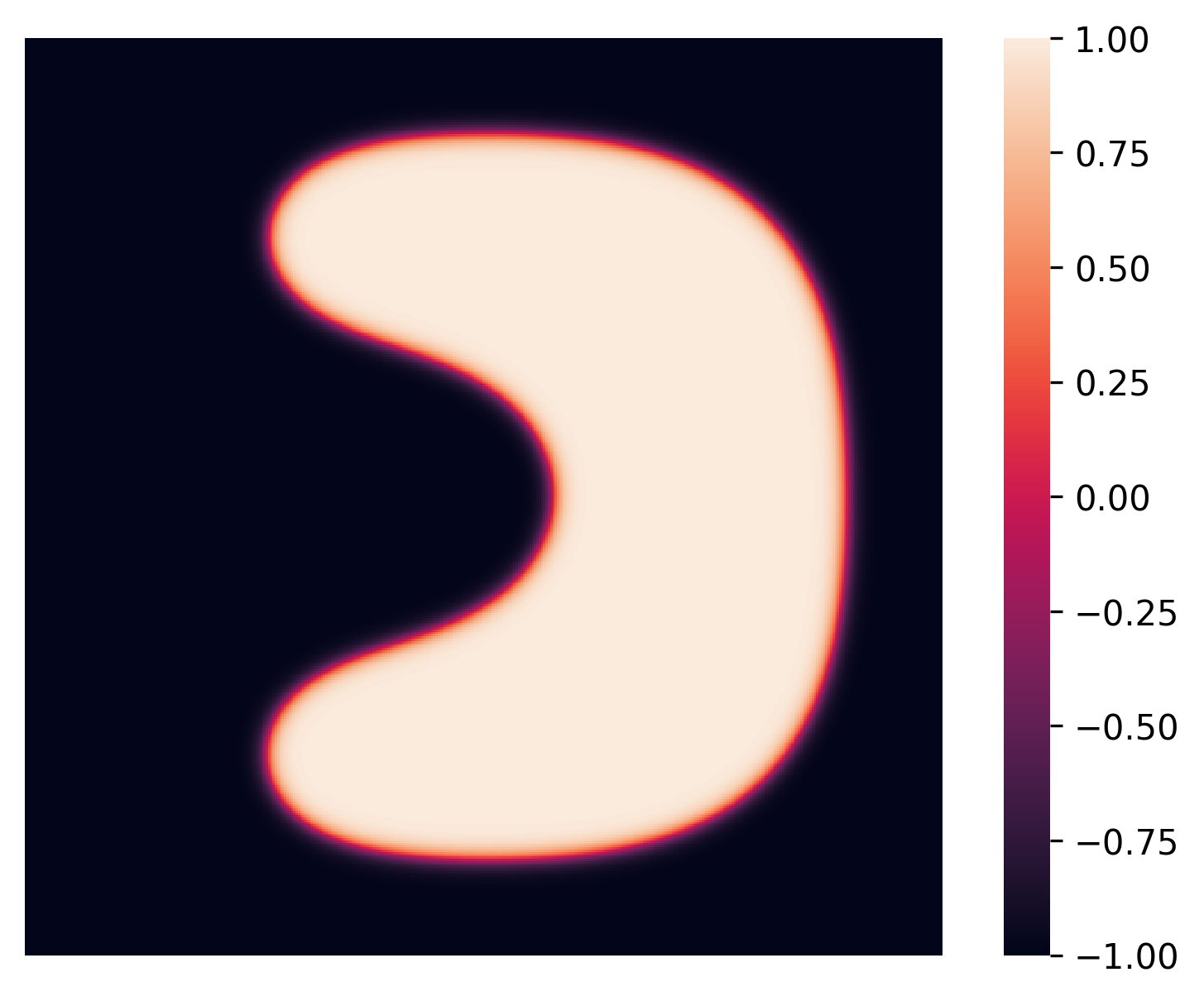}

    \includegraphics[height = 3.5cm, clip = true, trim = 0 0 2.3cm 0]{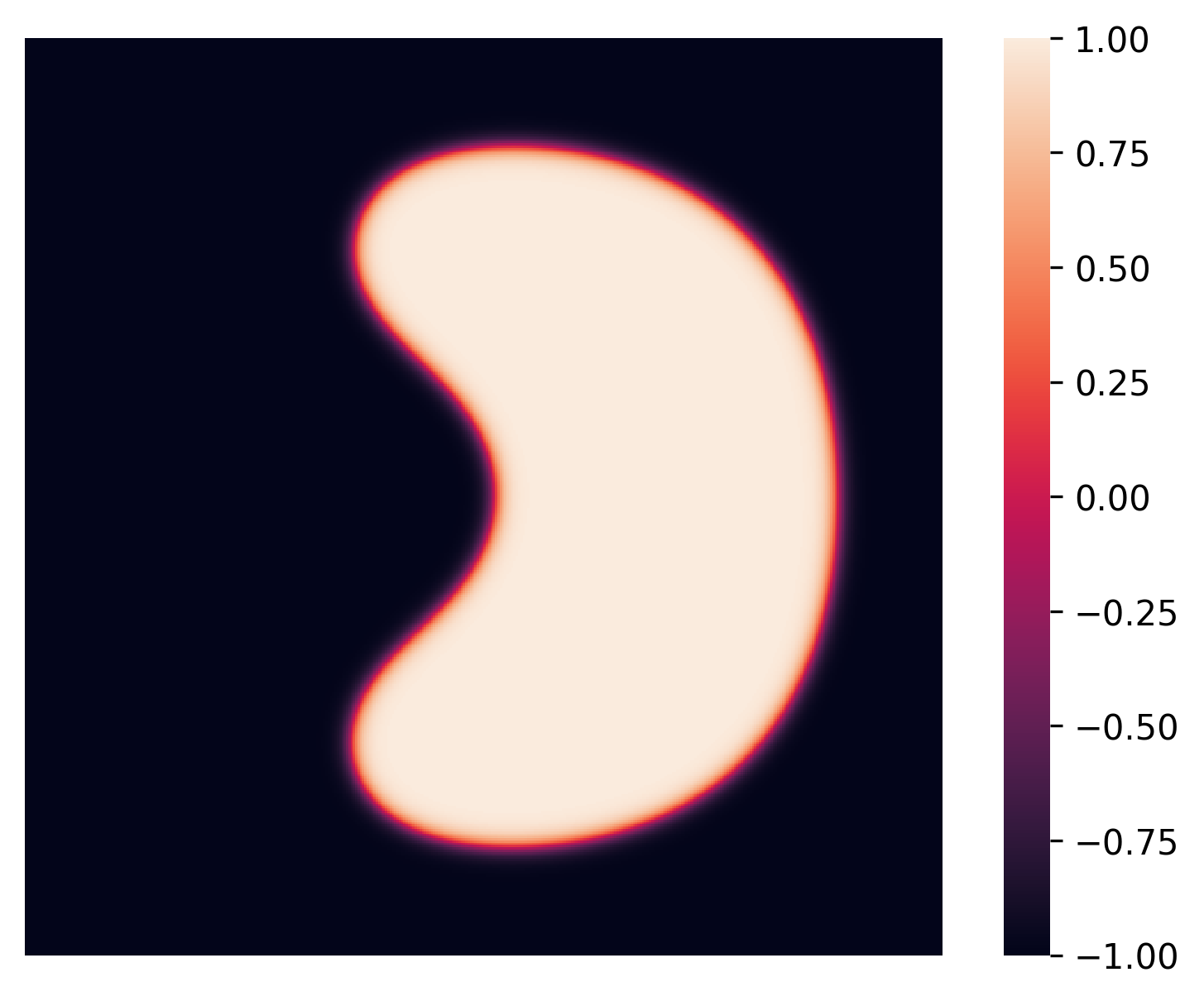}
    \includegraphics[height = 3.5cm, clip = true, trim = 0 0 2.3cm 0]{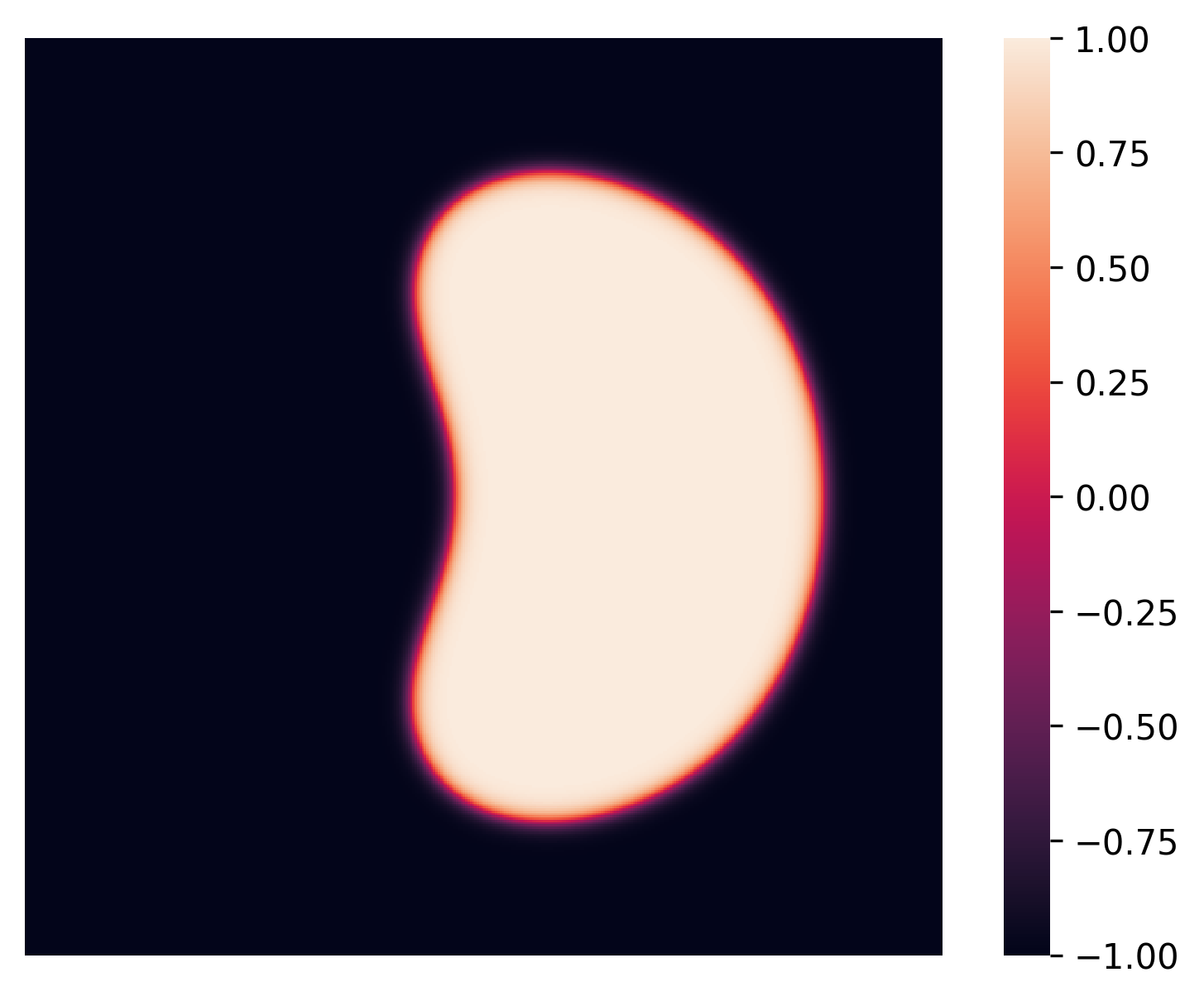}
    \includegraphics[height = 3.5cm, clip = true, trim = 0 0 2.3cm 0]{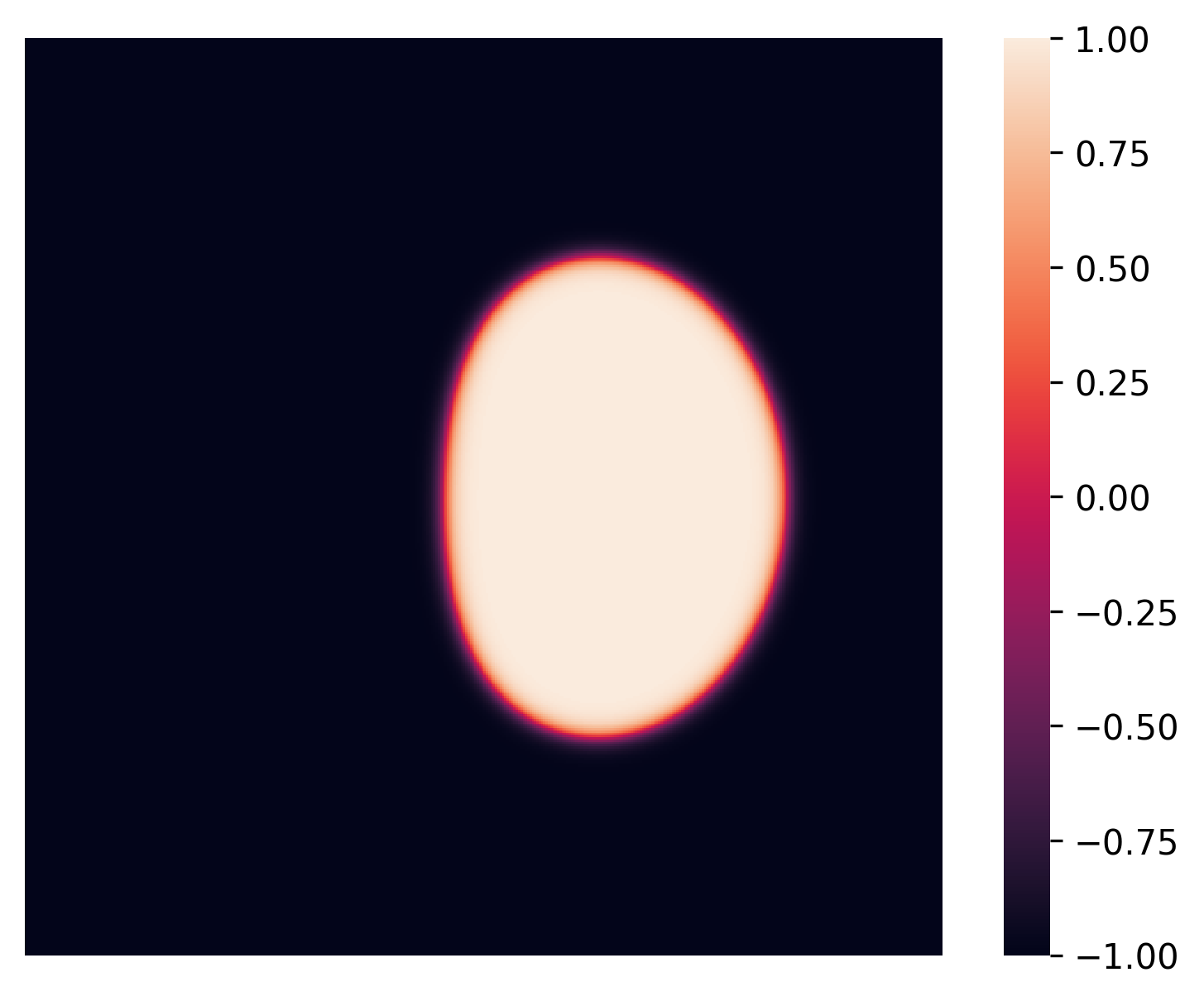}
    \includegraphics[height = 3.5cm]{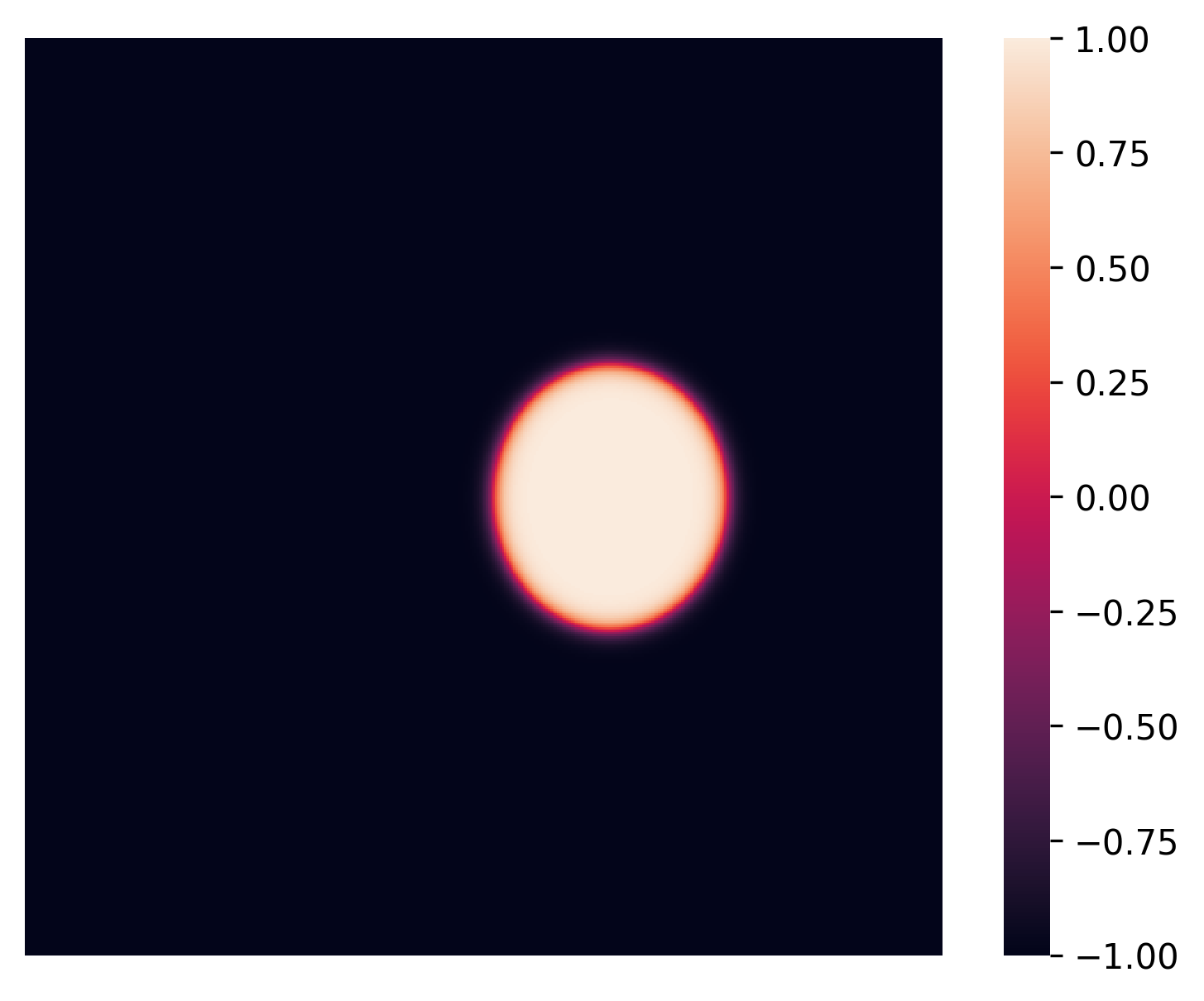}
    \end{flushleft}

\caption{
\label{figure mcf}
Evolution of a Jordan curve under the Allen-Cahn approximation to curve shortening flow at times $t\in \{0, 0.005, 0.01, 0.02, 0.03, 0.04, 0.06, 0.08\}$ (left to right, top to bottom).
}
\end{figure}

We compare the gradient flow and accelerated flow for the Ginzburg-Landau functional on the unit square with periodic boundary conditions. The gradient flow is implemented by a convex-concave splitting and the accelerated gradient flow is implemented as CINEMA on a $n\times n$-grid with $n=400$ with $\eps = 0.01$ and the coefficient of friction $\alpha = 3$ for the accelerated version. The time step size is $\tau = 10^{-5}$. The equation \eqref{eq cinema for ginzburg-landau}
is solved in the Fourier domain, transforming the right hand side by an FFT and the solution by the inverse FFT, and analogously for the regular Allen-Cahn equation.

The initial condition is (a relaxation of) the characteristic function of a C-shaped set. In order to start from a finite energy configuration, we take ten Allen-Cahn time steps with step size $\tilde\tau = 10^{-4}$ from the discontinuous initialization. 

In Figure \ref{figure mcf}, we illustrate the numerical behavior of the Allen-Cahn approximation to mean curvature flow: corners are instantaneously smoothed out and the curve convexifies before disappearing in a `round point' (i.e.\ the shape becomes approximately circular at the time of disappearance). This behavior is representative of MCF for curves in dimension two (also known as curve shortening flow) where it is known that no singularities occur if the inital curve is embedded (Gage-Hamilton-Grayson Theorem, \cite{gage1983isoperimetric, gage1984curve, gage1986heat, grayson1987heat, andrews2011curvature}).

In Figure \ref{figure accelerated mcf}, we illustrate the accelerated Allen-Cahn approximation to the geometric motion governed by \eqref{eq singular limit}. There are notable geometric differences to the regular Allen-Cahn equation.

At the ninety degree corners in the initial condition, the curvature is infinite (or essentially infinite after the Allen-Cahn relaxation), so the corner begins to instantaneously move inwards with high velocity. On the straight interfaces, on the other hand, the curvature is zero, so the interfaces remain initially unchanged due to the finite speed of propagation of information. In effect, this creates new corners with sharp angles of forty-five degrees. There is no parabolic `smoothing' effect in the evolution.

In fast-moving segments of the boundary, the transition between the potential wells is markedly steeper than in stationary segments. This increased sharpness is particularly visible in the vertical segments of the boundary in the second row (also compare Figure \ref{figure corrector}).

The `vertical' momentum in the accelerated Allen-Cahn equation translates into `horizontal' momentum for the interface: After the white area disappears in the bottom row, it reappears again.

\begin{figure}
\begin{flushleft}
    \includegraphics[height = 3.5cm, clip = true, trim = 0 0 2.3cm 0]{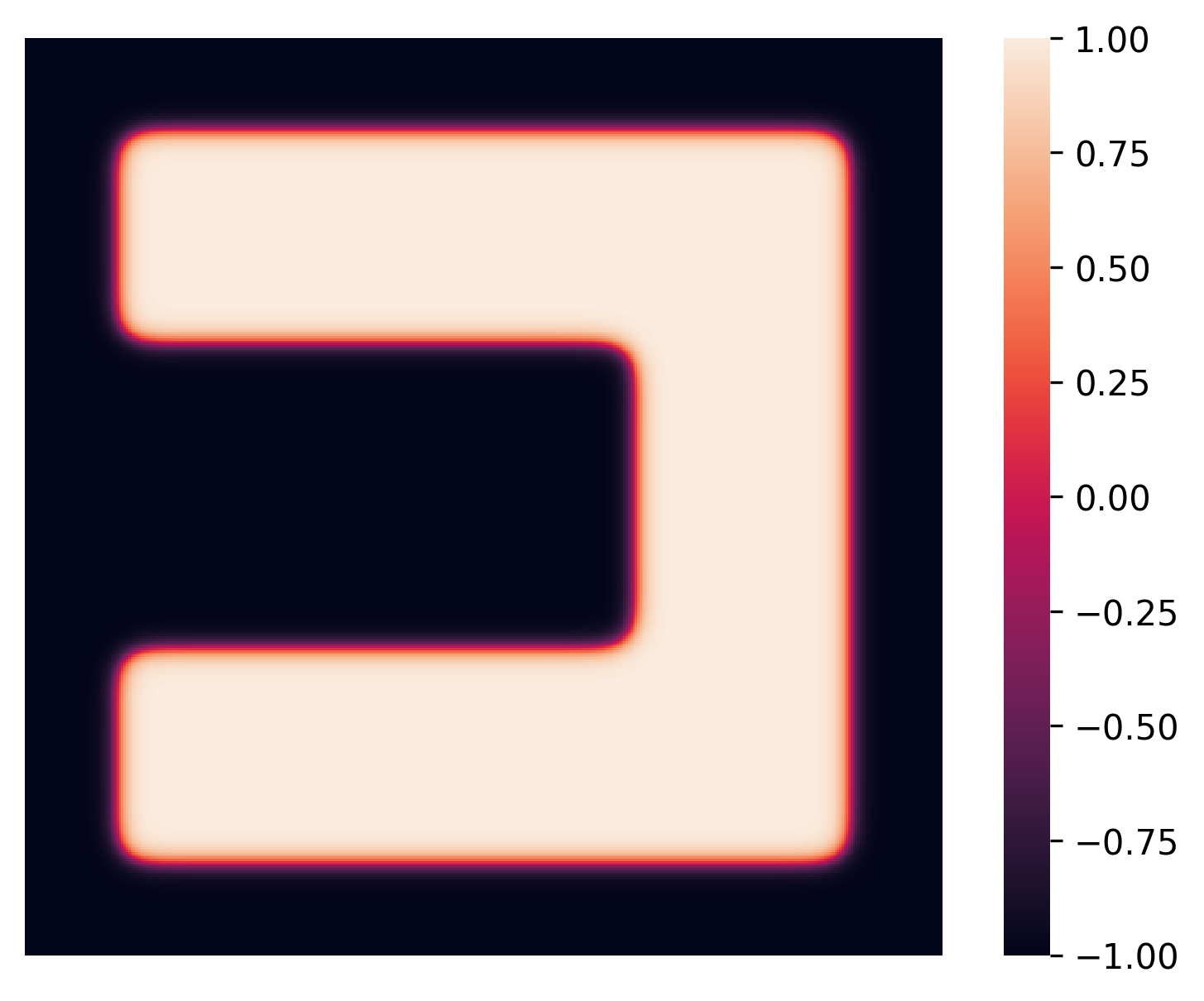}
    \includegraphics[height = 3.5cm, clip = true, trim = 0 0 2.3cm 0]{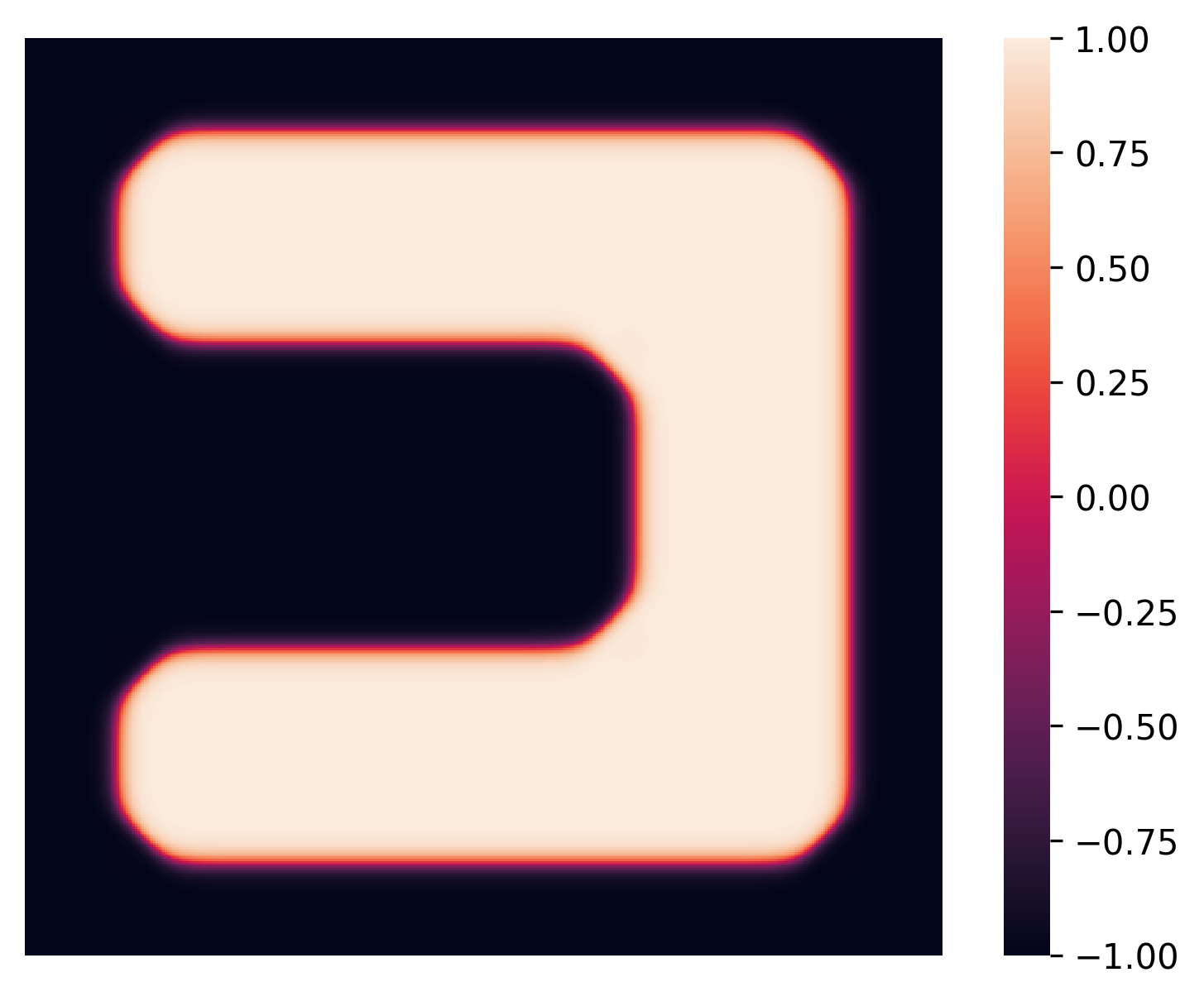}
    \includegraphics[height = 3.5cm, clip = true, trim = 0 0 2.3cm 0]{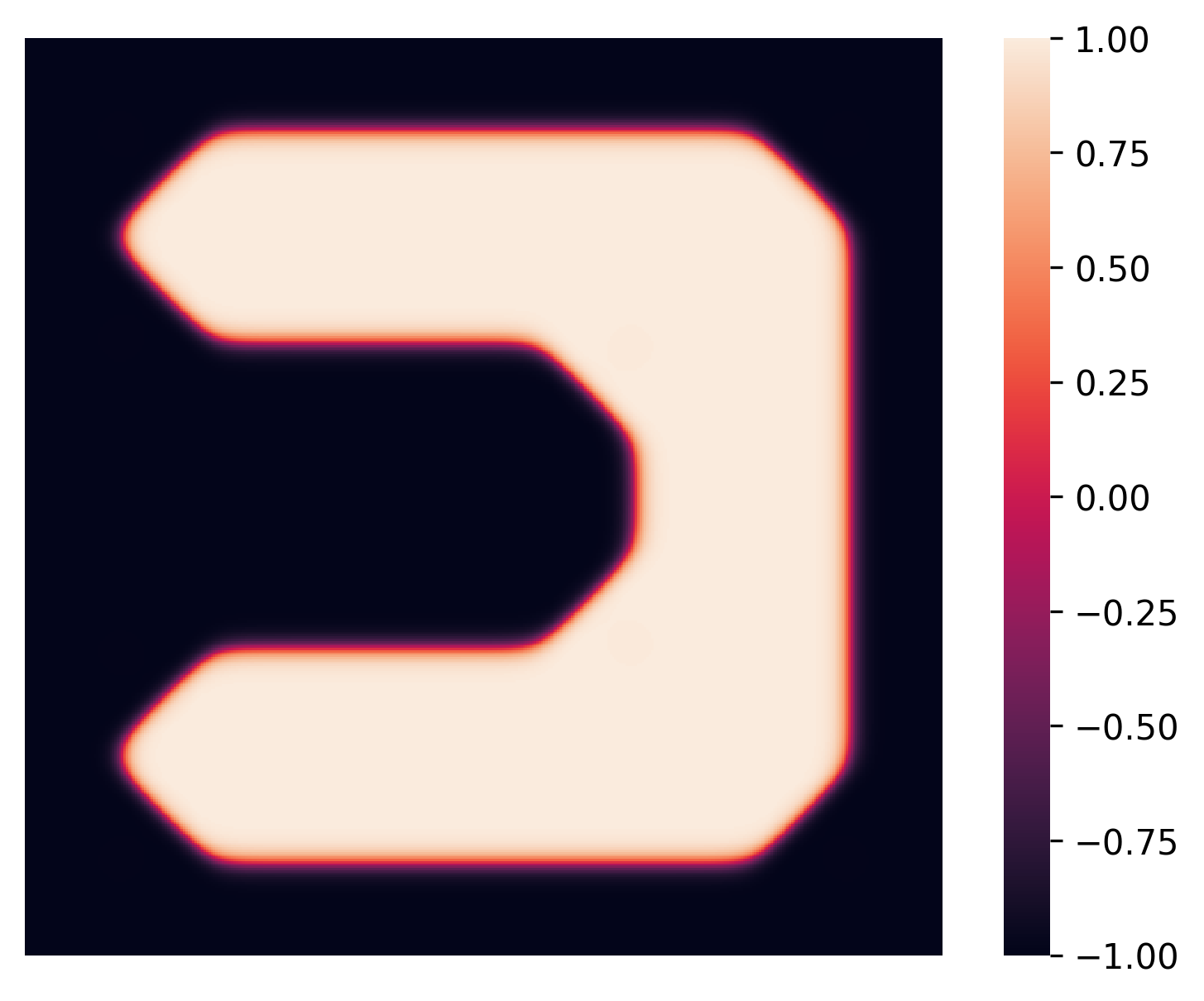}
    \includegraphics[height = 3.5cm, clip = true, trim = 0 0 2.3cm 0]{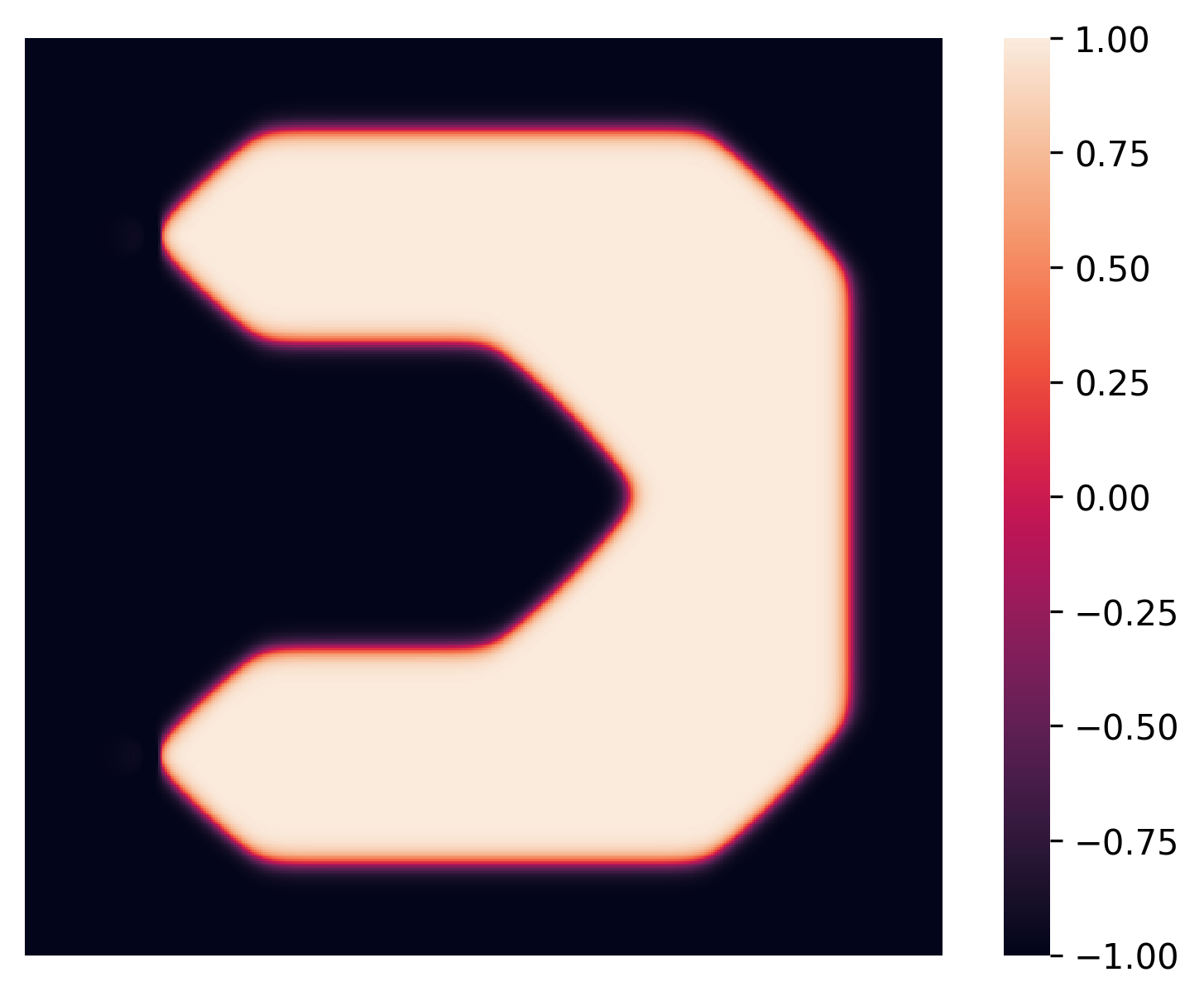}

    \includegraphics[height = 3.5cm, clip = true, trim = 0 0 2.3cm 0]{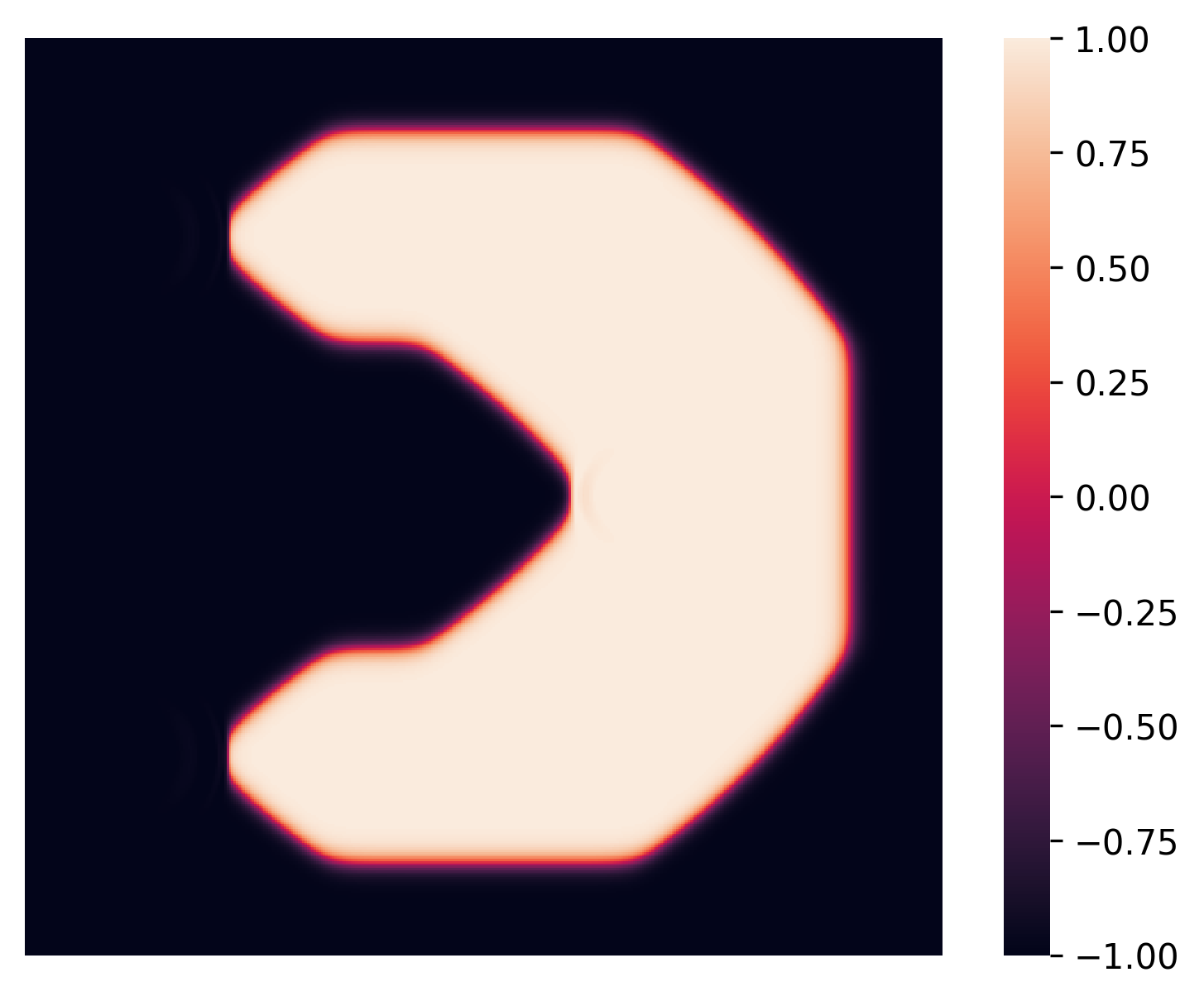}
    \includegraphics[height = 3.5cm, clip = true, trim = 0 0 2.3cm 0]{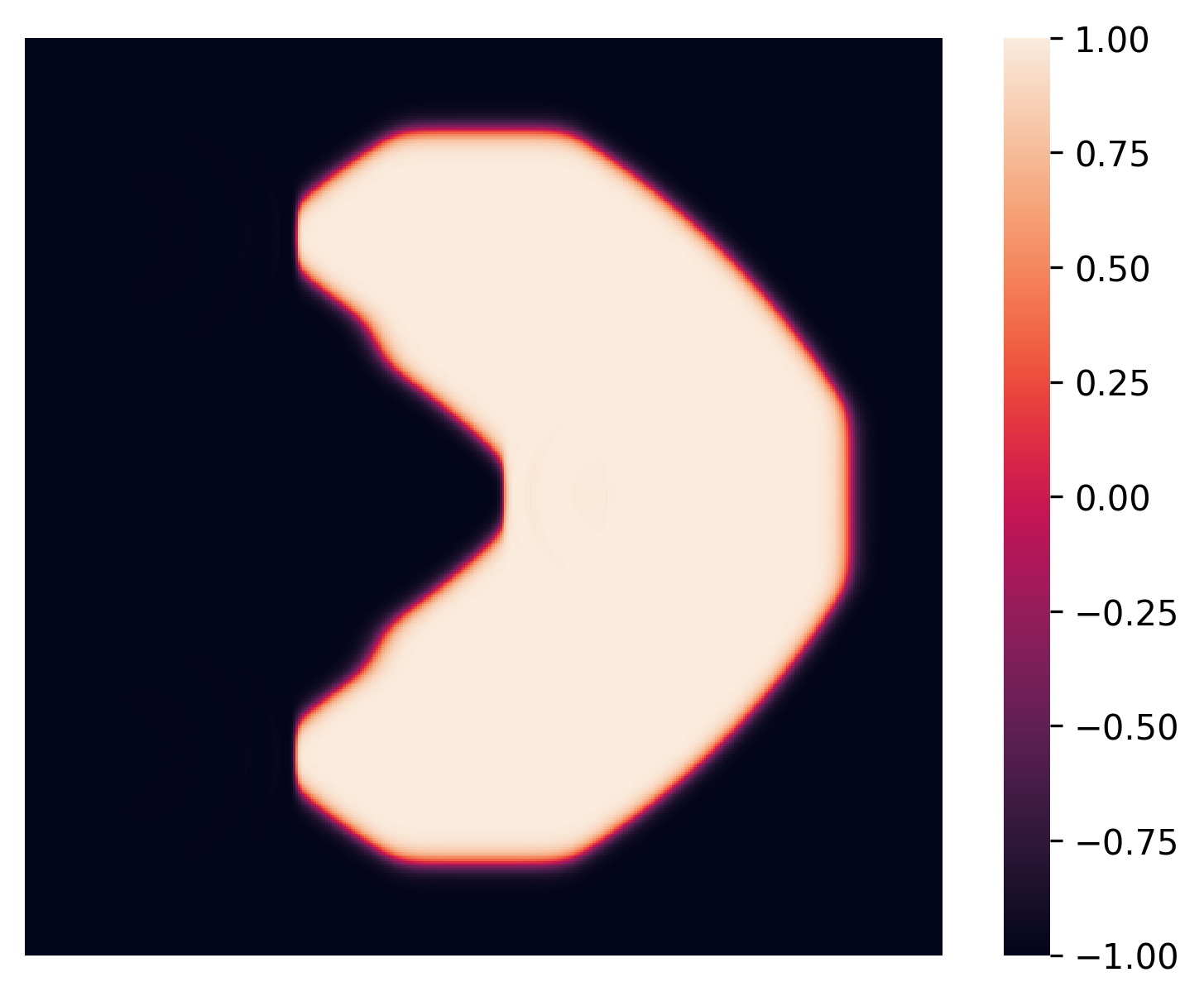}
    \includegraphics[height = 3.5cm, clip = true, trim = 0 0 2.3cm 0]{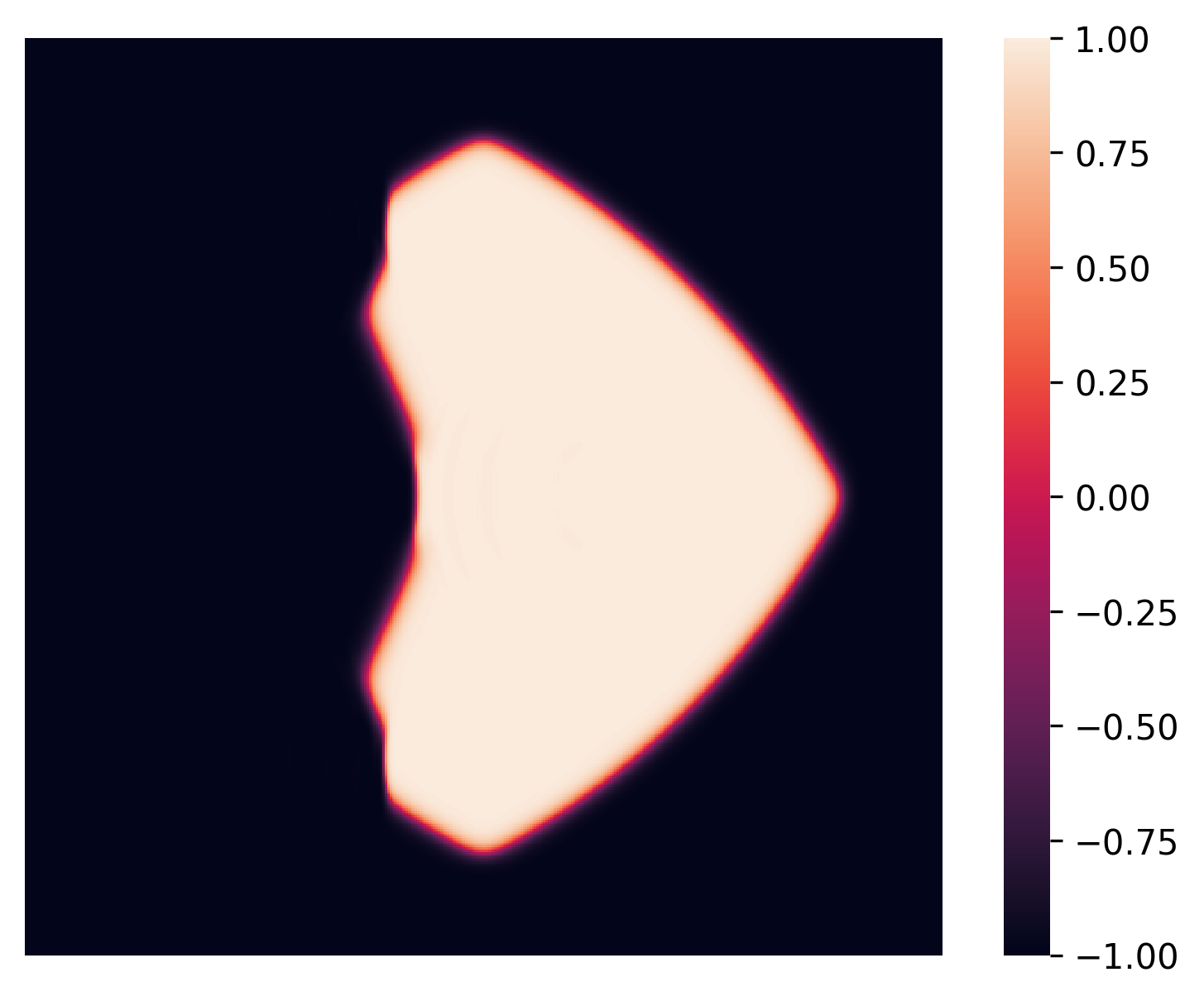}
    \includegraphics[height = 3.5cm, clip = true, trim = 0 0 2.3cm 0]{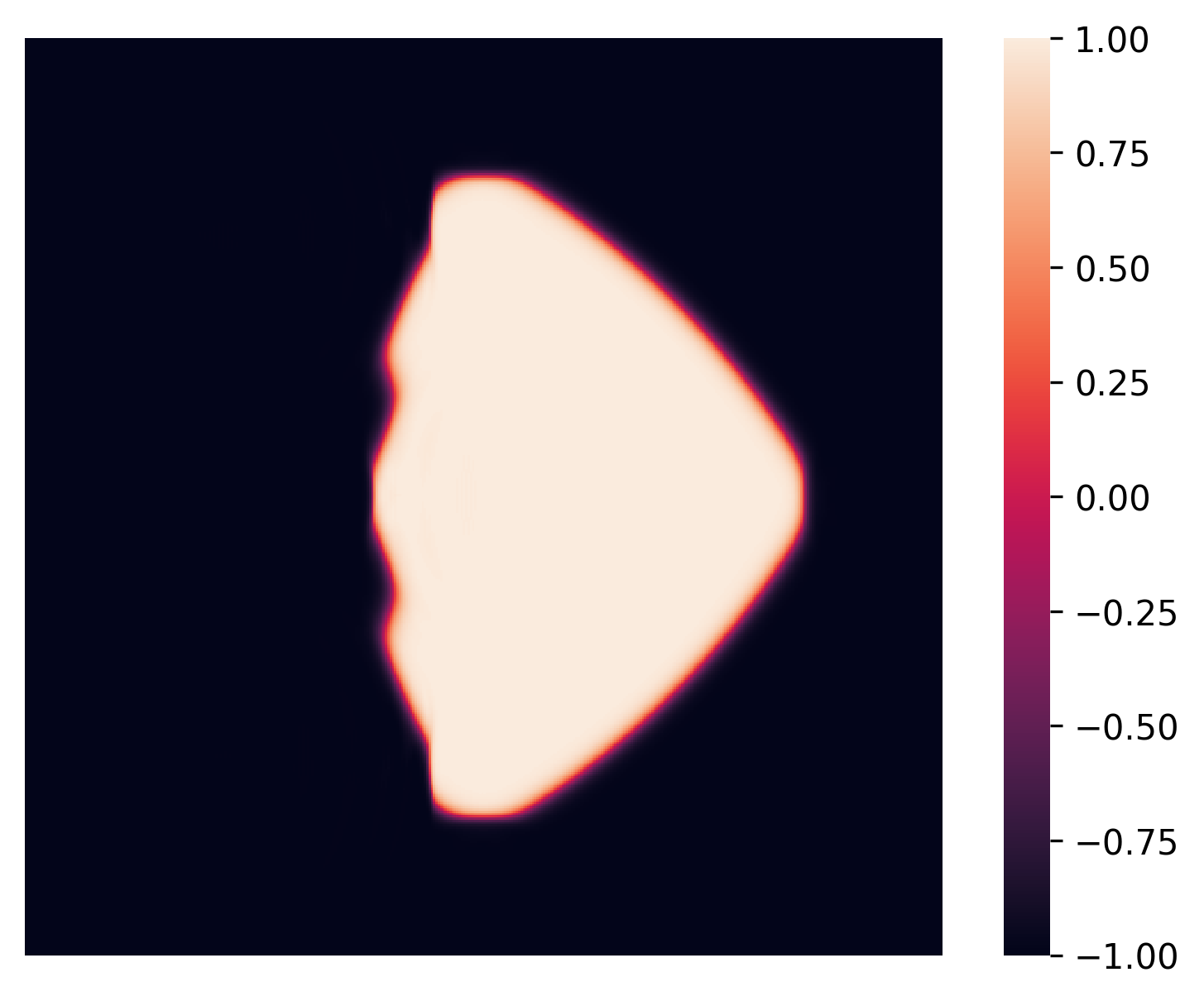}

    \includegraphics[height = 3.5cm, clip = true, trim = 0 0 2.3cm 0]{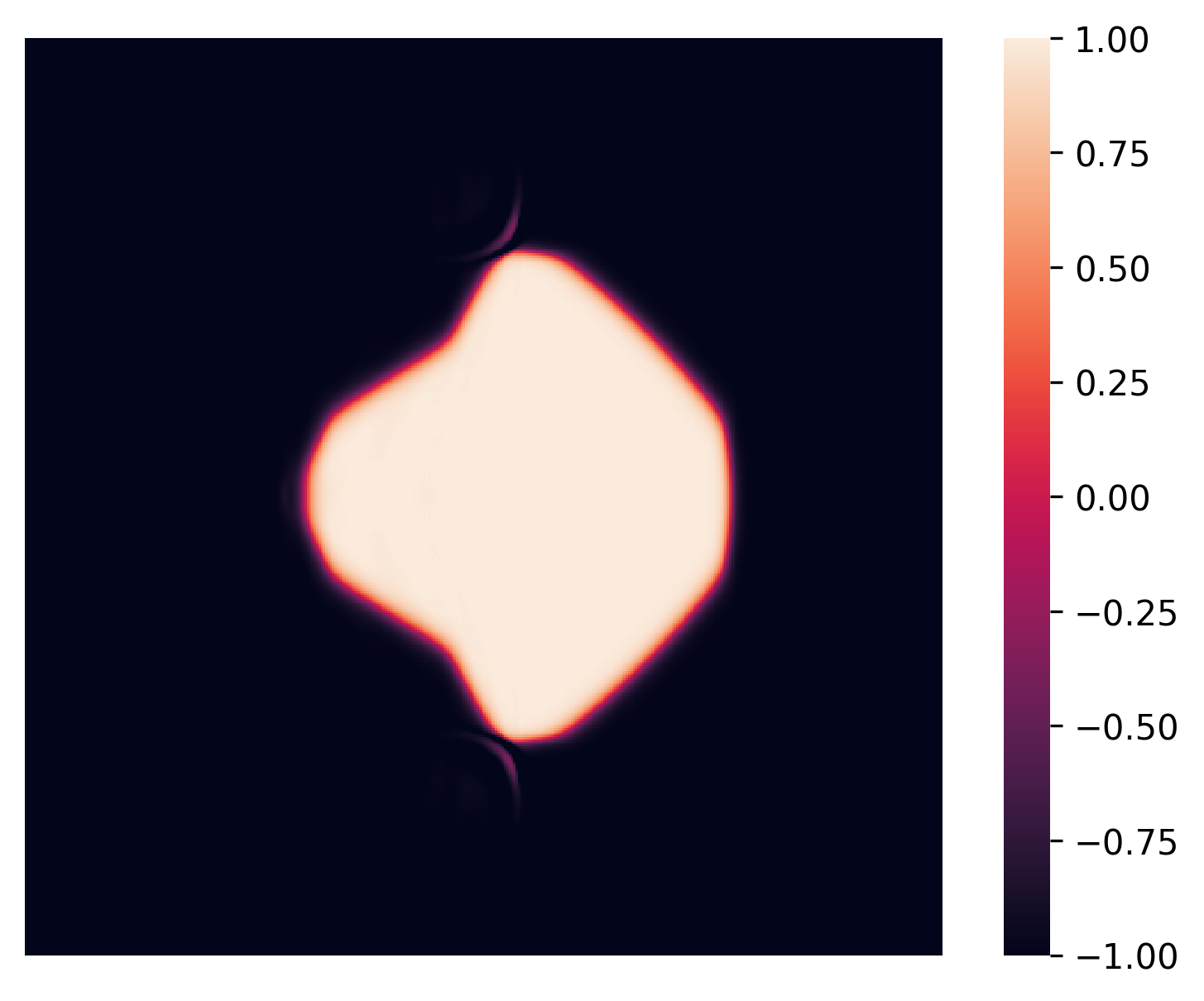}
    \includegraphics[height = 3.5cm, clip = true, trim = 0 0 2.3cm 0]{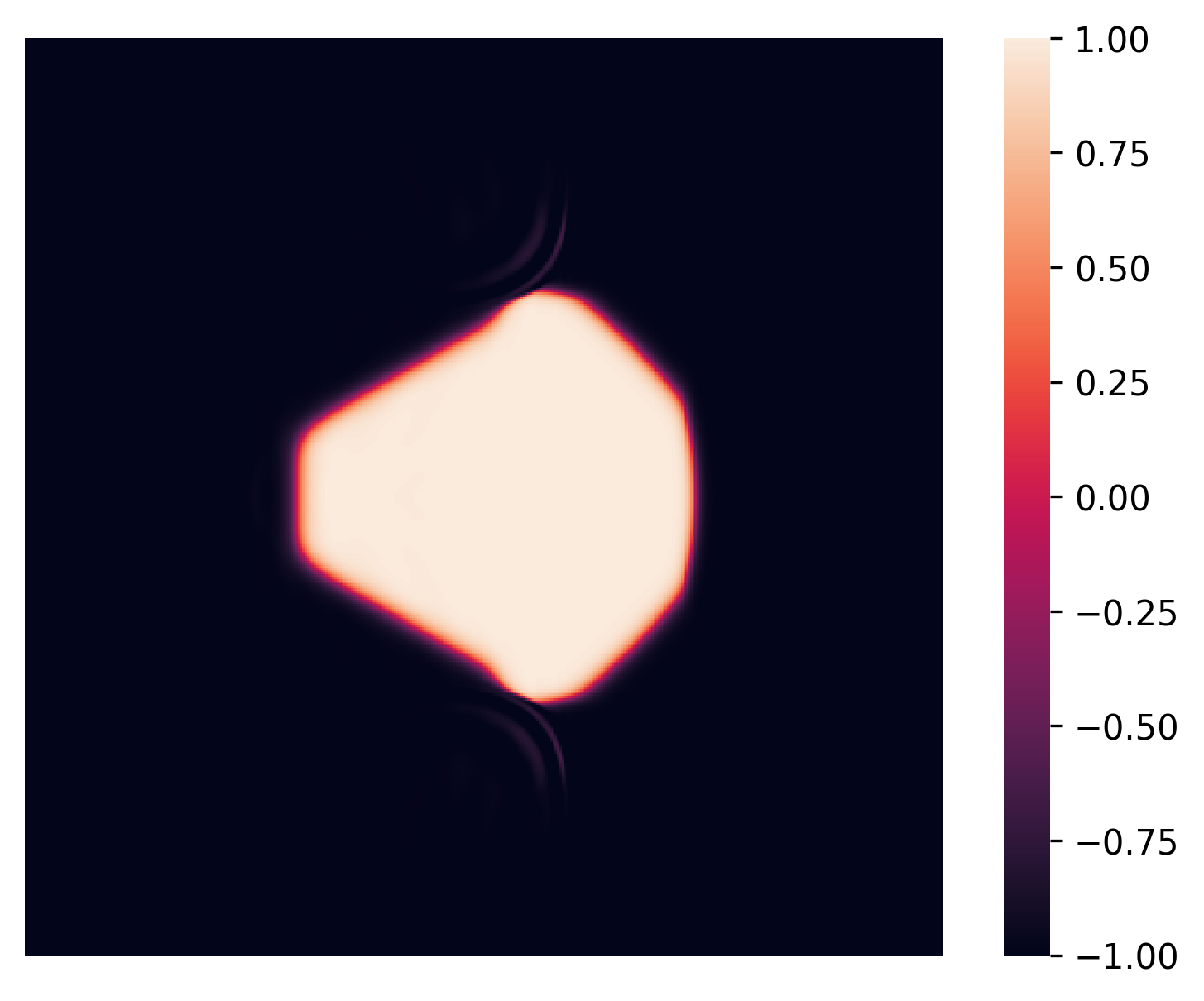}
    \includegraphics[height = 3.5cm, clip = true, trim = 0 0 2.3cm 0]{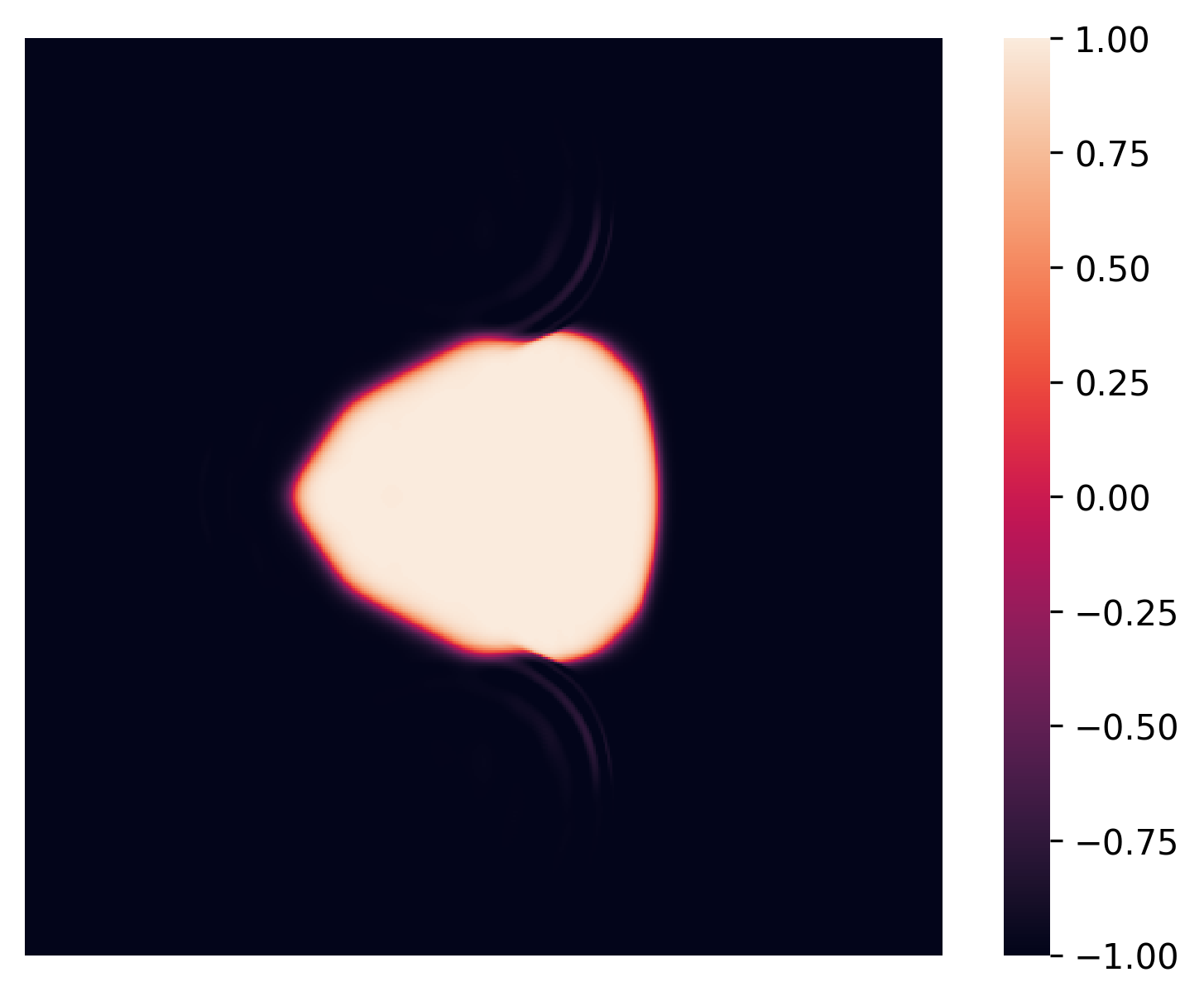}
    \includegraphics[height = 3.5cm, clip = true, trim = 0 0 2.3cm 0]{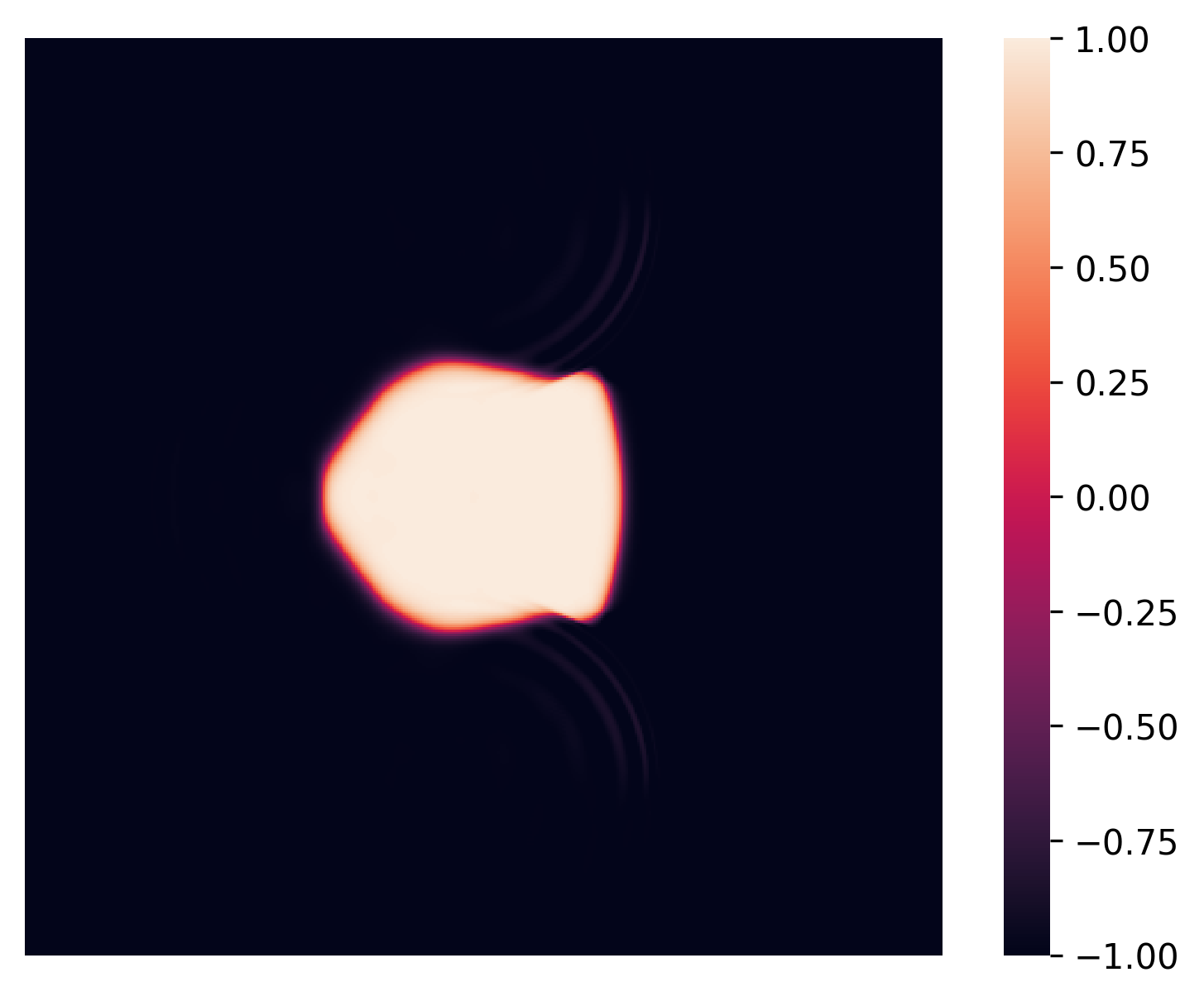}

    \includegraphics[height = 3.5cm, clip = true, trim = 0 0 2.3cm 0]{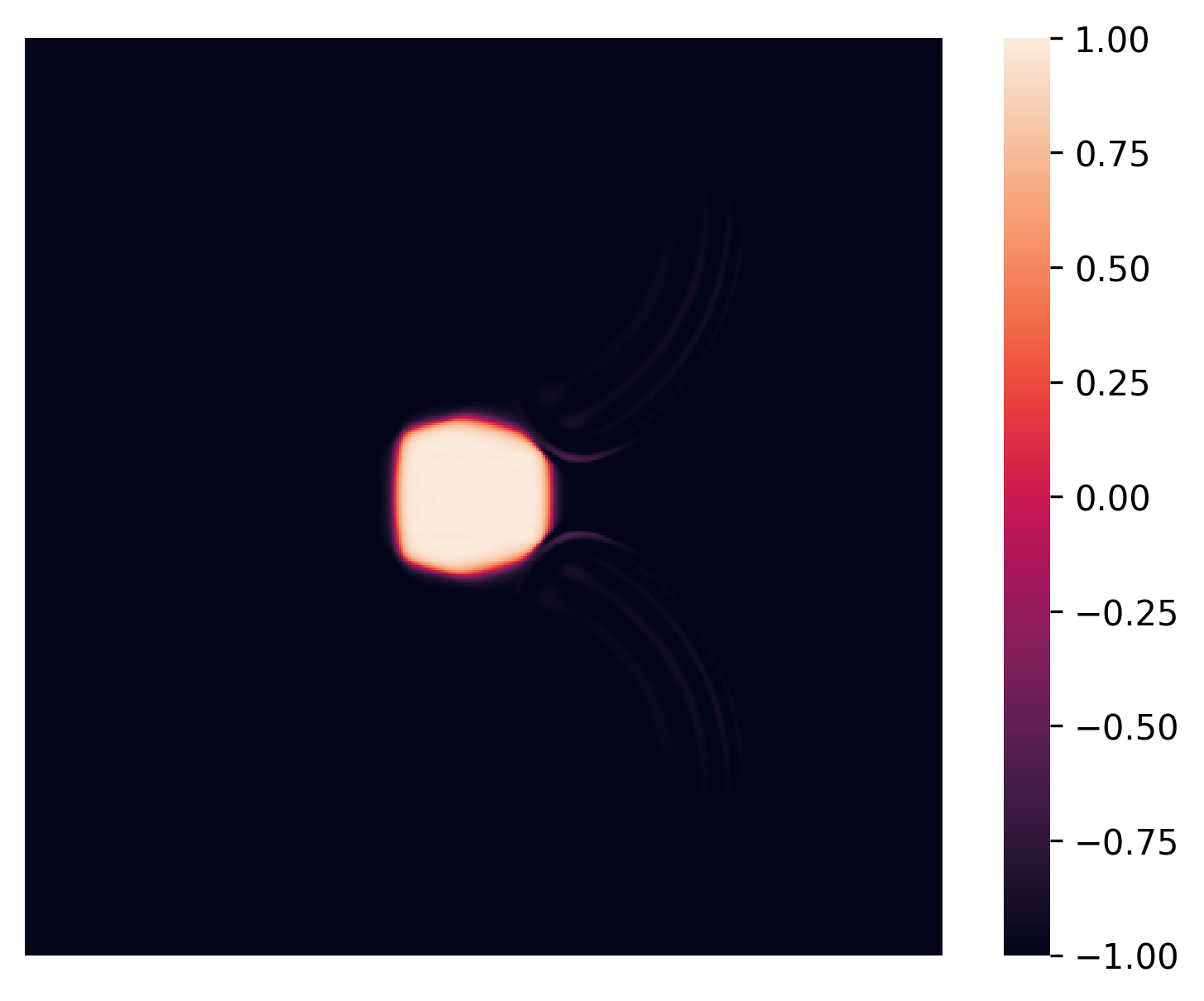}
    \includegraphics[height = 3.5cm, clip = true, trim = 0 0 2.3cm 0]{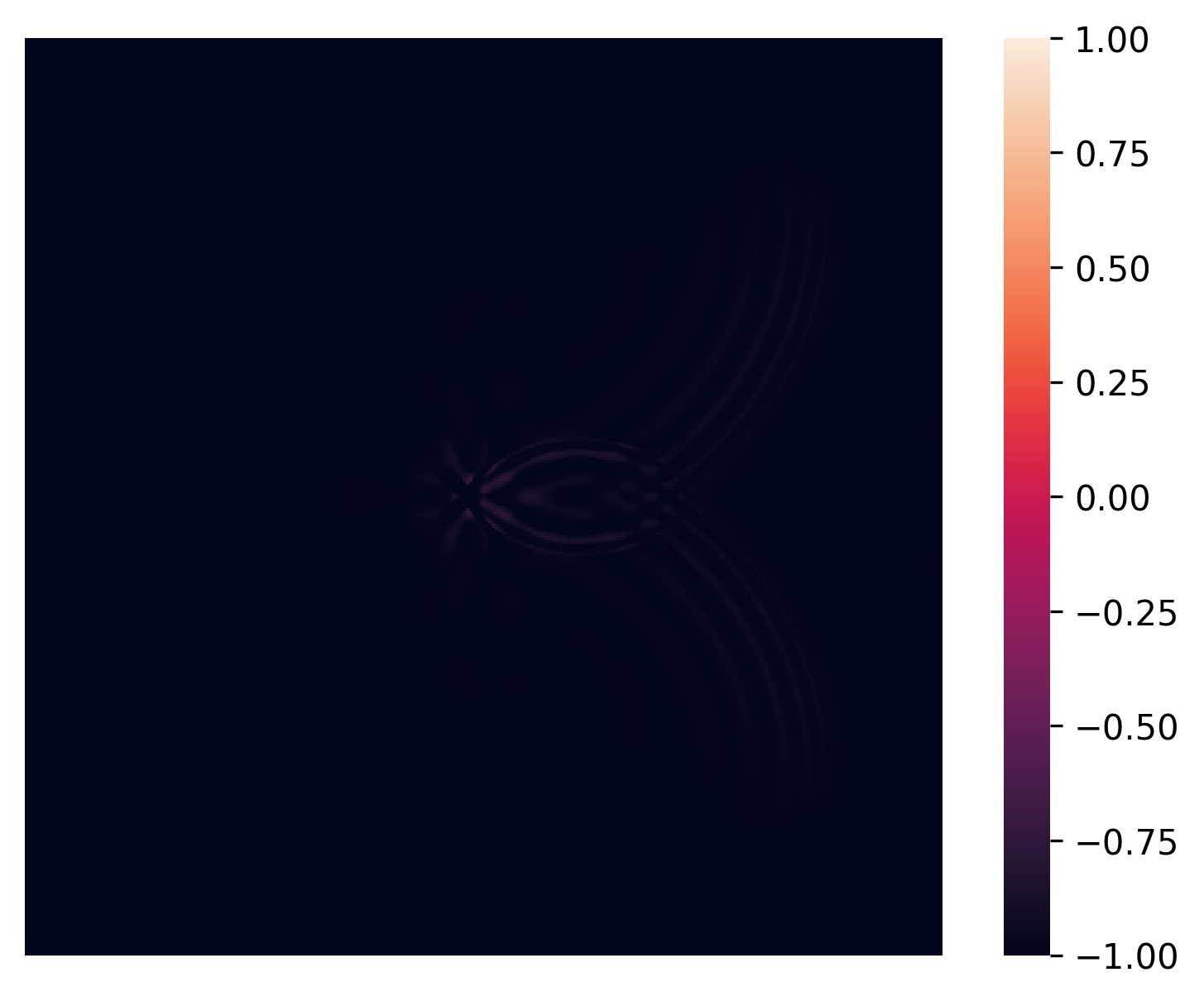}
    \includegraphics[height = 3.5cm, clip = true, trim = 0 0 2.3cm 0]{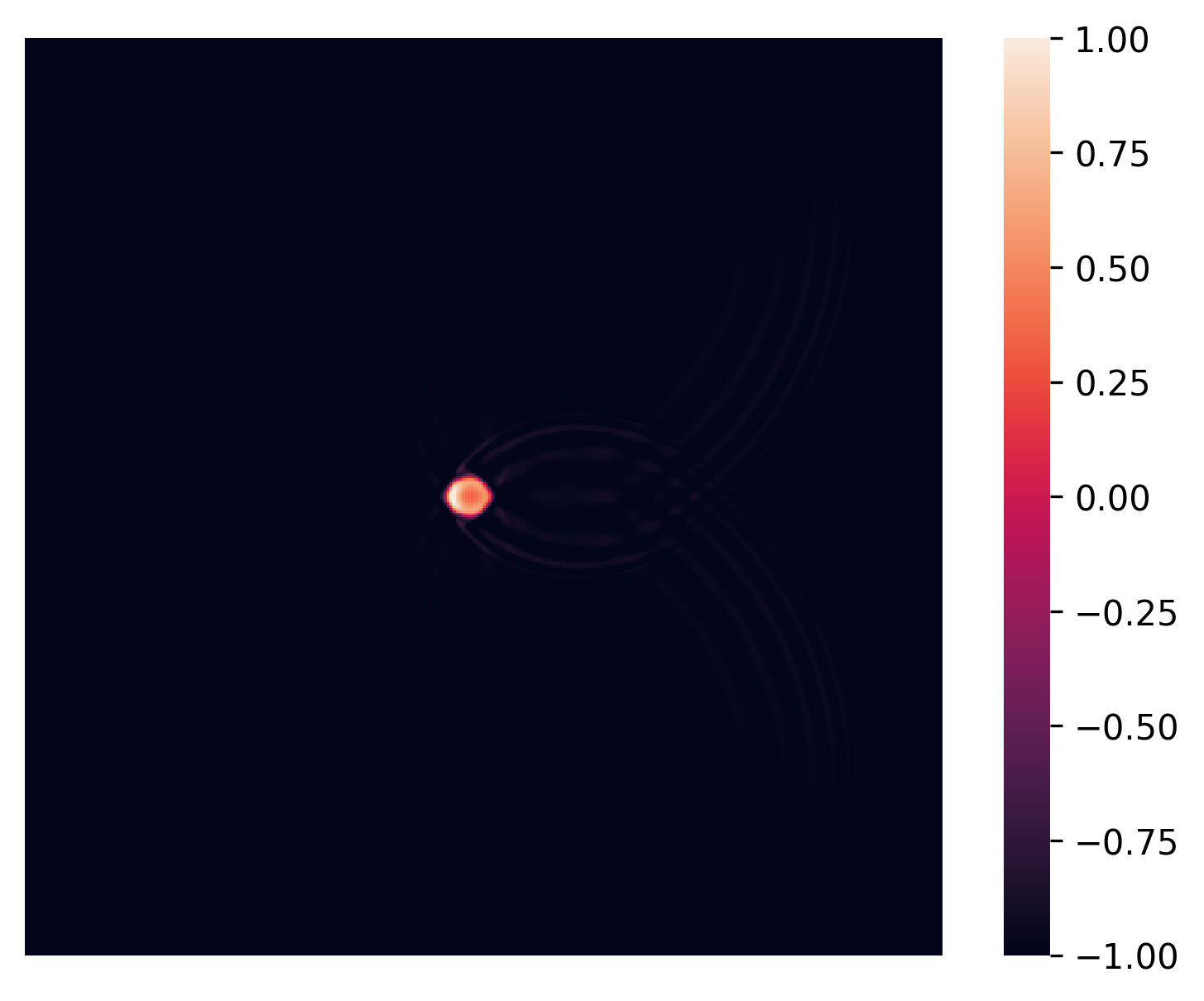}
    \includegraphics[height = 3.5cm]{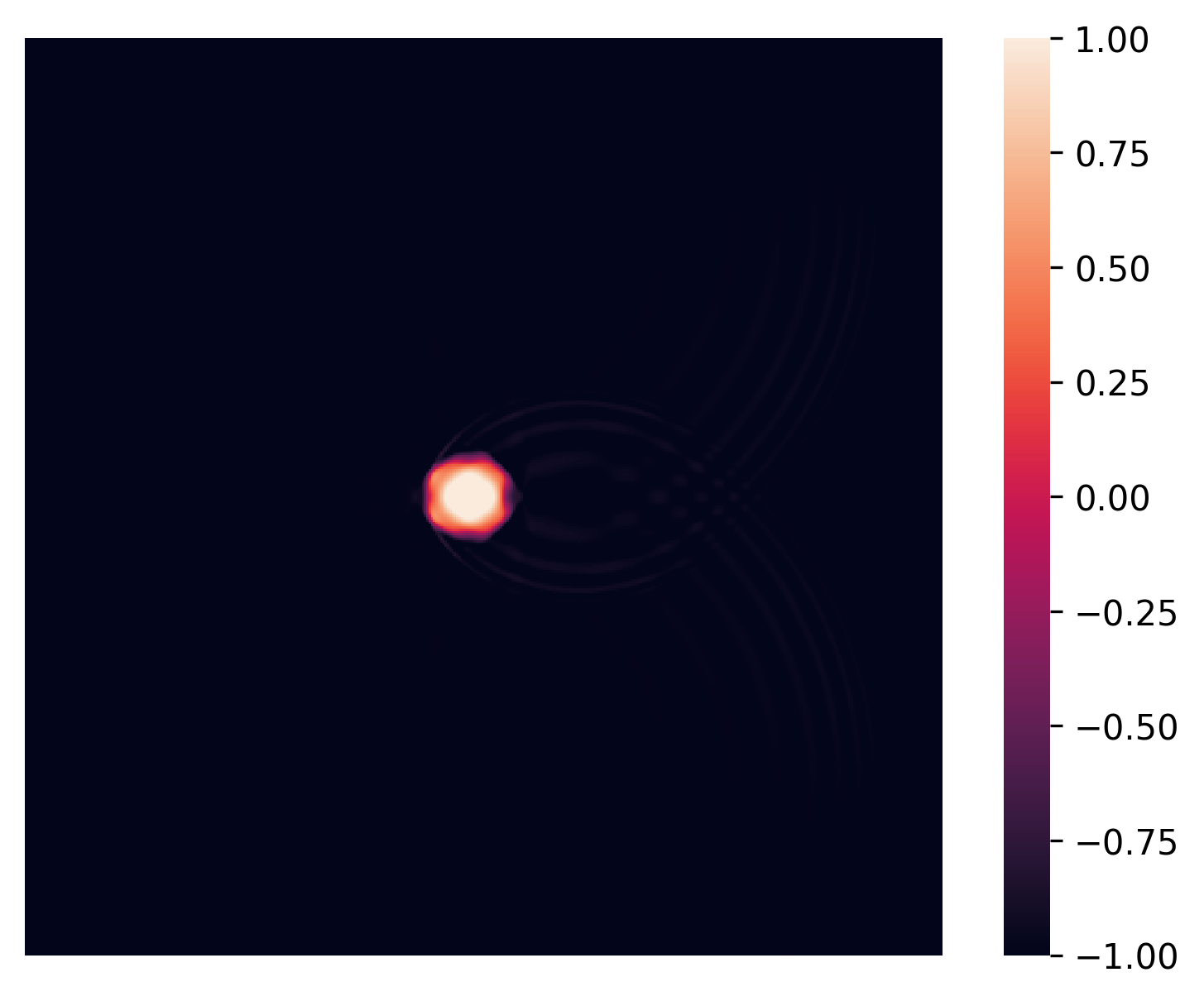}
\end{flushleft}

\caption{
\label{figure accelerated mcf}
Evolution of a Jordan curve under the accelerated Allen-Cahn approximation to `accelerated curve shortening flow' at times $t\in \{0, 0.05, 0.1, 0.15\}$ (top row, left to right), $t\in\{0.225, 0.3, 0.4, 0.45\}$ (second row) $t\in\{0.55, 0.6, 0.65, 0.7\}$ (third row) and $t\in\{0.8 0.9, 0.925, 0.95\}$ (bottom row).
}
\end{figure}

We observe the decrease of energy in Figure \ref{figure verifying singular limit}. Notably, the Ginzburg-Landau energy is not monotone decreasing along the accelerated Allen-Cahn equation, but the total energy (the sum of Ginzburg-Landau energy and kinetic energy) is. There is a singularity in the Ginzburg-Landau energy curve at the time that the shape `disappears' briefly, but it is less pronounced for large $\alpha$. Unsurprisingly, the energy decrease is much faster along the Allen-Cahn equation in `physical' time.

% \begin{figure}[H]
%     \includegraphics[height = 5cm]{}
%     \includegraphics[height = 5cm]{}
% \caption{
% \label{figure comparing energies}
% {\bf Left:} The decrease of boundary length in the Allen-Cahn equation and `accelerated Allen-Cahn equation' as measured by the Ginzburg-Landau functional. {\bf Right:} The evolution of `potential energy' (the Ginzburg-Landau functional), kinetic energy $\frac\eps2\,\|u_t\|^2_{L^2}$ and the total energy (their sum). While the potential energy is not monotone decreasing, the total energy decreases, as is expected for a momentum-based optimizer.
% }
% \end{figure}

In Figure \ref{figure large time step}, we consider the evolution of the same initial condition with the FISTA discretization and the much larger time step size $\eta = \tau^2 = \tau =1$ and $\rho = 1/ (1+\alpha \tau)$ for $\alpha = 0.1$. In this regime, the momentum method geometrically resembles mean curvature more than the accelerated mean curvature flow and does not develop non-smooth interfaces. The curve shrinks significantly faster than under the convex-concave splitting discretization of the Allen-Cahn equation, which is known to be limited by $\eps^2$ in this setting due to \cite{related_splitting_paper}.

\begin{figure}
    \centering
    \includegraphics[width=0.24\linewidth]{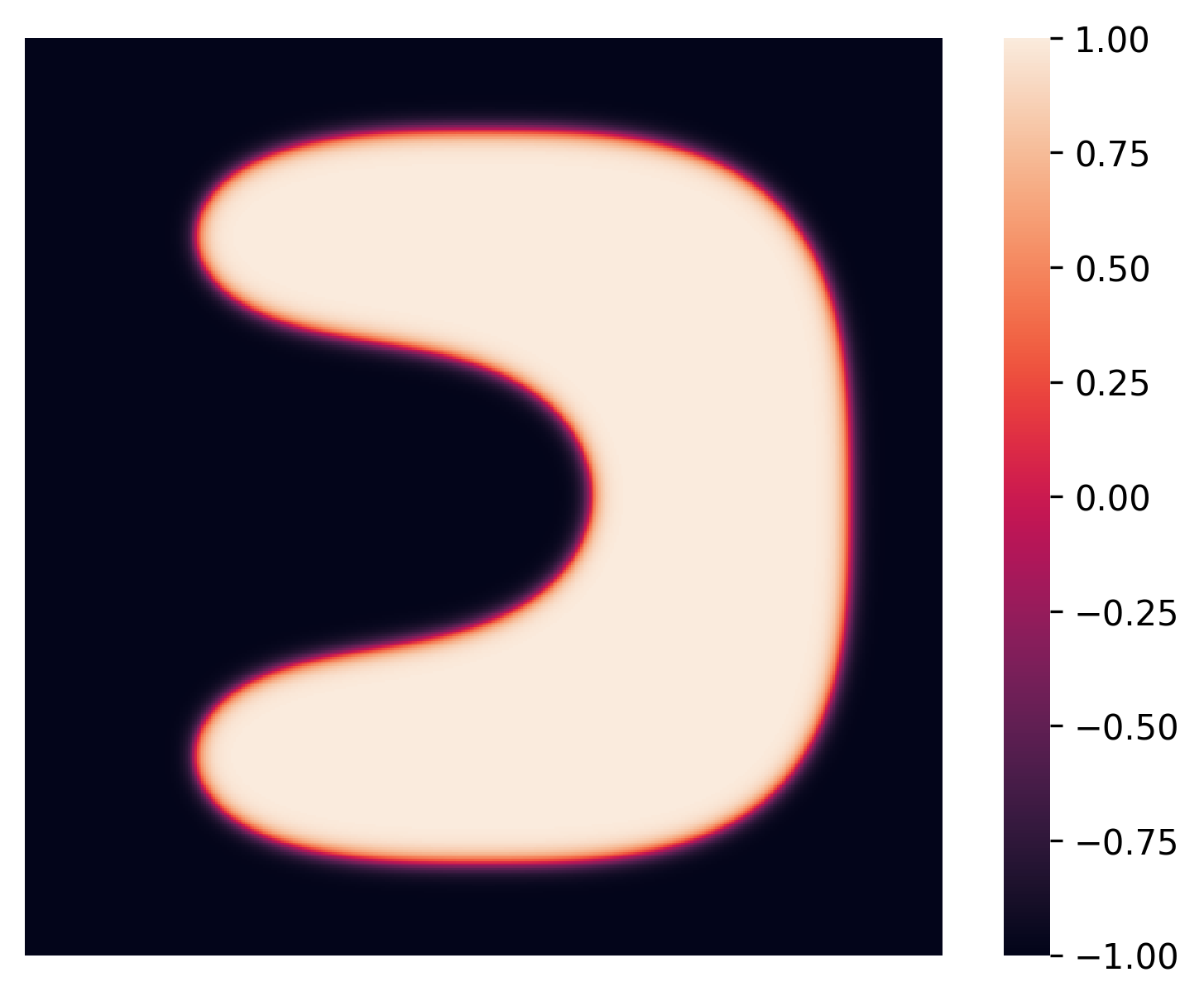}
    \includegraphics[width=0.24\linewidth]{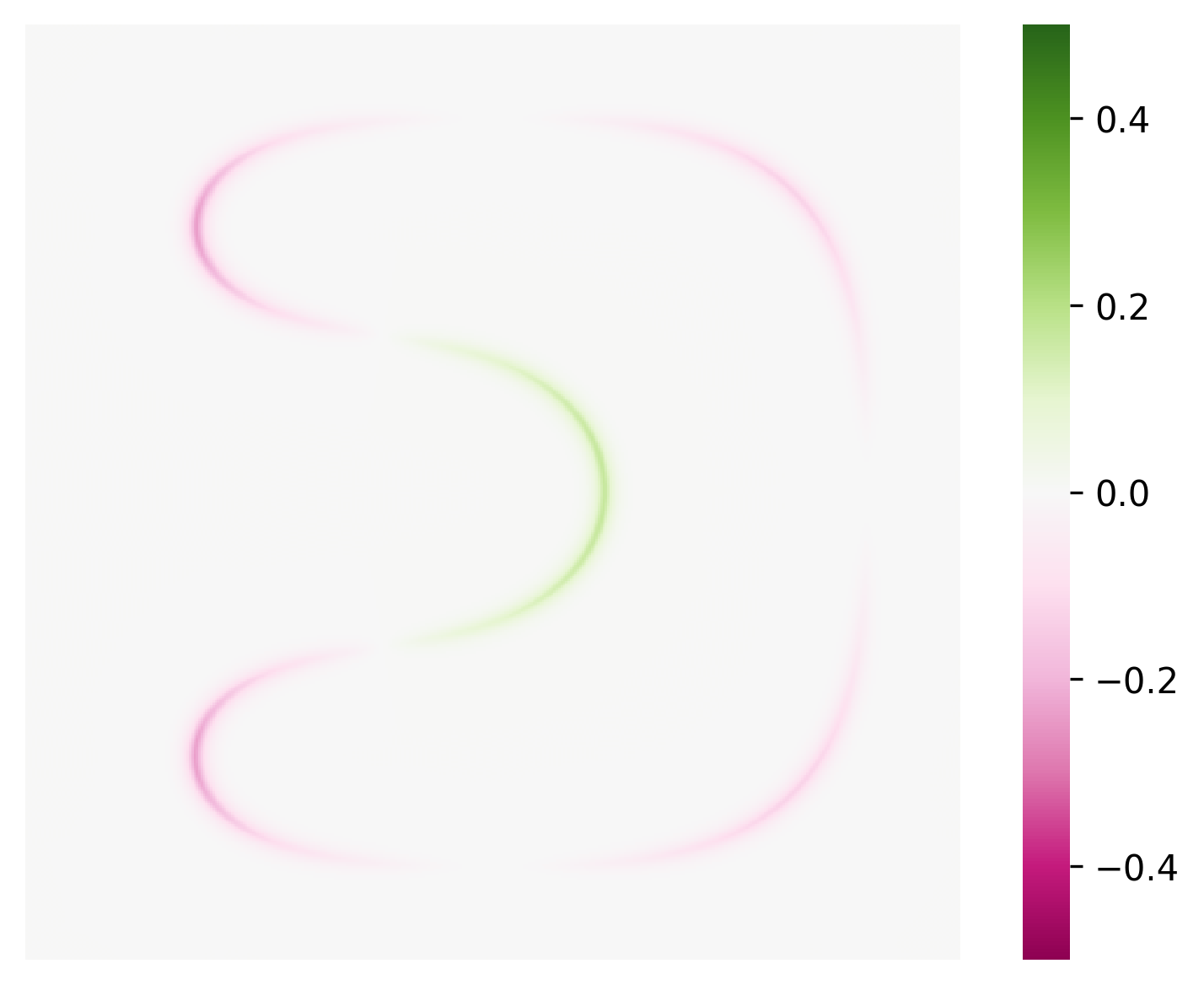}
    \includegraphics[width=0.24\linewidth]{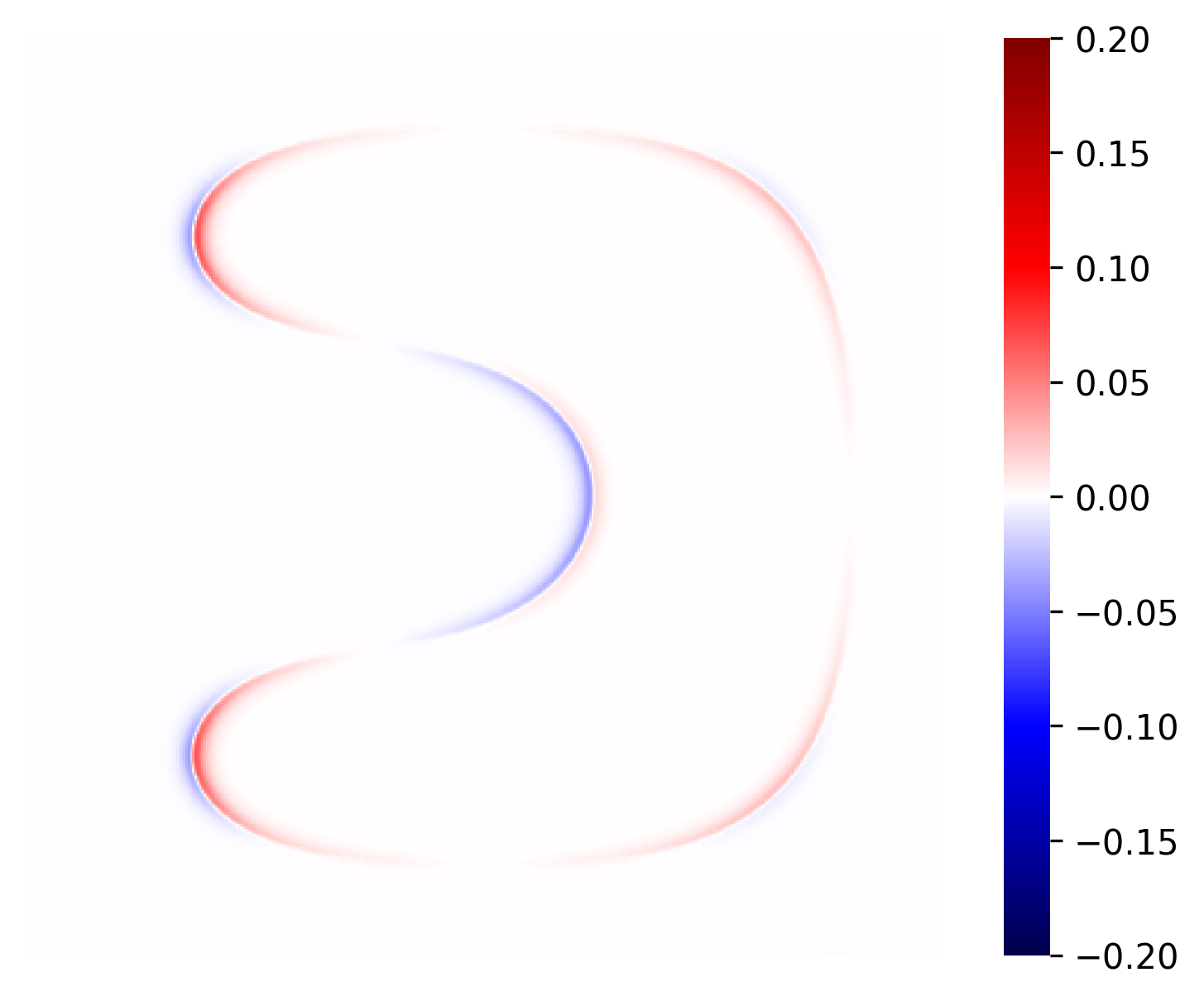}
    \includegraphics[width=0.24\linewidth]{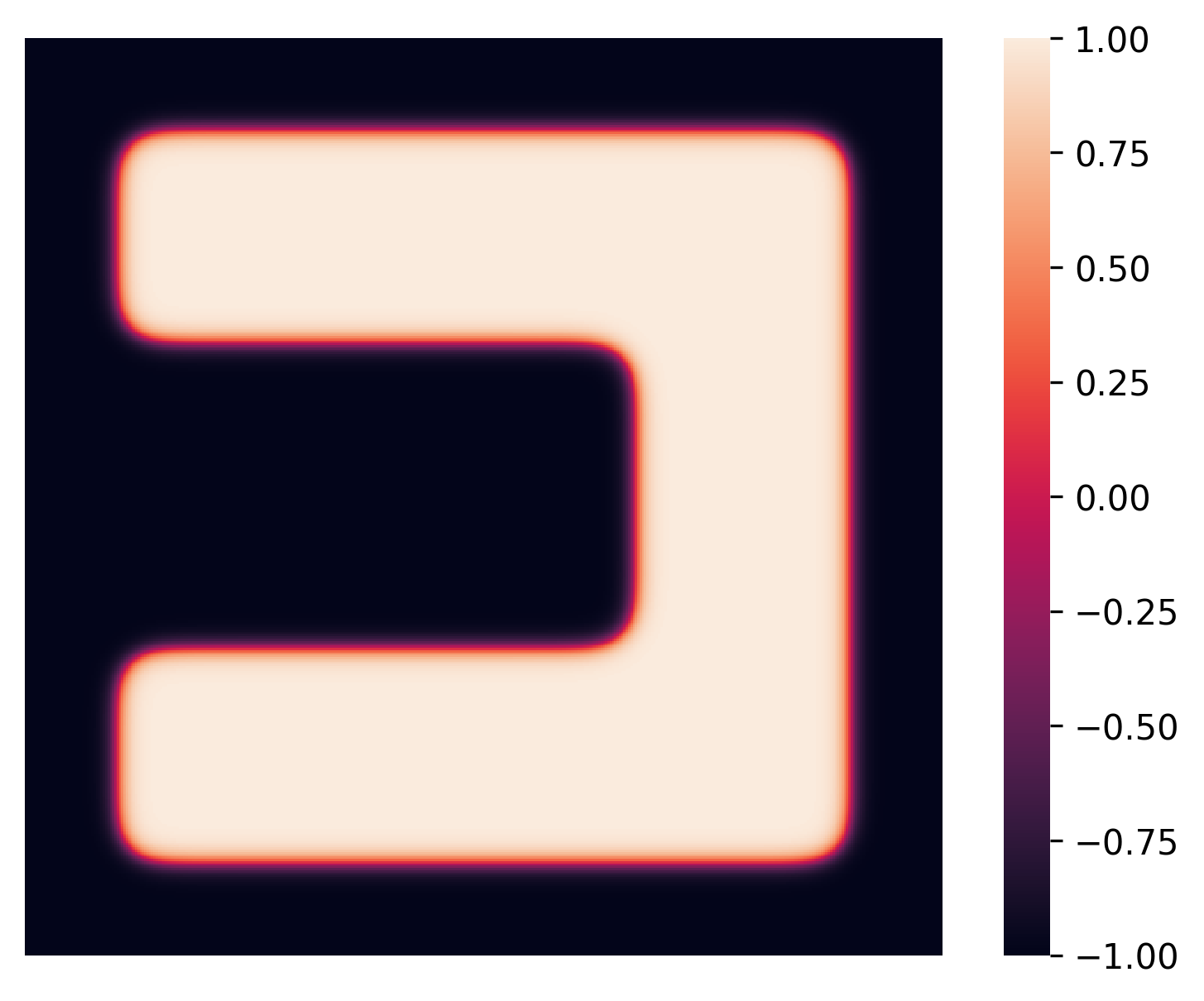}

    \includegraphics[width=0.24\linewidth]{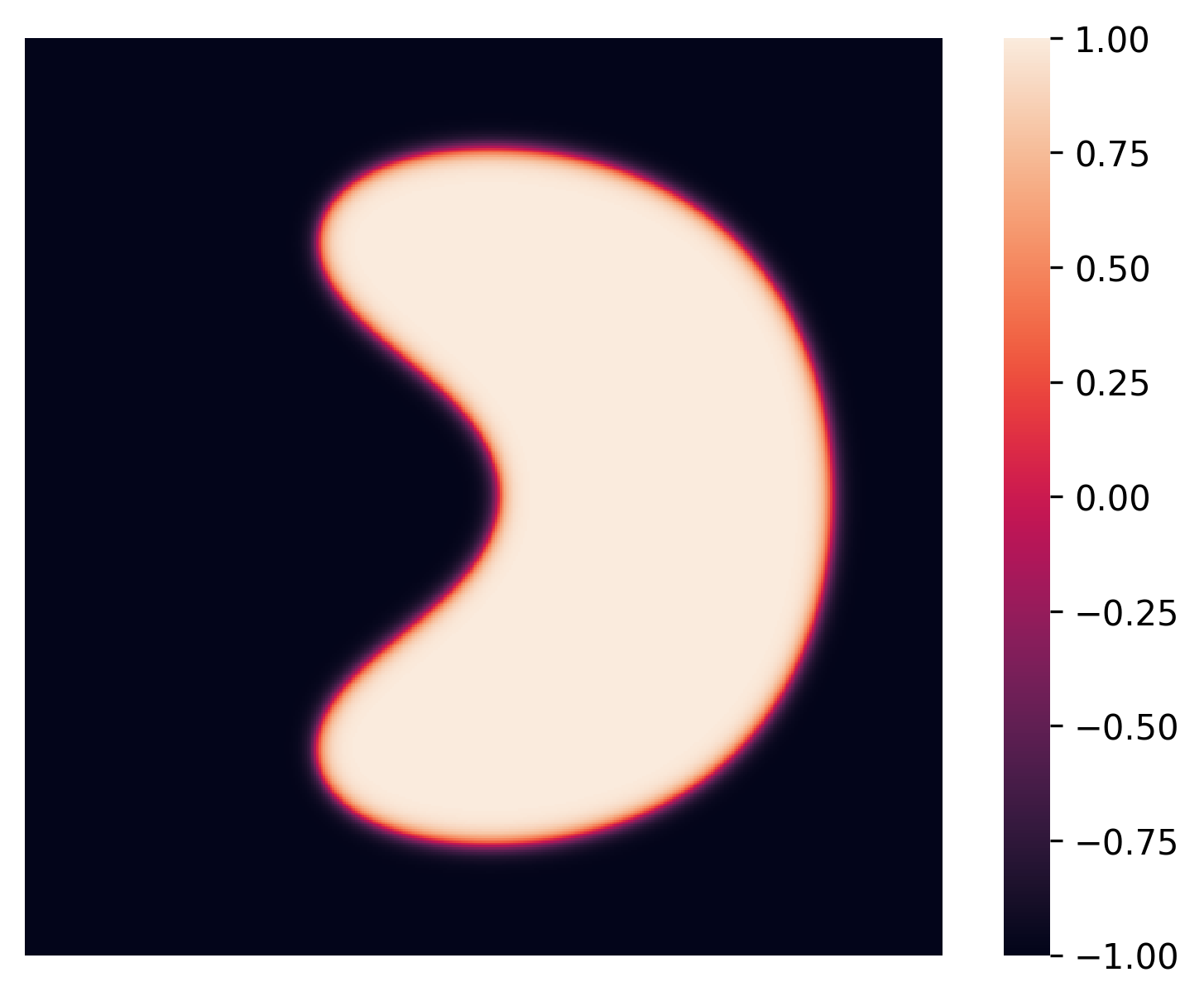}
    \includegraphics[width=0.24\linewidth]{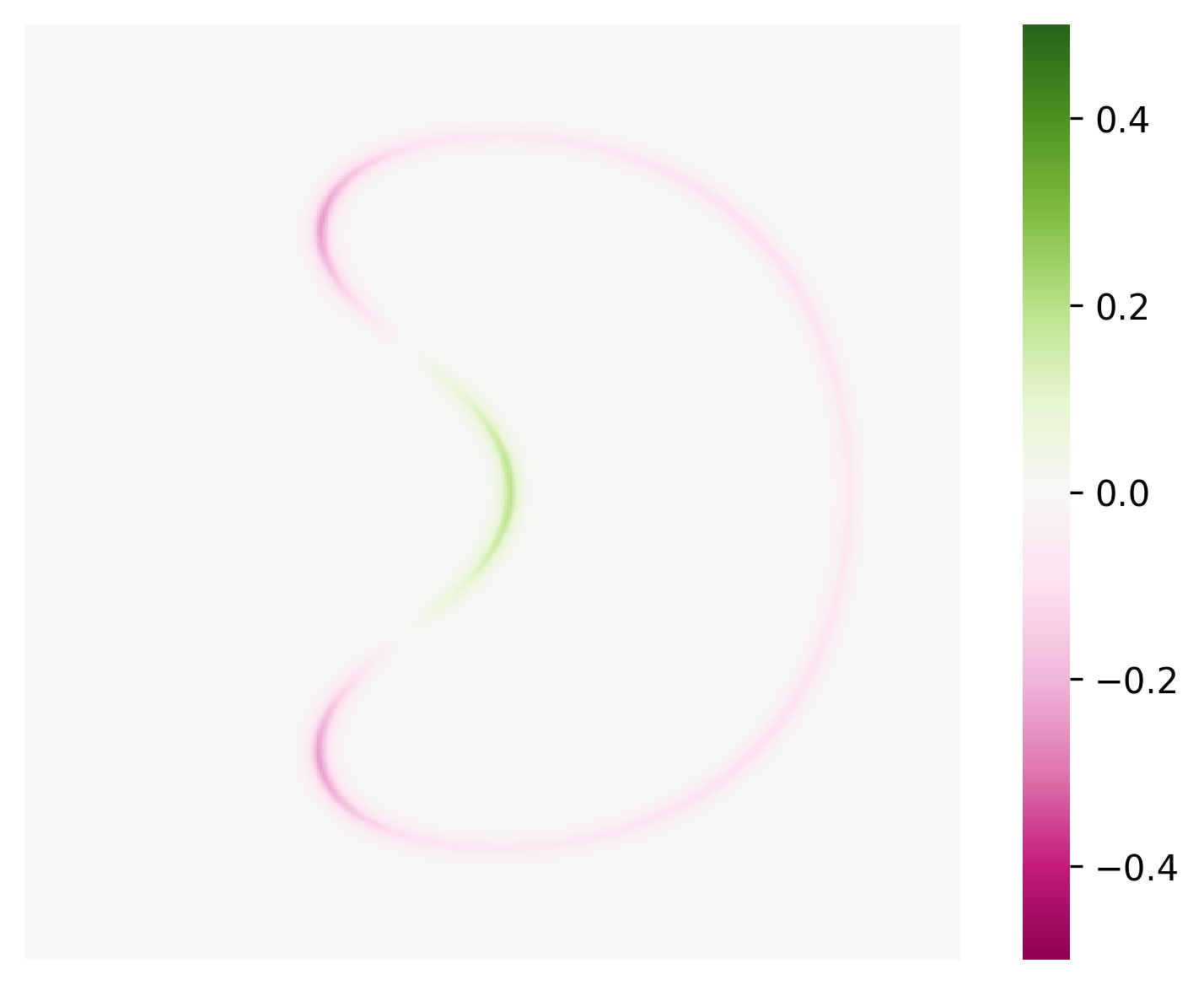}
    \includegraphics[width=0.24\linewidth]{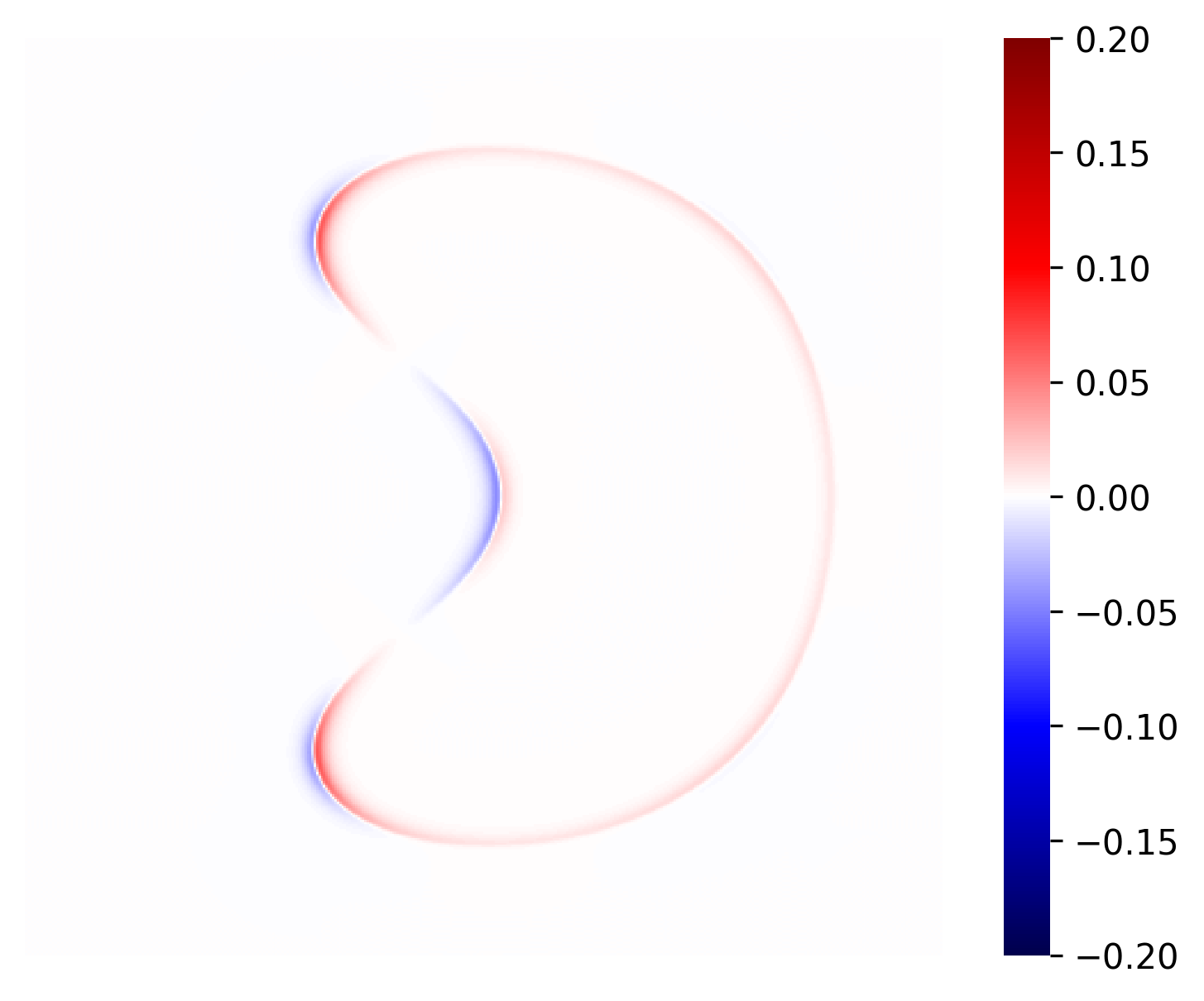}
    \includegraphics[width=0.24\linewidth]{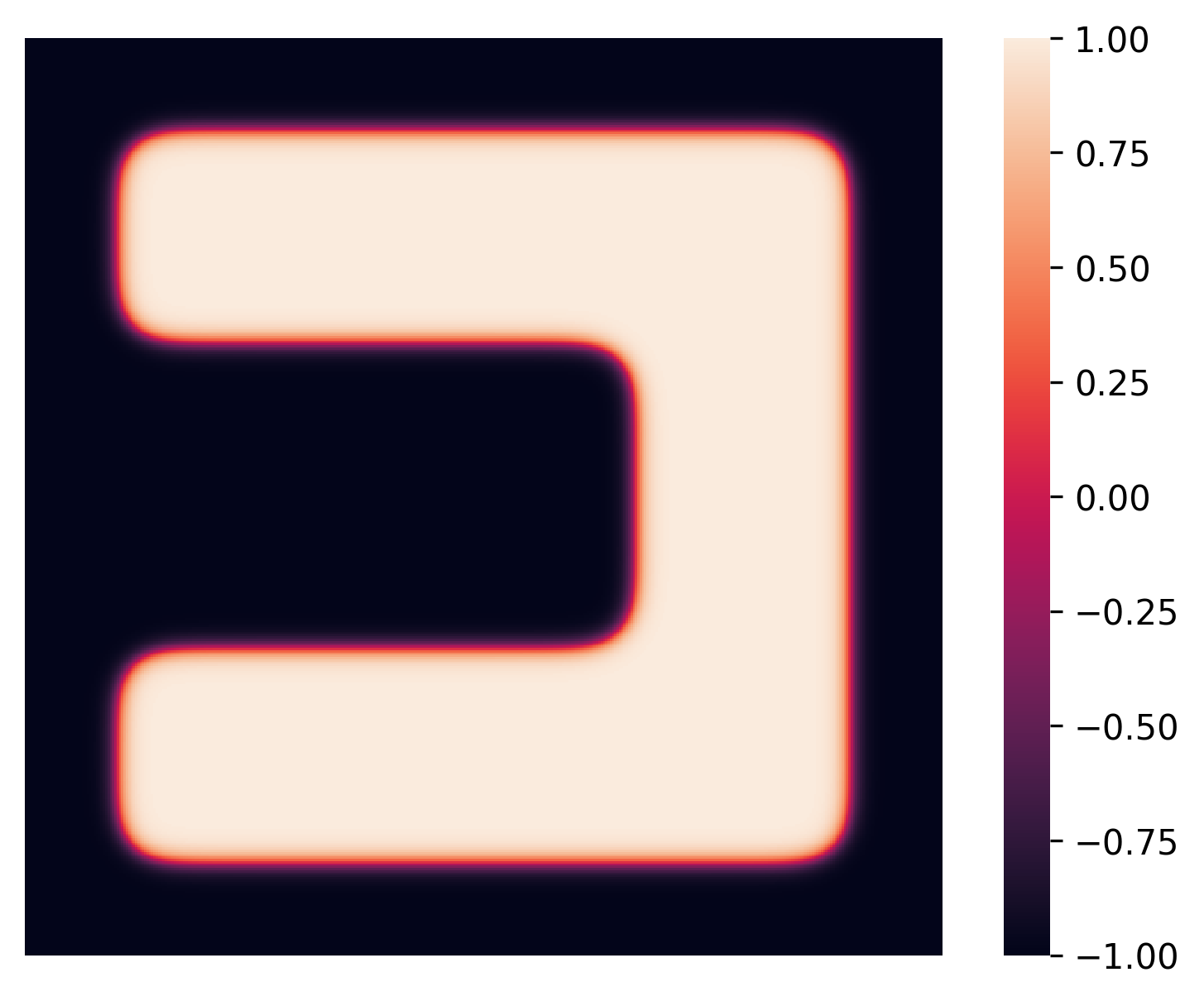}

    \includegraphics[width=0.24\linewidth]{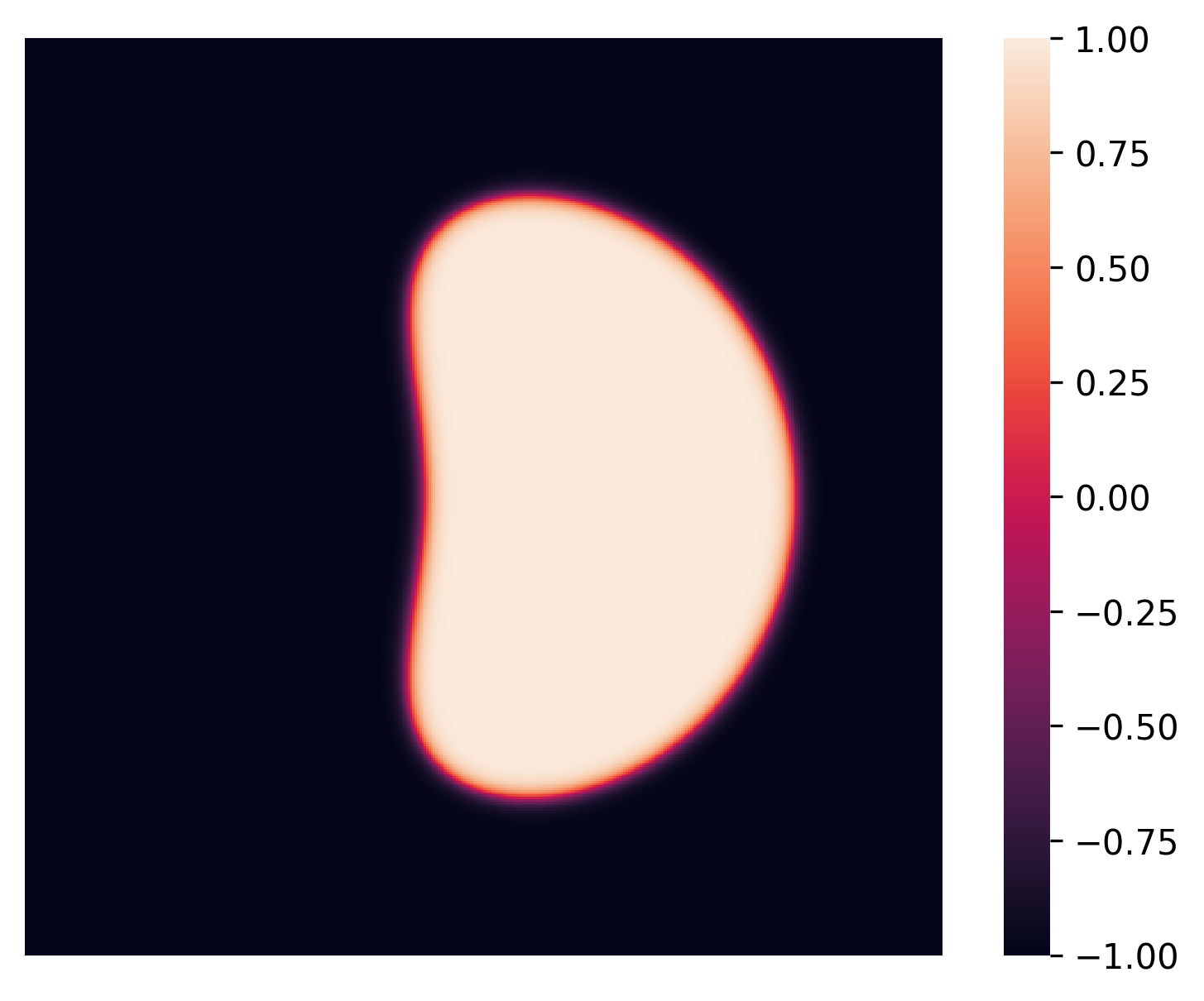}
    \includegraphics[width=0.24\linewidth]{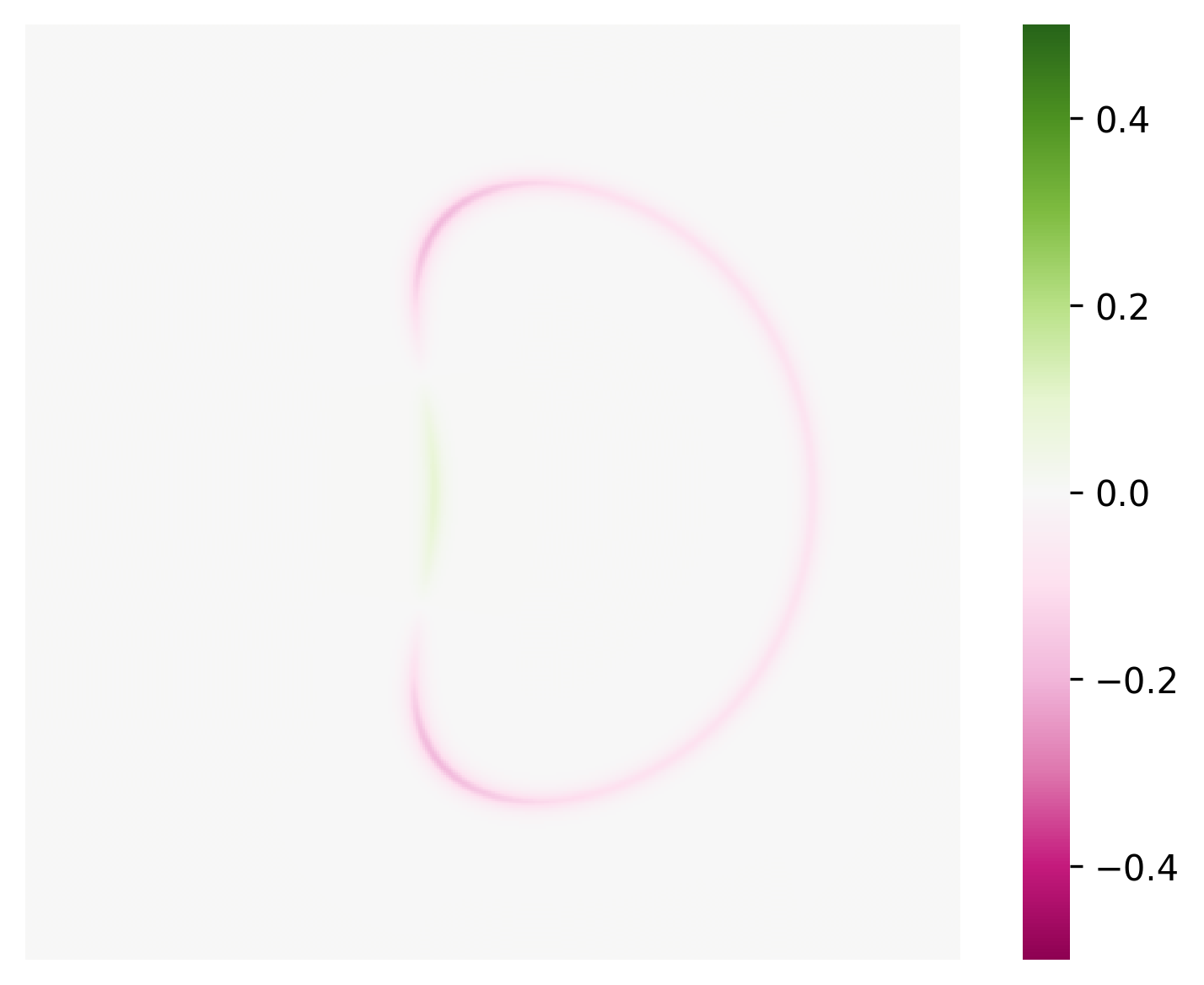}
    \includegraphics[width=0.24\linewidth]{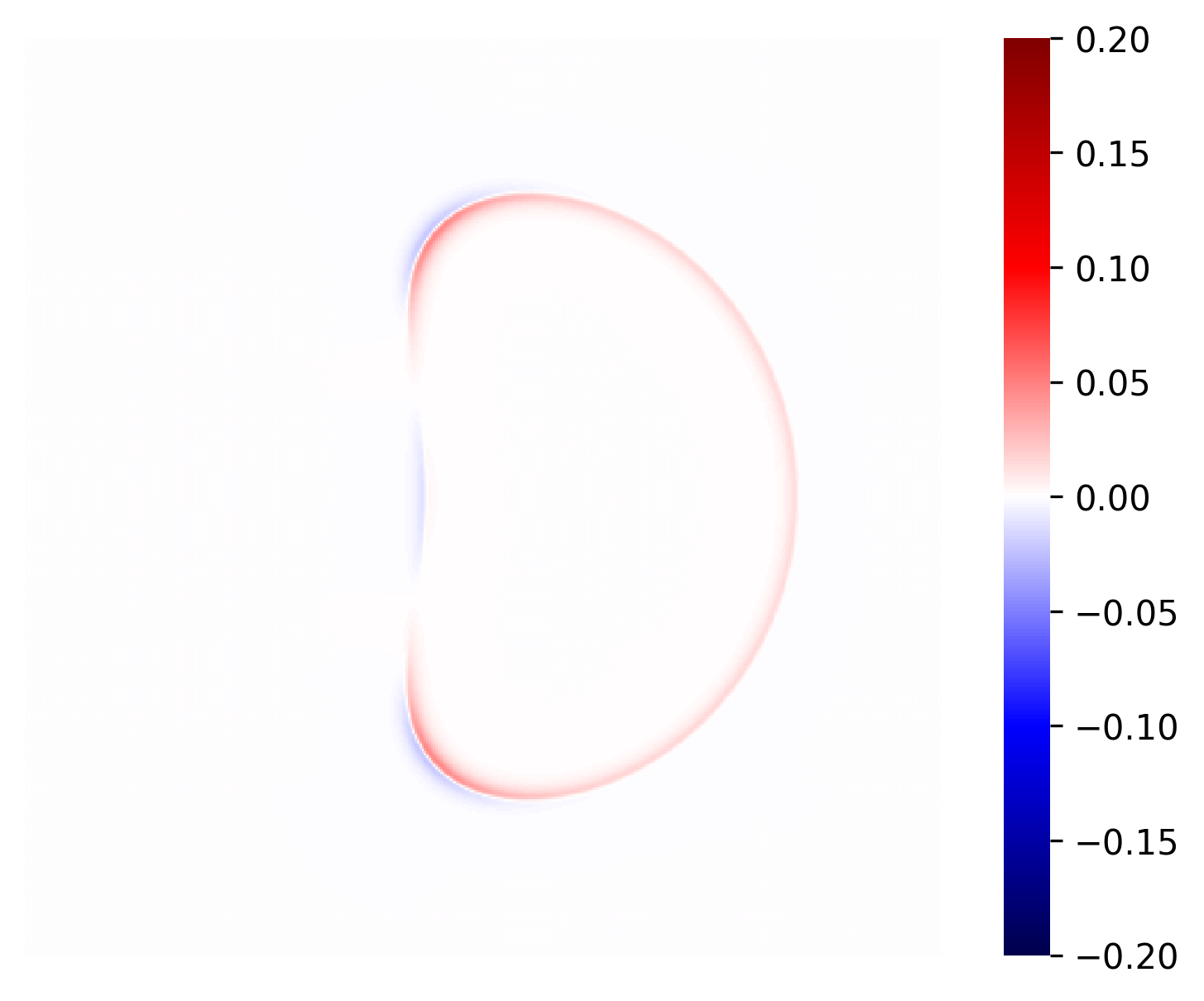}
    \includegraphics[width=0.24\linewidth]{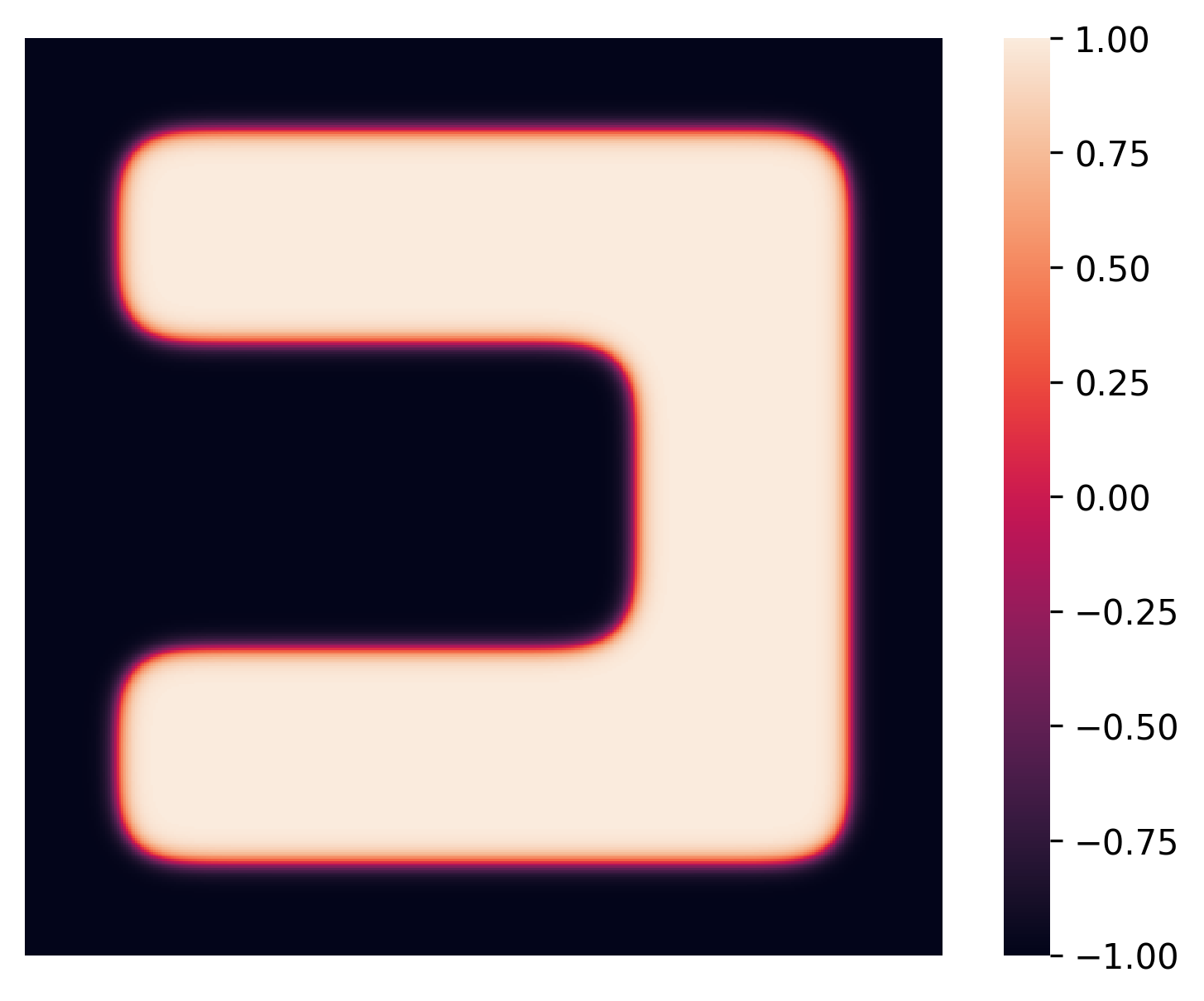}

    \includegraphics[width=0.24\linewidth]{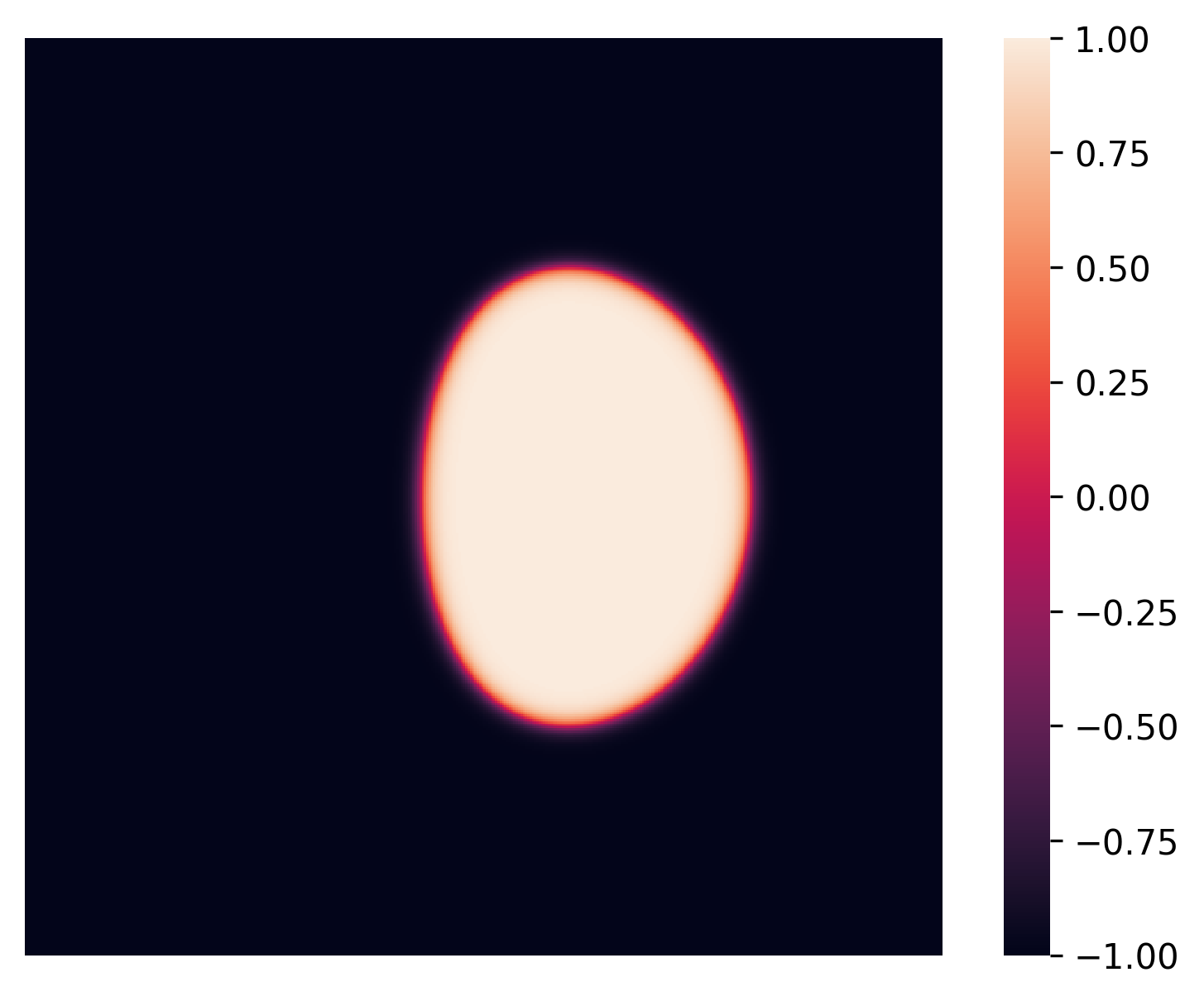}
    \includegraphics[width=0.24\linewidth]{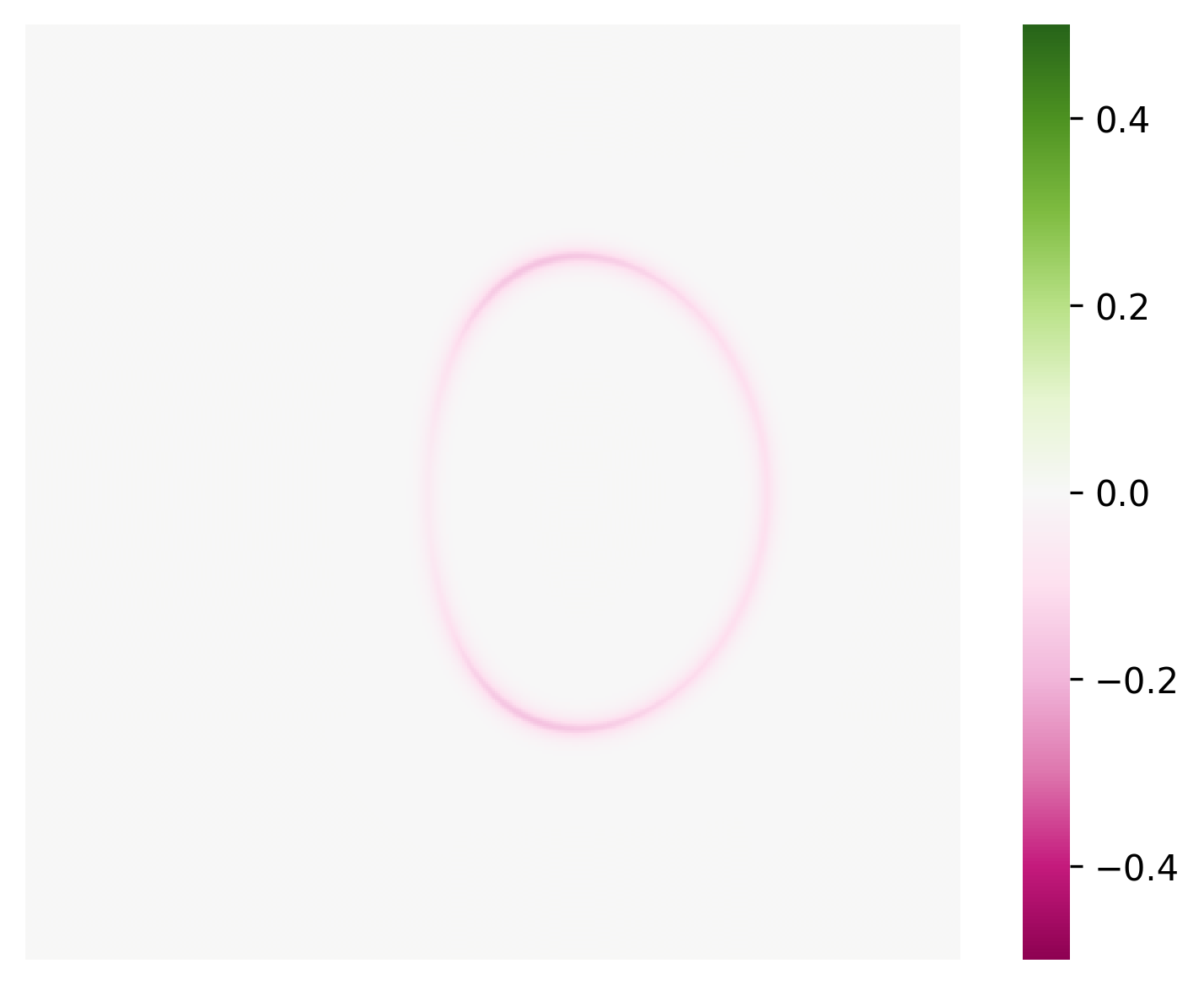}
    \includegraphics[width=0.24\linewidth]{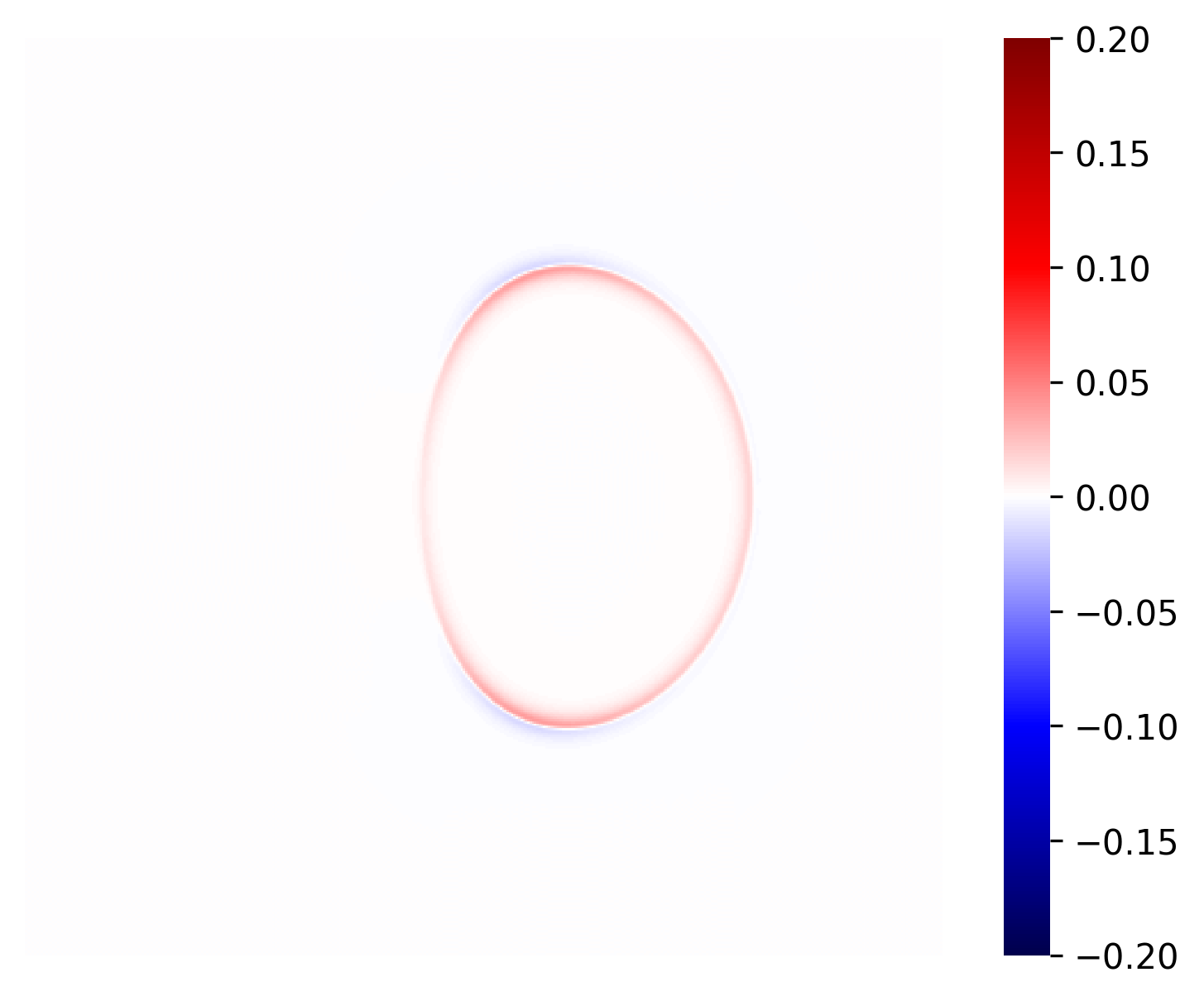}
    \includegraphics[width=0.24\linewidth]{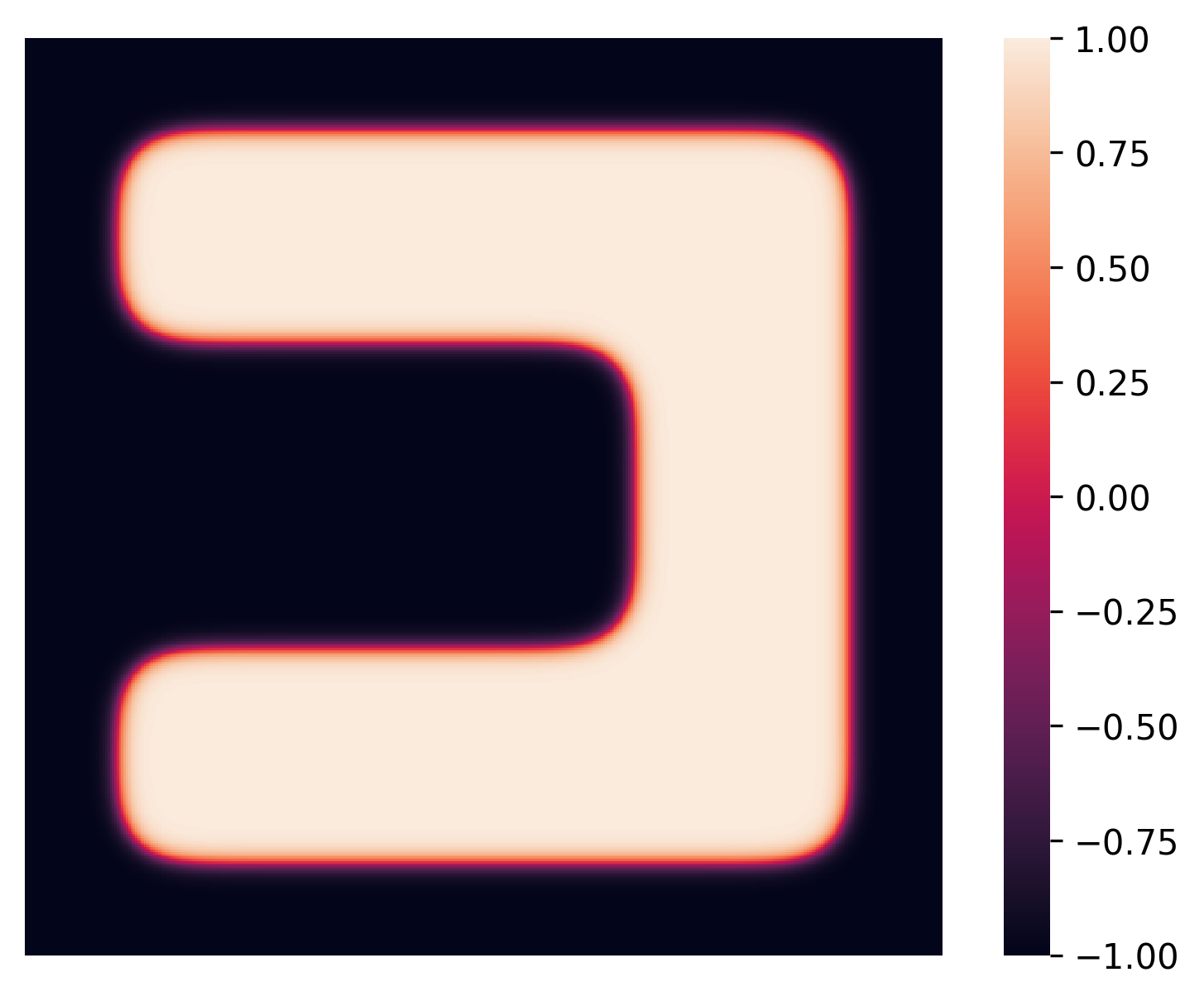}

    \includegraphics[width=0.24\linewidth]{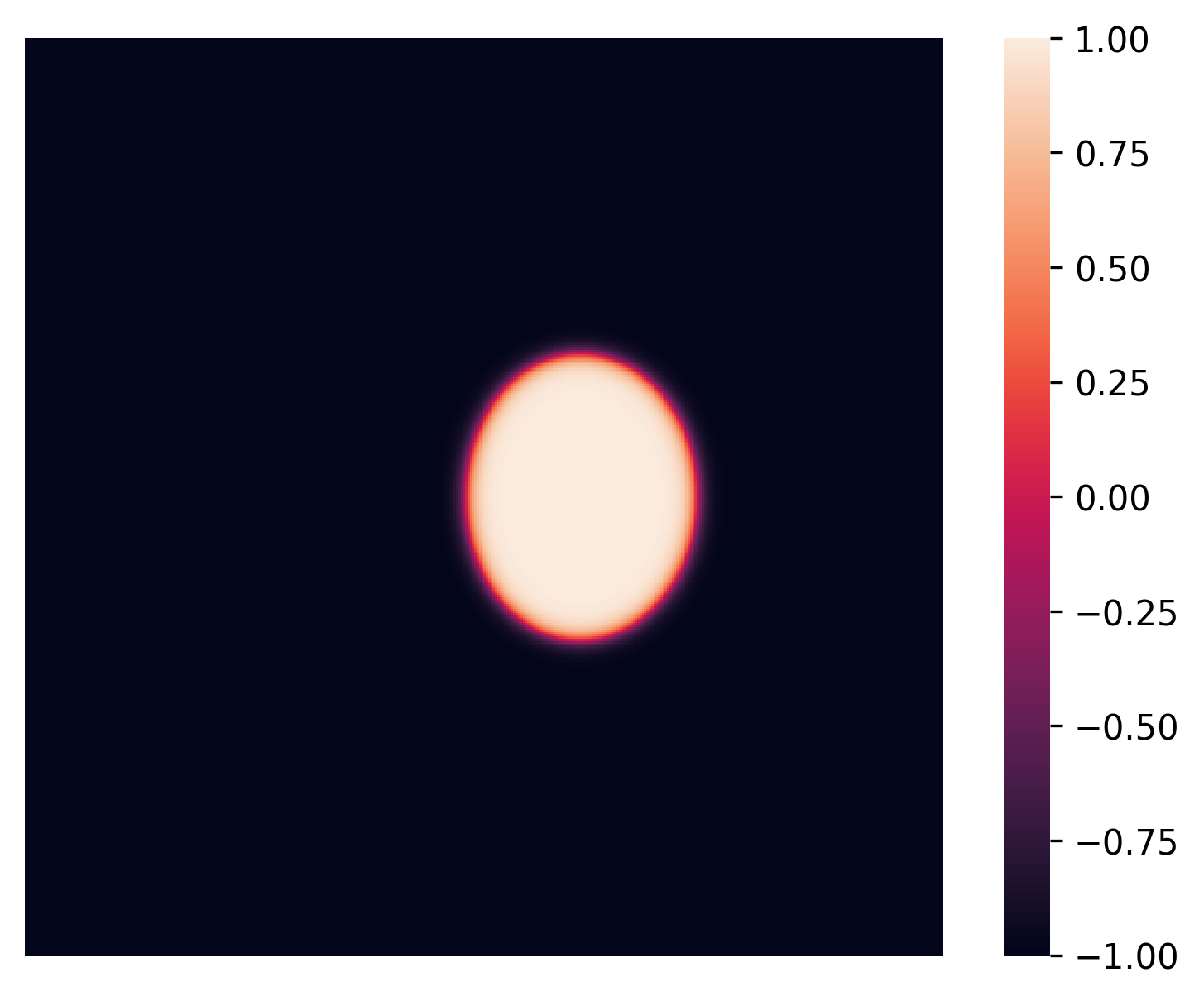}
    \includegraphics[width=0.24\linewidth]{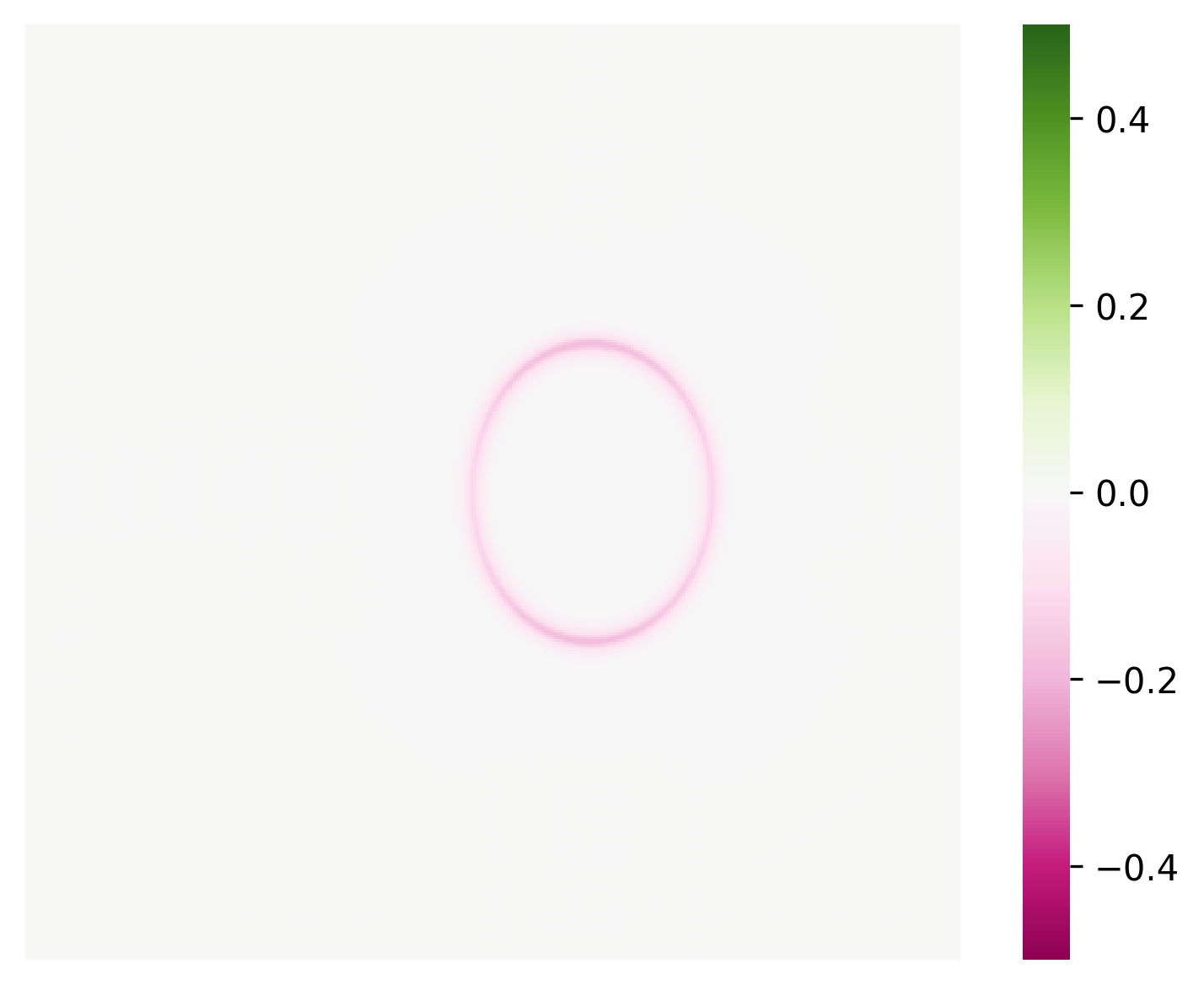}
    \includegraphics[width=0.24\linewidth]{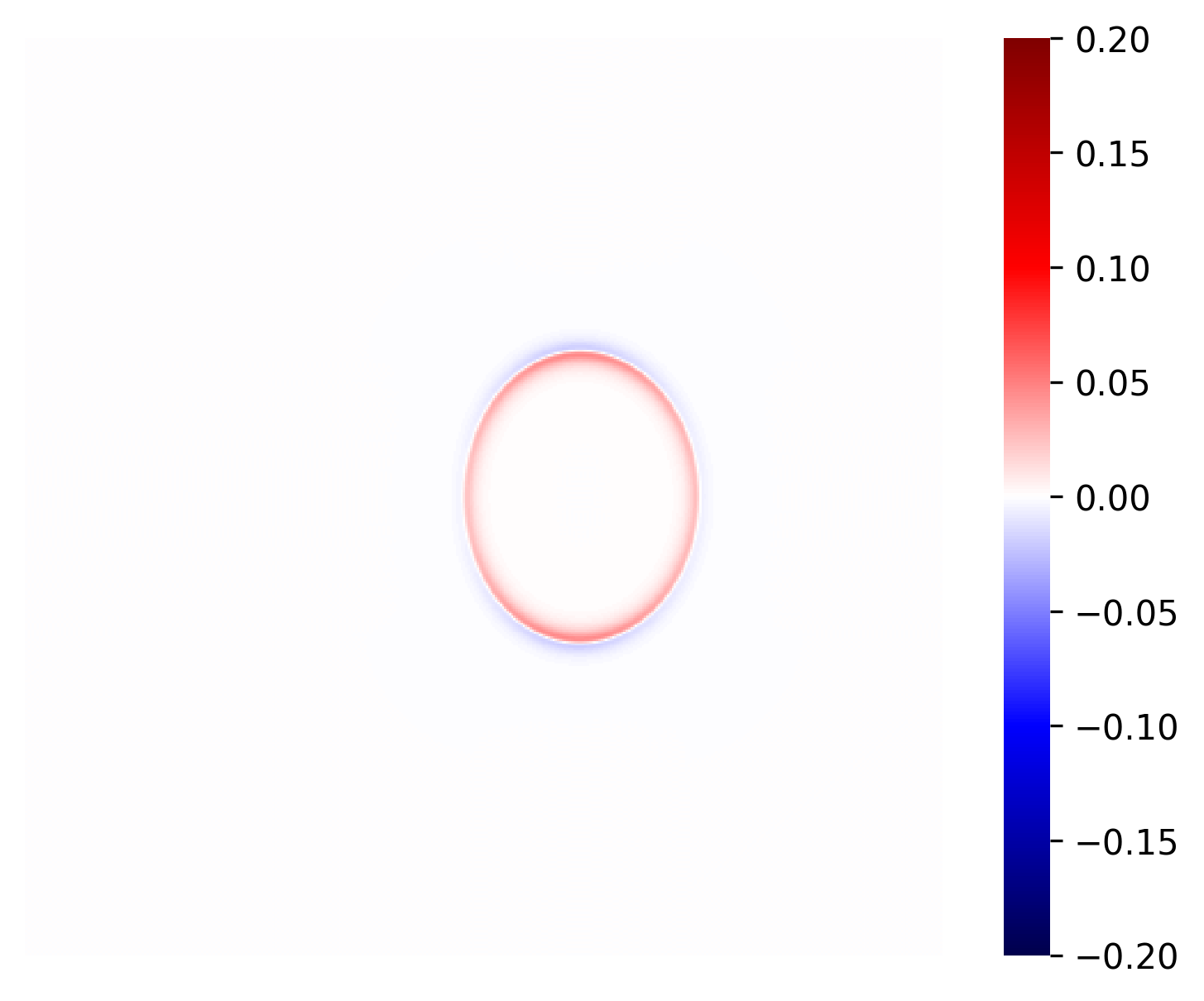}
    \includegraphics[width = .24\linewidth]{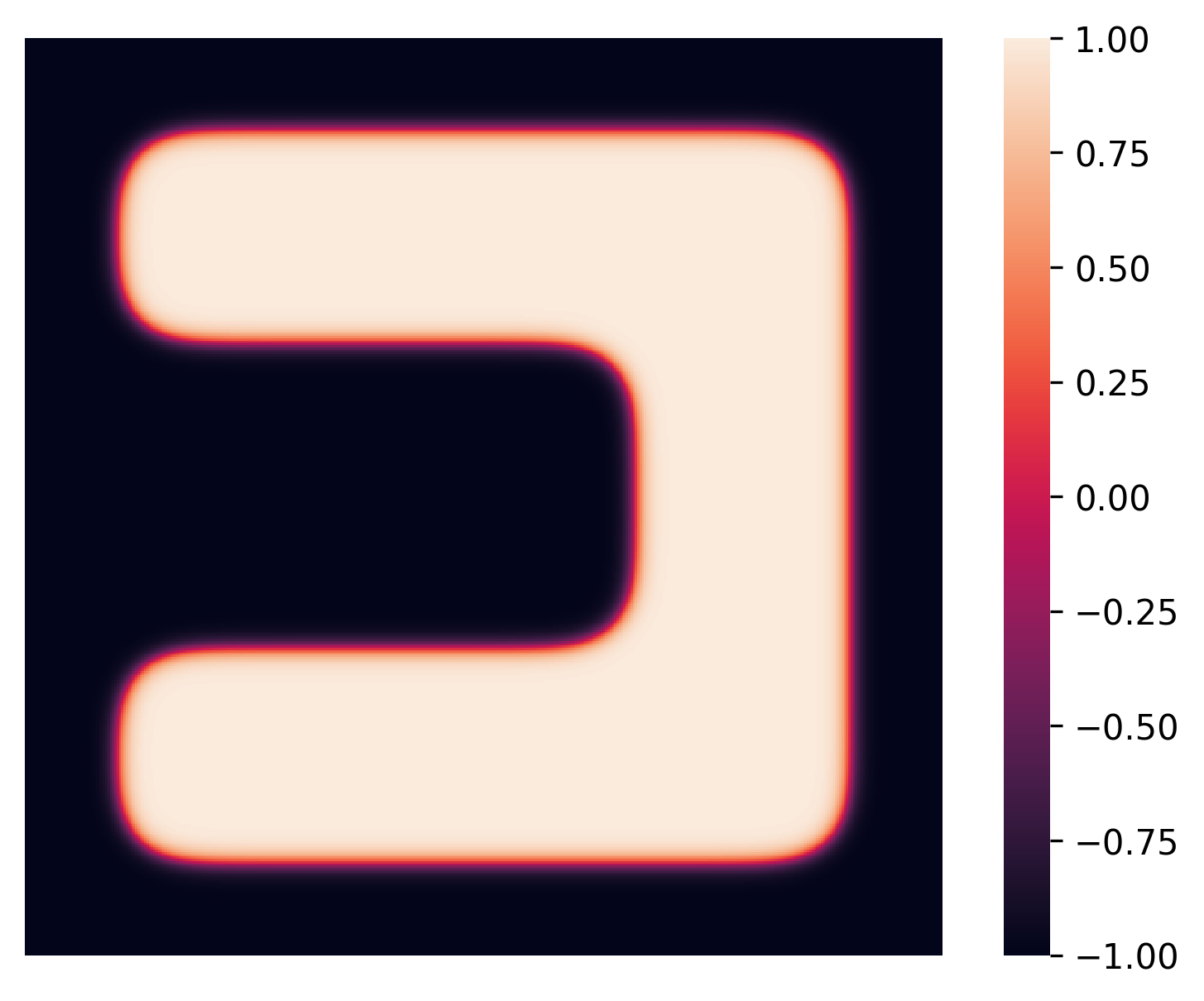}

    \includegraphics[width=0.24\linewidth]{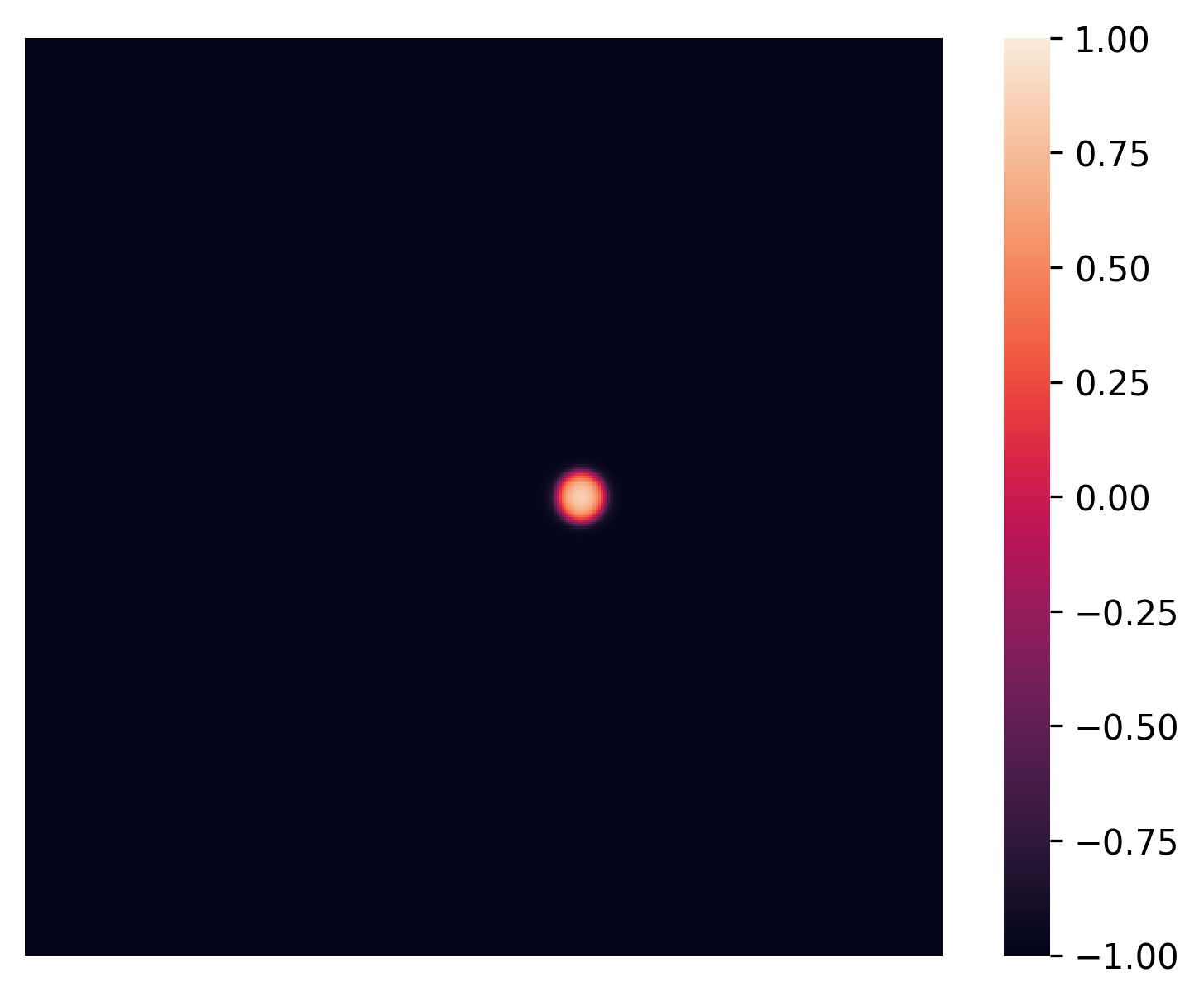}
    \includegraphics[width=0.24\linewidth]{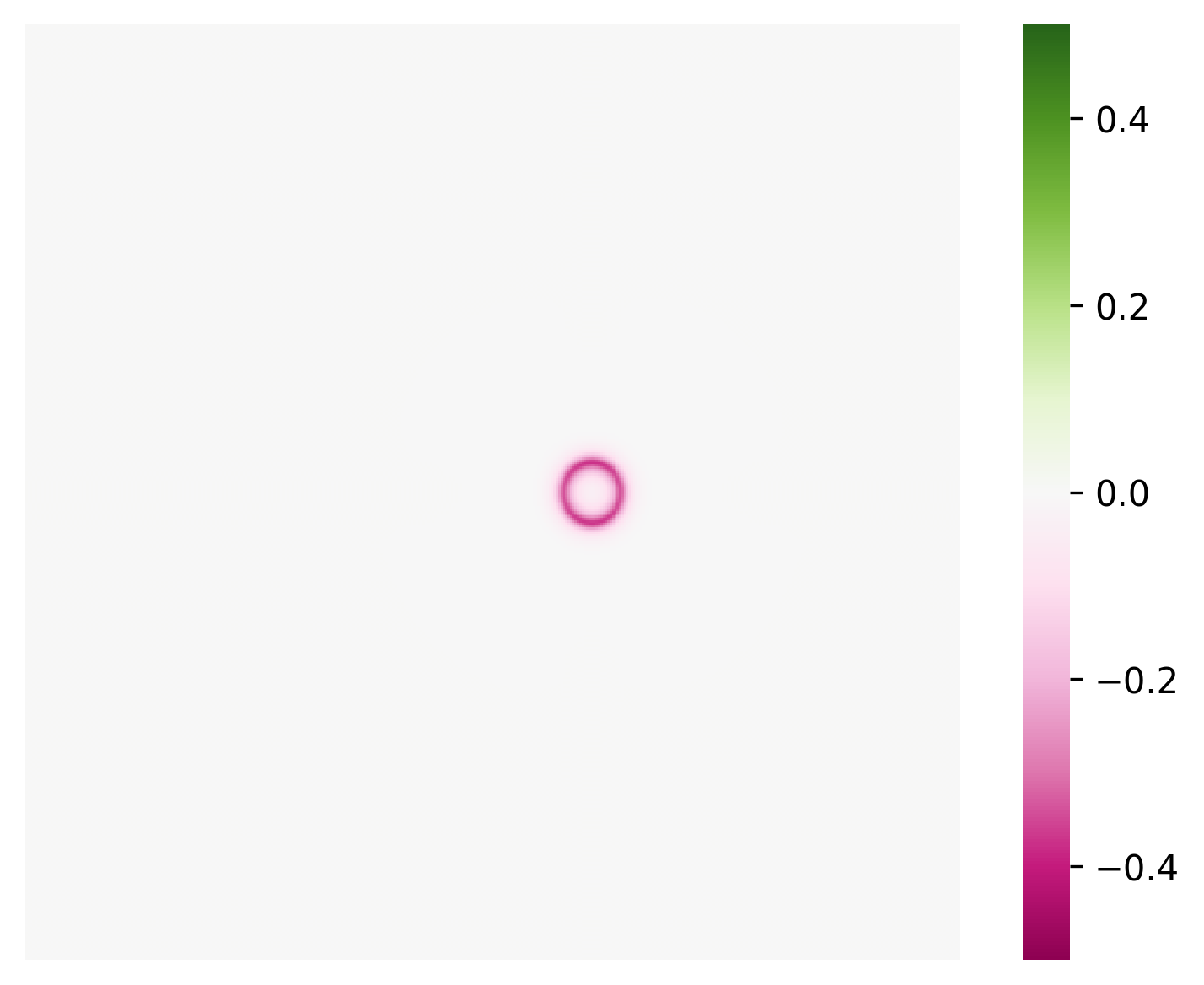}
    \includegraphics[width=0.24\linewidth]{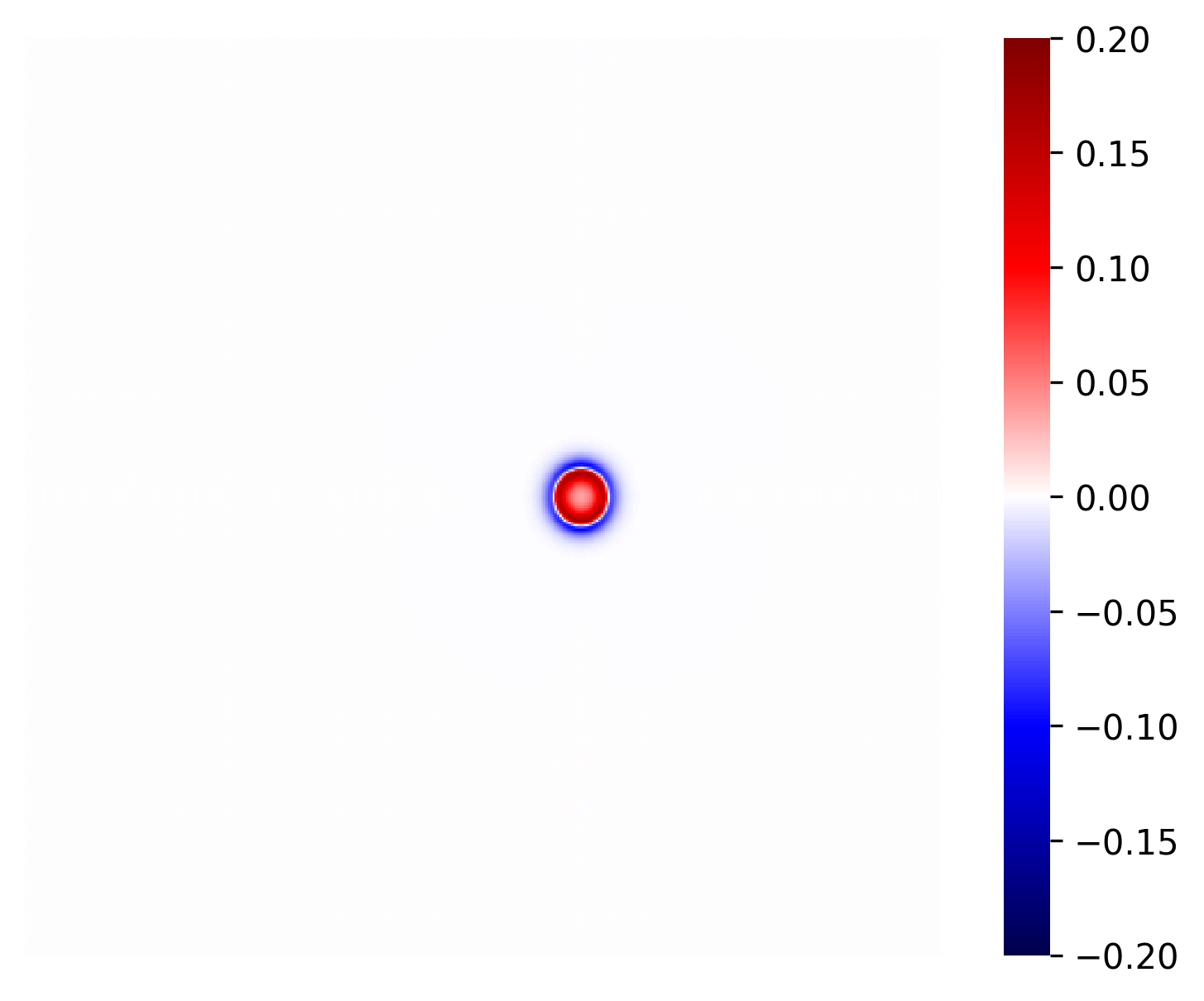}
    \includegraphics[width = .24\linewidth]{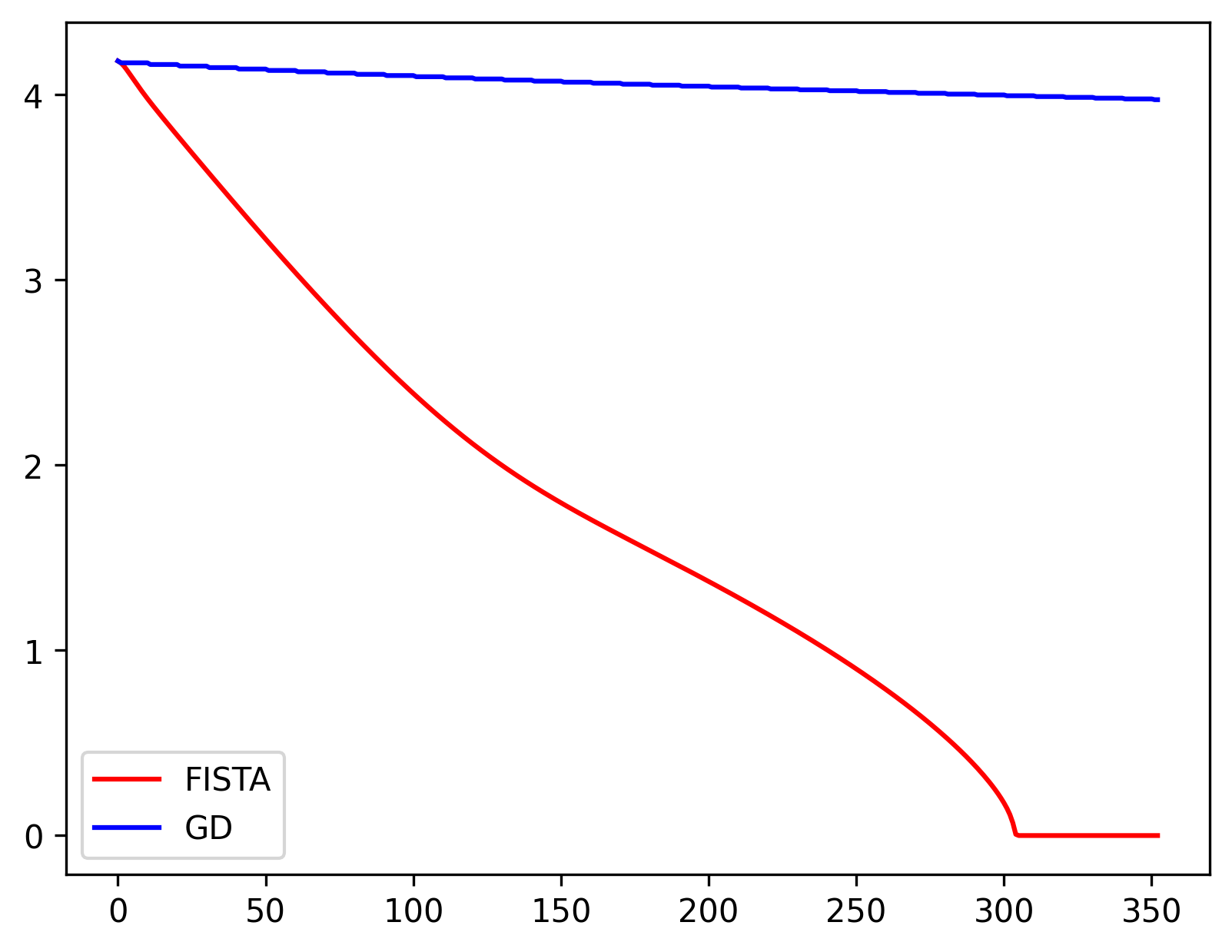}

    \caption{The FISTA-approximation to the accelerated Allen Cahn equation for $\alpha = 0.1$ with convex-concave splitting and large steps ($\tau=1$) after 50, 100, 150, 200, 250 and 300 steps (top to bottom). {Left column:} $u$, {second column:} $v= u_t$, third column: energy gradient density $\Delta u_{n+1}- \frac{2 u_{n+1} + W_{concave}'(u_n)}{\eps^2}$. For comparison, the solution $u$ for GD with step size $\tau = 1,000$ is plotted in the right column. The final image in the right column compares the energy decrease for FISTA and GD. The difference is more pronounced here than for larger $\eps = 0.03$ in Figure \ref{figure cinema vs fista pde}.\\ The energy gradient density (unsigned, sharpening the interface) accumulates in $v = u_t$ to a locally signed quantity driving the interface. Compared to the PDE, the large time-step scheme with momentum does not lead to singular interfaces with corners.}
    \label{figure large time step}
\end{figure}

\subsubsection{Surfaces in Three Dimensions}\label{section 3d}

To compare the performance of the classical convex-concave splitting gradient flow with our convex-concave splitting approach to FISTA, we consider an example with non-trivial stationary points. Since CINEMA does not seem to offer an additional speedup, we exclude it from this comparison. Thus we compute phase field approximations of triply-periodic minimal surfaces, in particular, we choose the well-known Schwarz P surface and Schoen's Gyroid. These minimal surfaces are stable under volume-preserving perturbations \cite{ROSS1992179}, they minimize area under such deformations \cite{MR1432843}. As they are not stable under general deformations, we impose volume preservation by adding a side condition in the gradient flow step (both in the classical gradient flow as well as in the gradient flow step of FISTA) so that the average of the phase field variable $u$ must vanish -- see also Appendix \ref{appendix volume-preserving}. As both Schwarz P and Gyroid split the period cell into equal volumes, this is will simply yield a volume preserving diffuse area flow.

For our 3d-implementation we use the ``Fourier Operators'' framework by L.\ Striet (Freiburg). Here, the equal volume fraction condition can be obtained by simply ensuring that the zeroth Fourier coefficient vanishes at each gradient flow step. All computations were performed on a regular grid of $N^3=256^3$ nodes covering the periodic unit cube and $\varepsilon=0.03 \approx \frac{7.5}{N}$. For the FISTA algorithm, we always first compute one regular gradient flow step with step-size $\tau^2$ to obtain a state with vanishing average.

Table \ref{tab:steps} shows a convergence analysis. For the initial condition being the nodal interpolant of 
\begin{equation} \label{eq:initSP}
u_0 = -1 + 2\cdot1_{\{(x-0.5)^2<0.3^2\}\cup\{(y-0.5)^2<0.3^2\}\cup\{(z-0.5)^2<0.3^2\}},
\end{equation}
i.e., three cylinders coarsely approximating the inside of a Schwarz P surface, we first compute a reference solution $u_\mathrm{ref}$ by executing a gradient flow with step size $\tau=0.1$ until the discrete $L^2$-distance between iterations $j$ and $k$ satisfies $||u_k-u_j||_{L^2} \le \delta_\mathrm{ref}$. We then compute the FISTA flow and classical gradient flow, starting from the same initial condition, until $||u^\mathrm{FISTA}_k-u_\mathrm{ref}||_{L^2} < \delta$ and $u^\mathrm{gf}_k < \delta$, respectively. For $\delta_\mathrm{ref} = 1.0\cdot 10^{-12}$, $\delta = 1.0\cdot 10^{-11}$, $\tau=0.4$, $\alpha=1.4$ in the FISTA algorithm, convergence was achieved after 93 steps (including the aforementioned initial step). We note that for these parameters, the $L^2$-difference to the reference solution does not increase again afterwards. Noting the values in Table \ref{tab:steps}, we see that FISTA provides a speedup by a factor larger than 5 compared to the classical convex-concave splitting gradient flow with the same stopping criterion -- independently of the step size used for the classical algorithm. We also remark that -- as expected from the analysis in \cite{related_splitting_paper} -- the convex concave splitting algorithm does not yield convergence to the minimizer faster when the step size is increased beyond a certain threshold.

In Figures \ref{fig:schp} and \ref{fig:gyr}, we illustrate the convergence of the FISTA flow to the Schwarz P surface (for the initial condition in equation \eqref{eq:initSP}) and to the Gyroid (for the initial condition $u_0 = -1 + 2\cdot 1_{\{\cos(2\pi(x-0.5)\sin(2\pi(y-0.5) + \cos(2\pi(y-0.5)\sin(2\pi(z-0.5) + \cos(2\pi(z-0.5)\sin(x-0.5) < 0\}}$), respectively. Noting that the standard approximation  of the Gryoid used as initial condition here is nearly indistinguishable from the ``real'' minimal surface, the FISTA algorithm essentially only introduces the optimal transition layer starting from a step function. The FISTA parameters in both these experiments were again $\tau=0.4$ and $\alpha=1.4$.

\begin{table}
\begin{center}
\begin{tabular}{r|c|c|c|c|c|c}
$\tau$ & $1.0 \cdot 10^{-4}$ & $2.0 \cdot 10^{-4}$ & $4.0 \cdot 10^{-4}$ & $8.0 \cdot 10^{-4}$ & $1.6 \cdot 10^{-3}$ & $3.2 \cdot 10^{-3}$ \\ 
\hline
Steps  & 2151 & 1310 & 891 & 681 & 576 & 524   \\ 
\arrayrulecolor{white}\hline\hline\hline\hline\hline \arrayrulecolor{black}
$\tau$ & $6.4 \cdot 10^{-3}$ & $1.28 \cdot 10^{-2}$ & $2.56 \cdot 10^{-2}$ & $5.12 \cdot 10^{-2}$ & $1.024 \cdot 10^{-1}$ & $2.048 \cdot 10^{-1}$ \\
\hline
Steps &   497 & 484 & 478 & 474 & 473 & 472 \\
\arrayrulecolor{white}\hline\hline\hline\hline\hline \arrayrulecolor{black}
$\tau$ & $4.096 \cdot 10^{-1}$ & $8.192 \cdot 10^{-1}$ & 1.6384 & 3.2768 & 6.5536 & 13.1072 \\ \hline
Steps &   472 & 471 & 471 & 471 & 471 & 471 \\
\arrayrulecolor{white}\hline\hline\hline\hline\hline \arrayrulecolor{black}
$\tau$ & $1.0\cdot 10^2$ & $1.0\cdot 10^3$ & $1.0\cdot 10^4$ & $1.0\cdot 10^5$  \\ \cline{1-5}
Steps &   471 & 471 & 471 & 471  \\
\end{tabular}
\vspace{4mm}

\caption{Number of time steps until convergence to the phase field approximation of a Schwarz P surface has been reached versus the step size for the classical gradient flow algorithm. For comparison, the FISTA algorithm requires only 93 steps for a step size $\tau=0.4$ and friction $\alpha=1.4$.} \label{tab:steps}
\end{center}
\end{table}

\begin{figure}
\begin{center}
\includegraphics[trim={4.5cm 4.5cm 4.5cm 4.5cm},clip,width=0.32\linewidth]{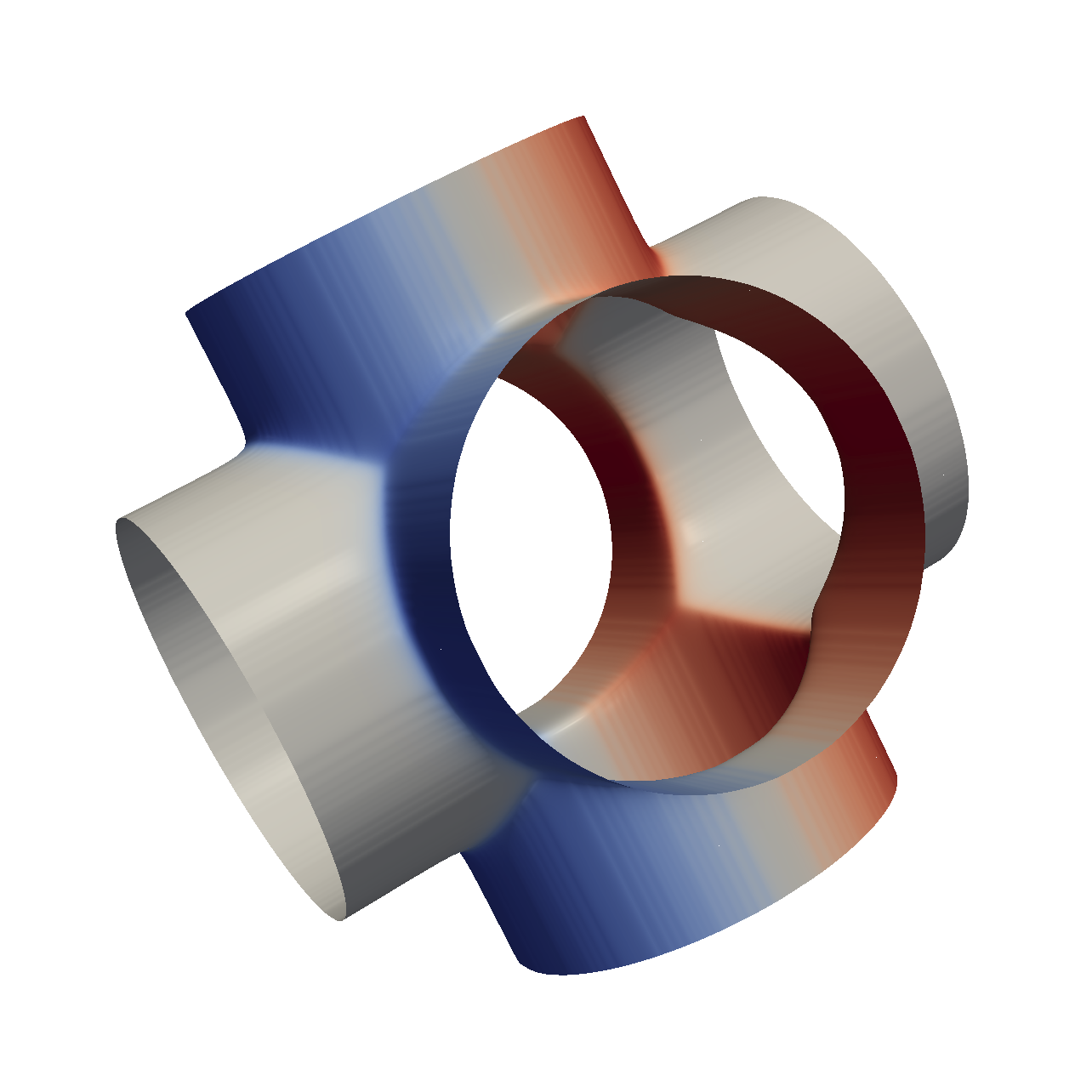}
\includegraphics[trim={4.5cm 4.5cm 4.5cm 4.5cm},clip,width=0.32\linewidth]{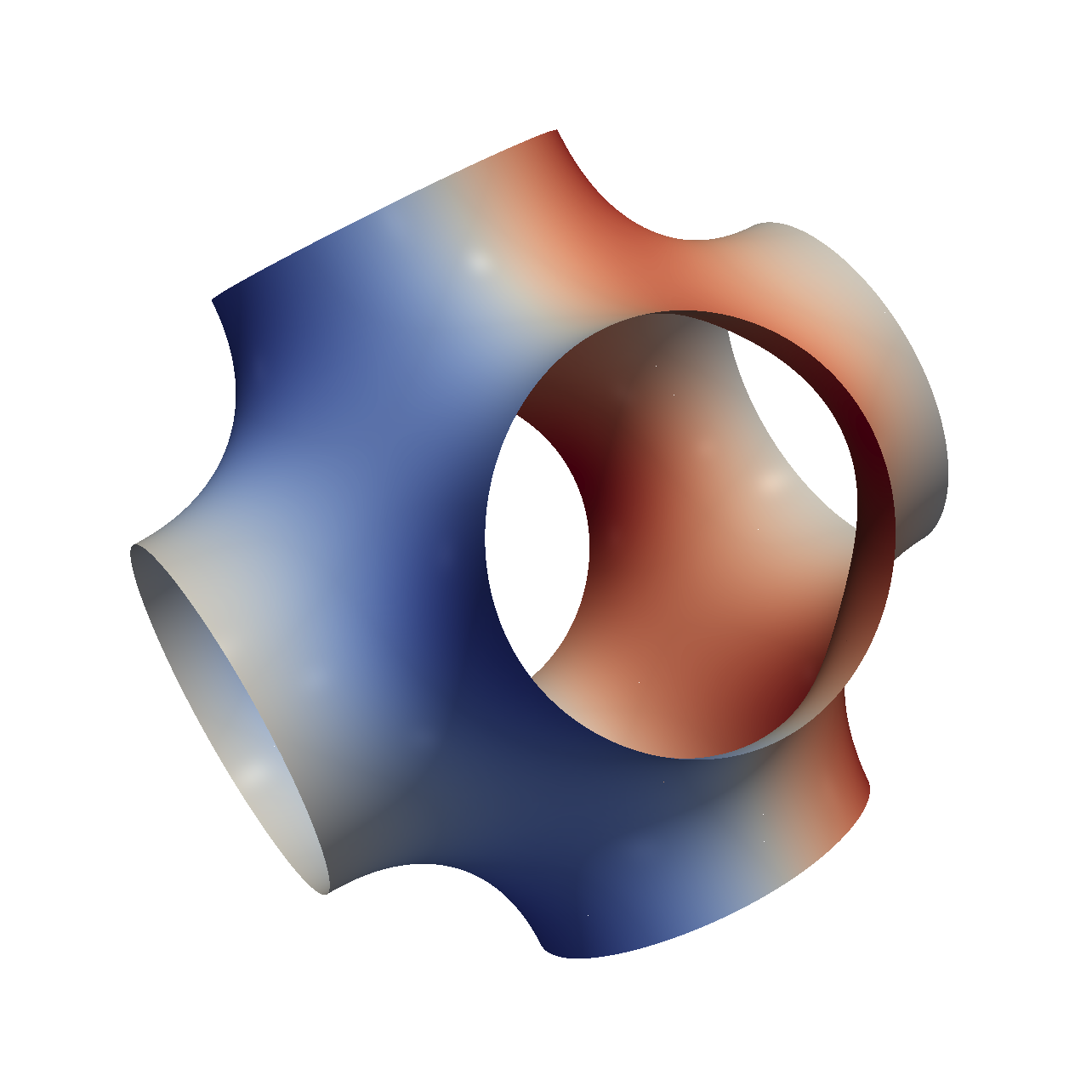}
\includegraphics[trim={4.5cm 4.5cm 4.5cm 4.5cm},clip,width=0.32\linewidth]{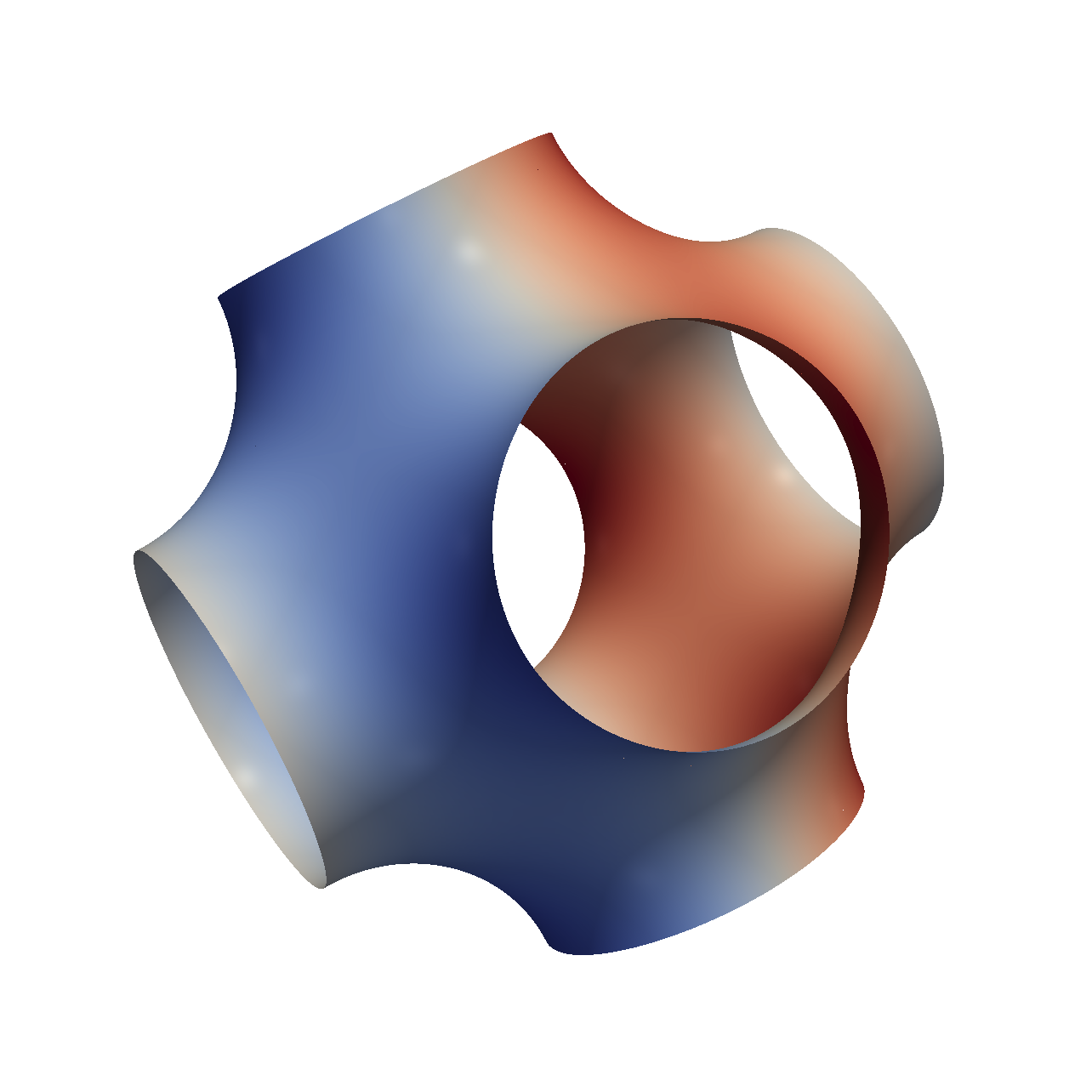}
\caption{Convergence of the FISTA flow to a Schwarz P minimal surface. Shown is the zero-level set of the phase field function, with color indicating the $x$-value of the surface normal. From left to right, time step 1 (immediately after the initial classical gradient flow step), 3, and 21 are shown. At time step 1, some artifacts from the discrete initialization are still visible. Between time step 4 and 21, there is no visually discernible difference. \label{fig:schp}}
\end{center}
\end{figure}
\begin{figure}
\begin{center}
\includegraphics[trim={4.5cm 4.5cm 4.5cm 4.5cm},clip,width=0.32\linewidth]{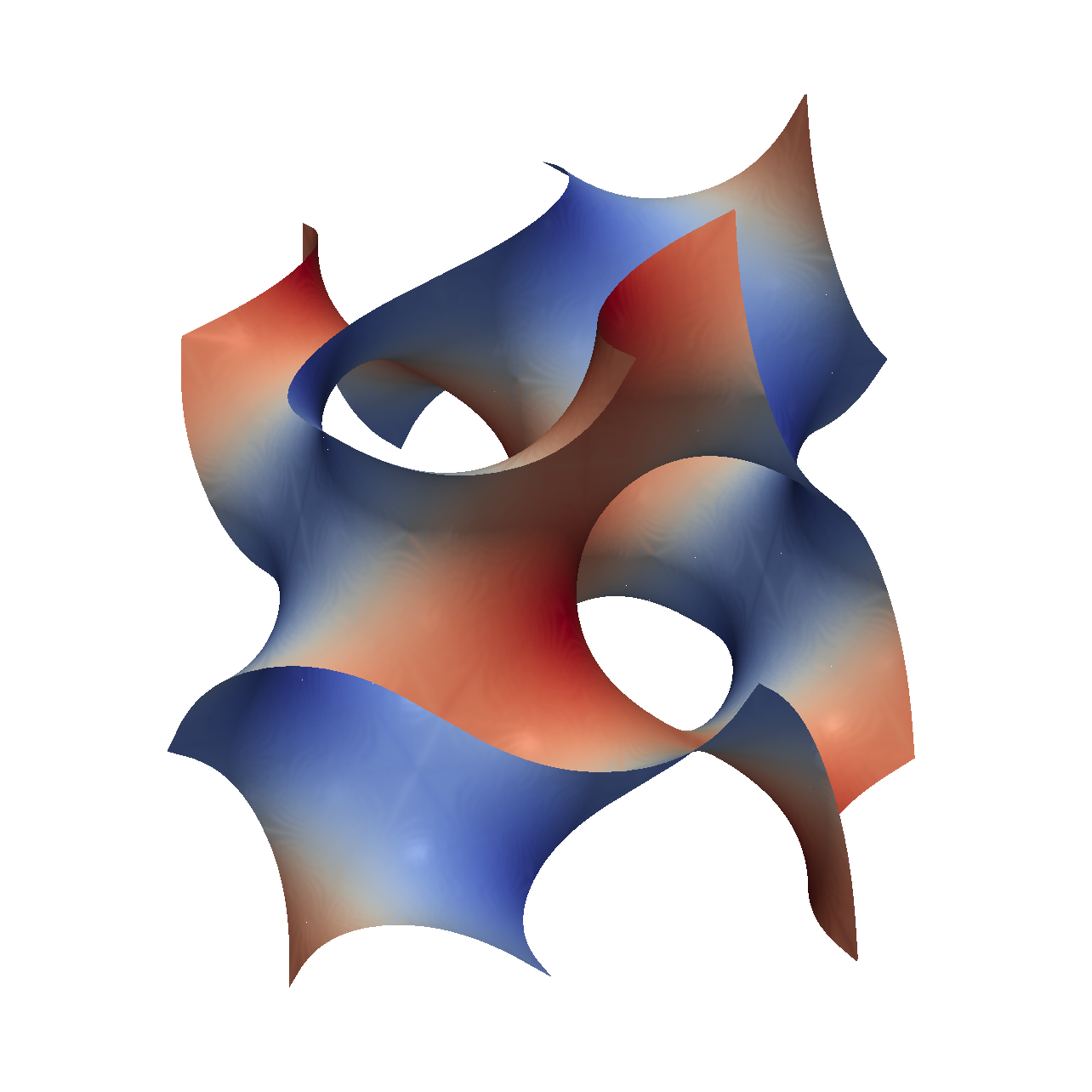}
\includegraphics[trim={4.5cm 4.5cm 4.5cm 4.5cm},clip,width=0.32\linewidth]{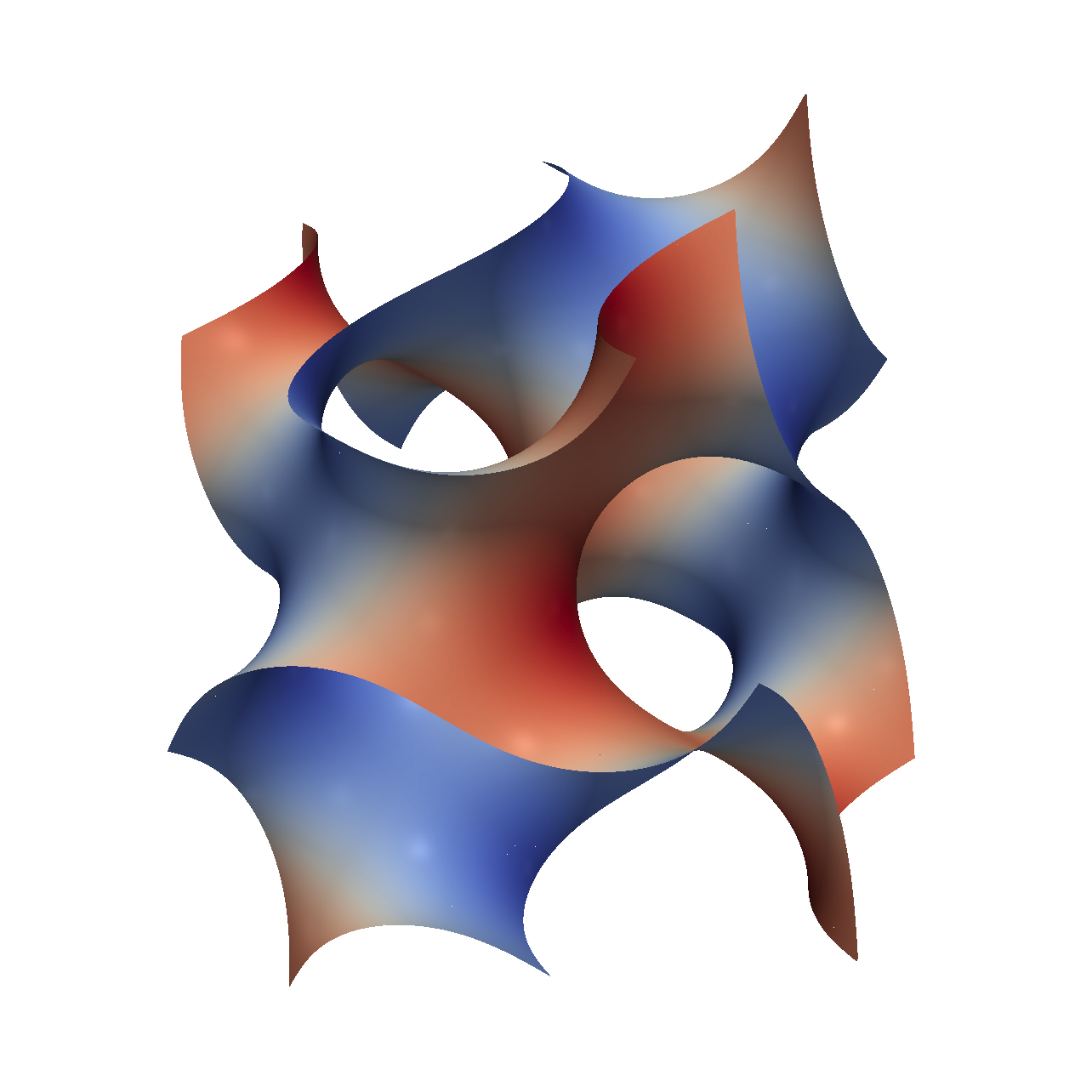}
\includegraphics[trim={4.5cm 4.5cm 4.5cm 4.5cm},clip,width=0.32\linewidth]{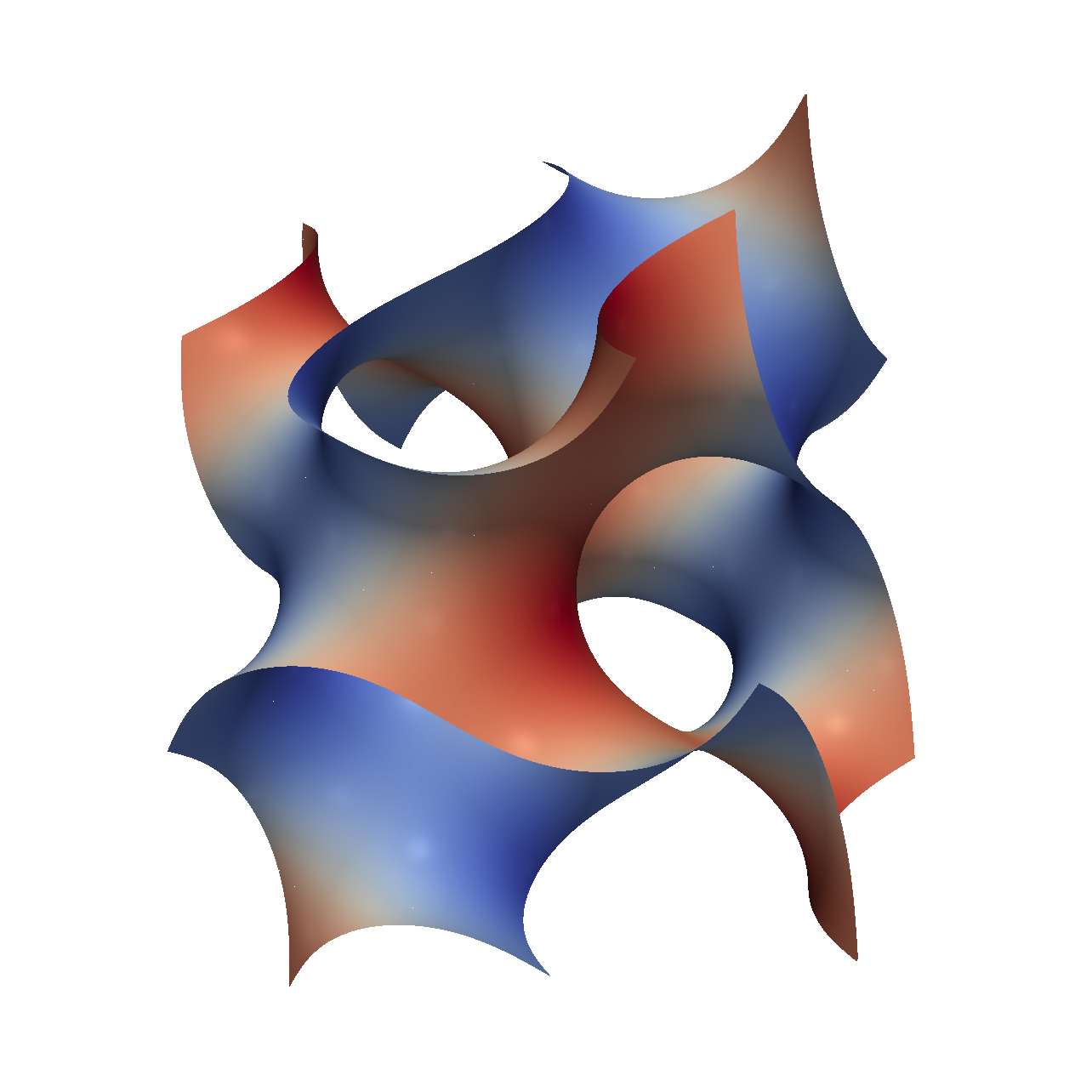}
\caption{Convergence of the FISTA flow to a Gyroid minimal surface. Shown is the zero-level set of the phase field function, with color indicating the $x$-value of the surface normal. From left to right, time step 1 (immediately after the initial classical gradient flow step), 3, and 21 are shown.  At time step 1, some artifacts from the discrete initialization are still visible. After step 2, there is no visually discernible difference. \label{fig:gyr}}
\end{center}
\end{figure}

\subsection{The Accelerated Allen-Cahn Equation on Graphs}\label{section graphs}

In this section, we present numerical experiments using the accelerated Allen-Cahn Equation on graphs with an eye towards applications in data science. We consider a synthetic two-dimensional dataset and the MNIST dataset. In both cases, we assign Gaussian edge weights $w_{ij} = \exp(-\|x_i-x_j\|^2/(2\sigma^2))$ for suitable $\sigma>0$ and use the unnormalized version of the graph Laplacian. As we consider multi-class classification, we require a vector-valued version of the Allen-Cahn and accelerated Allen-Cahn system. For details concerning the vector-valued implementation, see Appendix \ref{appendix vector-valued}.

\subsubsection{A synthetic dataset}
We generate $2,000$ data points in $\R^2$ belonging to five distinct classes using the \texttt{make\_blobs} function in \cite{scikit-learn} with within-cluster standard deviation $1.1$. In the experiment we report, we consider a fully connected graph with Gaussian weights for $\sigma = 0.2$ and we choose $\eps = 1$ for the graph-Ginzburg-Landau energy. We assume that $1\%$ of points are labeled. Edge weights below $10^{-3}$ are thresholded to zero. The linear system does not change from time step to time step and is solved by LU-factorization with pre-computed factors.

In Figures \ref{figure blobs data} and \ref{figure blobs performance}, we compare the gradient flow-based and momentum-based optimization methods for the Ginzburg-Landau energy. In all cases, the initial condition is $u_0\equiv \frac15\cdot (1,1,1,1,1)$ on the set of unlabeled data points.

\begin{figure}
    \centering
    \includegraphics[width=0.32\linewidth]{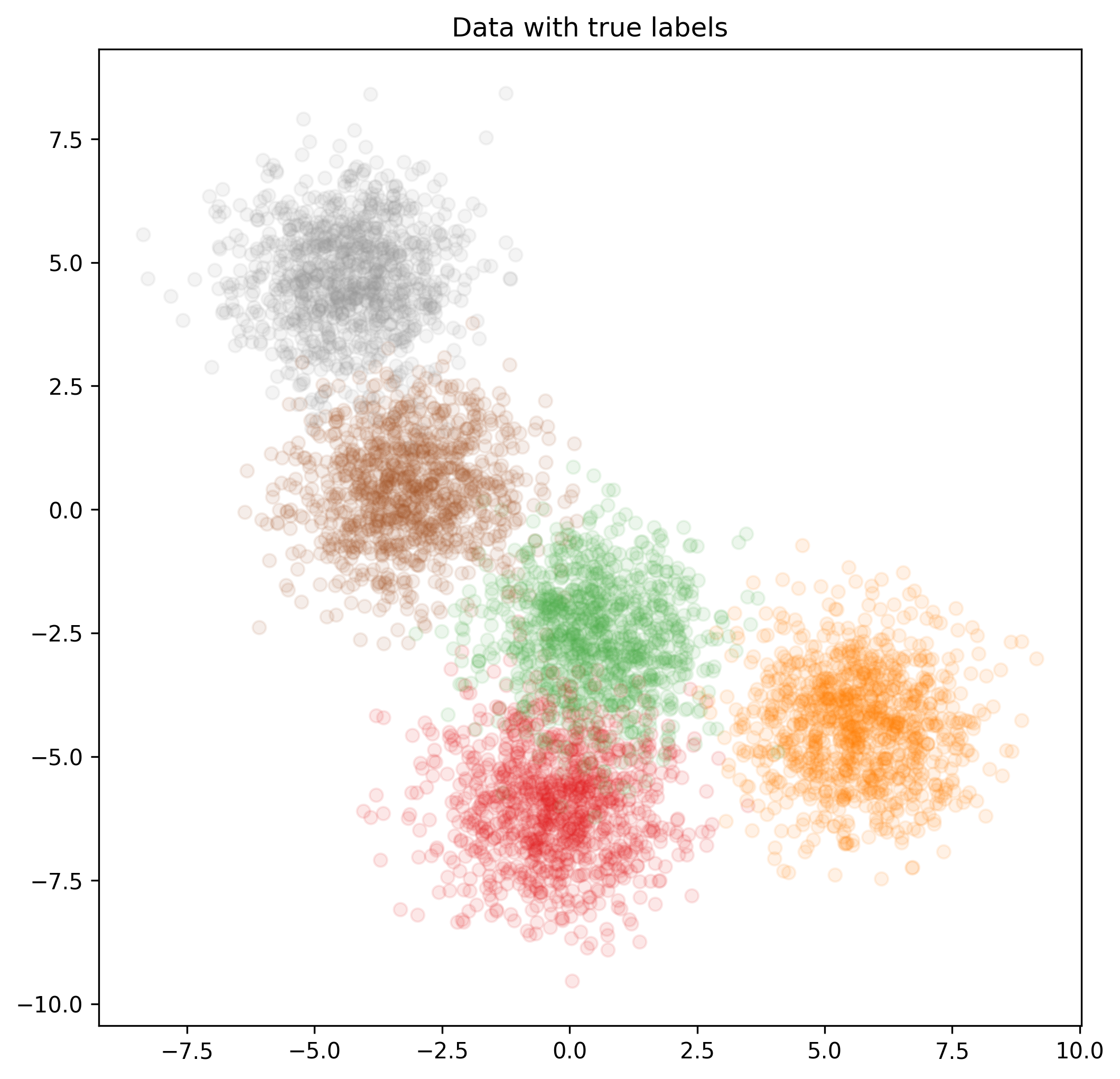}
    \includegraphics[width=0.32\linewidth]{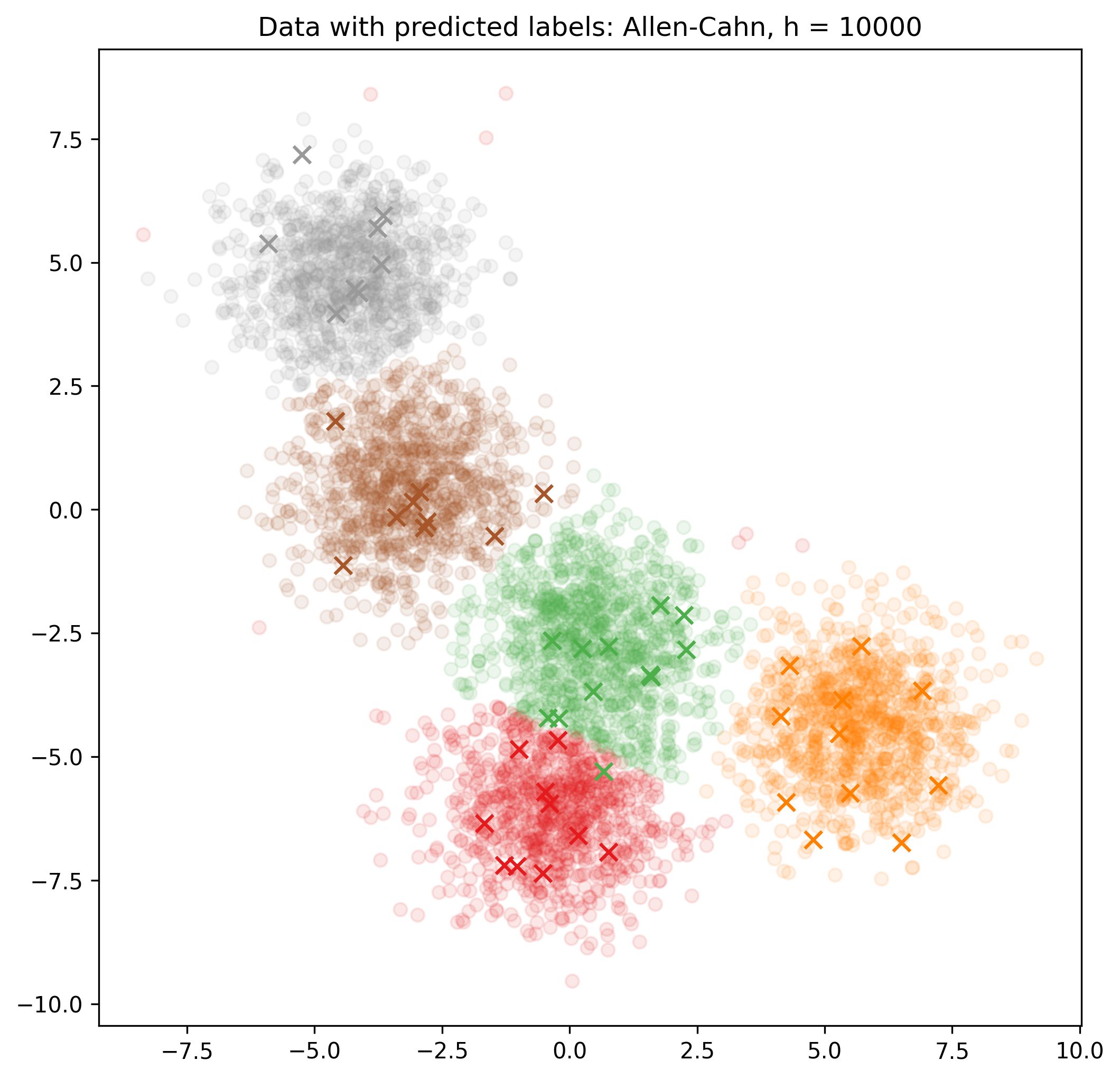}
    \includegraphics[width=0.32\linewidth]{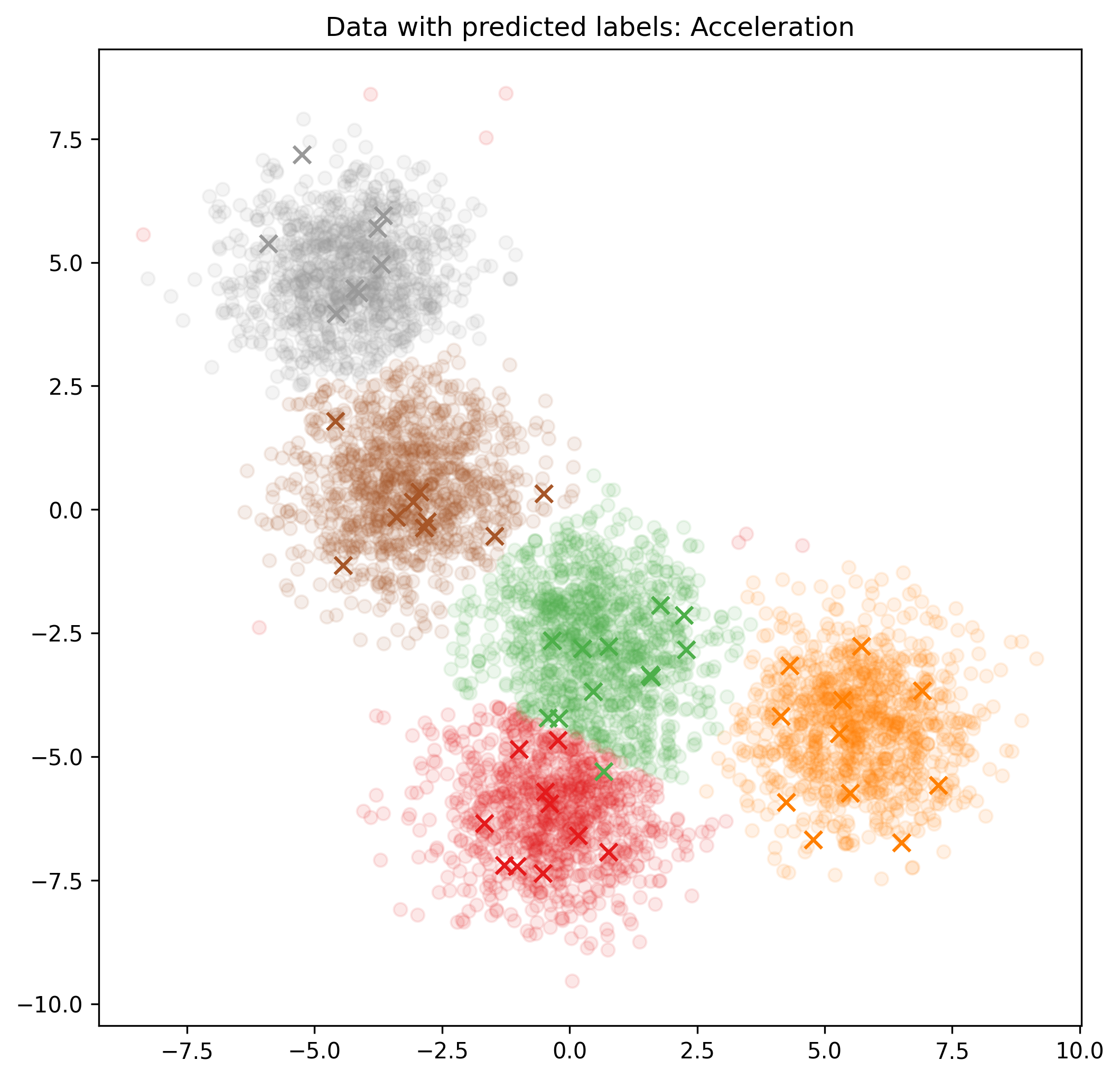}
    \caption{The synthetic blobs data colored by true label (left), the label suggested by gradient descent with convex-concave splitting (middle, $\tau$=10,000) and FISTA with convex-concave splitting (right, $\tau = 10, \rho = 0.4$). In the latter two plots, known labels are marked by a cross. Both methods find virtually identical label assignments. We note that (1) some outliers are disconnected from the main graph, so $u\equiv u_0$ by symmetry there (tie-broken to red label) and (2) the model prioritizes perimeter minimization and surrounds a `green' labeled point by red points in the bottom cluster to achieve a shorter boundary.}
    \label{figure blobs data}
\end{figure}

\begin{figure}
    \centering
    \includegraphics[width=0.48\linewidth]{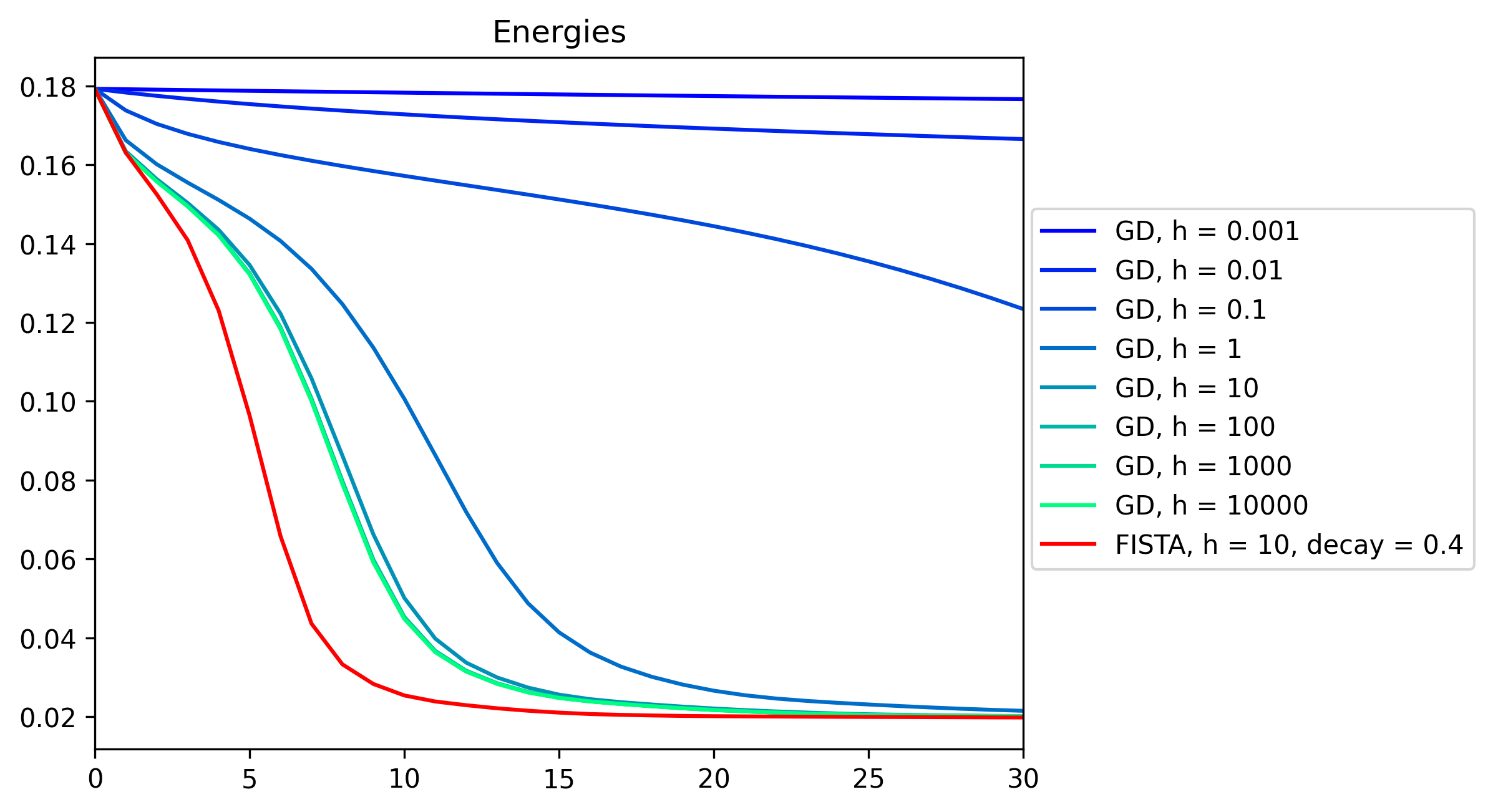}
    \includegraphics[width=0.48\linewidth]{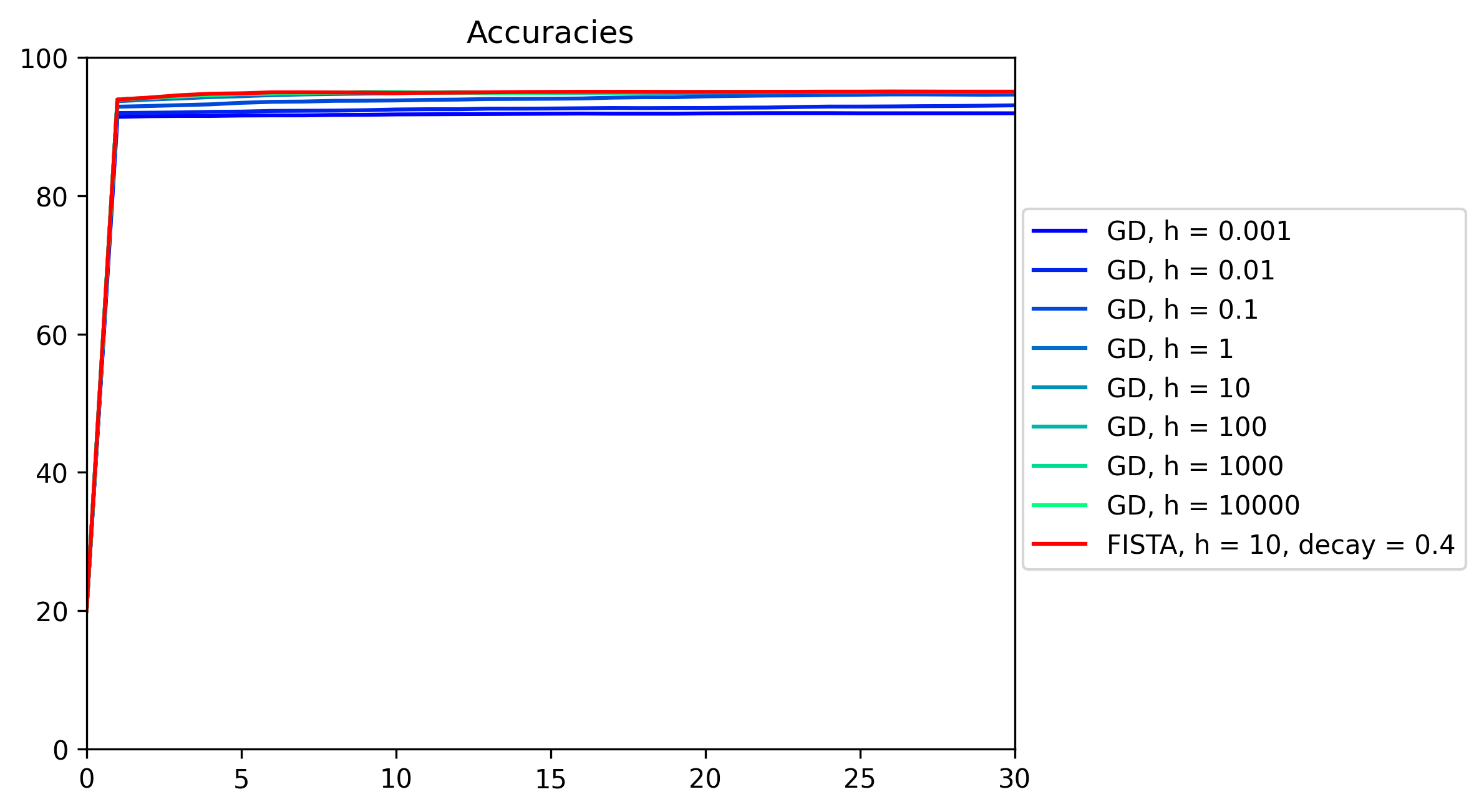}

    \caption{We compare the Allen-Cahn approach to classification with convex-concave splitting (blue and green lines for various time step sizes) with the FISTA-discretization with convex-concave splitting for the accelerated Allen-Cahn equation (red line). Notably, the Allen-Cahn scheme converges faster monotonically as the step size increases, but there is virtually no difference between the time step sizes $10^2, 10^3$ and $10^4$, suggesting that we have almost converged to the limiting model. On the other hand, FISTA reduces the Ginzburg-Landau energy faster than the best gradient-flow based minimizer. On the level of the accuracies, the convex-concave GD solver and FISTA achieve (almost) their final accuracy in a single time-step, with larger time steps favoring better local minimizers for GD.}
    \label{figure blobs performance}
\end{figure}

\subsubsection{MNIST}

We construct a $k$-nearest neighbor graph on the entire MNIST dataset of 70,000 images \cite{lecun2010mnist} for $k=5$ and assign Gaussian edge weights for $\sigma = 1.5$ (with image pixel values normalized to the range $[0,1]$). If $x_i$ is among the $k$ closest points to $x_j$, the opposite is generally not true. We therefore symmetrize the weight matrix as $W_{symm} = \frac12(W+W^T)$. Notably, weights for edges that were newly added in the symmetrization are only half as large as the Gaussian would suggest.

With one percent of labeled data and $\eps = 1$, we present classification accuracies and Ginzburg-Landau energy in Figure \ref{figure mnist performance} for both the Allen-Cahn approach and the accelerated Allen-Cahn approach to minimization. The arising linear system is sparse and we use a conjugate gradients solver. In all simulations, the initial condition was $u_0 \equiv \frac1{10}(1,\dots,1) \in\R^{10}$ at unlabeled points.

\begin{figure}
    \centering
    \includegraphics[width=0.48\linewidth]{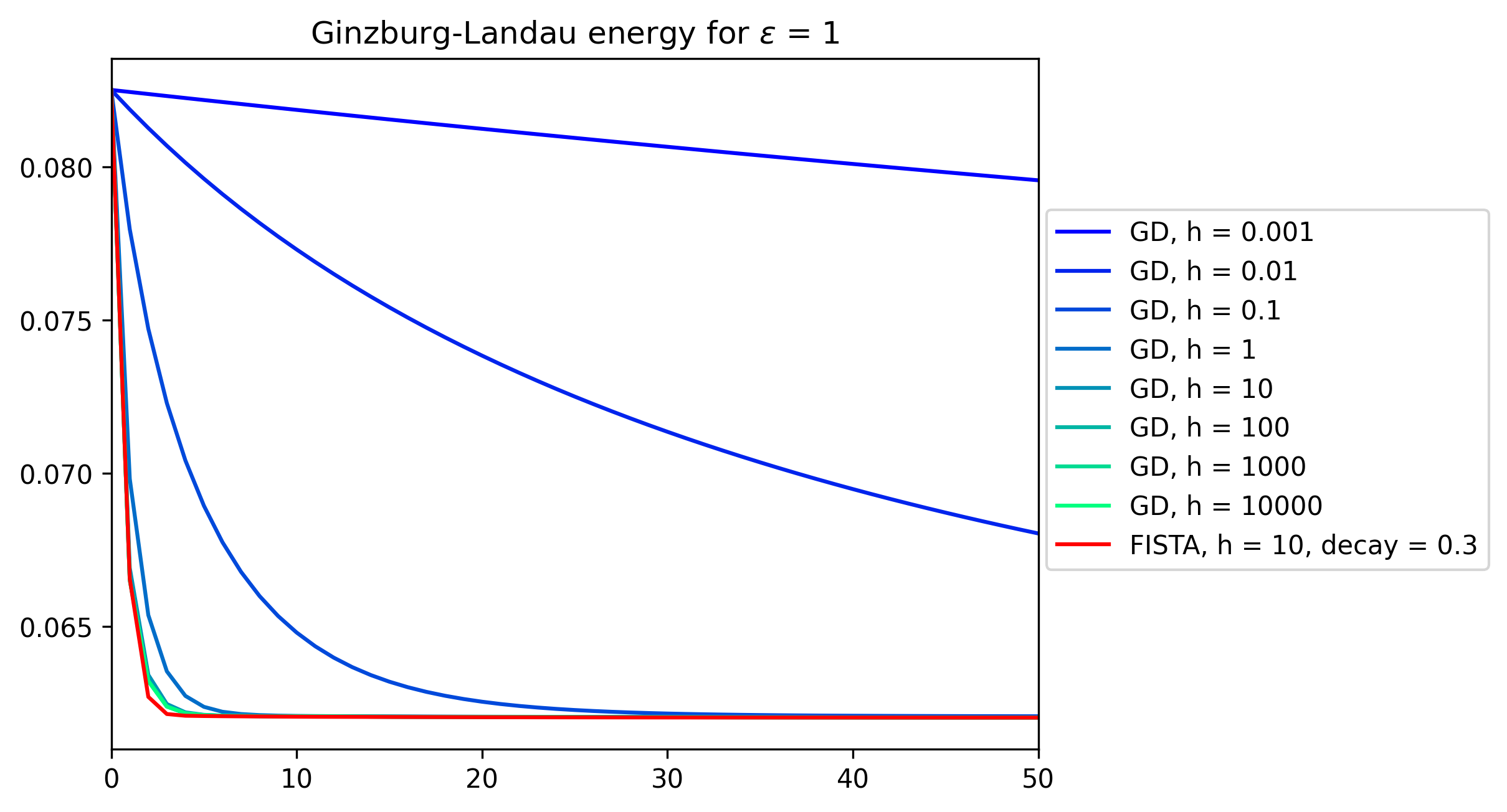}
    \includegraphics[width=0.48\linewidth]{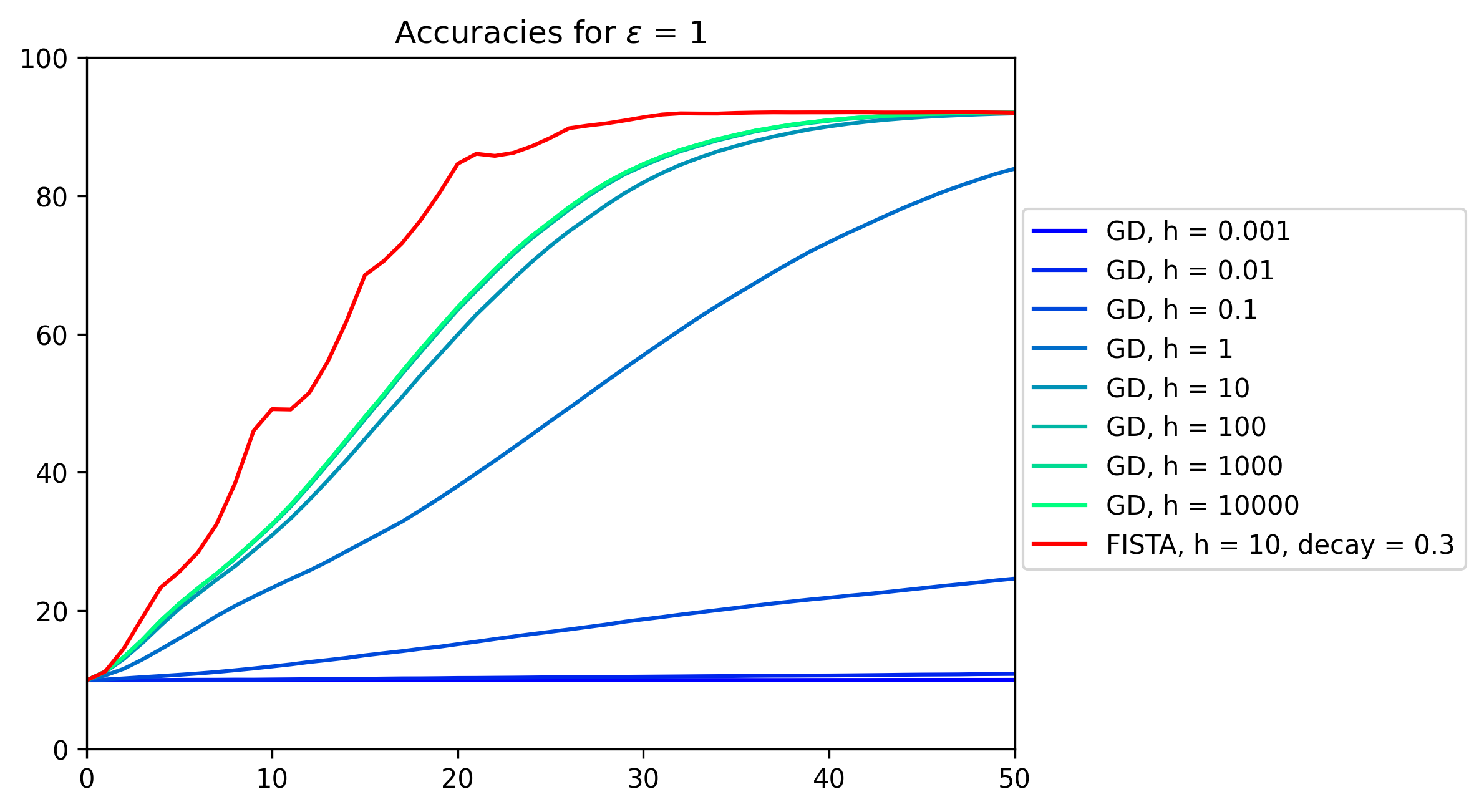}

    \caption{Also for MNIST data, we observe that GD with convex-concave splitting and step sizes $10^2, 10^3, 10^4$ performs essentially identically and that large time steps are desirable for the Allen-Cahn solver. In this experiment, FISTA converges faster than the best version of GD on the level of the accuracies, but there is no notable difference on the level of the energies.}
    \label{figure mnist performance}
\end{figure}

We note that the graphs are not constructed for performance. In these simple datasets, higher accuracy and faster convergence can be attained by simply choosing $\sigma, \eps$ (and $k$ for MNIST) advantageously. Often, it is possible to converge to a good local minimizer in a single time-step.

In general, constructing a `good' data graph is not an easy task. In the setting where a graph cannot easily be improved, our results show that it may be possible to accelerate convergence using momentum in the optimizer beyond what a mere convex-concave splitting can achieve.

\section{Conclusion} \label{section conclusion}

We have studied the `accelerated Allen-Cahn equation', a momentum based (`accelerated') method of optimization in a setting where momentum is not guaranteed to outperform gradient flow (i.e.\ the regular Allen-Cahn equation). We analyzed the basic geometric properties of solutions (Section \ref{section pde basics}) and obtained the singular limit of the equation as $\eps \to 0^+$ (Section \ref{section singular} and Figure \ref{figure verifying singular limit}). Numerical experiments illustrate the difference between the Allen-Cahn and accelerated Allen-Cahn equation (Figures \ref{figure mcf} and \ref{figure accelerated mcf}). Notably, in the small time step/PDE regime, the accelerated Allen-Cahn equation is in fact slower than the Allen-Cahn equation and exhibits the geometric features of a hyperbolic partial differential equation -- in particular, it is possible that interfaces may have sharp corners for a set of times of positive Lebesgue measure. Further research would be required to understand the deeper analytic properties of solutions to the limiting geometric motion.

The choice of a suitable large timestep discretization can have a large impact on the behavior of numerical solvers (Figures \ref{figure cinema vs fista pde} and \ref{figure large time step}), and it may alleviate some of the loss of regularity associated with second order hyperbolic equations. In simulations, a FISTA type discretization converges to a (local) minimizer significantly faster than a momentum-less method (Section \ref{section 3d}). A vector-valued graph-PDE version performs well for multi-class classification tasks on simple data sets (Figures \ref{figure blobs performance}, \ref{figure mnist performance}). Choosing a double-well potential with quadratic convex part allows for an implementation with low computational cost. 

Notably, our most encouraging results are purely experimental, and they concern the setting of large time steps where the PDE analysis does not apply. In fact, the lack of regularization and stability in FISTA appears to be driving its higher performance. In this setting, time slices do not resemble solutions to the accelerated Allen-Cahn equation in continuous time. Comparing to our insights on the accelerated Allen-Cahn equation, we conjecture that it is not the right object to study in order to better understand the performance of FISTA-based minimization of the Ginzburg-Landau functional.

\section*{Acknowledgements}

KG and SW gratefully acknowledge support through NSF grant 2424801 `Neural Networks for Stationary and Evolutionary Variational Problems'. PWD gratefully acknowledges discussions and help from L.\ Striet (Freiburg) with the implementation of Allen-Cahn equation for surfaces in three dimensions. These simulations use the \texttt{C++} ``Fourier Operators''-framework by L.\ Striet.

\appendix

\section{Optimal Profile and Corrector}\label{appendix corrector}

We first briefly review some useful facts about the optimal profile, which we expect to be well-known to the experts. We provide a full proof for completeness. For simplicity, we assume that the double-well potential $W:\R\to [0,\infty)$ is $C^\infty$-smooth and $W(u) =0$ if and only if $u\in\{-1,1\}$.

\begin{lemma}
There exists a unique function $\phi:\R\to\R$ such that
\begin{itemize}
\item $\phi'' = W'(\phi)$,
\item $\lim_{x\to \pm \infty} \phi(x) = \pm 1$, and
\item $\phi(0) = 0$.
\end{itemize}
Furthermore, $\phi$ is $C^\infty$-smooth and
\begin{enumerate}
\item $\phi$ is strictly monotone increasing and $\phi' = \sqrt{2\,W(\phi)}$,
\item $\int_\R \frac12 (\phi')^2 + W(\phi)\dx = \int_{-1}^1 \sqrt{2\,W(z)}\dz < + \infty$,
\item $\phi$ is odd if $W$ is even, and
\item If $W''(1)>0$, then for every $\alpha < \sqrt{W''(1)}$ there exist $C, R>0$ such that 
\[
|\phi(x) - 1|  + |\phi'(x)| + |\phi''(x)| \leq Ce^{-\alpha x}\qquad \forall\ x>R.
\]
\end{enumerate}
\end{lemma}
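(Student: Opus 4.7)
The plan is to reduce the second-order ODE $\phi'' = W'(\phi)$ to a first-order separable ODE via energy conservation, and then establish all the properties directly from the resulting explicit representation. Multiplying the ODE by $\phi'$ and integrating yields that $\tfrac12(\phi')^2 - W(\phi)$ is a conserved quantity along any solution. For a solution that approaches the zeros $\pm 1$ of $W$ as $x\to\pm\infty$ with bounded derivative, the conserved constant must equal $0$, because otherwise $|\phi'|$ would be bounded below near the asymptotes, which is incompatible with $\phi$ converging to a finite limit. Combined with the requirement that $\phi$ transitions from $-1$ to $+1$, we select the branch $\phi' = \sqrt{2W(\phi)}\geq 0$, which proves property (1).

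For existence and uniqueness, I would define
\[
\Psi(t) = \int_0^t \frac{ds}{\sqrt{2\,W(s)}},\qquad t\in(-1,1),
\]
which is smooth and strictly increasing on $(-1,1)$. The crucial analytic point is that $\Psi(t)\to\pm\infty$ as $t\to\pm 1$; under the natural hypothesis $W''(\pm 1)>0$ (which is used explicitly in property (4) and which I would adopt as a standing assumption together with the stated conditions), $\sqrt{W(s)}$ behaves like $\sqrt{W''(\pm 1)/2}\,|s\mp 1|$ near $\pm 1$, giving a logarithmically divergent integral. Then $\phi := \Psi^{-1}:\R\to(-1,1)$ is a smooth, strictly increasing function satisfying $\phi'=\sqrt{2W(\phi)}$, $\phi(0)=0$, and $\phi(\pm\infty)=\pm 1$. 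Differentiating the first-order ODE recovers $\phi''=W'(\phi)$, and $C^\infty$-smoothness follows by bootstrapping regularity from the smooth right-hand side. Uniqueness is immediate wherever $\phi\in(-1,1)$, since the separated equation has a unique solution with prescribed initial value; the potential non-Lipschitz behavior of $\sqrt{W}$ at $\pm 1$ does not cause branching because any solution attains those values only in the limit $x\to\pm\infty$.

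Property (2) follows from a single change of variables: since $(\phi')^2=2W(\phi)$, we have $\tfrac12(\phi')^2+W(\phi)=\phi'\sqrt{2W(\phi)}$, hence
\[
\int_\R \left(\tfrac12(\phi')^2+W(\phi)\right)\,dx = \int_\R \phi'(x)\sqrt{2W(\phi(x))}\,dx = \int_{-1}^{1}\sqrt{2W(z)}\,dz,
\]
and the latter is finite by continuity of $W$ on $[-1,1]$. Property (3) is a symmetry argument: if $W$ is even, $\tilde\phi(x):=-\phi(-x)$ satisfies the same ODE, the same initial value $\tilde\phi(0)=0$, and the same asymptotics, so uniqueness forces $\tilde\phi=\phi$.

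For property (4), I would linearize near the well at $+1$. Writing $\psi=1-\phi$ and Taylor-expanding $W(1-\psi)=\tfrac12 W''(1)\psi^2+O(\psi^3)$, the first-order ODE becomes $\psi'=-\sqrt{W''(1)}\,\psi\,\sqrt{1+O(\psi)}$. For any $\alpha<\sqrt{W''(1)}$, there exists $R$ such that the factor under the square root exceeds $(\alpha/\sqrt{W''(1)})^2$ on $(R,\infty)$, giving the differential inequality $\psi'\leq -\alpha\psi$ and therefore $\psi(x)\leq Ce^{-\alpha x}$ by Gronwall. The bounds on $\phi'$ and $\phi''$ then follow from $\phi'=\sqrt{2W(\phi)}=O(|\phi-1|)$ and $\phi''=W'(\phi)=O(|\phi-1|)$ near $\phi=1$. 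The main technical obstacle is the divergence of $\Psi$ at $\pm 1$; everything else is bookkeeping once that point is settled, and the quadratic nondegeneracy of the wells handles both this divergence and the exponential decay simultaneously.
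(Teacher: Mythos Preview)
Your proof is correct and tracks the paper's closely: both derive $\phi'=\sqrt{2W(\phi)}$ from energy conservation, compute (2) by substituting $z=\phi(x)$, obtain (3) from uniqueness plus symmetry, and establish (4) by linearizing near the well and invoking a differential inequality. The one genuine difference lies in the existence argument: the paper appeals to Picard--Lindel\"of for the first-order equation and argues by contradiction that $\phi$ cannot stay bounded away from $\pm1$, whereas you construct $\phi=\Psi^{-1}$ explicitly from the separated integral and read off global existence and the asymptotics from the divergence of $\Psi$ at $\pm1$. Your route is arguably more transparent, since it bundles existence, strict monotonicity, and the limits at $\pm\infty$ into a single step; the price is that you must adopt $W''(\pm1)>0$ as a standing hypothesis rather than only for property (4). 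The paper nominally avoids that extra assumption, although its claim that $\phi$ is \emph{strictly} increasing with values in the open interval $(-1,1)$ for all $x$ likewise implicitly requires the same integral $\Psi$ to diverge at the endpoints.
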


\begin{proof}
We note that $\phi'' = W'(\phi) $ implies that
\[
 \frac{d}{dx} \frac12\,(\phi')^2 = \phi' \phi'' = W'(\phi)\phi' = \frac{d}{dx} W(\phi)\qquad \Ra\quad (\phi')^2 \equiv 2\,W(\phi) + c
\]
for some $c\in\R$. We have $W(\phi(x))\to 0$ as $x\to \pm \infty$ since $\phi(x)\to \pm 1$, so $(\phi')^2 \to c$. Since $\phi'$ is continuous, it cannot change sign on $[R, \infty)$ or $(-\infty, -R]$ for some large $R$ if $c\neq 0$, meaning that $\phi$ could not be bounded. We conclude that $c=0$ for the solution we are interested in.\footnote{\ The constant $c$ governs a one-parameter group of qualitatively different solutions of the stationary Allen-Cahn equation $\phi''= W'(\phi)$: If $c<0$, the solutions are periodic with values in $(-1,1)$ and if $c>0$, they grow and may only exist on a finite interval, depending on the growth of $W$ at $\pm \infty$. The normalization $\phi(0)=0$ just centers $\phi$ since any translation $\phi(x+\tilde c)$ is also a solution.}\ We can therefore construct $\phi$ as the unique solution of the ODE 
\[
\begin{pde}
\phi' &= \sqrt{2\,W(\phi)} &x\neq 0\\
\phi &= 0 & x= 0.
\end{pde}
\]
Following the same argument backwards, we note that $\phi$ solves the original equation $\phi'' = W'(\phi)$ wherever $\phi'>0$, i.e.\ wherever $\phi \in (-1,1)$. A solution $\phi$ exists as long as $W$ is continuous, and we can always select a bounded solution
\[
\tilde \phi = \min\big\{1, \,\max\{-1, \phi\}\big\}
\]
even if $\sqrt{W}$ is not smooth at $\pm 1$. However, if $W\in C^2$, solutions are unique. We easily prove by contradiction that $\phi$ cannot be bounded away from $-1$ or $1$, i.e.\ $\lim_{x\to\pm \infty}\phi(x) = \pm 1$. Note that 
\begin{align*}
(\phi-1)' &= \phi' =  \sqrt{2\,W(\phi)}\\
	&= \sqrt{2\,W(1) + 2\,W'(1)\,(\phi-1) + W''(1)\,(1-\phi)^2 + O\big((1-\phi)^3\big)}\\
	&= \sqrt{W''(1)} (1-\phi) + O\big((1-\phi)^{3/2}\big)
\end{align*}
if $\phi(x)$ is close to $1$ and thus for all $\eps>0$ there exists $R>0$ such that 
\[
-(1+\eps)\sqrt{W''(1)}\, (1-\phi) \leq (1-\phi)' \leq -(1-\eps)\sqrt{W''(1)}\,(1-\phi) \qquad \forall\ x>R.
\]
By ODE comparison, we find that
\[
(1-\phi)(x) \leq (1-\phi)(R) \exp\left((1-\eps)\sqrt{-W''(1)}(R-x)\right) \leq \left(e^R(1-\phi)(R)\,\right)\,e^{-(1-\eps)\sqrt{W''(1)}\,x}
\]
and similarly for the lower bound. 
Notably, we have $-1<\phi(x) < 1$ for all $x\in\R$, so $\sqrt{2W}$ is locally Lipschitz continuous along the trajectory and the solution is unique by the Picard-Lindel\"off theorem.

The estimates for $\phi'$ and $\phi''$ follow from $\phi'(x) = \sqrt{2\,W(\phi(x))} \approx \sqrt{W''(1)}(1-\phi(x))$ and $\phi'' = W'(\phi(x)) \approx W''(1)(\phi(x)-1)$.

We use the identity $\phi' = \sqrt{2\,W(\phi)}$ to compute
\begin{align*}
\int_\R \frac12 (\phi')^2 + W(\phi)\dx &= \int_\R \frac12 \,\sqrt{2\,W(\phi)}\phi' + \sqrt{W(\phi)}\,\frac{\phi'}{\sqrt 2}\dx
	= \int_\R \sqrt{2\,W(\phi)}\phi'\dx = \int_{-1}^1 \sqrt{2\,W(z)}\dz.
\end{align*}
If $W$ is even, then $W'$ is odd, and the function $\tilde\phi(x) = - \phi(-x)$ satisfies the equation
\[
\tilde\phi'' (x) = -\phi''(-x) = - W'(\phi(-x)) = W'(-\phi(-x)) = W' (\tilde\phi(x))
\]
and $\lim_{x\to\pm \infty}\tilde\phi(x) = \pm 1$ as well as $\tilde\phi(0) = 0$. Since we showed that the solution of the equation is unique, this means that $\tilde\phi = \phi$, i.e.\ $\phi$ is odd.
\end{proof}

We can now prove that a corrector exists. For this, we assume that $W''(\pm 1)>0$ and that $W$ is even. While these assumptions can likely be weakened, they are satisfied by most standard double-well potentials. The condition that $W''(\pm 1)>0$ implies that $\phi$ approaches $\pm 1$ {\em exponentially} fast as $x\to\pm\infty$. 

\begin{lemma}
    There exists a unique solution $\psi \in H^1(\R)$ of the equation
    \begin{equation}\label{eq euler-lagrange corrector}
    \psi'' = W''(\phi)\psi  + \phi' + 2x\,\phi''
    \end{equation}
    such that
    \[
    \int_\R \phi'' \psi' \dx =0.
    \]
    Furthermore, $\psi$ is even and $\psi \in C^\infty(\R)$.
\end{lemma}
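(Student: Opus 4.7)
The plan is to treat the corrector equation as a linear Fredholm problem for the self-adjoint Schr\"odinger-type operator $L\psi := -\psi'' + W''(\phi)\psi$ on $L^2(\R)$ with form domain $H^1(\R)$. Differentiating the profile equation $\phi'' = W'(\phi)$ gives $\phi''' = W''(\phi)\phi'$, i.e.\ $L\phi' = 0$; since $\phi'$ and $\phi''$ decay exponentially by the previous lemma, $\phi' \in H^2(\R)$. A reduction-of-order computation shows the second independent homogeneous solution grows exponentially, so $\ker L \cap H^1(\R) = \R\,\phi'$. Because $\phi' > 0$, the one-dimensional Perron--Frobenius principle identifies $\phi'$ as the ground state of $L$, and since $W''(\phi(x)) \to W''(1) > 0$ as $|x|\to\infty$ the essential spectrum equals $[W''(1),\infty)$; hence $0$ is simple and isolated, yielding a spectral gap $\mu > 0$ on $(\R\phi')^\perp$.

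With $f := \phi' + 2x\phi'' \in L^2(\R)$, an integration by parts justified by the exponential decay of $\phi'$ verifies the Fredholm solvability condition
\[
\int_\R \phi'\,f\,dx = \int_\R (\phi')^2\,dx + \int_\R x\,\tfrac{d}{dx}(\phi')^2\,dx = 0.
\]
On the closed subspace $V := \{\phi'\}^{\perp}\cap H^1(\R)$ (with $L^2$-orthogonality), the bilinear form
\[
Q(\psi,\varphi) := \int_\R \psi'\varphi' + W''(\phi)\,\psi\,\varphi\,dx
\]
is $H^1$-coercive: combine $Q(\psi,\psi) \geq \mu\|\psi\|_{L^2}^2$ with the $L^\infty$-bound on $W''(\phi)$ to control $\int (\psi')^2\,dx$. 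Lax--Milgram then yields a unique $\psi_0 \in V$ with $Q(\psi_0,\varphi) = -\int f\,\varphi\,dx$ for all $\varphi \in V$. Decomposing arbitrary $\varphi\in H^1(\R)$ as $\varphi_V + \lambda\phi'$ and using $Q(\psi_0,\phi')=0$ together with $\int f\phi'\,dx=0$ extends the identity to all $\varphi\in H^1(\R)$; elliptic regularity then upgrades $\psi_0$ to a classical $H^2$-solution of \eqref{eq euler-lagrange corrector}.

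For parity and normalization, I would exploit the reflection symmetry of the problem. Since $W$ is even, $\phi$ is odd, so $W''(\phi(\cdot))$ and $f = \phi' + 2x\phi''$ are both even, and the equation commutes with $P\psi(x):=\psi(-x)$. The even part of $\psi_0$ is therefore still an $H^1$-solution, and the family of all $H^1$-solutions has the form $\psi_{\mathrm{even}} + c\,\phi'$ with $c\in\R$. The normalization $\int \phi''\psi'\,dx = 0$ pins down $c$ uniquely as $c = -\int\phi''\psi_{\mathrm{even}}'\,dx \big/ \|\phi''\|_{L^2}^2$, which is well-defined because $\phi''\not\equiv 0$; the resulting $\psi$ remains even because $\phi'$ is. Uniqueness in $H^1$ follows from the same computation. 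Smoothness is then a short bootstrap: from $\psi\in H^2\hookrightarrow C^0$ and $W,\phi,f\in C^\infty$, the identity $\psi'' = W''(\phi)\psi + f$ places $\psi''\in H^1$, so $\psi\in H^3$; iterating yields $\psi\in C^\infty(\R)$.

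The main obstacle I anticipate is establishing the spectral gap at $0$, which is what makes the Lax--Milgram step close. It rests on two ingredients: (i) $0$ is the ground state of $L$, which comes from positivity of $\phi'$ and the one-dimensional Perron--Frobenius principle for Schr\"odinger operators; and (ii) the essential spectrum is bounded away from $0$, which follows from the exponential convergence $\phi(x)\to \pm 1$ proved in the previous lemma forcing $W''(\phi(x))\to W''(1)>0$. Once this spectral picture is in hand, the remaining steps -- solvability identity, Lax--Milgram, parity reduction, selection of the normalized representative, and bootstrap regularity -- are fairly routine.
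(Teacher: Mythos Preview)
Your approach is correct and follows a genuinely different route from the paper's. You treat the problem via spectral/Fredholm theory for the Schr\"odinger operator $L=-\partial_{xx}+W''(\phi)$: Perron--Frobenius identifies $\phi'>0$ as the simple ground state at eigenvalue $0$, Weyl's theorem places the essential spectrum at $[W''(1),\infty)$, and the resulting spectral gap gives $H^1$-coercivity of $Q$ on the $L^2$-orthogonal complement of $\phi'$, so Lax--Milgram closes. You then adjust by a multiple of $\phi'$ to meet the stated normalization $\int\phi''\psi'\,dx=0$. The paper, by contrast, stays entirely variational and elementary: it proves nonnegativity of the quadratic form by the explicit Hardy-type factorization $\int(\psi')^2+W''(\phi)\psi^2\,dx=\int|\psi'-\psi\,\phi''/\phi'|^2\,dx$, establishes coercivity on the hyperplane $\hat H^1=\{\int\phi''\psi'\,dx=0\}$ by a direct minimizing-sequence/compactness argument (essentially proving the spectral gap by hand), and then minimizes the full functional by the direct method, finally checking that the constrained minimizer is in fact unconstrained. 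Your approach is shorter if one is willing to quote Perron--Frobenius and Weyl; the paper's is self-contained and avoids invoking spectral theory of Schr\"odinger operators. A minor stylistic point: once you know the unique Lax--Milgram solution $\psi_0\in V$ exists and $V$, $Q$, $f$ are all reflection-invariant, uniqueness forces $\psi_0$ to be even outright, so the ``even part'' detour is unnecessary (though harmless).
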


The proof is standard in Allen-Cahn-like settings where a corrector is needed -- see e.g.\ \cite[Theorem 3.2]{gonzalez2010slow} for a proof involving fractional differential operators. 

\begin{proof}
We aim to construct $\psi$ as a minimizer of the quadratic form
\[
F : H^1(\R) \to \R, \qquad F(\psi) = \frac12\int_\R |\psi'|^2 + W''(\phi)\,\psi^2\dx - \int_\R \psi\big(\phi' + 2x\phi''\big)\dx. 
\]
Since $W''$ is non-positive close to the origin, it is not immediately clear that $F$ is bounded from below or that minimization makes sense. However, if $\psi$ is a minimizer of $F$, it is clear that $\psi$ satisfies the Euler-Lagrange equation \eqref{eq euler-lagrange corrector}. The regularity of $\psi$ follows from standard ODE theory by bootstrapping. 

{\bf Step 1. Convexity of the quadratic part.} We first study the quadratic part 
\[
F^q(\psi) = \frac12\int_\R |\psi'|^2 + W''(\phi)\,\psi^2\dx
\]
only. The optimal interface equation $\phi'' = W'(\phi)$ implies that $\phi''' = W''(\phi)\phi'$ by differentiation. We can divide the identity by $\phi'= \sqrt{2\,W(\phi)}>0$ and thus obtain
\begin{align*}
\int_\R W''(\phi)\,\psi^2\dx &= \int_\R \phi'''\,\frac{\psi^2}{\phi'}\dx\\
	&= - \int_\R\phi'' \left(\frac{2\psi\psi'}{\phi'} - \psi^2\frac{\phi''}{(\phi')^2}\right)\dx\\
	&= \int_\R\phi' \left(\frac{2\psi\psi'}{\phi'}\right)'  +  \psi^2\frac{(\phi'')^2}{(\phi')^2}\dx\\
	&= \int_\R 2\,(\psi')^2 + 2 \psi \psi'' - 2\psi\psi'\,\frac{\phi''}{\phi'} + \psi^2\frac{(\phi'')^2}{(\phi')^2}\dx
\end{align*}
since we can integrate by parts without boundary terms as $H^1(\R) = H^1_0(\R)$. We note that
\[
\int_\R (\psi')^2 +\psi\psi'' \dx = 0
\]
by integration by parts, and after adding $\int_\R (\psi')^2\dx$ on both sides, we finally find that
\[
\int_\R (\psi')^2 + W''(\phi)\,\psi^2\dx = \int_\R (\psi')^2 - 2\psi\psi'\,\frac{\phi''}{\phi'} + \psi^2\frac{(\phi'')^2}{(\phi')^2}\dx = \int_\R\left|\psi' - \psi\,\frac{\phi''}{\phi'}\right|^2\dx \geq 0,
\]
so the quadratic form is bounded from below and in particular convex.\footnote{\ This statement is often formulated as: Layer solutions to the Allen-Cahn equation (in particular, the optimal interface) are stable.} We can the re-write the expression for a more informative bound:
\[
\int_\R\left|\psi' - \psi\,\frac{\phi''}{\phi'}\right|^2\dx = \int_\R\left|\frac{\psi'}\psi - \frac{\phi''}{\phi'}\right|^2\psi^2\dx = \int_\R\big|\log(|\psi|)' - \log(\phi')'\big|^2\psi^2\dx = \int_\R\left|\frac{d}{dx} \log\left(\frac{|\psi|}{\phi'}\right)\right|^2\psi^2\dx
\]
The expression is zero and only if $|\psi|/\phi'$ is constant (potentially zero), at least locally. Since $H^1$-functions are continuous in one dimension, we find that $F^q\geq 0$ and $F^q(\psi) = 0$ if and only if $\psi = a\phi'$ for some $a\in \R$.\footnote{\ The fact that $F^q(\phi') =0$ stems from the invariance of the Ginzburg-Landau energy under the spatial translation $x\mapsto x+ c$, as can be seen by taking the inner variation $\frac{d}{dt^2}\big|_{t=0} F_1^{GL}(\phi(\cdot +t))=0$.}

{\bf Step 2. Strong convexity/coercivity.} We consider the closed hyperplane
\[
\hat H^1 = \left\{\psi \in H^1(\R) : \int_\R \phi''\,\psi'\dx = 0\right\}.
\]
The space $\hat H^1$ is closed since the linear functional $\psi \mapsto \int_\R \phi''\,\psi'\dx $ is continuous on $H^1(\R)$ since $\phi'' = W''(\phi)\phi'\in L^2(\R)$. Note that $\phi'\notin \hat H^1$ since
\[
\int_\R  \phi'' (\phi')'\dx = \int_\R (\phi'')^2 \dx >0.
\]
In particular, any $\psi\in H^1(\R)$ can be uniquely decomposed as $\psi = \hat \psi + a\phi'$ for some $\hat\psi\in \hat H^1$ and $a\in\R$.
Let $\psi_n$ be a sequence in $\hat H^1$ such that
\begin{enumerate}
\item $\|\psi_n\|_{L^2(\R)}=1$ and
\item $F^q(\psi_n) \to \inf_{\psi \in \hat H^1, \: \|\psi\|_{L^2} = 1} F^q(\psi)$.
\end{enumerate}
Since
\[
C \geq F^q(\psi_n) \geq \frac{\|\psi'_n\|_{L^2} - \min_{z\in\R}W''(z)\,\|\psi_n\|_{L^2}^2}2 = \frac12 \,\|\psi_n'\|_{L^2(\R)}^2 - c 
\]
for all $n\in \mathbb N$, we see that $\psi_n$ is, in fact, bounded in $H^1(\R)$. By the weak sequential compactness of bounded subsets reflexive separable spaces \cite[Theorem 3.18]{brezis2011functional}, we find that $\psi_n\to \hat\psi$ weakly in $H^1(\R)$ for some $\hat\psi\in H^1(\R)$. Since the subspace $\hat H^1$ is convex, we may conclude that $\hat\psi \in \hat H^1$. 

We note that $H^1(\R)$ embeds continuously into $H^1(-R,R)$ and therefore compactly into $L^2(-R, R)$. We note that
\[
\liminf_{n\to \infty} F^q(\psi_n) \geq \frac12 \,\liminf_{n\to\infty} \left(\int_\R (\psi'_n)^2 + \max\{W''(\phi), 0\} \psi_n^2\dx + \int_\R\min\{W''(\phi),0\}\,\psi_n^2\dx\right) \geq F^q(\hat\psi) 
\]
since the first term is lower semi-continuous by convexity and the second is continuous due to strong convergence on bounded subsets. Since $\hat\psi \in \hat H^1$ and $\phi\notin \hat H^1$, we conclude that $F^q(\hat\psi)>0$ unless $\psi \equiv 0$. Then 
\[
\inf_{\psi\in \hat H^1, \:\|\psi\|_{L^2}=1} F(\psi) = \liminf_{n\to\infty} F(\psi_n) \geq F^q(\hat\psi)>0
\]
and thus 
\[
 F^q(\psi) = F^q\left(\frac{\psi}{\|\psi\|_{L^2}}\right)\,\|\psi \|_{L^2}^2 \geq \left(\inf_{\tilde \psi \in \hat H^1, \: \|\tilde\psi\|_{L^2}=1}F^q(\tilde\psi)\right)\, \|\psi\|_{L^2}^2 \geq F^q(\hat\psi)\,\|\psi\|_{L^2}^2
\]
for all $\psi\in \hat H^1$. On the other hand, if $\psi_n\wto 0$ in $H^1(\R)$, then $\psi_n\to 0$ in $L^2(-R,R)$ for all $R>0$. We therefore have
\[
\liminf_{n\to \infty} F^q(\psi_n) \geq \liminf_{n\to\infty} \frac12\int_\R (\psi_n')^2 + W''(\pm 1) \, \psi_n^2\dx \geq W''(\pm 1) >0
\]
since $W''(\phi)\approx W''(\pm 1)$ for $\phi$ close to $\pm 1$, i.e.\ for $x$ outside a compact set. Also in this case, we find a positive lower bound. Overall, we find that 
\[
F^q(\psi)\geq \eps_0 \|\psi\|_{L^2(\R)}^2, \qquad F^q(\psi) \geq \frac12\,\|\psi'\|_{L^2(\R)}^2 - c\,\|\psi\|_{L^2(\R)}^2
\]
for some constants $c, \eps_0>0$.

{\bf Step 3. Existence of a corrector $\psi$.} The functional $F$ is bounded from below on $\hat H^1$ since
\begin{align*}
F(\psi) &\geq \eps_0\,\|\psi\|_{L^2(\R)}^2 - \|\phi' + 2x\phi''\|_{L^2(\R)} \|\psi\|_{L^2(\R)} \\
	&\geq \min_{\alpha\in\R} \big(\eps_0 \alpha - \|\phi' + 2x\phi''\|_{L^2(\R)}\big) \alpha \hspace{1.5cm}= - \frac{\|\phi'+2x\phi''\|_{L^2(\R)}^2}{4\eps_0}.
\end{align*}
We claim that the functional $F$ has a minimizer $\psi^*\in \hat H^1$. First, we construct an energy competitor to obtain an a priori bound. Consider $\tilde\psi = \phi' + 2x\phi''$. Then
\[
\int_\R \tilde\psi \,\phi'' \dx = \int_\R \frac d{dx} \frac{(\phi')^2}2 +2x\,(\phi'')^2\dx = 0
\]
since the first term vanishes by the fundamental theorem of calculus and the second is the integral of an odd function.
Now, since $\phi' + 2x\phi'' \neq 0$, we find that the linear term dominates when considering
\[
F\big(-t(\phi' + 2x\phi'' )\big) = t^2 F^q(\phi'+2x\phi'') - t\,\|\phi'+2x\phi''\|_{L^2(\R)}^2 < 0
\]
for sufficiently small $t>0$ and thus $-\infty < \inf_{\psi\in \hat H^1} F(\psi) < 0$. We can therefore consider a sequence $\psi_n \in \hat H^1$ such that $F(\psi_n)\to \inf_{\psi \in \hat H^1}F(\psi)$ and $F(\psi_n) <0$ for all $n\in \mathbb N$. As above, we estimate
\[
0 \geq F(\psi_n) \geq \big(\eps_0\,\|\psi_n\|_{L^2(\R)} - \|\phi' + 2x\phi''\|_{L^2(\R)} \big)\|\psi_n\|_{L^2(\R)} \qquad\Ra\quad \|\psi_n\|\leq \frac{\|\phi' + 2x\phi''\|_{L^2(\R)}}{\eps_0}
\]
for all $n\in\mathbb N$. By the alternative lower bound  
\[
0 > F^q(\psi_n) \geq \frac12\,\|\psi_n'\|_{L^2(\R)}^2 - c\,\|\psi_n\|_{L^2(\R)}^2 \geq \frac12\,\|\psi_n'\|_{L^2(\R)}^2 - c\,\frac{\|\phi' + 2x\phi''\|_{L^2(\R)}^2}{\eps_0^2}
\]
on $F^q$, we find that also $\psi_n'$ is uniformly bounded in $L^2$. Convergence and lower semi-continuity of $F$ follow as in Step 2, so a minimizer $\psi^*\in \hat H^1$ of $F$ exists by the direct method of the calculus of variations since the linear functional is continuous in the weak topology.

We claim that $\psi^*$ is a minimizer of $F$ not just in the hyperplane $\hat H^1$. To see this, note that every $\xi \in H^1(\R)$ can be written as $\xi = \hat\psi + b\phi'$ for unique $\hat \psi \in \hat H^1$ and $b\in\R$ since $\phi'\notin \hat H^1$. We now have
\begin{align*}
F(\xi) &= \int_\R \frac12 \,(\hat \psi')^2 + b\,\hat\psi' \phi'' + \frac{b^2}2\,(\phi'')^2 + W''(\phi)\,\big(\hat\psi^2 + 2b\,\hat\psi\phi' + b^2(\phi')^2\big)\dx\\
	&\qquad - \int_R \big(\phi' + 2x\phi'')\big(\hat \psi + b\phi'\big)\dx\\
	&= \frac12 \int_\R (\hat \psi')^2 + W''(\phi)\hat\psi^2\dx - \int_R \big(\phi' + 2x\phi'')\hat \psi\dx + \frac{b^2}2 \int_\R(\phi'')^2 + W''(\phi)(\phi')^2 \dx\\
	&\qquad - b \int_R \big(\phi' + 2x\phi'')\phi'\dx + b \int_\R \hat \psi' \phi'' + W''(\phi)\,\phi'\hat\psi\dx.
\end{align*}
We note that 
\[
\int_\R(\phi'')^2 + W''(\phi)(\phi')^2 \dx = \int_\R \left(-\phi''' + W''(\phi)\phi'\right)\phi'\dx = 0
\]
since the Ginzburg-Landau energy of the optimal profile does not change under coordinate shifts. Integrating by parts, we have 
\[
\int_R \big(\phi' + 2x\phi'')\phi'\dx = \int_\R (\phi')^2 + 2x \,\phi'\phi'' \dx = \int_\R (\phi')^2 + x\,\frac{d}{dx}(\phi')^2 \dx = \int_\R (\phi')^2 - (\phi')^2\dx =0.
\]
Finally, we note that
\[
\int_\R \hat \psi' \phi'' + W''(\phi)\,\phi'\hat\psi\dx= \int_\R \phi'' \hat\psi' + (W'(\phi))' \hat\psi\dx = \int_\R\phi'' \hat\psi' + \phi''' \hat\psi\dx = 2 \int_\R \phi'' \hat\psi' \dx = 0
\]
by the definition of $\hat H^1$. We conclude that
\[
F(\xi) = F(\hat\psi + b\phi') = F(\hat\psi) \geq F(\psi^*),
\]
i.e.\ $\psi^*$ minimizes $F$ over the entire space $H^1(\R)$. We conclude that $\psi^*$ solves the Euler-Lagrange equation
\[
-(\psi^*)'' + W''(\phi)\,\psi^* = \phi' + 2x\phi''.
\]

{\bf Step 4. Uniqueness.} We have shown that $F$ is strictly convex on $\hat H^1$, i.e.\ the minimizer $\psi^*$ is unique. Since $\phi''$ is an odd function, we can substitute $z=-x$ to deduce that 
\[
\int_\R \phi''(x)\frac{d}{dx}\psi(-x)\dx = -\int_\R \phi''(x) \psi'(-x)\dx = \int_\R\phi''(-z)\psi'(z)\dz = - \int_\R \phi''(z) \,\psi'(z)\dz
\]
meaning that the space $\hat H^1$ is invariant under the change of coordinates $z=-x$. 
 We further note that $W''(\phi), \phi'$ and $x\phi''$ are even functions, meaning that $F$ is invariant under the replacement of $x\mapsto \psi(x)$ by $x\mapsto \psi(-x)$. Thus, if $\psi$ minimizes $F$, then so does $x\mapsto\psi(-x)$. From the uniqueness of minimizers, we deduce that $\psi^*(x) = \psi^*(-x)$ for all $x\in\R$, i.e.\ $\psi^*$ is even.
 
 All minimizers of the functional $F$ in $H^1(\R)$ have the form $\psi = \psi^* + a \phi'$ for some $a\in\R$. Since $\phi'(0) = \sqrt{2\,W(\phi(0))}>0$, there exists an affine linear map from $\psi(0)$ to $ \int_\R \phi'' \psi'\dx$. We may therefore solve
 \[
 \begin{pde}
 \psi_c'' &= W''(\phi)\,\psi_c & x>0\\
 \psi_c &= c &x=0\\
 \psi_c' &= 0 &x=0
 \end{pde}
 \]
 and extend $\psi_c$ by even reflection to the real line in order to find $\psi^*$ numerically. This is implemented numerically in Figure \ref{figure corrector equation}. To find the correct value of $c= \psi(0)$, we only need to find the root of the linear function $c\mapsto \int_\R \phi'' \psi_c'\dx$. 
 
All even solutions of the corrector ODE have the form $\psi^*+c\phi'$ for some $c\in\R$. The odd solutions do not correspond to minimizers of the energy $F$, but they would be critical points of the energy if they were in the domain $H^1(\R)$. Since $F$ is convex, this would make them minimizers, which means that the odd solutions cannot lie in $H^1(\R)$. 
 \end{proof}
 
  \begin{figure}
  \includegraphics[width = .48\textwidth]{./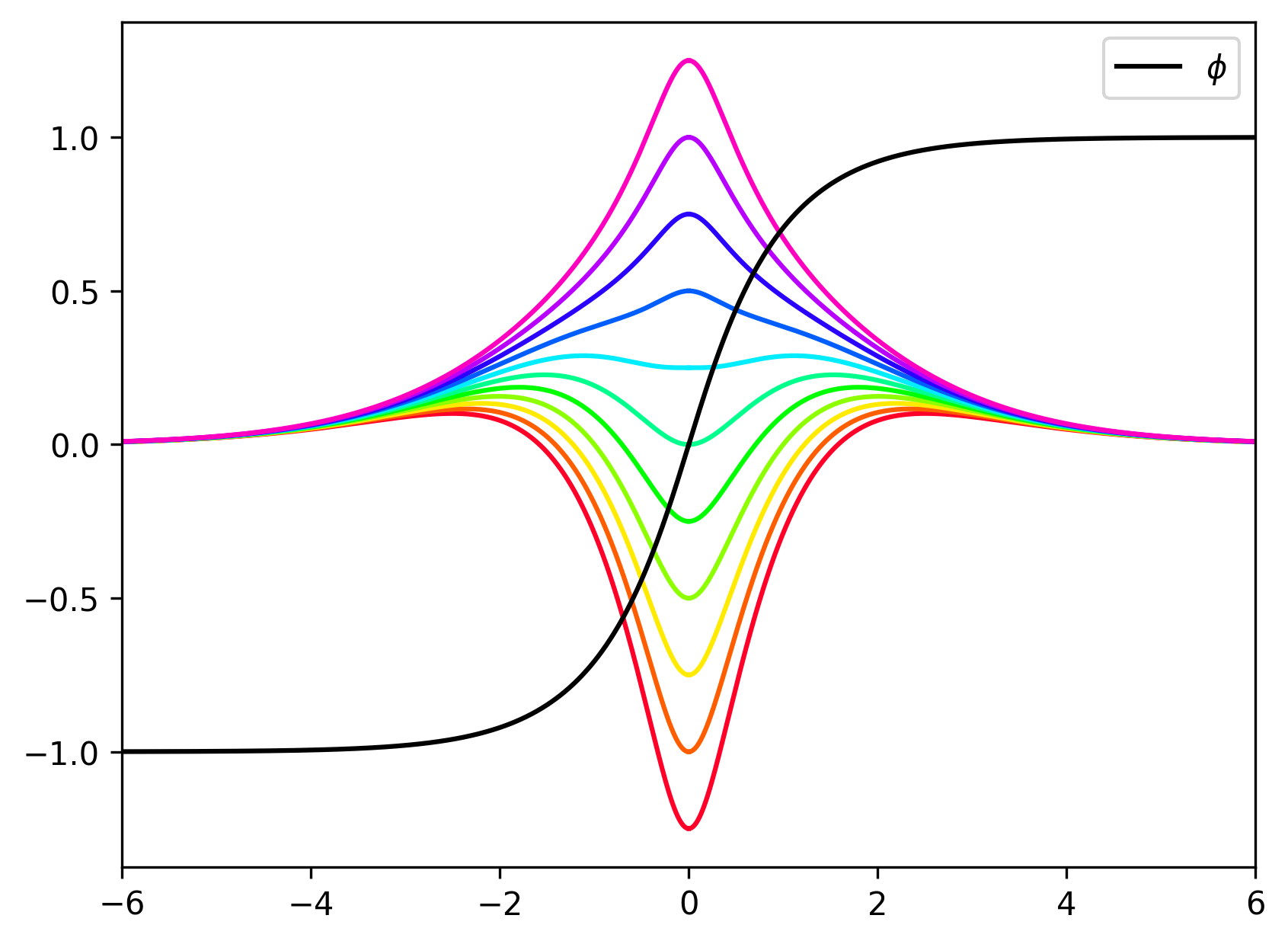}\hfill
 \includegraphics[width = .48\textwidth]{./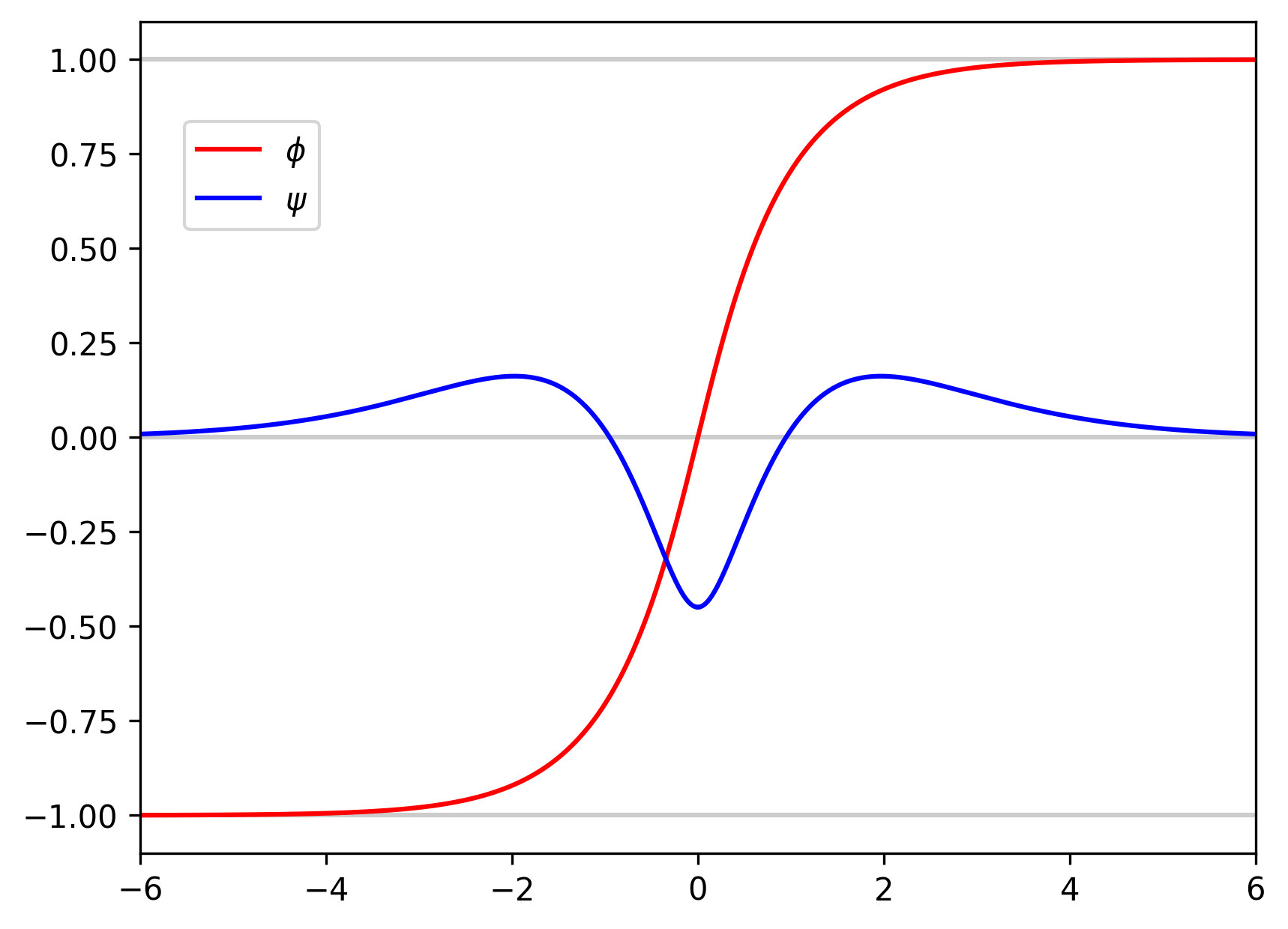}

 \caption{
 {\bf Left:} Even solutions of the corrector equation with different values at zero. {\bf Right:} The true corrector.
 \label{figure corrector equation}
 }
 \end{figure}

\begin{remark}
If we tried to solve
\[
\psi'' - W''(\phi)\psi = \phi' - cx\phi''
\]
for $c\neq 2$, we could still carry out the minimization in $\hat H^1$, but the minimizer would no longer be global and only solve an Euler-Lagrange equation with a Lagrange multiplier.
\end{remark}

\section{Proofs for CINEMA}\label{appendix cinema}

\begin{proof}[Proof of Theorem \ref{theorem descent existence}]
For $x,v\in H$, we consider the function 
\[
M:H\to\R, \qquad M(z) = \frac12\|z-x\|^2 + \langle \eta \nabla G(x) - \tau v, z \rangle + \eta F(z).
\]
 We observe that for for fixed $x$ and $v$, $M$ is bounded below since
\begin{align*}
M(z) &= \frac12\|z\|^2 + \frac12 \|x\|^2  + \langle \eta \nabla G(x) - \tau v - x, z \rangle + \eta F(z)  \\
&\geq \frac12\|z\|^2  - \|\tau v + x\|\,\|z\| + \| \eta \nabla G(x)\| \| z\| + \eta \big(F(\xi) - \|\partial F(\xi)\|\,\|z-\xi\|\big)
\end{align*}
for some $\xi$ in the domain of $\partial F$. The expression on the right is bounded from below even if $F$ is not. Suppose that $z_n$ is a minimizing sequence of $M$, i.e.\ $\lim_n M(z_n) = \inf_{z\in H} M(z)$. Then the inequality above implies that $z_n$ is a bounded sequence. By the Banach-Alaoglu theorem and the fact that $H$ is a a separable Hilbert space, $z_n$ has a weakly convergent subsequence. For notational convenience, we can assume that $z_n$ itself is weakly convergent and $z^*$ is its weak limit. Then $\lim_n \langle \eta \nabla G(x) - \tau v, z_n \rangle = \langle \eta \nabla G(x) - \tau v, z^* \rangle$. Further, since $F$ and the norm $\|\cdot\|_H$ are both convex and lower semi-continuous, they are also weakly lower semi-continuous, i.e.\ $F(z^*) \leq \liminf_n F(z_n)$ and $\|z^*-x\|^2 \leq \liminf_n \|z_n-x\|^2$. Hence, we have shown that 
\[
M(z^*) \leq \liminf_{n\to\infty} M(z_n) = \inf_z M(z) \leq M(z^*).
\]
We can conclude that $z^*$ is the minimizer of $M$ and thus $0\in \partial M(z^*)$, which is equivalent to
\[
  \frac{x + \tau v - z^*}\eta - \nabla G(x) \in \partial F (z^*). \qedhere
\]
\end{proof}

\begin{proof}[Proof of Theorem \ref{theorem descent scheme}]
 Existence of the scheme follows from Theorem \ref{theorem descent existence}. For convenience, we denote $s_{n+1} \in \partial F(x_{n+1})$  the element selected by \eqref{eq cinema time step} and $g_n = s_{n+1} + \nabla G(x_n)$. Using the first order convexity condition for $F$, we find that
\[ 
F(x_{n+1}) - F(x_n) \leq \langle s_{n+1}, x_{n+1} - x_n \rangle = \langle s_{n+1}, \tau v_{n}-\eta g_{n} \rangle.
\]
On the other hand, due to the concavity of $G$, we have
\[
G(x_{n+1}) - G(x_{n}) \leq \langle \nabla G(x_{n}), x_{n+1}-x_{n} \rangle = \langle\nabla G(x_{n}), \tau v_n -\eta g_{n}\rangle.
\]
Adding the last two inequalities,
\begin{align*}
F(x_{n+1}) + G(x_{n+1})
    &\leq F(x_n) + G(x_{n}) - \eta \|g_n\|^2 
    + \langle g_n , \tau v_{n}\rangle.
\end{align*}
From this inequality and the definition of $v_{n+1}$ it follows that
\begin{align*}
   e_{n+1} &= F(x_{n+1}) + G(x_{n+1}) + \frac1{2\rho^2}\|v_{n+1}\|^2\\
   &\leq F(x_n) + G(x_{n}) - \eta \|g_n\|^2 
    + \langle g_n, \tau v_{n}\rangle + \frac12 \|v_n\|^2 + \frac{\tau^2}2\|g_n\|^2 - \langle v_n, \tau g_n\rangle \\
   &= F(x_n) + G(x_{n}) + \frac12 \|v_n\|^2 + \left(\frac{\tau^2}2 - \eta\right)\|g_n\|^2\\
    &= e_n - \frac12(\rho^{-2}-1)\|v_n\|^2 + \left(\frac{\tau^2}2 - \eta\right)\|g_n\|^2.\qedhere
\end{align*}
\end{proof}

\section{Computational details}

\subsection{From problem to implementation}

We briefly describe implementations in the PDE setting and the graph setting. For now, we focus on scalar-valued PDEs -- generalizations are discussed below.

{\bf Problem data.} The domain $\Omega$ (or graph $\Gamma$) is assumed to be given. 
\begin{enumerate}
\item In semi-supervised learning, we model known labels as a time-independent Dirichlet boundary condition $u(t, x_i) = y_i$ for all $i$ where $y_i$ is known.

The initial condition can be freely chosen by the practitioner. In this article, we opted for a constant `average' initial condition, which we found to perform better than randomly selected initial values. In principle, initial values could be selected based on a nearest neighbor scheme or similar.

\item In PDE applications, the initial condition $u_0$ is generally part of the given problem, along with the boundary data. In our numerical experiments, we considered periodic boundary conditions, i.e.\ there is no boundary on which to assign specific values.
\end{enumerate}

For simplicity, we describe an implementation for periodic or time-independent Dirichlet boundary conditions, but other choices are possible with minor modifications.

{\bf Choice of model and discretization for the energy.} In the graph setting, the practitioner needs to select a version of the graph Laplacian which gives rise to a non-negative Dirichlet energy. We find the unnormalized graph Laplacian most natural since it gives zero Dirichlet energy to constant functions. However, we note that it may be advantageous to alter the graph once the set of labeled vertices is known in such fashion as \cite{calder2020properly}.

In the PDE setting, the Laplacian is given as an operator between function spaces, but a choice of discretization needs to be made. We opted for a Fourier approach, but finite elements are a valid alternative choice (and in fact needed for more complicated domains or boundary conditions). For either method, a specific discretization must be chosen (often described by a characteristic length scale $h>0$).

Additionally, the double-well potential $W$ and parameter $\eps>0$ have to be specified in order to characterize a specific discrete Ginzburg-Landau energy. In the continuous setting, we typically opt for $\eps= Ch$ with a sufficiently large constant $C>0$ in order to resolve transitions between phases sufficiently accurately (and with momentum, $C$ may have to be higher). In order to establish $\Gamma$-convergence, the stronger asymptotic condition $h/\eps_h \to 0$ is needed.

In the setting of graphs, the weights $w_{ij}$ carry the meaning of $h^{-2}$, i.e.\ we can choose $\eps$ proportional to $1/\sqrt{h_\Gamma}$ where $h_\Gamma$ is e.g.\ the average of the distance of a point to its nearest neighbor or $k'$ nearest neighbors.

We always assume that $W$ is an even non-negative $C^2$-function which only vanishes at $-1$ and $1$ and satisfies $W(u) = u^2 + W_{conc}(u)$ for a concave function $W_{conc}$. 

{\bf Choice of minimization algorithm and hyperparameters.} Given the energy characterized by $\eps$, notion of graph-Laplacian and $W$, we may freely select a gradient flow-based (Allen-Cahn) optimization algorithm or a momentum-based (accelerated Allen-Cahn) algorithm.

Within these two options, further choices of time discretization are available. We describe the simple explicit Euler and the FISTA discretization, both utilizing a convex-concave splitting. 

The gradient descent algorithm requires a choice of time step size $\tau>0$. FISTA requires a choice of time-step size $\tau>0$ and decay factor $\rho$, or a friction parameter $\alpha$ from which $\rho$ can be computed e.g.\ as $\rho = 1/(1+\alpha\tau)$.

{\bf Implementation.} All scenarios described above fall into the framework of minimizing
\[
F_\eps^{GL} :\R^N \to [0,\infty), \qquad F_\eps^{GL}(u) = \frac\eps2\,u^TLu + \frac1\eps \,\sum_i W(u_i).
\]
We assume that there exists a set of indices $I_{bd}\subseteq \{1,\dots, N\}$ of cardinality $0\leq N_{bd} < N$ such that $u_i$ is prescribed for $i \in I_{bd}$. Without loss of generality, we may assume that $I_{bd} = \{N- N_{known}+1, \dots, N\}$. We denote $I_{int} = \{1,\dots, N\}\setminus I_{bd}$. Accordingly, we segment
\[
u = \begin{pmatrix} u_{int}\\ u_{bd}\end{pmatrix}, \qquad L = \begin{pmatrix} L_{int} &L_\times\\ L_\times^T &L_{bd}\end{pmatrix}
\]
and note that
\[
u^TLu = u_{int}^T \,L_{int} u_{int} + 2\,u_{int}^T \underbrace{L_\times u_{bd}}_{=:f_{bd}} + \underbrace{u_{bd}^TL_{bd}u_{bd}}_{=: 2\,c_{bd}}.
\]
We denote $N_{int} = N - N_{bd}$. The algorithms can be implemented as follows. 

\begin{minipage}[t]{.48\textwidth}
\vspace{0pt}
\begin{algorithm}[H]
\caption{GD}\label{algorithm gd}
\Input{$I_{bd}, u_{bd}$, $u_0$, $L\in\R^{N\times N}_{spd}$, $\tau>0$, $T>0$.}
$S \gets (1+ 2\tau/\eps^2) I_{N_{int}} + L_{int}$\\
$f_{bd} \gets L_\times u_{bd}$\\
set up linear solver\\
$u\gets (u_0)_{int}$\\
${\color{white}{\phi}}$\\
\While{$t<T$}{
   ${\color{white}{\phi}}$\\
    $rhs \gets u - (\tau/\eps^2)\,W_{conc}'(u) - \tau\,f_{bd}$\\
    $u\gets S^{-1}\,rhs$\\
    ${\color{white}{\phi}}$\\
    ${\color{white}{\phi}}$\\
    ${\color{white}{\phi}}$\\
}
\Return{$u$}
\end{algorithm}
\end{minipage}
\hfill
\begin{minipage}[t]{.48\textwidth}
\vspace{0pt}
\begin{algorithm}[H]
\caption{FISTA}\label{algorithm fista}
\Input{$I_{bd}, u_{bd}$, $u_0$, $L\in\R^{N\times N}_{spd}$, $\tau>0$, $\rho\in(0,1)$, $T>0$.}
$S \gets (1+ 2\tau^2/\eps^2) I_{N_{int}} + L_{int}$\\
$f_{bd} \gets L_\times u_{bd}$\\
set up linear solver\\
$u\gets (u_0)_{int}$\\
$v\gets 0\in \R^{N_{int}}$\\
\While{$t<T$}{
    $u \gets u+ \tau v$\\
    $rhs \gets u - (\tau^2/\eps^2)\,W_{conc}'(u) - \tau^2\,f_{bd}$\\
    $u_{new}\gets S^{-1}\,rhs$\\
    $g\gets (u-u_{new})/\tau^2$\\
    $v\gets \rho(v- \tau\,g)$\\
    $u\gets u_{new}$
    }
\Return{$u$}
\end{algorithm}
\end{minipage}

\vspace{2mm}

In particular, FISTA includes the same convex-concave split gradient descent step (with step size $\tau^2$ rather than $\tau$), and only requires additional vector additions (and storage for two additional vectors $u_{new}, v$, if implemented without saving $g$).

The constant $c_{bd}$ does not need to be computed for the algorithm, but should be computed along with $f_{bd}$ in order to track the energy throughout the evolution. As an additive constant, it could be neglected, but it ensures positivity of $F_\eps^{GL}$ and thus $E_\eps$.

The linear solver may not need to be set up, but for LU solvers for instance, the system matrix $S$ should be factorized only once before the first time step. We use an LU solver for small densely connected graphs, a CG solver for sparsely connected large graphs, and a direct coordinatewise solver in the Fourier domain in the PDE setting.

\subsection{An enclosed volume-preserving version}\label{appendix volume-preserving}

In Section \ref{section 3d}, we consider the minimization of the Ginzburg-Landau energy under a fixed enclosed volume constraint $\frac1{|\Omega|}\int_\Omega u\dx = p\in(-1,1)$. Algorithms \ref{algorithm gd} and \ref{algorithm fista} easily extend to this situation. We present the modification in the setting of partial differential equations.

Abstractly, in Algorithm \ref{algorithm gd}, rather than selecting $u_{n+1}$ as the unique minimizer of
\begin{align*}
\frac1{2\tau} \,\|u- u_n\|^2_{L^2(\Omega)} + \int_\Omega \frac 12\,\|\nabla u\|^2 + \frac{u^2 + W_{conc}(u_n) + W_{conc}'(u_n)\,(u-u_n)}{\eps^2} \dx 
\end{align*}
or equivalently of 
\begin{align*}
\frac1{2\tau} \,\|u- u_n\|^2_{L^2(\Omega)} + \int_\Omega \frac 12\,\|\nabla u\|^2 + \frac{u^2 + W_{conc}'(u_n)\,u}{\eps^2} \dx
\end{align*}
in the affine space $u_{bd} + H_0^1(\Omega)$, we select the unique minimizer of the same functional in the smaller affine space 
\[
H = \left\{ h\in H^1(\Omega) : Tr_{\partial\Omega}(u) = u_{bd}, \:\:\frac1{|\Omega|}\int_\Omega u \dx = p\right\}.
\]
By the Lagrange-multiplier theorem, the Euler-Lagrange equation of the modified minimization problem is
\begin{equation}\label{eq lagrange multiplier volume}
\frac{u-u_n}\tau - \Delta u + \frac2{\eps^2}u + \frac{W_{conc}'(u_n)}{\eps^2} \equiv \lambda
\end{equation}
for some $\lambda\in\R$. Crucially, since the convex part of $W$ is quadratic, its derivative is linear, yielding the linear PDE \eqref{eq lagrange multiplier volume}. In particular, if $\Omega$ is a closed manifold (in particular, a periodic domain/torus) we may solve the Euler-Lagrange equation
\[
-\tau\,\Delta u + \left(1+\frac{2\tau}{\eps^2}\right) u = u_n - \frac{W_{conc}'(u_n)}{\eps^2}
\]
of the unconstrained problem for $\tilde u_{n+1}$ and the replace $\tilde u_{n+1}$ by $\tilde u_{n+1} + \frac1{|\Omega|} u_n - \tilde u_{n+1}\dx$. The Lagrange multiplier is the unique $\lambda\in\R$ leading to integral preservation along the scheme.
If $\partial\Omega \neq \emptyset$, we need to replace the constant function $1$ by the solution to the PDE
\[
\begin{pde}
-\tau\,\Delta u_{av} + \left(1+\frac{2\tau}{\eps^2}\right) u_{av} &= 1 &\text{in }\Omega\\ 
u_{av} &= 0 &\text{on }\partial\Omega.
\end{pde}
\]
The corrector $u_{av}$ for preserving the average only needs to computed when the algorithm is set up since \eqref{eq lagrange multiplier volume} is linear in $u$ -- a major advantage of the potentials whose convex part is quadratic. 

An even easier implementation is available in a Fourier discretization. On the periodic unit hypercube, the Fourier coefficients of $u_{n+1}$ can be found as
\[
a_{k}(u_{n+1}) = \frac{1}{1 + \frac{2\tau}{\eps^2} + \tau (2\pi)^2\|k\|_2^2} \, a_{k}\left(u_n - \frac{\tau}{\eps^2}\,W_{conc}'(u_n)\right)
\]
for $k = (k_1,\dots, k_d) \neq 0$ and $a_0 = p$ in order to prescribe the average. In the FISTA algorithm, if the gradient descent step is implemented in this manner such that
\[
\frac1{|\Omega|}\int_\Omega u_{new}\dx = \frac1{|\Omega|}\int_\Omega u\dx,
\]
then $g = \frac{u-u_{new}}{\tau^2}$ has vanishing mean, and by induction also $v$ has mean zero in all time steps, leading to enclosed volume conservation.

\subsection{Vector-valued extension}\label{appendix vector-valued}

In Section \ref{section graphs}, we consider the minimization of a Ginzburg-Landau energy for vector-valued inputs $u$. Algorithms \ref{algorithm gd} and \ref{algorithm fista} easily extend to this situation. We present the modification in the setting of graph-PDEs.

The Dirichlet energy for vector-valued functions $u:\Gamma\to\R^k$ is
\[
\sum_{j=1}^k E_{DR}(u_j) = \sum_{j=1}^k u_j^TLu_j = tr\big(U^TLU\big)
\]
where $U \in \R^{N\times k}$ is the matrix with columns $u_i$. For any $R>0$ and $W_R$ as in Section \ref{section ws}, the potential
\[
W(u) = \sum_{j=1}^k W_R(u_j)
\]
has the convex part $\sum_{j=1}^k u_j^2 = \|u\|_2^2$. However, the potential has $2^k$ wells in $\{-1,1\}^k$, most of which do not support unique classification. To address these issues, we consider
\[
W(u) = \begin{cases} \frac14 \sum_{j=1}^k W_R(2u_j +1) &\text{if }\sum_{j=1}^k u_j = 1\\ +\infty &\text{otherwise}\end{cases}
\]
instead. While slightly more challenging than a potential with purely quadratic convex part, $W$ admits a very similar computational approach. Namely, for a gradient descent step, we select
\[
U_{n+1} = \argmin_U \left(\frac1{2\tau}\|U-U_{n+1}\|^2 + \frac12\,tr(U^TLU) + \frac1{\eps^2}\,\sum_{i=1}^N\sum_{j=1}^k U_{ij}^2 + W_{R,conc}'(U_{n, ij})\,U_{ij}\right)
\]
under the constraints that
\begin{enumerate}
    \item $U_i = y_i$ for $i \in I_{bd}$ where $U_i = (U_{i1},\dots, U_{ik})$ and $y_i$ is the one-hot encoding of the known label of the data point $x_i$ and
    \item $\sum_{j=1}^k U_{ij} = 1$ for all $i=1,\dots, N$.
\end{enumerate}
Again, the Lagrange multiplier theorem tells us that
\[
\frac{U_{int}- U_{n,int}}\tau + L_{int}U_{int} + F_{bd} + \frac2{\eps^2}\,U + W'_{R,conc}(U_n) = \lambda \otimes \vec 1
\]
for $\vec 1 = (1,\dots, 1)$ in $\R^k$, $F_{bd} = L_\times U_{bd}$ and some vector of Lagrange multipliers $\lambda \in \R^{N-N_{bd}}$. We can solve the linear system 
\[
\left(1 + \frac{2\tau}{\eps^2}\right) U + \tau\,L_{int}U = U_{n, int} - \frac{\tau}{\eps^2}\,W_{R,conc}'(U_{n,int}) - 2\tau\,F_{bd} + \tau \,\lambda \otimes \vec 1
\]
by solving the equation without Lagrange multiplier
\[
\left(1 + \frac{2\tau}{\eps^2} + \tau\,L_{int}\right) \tilde U_{n+1} = U_{n, int} - \frac{\tau}{\eps^2}\,W_{R,conc}'(U_{n,int}) - 2\tau\,F_{bd}
\]
for $\tilde U_{n+1, int}$ per time-step and modify the $i$-th row as
\[
U_{n+1, i} = \tilde U_{n+1, i} + \frac{1-\sum_{j=1}^k \tilde U_{n+1,ij}}k \,\vec 1_k =: \tilde \lambda \otimes \vec 1_k
\]
since then 
\[
\sum_{j=1}^k U_{n+1,ij} = \sum_{j=1}^k\left(\tilde U_{n+1, ij} + \frac{1-\sum_{j=1}^k \tilde U_{n+1,ij}}k\right) = 1
\]
for all $i$ and
\begin{align*}
\left(1 + \frac{2\tau}{\eps^2} + \tau\,L_{int}\right) U_{n+1} 
    &= \left(1 + \frac{2\tau}{\eps^2} + \tau\,L_{int}\right) \tilde U_{n+1} + \left(1 + \frac{2\tau}{\eps^2} + \tau\,L_{int}\right) \tilde\lambda \otimes \vec 1_k\\
    &=U_{n, int} - \frac{\tau}{\eps^2}\,W_{R,conc}'(U_{n,int}) - 2\tau\,F_{bd} + \left(\left(1 + \frac{2\tau}{\eps^2} + \tau\,L_{int}\right) \tilde\lambda \right) \otimes \vec 1_k,
\end{align*}
i.e.\ $U_{n+1}$ is the unique solution to the Lagrange-multiplier equation which satisfies $\sum_j U_{n+1,ij} = 1$ for all $i \in I_{int}$.
As before: If the gradient-descent step with magnitude $\tau^2$ in FISTA preserves $\sum_{j=1}^k U_{ij}$ for all $i \in I_{int}$, then both sequences
\[
G_n = \frac{U_{n+1} - U_n}{\tau^2} \qquad\text{and}\quad V_{n+1} = \rho(V_n - \tau G_n)
\]
satisfy $\sum_{j=1}^k G_{n,ij} = \sum_{j=1}^k V_{n,ij} = 0$ for all $i\in I_{int}$ and $n\in \mathbb N$ by induction. Thus, with minimal additional computational effort, the problem falls into the same framework.

\bibliographystyle{alpha}
\bibliography{./bibliography.bib}

\end{document}